\PassOptionsToPackage{dvipsnames}{xcolor}
\documentclass[twoside]{article}
\linespread{1.1}
\usepackage{amsmath,amsthm,amsfonts}
\usepackage{todonotes}
\usepackage{mathtools}
\usepackage{latexsym}
\usepackage{tikz,tikz-cd}
\usepackage{comment}
\usepackage{tikzit}			

\tikzstyle{morphism}=[fill=white, draw=black, shape=rectangle]
\tikzstyle{generic morphism}=[fill=white, draw=black, shape=rectangle,dashed]
\tikzstyle{medium box}=[fill=white, draw=black, shape=rectangle, minimum width=0.7cm, minimum height=0.7cm]
\tikzstyle{large morphism}=[fill=white, draw=black, shape=rectangle, minimum width=1.7cm, minimum height=1cm]
\tikzstyle{bn}=[fill=black, draw=black, shape=circle, inner sep=1.5pt]
\tikzstyle{bw}=[fill=white, draw=black, shape=circle, inner sep=1.5pt]
\tikzstyle{bin}=[fill=white, draw=black, shape=circle, inner sep=0pt]
\tikzstyle{effect}=[fill=white, draw=black, regular polygon, regular polygon sides=3, minimum width=0.8cm, inner sep=0pt]
\tikzstyle{state}=[fill=white, draw=black, regular polygon, regular polygon sides=3, minimum width=0.8cm, shape border rotate=180, inner sep=0pt]
\tikzstyle{medium state}=[fill=white, draw=black, regular polygon, regular polygon sides=3, minimum width=1.3cm, inner sep=0pt, shape border rotate=180]
\tikzstyle{large state}=[fill=white, draw=black, regular polygon, regular polygon sides=3, minimum width=2.2cm, shape border rotate=180, inner sep=0pt]
\tikzstyle{wide state}=[fill=white, draw=black, shape=isosceles triangle, minimum width=0.8cm, shape border rotate=270, inner sep=1.4pt, minimum height=0.5cm, isosceles triangle apex angle=80]
\tikzstyle{wn}=[fill=white, draw=black, shape=circle, inner sep=1.5pt]
\tikzstyle{blue morphism}=[fill=white, draw={rgb,255: red,15; green,0; blue,150}, shape=rectangle, text={rgb,255: red,15; green,0; blue,150}, tikzit category=blue]
\tikzstyle{red morphism}=[fill=white, draw={rgb,255: red,150; green,0; blue,2}, shape=rectangle, text={rgb,255: red,150; green,0; blue,2}, tikzit category=red]
\tikzstyle{blue state}=[fill=white, draw={rgb,255: red,15; green,0; blue,150}, shape=circle, regular polygon, regular polygon sides=3, minimum width=0.8cm, shape border rotate=180, inner sep=0pt, text={rgb,255: red,15; green,0; blue,150}, tikzit category=blue]
\tikzstyle{blue node}=[fill={rgb,255: red,15; green,0; blue,150}, draw={rgb,255: red,15; green,0; blue,150}, shape=circle, tikzit category=blue, inner sep=1.5pt]
\tikzstyle{blue}=[text={rgb,255: red,15; green,0; blue,150}, tikzit draw={rgb,255: red,191; green,191; blue,191}, tikzit category=blue, tikzit fill=white, inner sep=0mm]
\tikzstyle{blue wide state}=[fill=white, draw={rgb,255: red,15; green,0; blue,150}, text={rgb,255: red,15; green,0; blue,150}, shape=isosceles triangle, minimum width=0.8cm, shape border rotate=270, inner sep=1.4pt, minimum height=0.5cm, isosceles triangle apex angle=80]
\tikzstyle{red node}=[fill={rgb,255: red,150; green,0; blue,2}, draw={rgb,255: red,150; green,0; blue,2}, shape=circle, inner sep=1.5pt]
\tikzstyle{Purple node}=[fill={rgb,255: red,120; green,0; blue,120}, draw={rgb,255: red,120; green,0; blue,120}, text={rgb,255: red,120; green,0; blue,120}, shape=circle, inner sep=1.5pt]
\tikzstyle{red}=[text={rgb,255: red,150; green,0; blue,2}, inner sep=0mm, tikzit fill=white, tikzit draw={rgb,255: red,191; green,191; blue,191}]
\tikzstyle{purple}=[text={rgb,255: red,150; green,0; blue,150}, inner sep=0mm, tikzit fill=white, tikzit draw={rgb,255: red,191; green,191; blue,191}]
\tikzstyle{white morphism}=[fill=white, draw=white, shape=rectangle, tikzit draw={rgb,255: red,139; green,139; blue,139}]
\tikzstyle{leak morphism}=[fill=white, draw={rgb,255: red,120; green,0; blue,85}, shape=rectangle, text={rgb,255: red,120; green,0; blue,85}, tikzit category=leak]
\tikzstyle{leak}=[text={rgb,255: red,120; green,0; blue,85}, inner sep=0mm, tikzit fill=white, tikzit draw={rgb,255: red,191; green,191; blue,191}, tikzit category=leak]
\tikzstyle{leak node}=[fill={rgb,255: red,120; green,0; blue,85}, draw={rgb,255: red,120; green,0; blue,85}, shape=circle, inner sep=1.5pt, tikzit category=leak]
\tikzstyle{horiz state}=[fill=white, draw=black, regular polygon, regular polygon sides=3, minimum width=1cm, shape border rotate=90, inner sep=0pt]

\tikzstyle{arrow}=[->]
\tikzstyle{dashed box}=[-, dashed]
\tikzstyle{blue arrow}=[-, draw={rgb,255: red,15; green,0; blue,150}, tikzit category=blue]
\tikzstyle{red arrow}=[-, draw={rgb,255: red,150; green,0; blue,2}, tikzit category=red]
\tikzstyle{purple arrow}=[->, draw={rgb,255: red,120; green,0; blue,120}, >=stealth, shorten <=2pt, shorten >=2pt]
\tikzstyle{protected purple arrow}=[->, draw={rgb,255: red,120; green,0; blue,120}, >=stealth, shorten <=2pt, shorten >=2pt, preaction={line width=1.8pt, white, draw}]
\tikzstyle{mapsto}=[{|->}]
\tikzstyle{double wire}=[-, double]
\tikzstyle{curly brace}=[-, draw=none, tikzit draw={rgb,255: red,128; green,0; blue,128}]
\tikzstyle{protected}=[-, preaction={line width=1.8pt,white,draw}]
\tikzstyle{leak arrow}=[-, tikzit draw={rgb,255: red,150; green,0; blue,120}]
\tikzstyle{protected leak arrow}=[-, tikzit draw={rgb,255: red,150; green,0; blue,120}]
\tikzstyle{hollow arrow}=[-, very thin, white, preaction={line width=0.7pt,draw={rgb,255: red,120; green,0; blue,85}}, tikzit category=leak, tikzit draw={rgb,255: red,150; green,0; blue,120}]
\tikzstyle{protected hollow arrow}=[-, very thin, white, preaction={line width=0.7pt,draw={rgb,255: red,120; green,0; blue,85},preaction={line width=2.1pt,white,draw}}, tikzit category=leak, tikzit draw={rgb,255: red,150; green,0; blue,120}]
\tikzstyle{over arrow}=[-, black, preaction={draw=white, double}]
\tikzstyle{curly brace}=[-, decorate, decoration={brace,amplitude=5pt}]
\usetikzlibrary{decorations.pathreplacing,decorations.pathmorphing}	
\usepackage{enumitem}
	\setlist[enumerate]{label=(\roman*)}  
	\setlist[enumerate,2]{label=(\alph*)}  
\usepackage[T1,T2A]{fontenc}			
\usepackage[noadjust]{cite}
\usepackage{tikz-cd}
\usepackage{amssymb,xparse}		
\usepackage{scalerel}
\usepackage{geometry}
\geometry{a4paper,heightrounded,left=1in,right=1in,top=1.3in,bottom=1.3in}
\usepackage{bm}				
\usepackage{authblk}
\usepackage{thmtools}

\definecolor{myurlcolor}{rgb}{0,0,0.3}
\definecolor{mycitecolor}{rgb}{0,0.3,0}
\definecolor{myrefcolor}{rgb}{0.3,0,0}
\usepackage[pagebackref,draft=false]{hyperref}
\hypersetup{colorlinks,
	linkcolor=myrefcolor,
	citecolor=mycitecolor,
urlcolor=myurlcolor}

\usepackage[noabbrev,capitalize]{cleveref}



\let\originalleft\left
\let\originalright\right
\renewcommand{\left}{\mathopen{}\mathclose\bgroup\originalleft}
\renewcommand{\right}{\aftergroup\egroup\originalright}

\newtheorem{theorem}{Theorem}[section]
\newtheorem{proposition}[theorem]{Proposition}
\newtheorem{lemma}[theorem]{Lemma}
\newtheorem{corollary}[theorem]{Corollary}

\theoremstyle{definition}
\newtheorem{definition}[theorem]{Definition}
\newtheorem{notation}[theorem]{Notation}
\newtheorem{example}[theorem]{Example}

\newtheorem{remark}[theorem]{Remark}

\numberwithin{equation}{section}

\newcommand{\newterm}[1]{\textbf{\textit{#1}}}	
\newcommand{\N}{\mathbb{N}}
\newcommand{\Z}{\mathbb{Z}}
\newcommand{\R}{\mathbb{R}}




\newcommand{\norm}[1]{\left\lVert #1 \right\rVert} 
\newcommand{\abs}[1]{\left\lvert #1 \right\rvert} 
\newcommand{\spec}[1]{\mathrm{sp}\left(#1\right)}
\renewcommand{\emptyset}{\varnothing} 

\newcommand{\op}{\mathrm{op}}
\newcommand{\cat}[1]{{\mathsf{#1}}} 
\newcommand{\invo}[1]{\overline{#1}}
\newcommand{\involution}{\mathfrak{I}}


\newcommand{\id}{\mathrm{id}} 		
\newcommand{\tensor}{\odot}
\newcommand{\bigtensor}{\bigodot}
\DeclareMathOperator*{\colim}{colim}
\DeclareMathOperator{\im}{\operatorname{im}}

\tikzset{pullback/.style={minimum size=1.2ex,path picture={	
			\draw[opacity=1,black,-,#1] (-0.5ex,-0.5ex) -- (0.5ex,-0.5ex) -- (0.5ex,0.5ex);%
}}}
\newcommand{\comp}{ 		
	\mathchoice{\,}{\,}{}{} 	
}

\DeclareMathOperator{\swap}{swap}
\DeclareMathOperator{\cop}{copy}
\DeclareMathOperator{\discard}{del}
\newcommand{\cC}{\mathsf{C}}		
\newcommand{\cD}{\mathsf{D}}		
\renewcommand{\det}{\mathrm{det}}	
\newcommand{\pairing}[2]{(#1,#2)} 
\newcommand{\samp}{\mathsf{samp}}	
\newcommand{\ev}{\mathsf{ev}}		

\newcommand{\finstoch}{\mathsf{FinStoch}}

\newcommand{\chaus}{\mathsf{CHaus}}

\newcommand{\cofree}{\mathsf{Cof}}
\newcommand{\classical}[1]{C\ell(#1)}
\newcommand{\gen}[1]{\mathsf{Gen}(#1)}
\newcommand{\iMod}[1]{#1\operatorname{-}\hspace{-0.3ex}\mathsf{iMod}}
\newcommand{\close}[1]{\operatorname{cl}({#1})}
\newcommand{\closed}[1]{\widehat{#1}}
\newcommand{\cpu}{\mathsf{pCPU}}
\newcommand{\calgone}{\mathsf{pC}^*\mathsf{ulin}^{\op}}
\newcommand{\calgmin}{\calgone_{\min}}
\newcommand{\calgmax}{\calgone_{\max}}

\newcommand{\as}[1]{
	\def\relstate{#1}%
	\ifx\relstate\empty
		\text{a.s.}%
	\else
		{#1\text{-a.s.}}%
	\fi
}

\newcommand{\ase}[1]{\simeq_{#1}}
\newcommand{\asel}[1]{\overset{L}{\simeq}_{#1}}
\newcommand{\aser}[1]{\overset{R}{\simeq}_{#1}}
\newcommand{\asets}[1]{\cong_{#1}} 


\newcommand{\sharpc}{\natural}


\newcommand{\suppincop}{\mathsf{si}}	
\newcommand{\suppinc}[1]{
	\def\relstate{#1}%
	\ifx\relstate\empty
		\mathsf{si}%
	\else
		\suppincop_{#1}%
	\fi
}
\newcommand{\Suppop}{\mathsf{Supp}}
\newcommand{\Supp}[1]{
	\def\relstate{#1}%
	\ifx\relstate\empty
		S%
	\else
		\Suppop_{#1}%
	\fi
}
\newcommand{\suppprojop}{\mathsf{sp}}
\newcommand{\suppproj}[1]{
	\def\relstate{#1}%
	\ifx\relstate\empty
		\mathsf{sp}%
	\else
		\suppprojop_{#1}%
	\fi
}

\newcommand{\llsim}{
	\mathrel{\ooalign{%
			\raise.45ex\hbox{$\scaleobj{0.8}{\ll}$}\cr%
	\hidewidth\raise-.4ex\hbox{$\vstretch{0.7}{\sim}$}\hidewidth\cr}}%
}
\newcommand{\ggsim}{%
	\mathrel{\ooalign{%
			\raise.45ex\hbox{$\scaleobj{0.8}{\gg}$}\cr%
	\hidewidth\raise-.4ex\hbox{$\vstretch{0.7}{\sim}$}\hidewidth\cr}}%
}

\newcommand{\pre}[1]{#1}
\newcommand{\opm}[2]{
	\def\relstate{#2}%
	\ifx\relstate\empty
		#1^{\op}%
	\else
		#1^{#2,\op}%
	\fi
}

\providecommand{\given}{}			
\newcommand{\SetSymbol}[1][]{%
	\nonscript\;\,#1\vert
	\allowbreak
	\nonscript\;\,
	\mathopen{}
}
\DeclarePairedDelimiterX{\Set}[1]{\{}{\}}{%
	\renewcommand{\given}{\SetSymbol[\delimsize]}
	#1
}
\makeatletter
\let\oldSet\Set
\def\Set{\@ifstar{\oldSet}{\oldSet*}}
\makeatother

\DeclarePairedDelimiterX{\Family}[1]{(}{)}{%
	\renewcommand{\given}{\SetSymbol[\delimsize]} 
	#1
}
\makeatletter
\let\oldFamily\Family
\def\Family{\@ifstar{\oldFamily}{\oldFamily*}}

\newsavebox{\numbox}%
\newsavebox{\slashbox}%
\newsavebox{\denbox}%
\newlength{\slashlength}%
\newlength{\faktorscale}%
\DeclareDocumentCommand{\newfaktor}{m O{0.35} m O{-0.35}}{
	\savebox{\numbox}{\ensuremath{#1}}
	\savebox{\slashbox}{\ensuremath{\diagup}}
	\savebox{\denbox}{\ensuremath{#3}}
	\setlength{\faktorscale}{0.5\ht\numbox+0.5\ht\denbox}%
	\setlength{\slashlength}{2pt+0.8\faktorscale+#2\faktorscale-#4\faktorscale}%
	\raisebox{#2\ht\slashbox}{\usebox{\numbox}}
	\mkern-2mu%
	\rotatebox{-30}{\rule[#4\ht\denbox]{0.4pt}{\slashlength}}
	\mkern9mu%
	\hspace{-0.44\slashlength}%
	\raisebox{#4\ht\denbox}{\usebox{\denbox}}
}
\DeclareDocumentCommand{\linefaktor}{m O{0.08} m O{-0.08}}{
	\savebox{\numbox}{\ensuremath{#1}}
	\savebox{\slashbox}{\ensuremath{\diagup}}
	\savebox{\denbox}{\ensuremath{#3}}
	\setlength{\faktorscale}{0.5\ht\numbox+0.5\ht\denbox}%
	\setlength{\slashlength}{0.2\faktorscale+0.8\baselineskip}%
	\raisebox{#2\ht\slashbox}{\usebox{\numbox}}
	\mkern-1mu%
	\raisebox{-0.8pt}{%
		\rotatebox{-30}{\rule[#4\ht\denbox]{0.4pt}{\slashlength}} 
	}%
	\mkern-1mu%
	\hspace{-0.25\slashlength}%
	\raisebox{#4\ht\denbox}{\usebox{\denbox}}
}

\title{\vspace{-1ex} Involutive Markov categories and the\\ quantum de Finetti theorem}

\author[1]{Tobias Fritz}
\author[2,*]{Antonio Lorenzin}
\affil[1]{{\small Department of Mathematics, University of Innsbruck, Austria, ORCID: \href{https://orcid.org/0000-0001-7081-2635}{0000-0001-7081-2635}}}
\affil[2]{{\small Independent researcher, ORCID: \href{https://orcid.org/0000-0002-2415-4261}{0000-0002-2415-4261}}}
\affil[*]{{\small Corresponding author,  \href{mailto:a.lorenzin.95@gmail.com}{a.lorenzin.95@gmail.com}}}
\date{}
\usepackage{fancyhdr} 
\pagestyle{fancy}

\fancyhf{}
\makeatletter
\newcommand{\runtitle}{Involutive Markov categories and the quantum de Finetti theorem}	
\makeatother
\fancyhead[OL]{\nouppercase\leftmark}
\fancyhead[ER]{\textsc\runtitle}
\fancyhead[EL,OR]{\thepage}

\begin{document}

\maketitle
\par\vspace{-8ex}
\begin{abstract}
	\noindent Markov categories have recently emerged as a powerful high-level framework for probability theory and theoretical statistics.
	Here we study a quantum version of this concept, called involutive Markov categories. 
	These are equivalent to Parzygnat's quantum Markov categories, but we argue that they offer a simpler and more practical approach.
	Our main examples of involutive Markov categories have pre-C*-algebras, including infinite-dimensional ones, as objects, together with completely positive unital maps as morphisms in the \emph{picture} of interest.
	In this context, we prove a quantum de Finetti theorem for both the minimal and the maximal C*-tensor norms, and we develop a categorical description of such quantum de Finetti theorems which amounts to a universal property of state spaces.
\end{abstract}

\tableofcontents

\section{Introduction}

Categorical probability, and in particular the theory of Markov categories, has attracted much interest in recent years. 
This ``synthetic'' approach to probability has proven fruitful in several areas, including statistics~\cite{fritz2023representable}, graphical models~\cite{fritz2022dseparation} and ergodic theory~\cite{moss2022ergodic}. 
The first paper on a \emph{quantum} version of Markov categories was written by Parzygnat~\cite{quantum-markov}, whose main topic has been information flow axioms (see~\cite{fritz2022dilations} on this term) and Bayesian inversion. 
Although there are many examples of ordinary Markov categories~\cite{fritz2019synthetic}, the only quantum Markov category considered so far has finite-dimensional C*-algebras as objects.
Roughly speaking, a C*-algebra is an algebra over the complex numbers, equipped with an involution modelled after hermitian conjugation and a norm which makes it into a Banach space.\footnote{In contrast to most references on C*-algebras, we will always assume our C*-algebras to be unital.}
In quantum physics, C*-algebras come up in the form of algebras of observables.
In this context, physical transformations or processes are modelled as completely positive unital maps between C*-algebras.
Probabilistic Gelfand duality~\cite{furber_jacobs_gelfand} then explains the sense in which this formalism is a noncommutative generalization of probability: 
The category of commutative C*-algebras with positive unital maps is dually equivalent to the category of compact Hausdorff spaces with continuous Markov kernels.

The present paper develops a quantum version of Markov categories which facilitates a sensible treatment of infinite-dimensional C*-algebras. 
Among other things, this results in a new perspective on the quantum de Finetti theorem in terms of a universal property of state spaces.
To the best of our knowledge, the version of the quantum de Finetti that we prove strengthens the known results in the theory of operator algebras.
\paragraph*{Quantum versions of Markov categories.}
A first important step is the definition of \emph{involutive Markov categories}.
These categories are equivalent to Parzygnat's quantum Markov categories (\cref{prop:qcdicd}), but they avoid any distinction between ``even'' and ``odd'' morphisms---which correspond to linear and antilinear maps, respectively---thus making the theory a bit cleaner. 
Another helpful notion introduced in this paper is that of \emph{picture}, which encodes some requirements one would expect to have when dealing with quantum probability.
For instance, the classical objects of any picture---i.e., those conforming to classical probability---form a Markov category. 
To better understand this concept, let us consider the example of C*-algebras.
The associated involutive Markov category contains all unital maps, but only the completely positive ones have a well-behaved and physically relevant probabilistic interpretation. 
Therefore, we consider the picture given by the completely positive unital maps, and keep the involutive Markov category as the environment for diagrammatic calculus. 

As pointed out by Parzygnat~\cite[Remark~3.12]{quantum-markov}, a major challenge for quantum probability is the construction of a quantum Markov category of infinite-dimensional C*-algebras.
The issue is that the multiplication map on an infinite-dimensional C*-algebra is typically unbounded~\cite[Remark~3.12]{quantum-markov}.
Since the multiplication maps are the copy morphisms, which form an essential piece of structure in any flavour of Markov category, we are forced to allow unbounded maps as morphisms.
In order for these to compose reasonably, we extend our study to \emph{pre-C*-algebras}, which, unlike C*-algebras, are not required to be norm-complete.
In this way, we obtain a well-behaved involutive Markov category containing infinite-dimensional C*-algebras, and with completely positive unital maps as the picture of interest.

Another issue with considering infinite-dimensional C*-algebras is that there is no canonical tensor product for them, as there are two generally distinct canonical norms for the tensor product of two C*-algebras. 
These are known as the \emph{minimal} and \emph{maximal} C*-tensor norms. 
They give rise to two similar settings, where the underlying categories are the same, but the symmetric monoidal structures differ (in their respective pictures).
Although all our results apply to both cases, the distinction between the two norms also manifests itself at the categorical level: see \cref{prop:display_compatibility}.
\paragraph*{The quantum de Finetti theorem.}
In classical probability, the de Finetti theorem tells us that whenever the probability of an infinite sequence of outcomes is independent of their order, such outcomes are drawn independently from the same underlying distribution (which can itself be random).
Historically, this result has been a milestone in the debate over the subjective vs.~objective view of probability, but it is also crucial to the development of nonparametric Bayesian modeling. 
A brief discussion can be found in~\cite{fritz2021definetti}, where the authors give a new proof of the de Finetti theorem using only the technology of Markov categories. 
Furthermore, this synthetic approach generalizes the result by allowing everything to depend measurably on an additional parameter.

A quantum analog of the de Finetti theorem is due to St{\o}rmer~\cite{stormer69exchangeable}, and it is fundamental in quantum Bayesianism~\cite{fuchs2004unknownquantum}. 
A categorical perspective on the quantum de Finetti theorem has already been considered by Staton and Summers~\cite{staton2023quantum}, who extended St{\o}rmer's result with parameters using categorical limits.
These works, however, focus only on the minimal C*-tensor norm.
The case of the maximal C*-tensor norm was considered by Hulanicki and Phelps~\cite{hulanicki68tensor}, but their paper seems to be little known. 
Their argument, which applies to both the minimal and the maximal C*-tensor norm, is presented in \cref{sec:qdf_cpu}.
Moreover, this generalizes to the case of completely positive unital maps with an additional tensor factor (\cref{thm:qdf}), in particular allowing for correlation and (a priori) entanglement with another system.
Our result then in particular implies that there is no entanglement with the additional system, a fact whose finite-dimensional special case is well-known in quantum information theory, as it underlies a prominent separability criterion for quantum states~\cite[Theorem~1]{doherty04complete}.

In our categorical setting, this quantum de Finetti theorem (for either choice of tensor product) results in an elegant universal property similar to the one of Staton and Summers~\cite{staton2023quantum}.
This limit is a universal property of the state space of a pre-C*-algebra, and it gives rise to a quantum analog of the notion of \emph{representability} of a Markov category~\cite{fritz2023representable}.

\subsubsection*{Overview}
\cref{sec:involutivemarkov} focuses on involutive Markov categories. 
After introducing the definition in \cref{sec:def_involutivemarkov}, we present the main example, given by pre-C*-algebras and unital linear maps in \cref{sec:mainexample_invM}. 
The connection to existing literature, in particular to quantum Markov categories, is covered in \cref{sec:literature}. 
We then study the relevant notions of functors and natural transformations (\cref{sec:icdfun}), leading to a strictification theorem for such categories in \cref{sec:strictification}. 
A general example of involutive Markov categories is given in \cref{sec:cofree}, where we define involutive comonoids in an involutive symmetric monoidal category and show that they assemble to an involutive Markov category.
\cref{sec:classical_compatible,sec:as} develop basic properties of objects and morphisms. 
The former discusses new key notions, such as classicality of objects and compatibility of morphisms, while the latter presents four non-equivalent definitions of almost sure equality, all of which specialize to the standard one for classical Markov categories.

\cref{sec:pictures} begins by introducing pictures (\cref{sec:picture_def}) as the relevant probabilistic framework, motivated by the main example presented in \cref{sec:mainexample_cpu}. 
We then extend the characterization of almost sure equality developed by Parzygnat~\cite[Section~5]{quantum-markov} in \cref{sec:as_nullspaces}, and introduce a quantum version of Kolmogorov products~\cite{fritzrischel2019zeroone} in \cref{sec:kolmogorov}.

Our main results appear in \cref{sec:representability_qdf}, where we investigate representability in various forms. 
\cref{sec:representability} deals with the most straightforward notion of representability, adapted from the classical counterpart~\cite{fritz2023representable}. 
While this concept may become significant in the future, it is not central to our present investigation.
We then define classical representability in \cref{sec:classical_representability}, where representability is required only against classical objects (i.e.~those encoding classical probability).
This is related to de Finetti representability (\cref{sec:qdf}), a concept arising from the de Finetti theorem, hence the name.
\cref{sec:qdf_cpu} begins with a detailed discussion of a quantum de Finetti theorem for states, independent of the choice of tensor norm, as established by Hulanicki and Phelps~\cite{hulanicki68tensor}. 
This result is then applied to prove our main theorem (\cref{thm:qdf}), which states that our pictures of interest are de Finetti representable.

\cref{sec:infinite_tensor_trivial} supplies a proof omitted from~\cref{rem:01laws} concerning the Hewitt--Savage zero--one law.

\subsubsection*{Prerequisites}
In this article, we will make extensive use of \emph{string diagrams} for symmetric monoidal categories. 
An introduction to such diagrams can be found in~\cite{selinger11graphical}. 
We point out that this graphical approach suppresses the coherence isomorphisms of the monoidal structure. 
For this reason, it is important to understand if the categories we work with admit a strictification. This turns out to be the case in our setting---a precise statement is \cref{prop:strictification}.

Besides string diagrams, the article is intended to be readable by people with a basic background in symmetric monoidal categories, although some knowledge of existing work on Markov categories or quantum Markov categories~\cite{quantum-markov,fritz2019synthetic} may improve the understanding of the paper.

For the sake of completeness, all necessary basics about pre-C*-algebras are covered within the paper (\cref{sec:mainexample_invM,sec:mainexample_cpu}), but a previous acquaintance with these structures will help with following the proofs.

\subsubsection*{Acknowledgements}
We thank Tom{\'a}\v{s} Gonda, Paolo Perrone, Sam Staton, Ned Summers and Nico Wittrock for fruitful discussions and comments.
The authors acknowledge support from the Austrian Science Fund (FWF P 35992-N).
We are grateful to the anonymous reviewer for their insight and thorough reading, which led to a complete restructuring of the paper to enhance clarity.

\section{Involutive Markov categories}\label{sec:involutivemarkov}

In this section, we introduce the protagonists of our approach to categorical quantum probability, discuss some basic examples and properties, and prove a strictification theorem.

We begin by introducing \emph{involutive Markov categories}, and the more general \emph{ICD-categories}, and discuss our main example, the involutive Markov category of pre-C*-algebras. 
This choice is motivated by the fact that there is no quantum Markov category of all C*-algebras~\cite[Remark~3.12]{quantum-markov}, and therefore we need to adapt the framework to something slightly different. 
Basically, pre-C*-algebras have a better behaved tensor product, and this allows us to consider the multiplication as a morphism despite it generally being unbounded.
After the main example, we show that involutive Markov categories are equivalent to Parzygnat's notion of quantum Markov category.
The main difference is that our categories do not involve a distinction between ``even'' and ``odd'' morphisms, since they only contain the even ones to start with.
This makes them somewhat simpler to work with than Parzygnat's original concepts.
As the names indicate, our categories have an involution on morphisms instead.
We then introduce a notion of involutive comonoid in a hom-involutive symmetric monoidal category such that the category of all such comonoids becomes an involutive Markov category (\cref{prop:cof_icd}). 
In particular, this answers the final question in~\cite[Question~3.24]{quantum-markov}.
We also state and prove a strictification theorem for ICD-categories (\cref{prop:strictification}). 
The section concludes by discussing some important notions of compatibility of morphisms related to compatibility of measurements in quantum theory, and by investigating almost sure equalities in our context.

\subsection{Definition of involutive Markov categories}\label{sec:def_involutivemarkov}

Throughout, we work in a symmetric monoidal category\footnote{The unorthodox choice of $\odot$ to denote the  tensor is motivated by our main example (\cref{sec:mainexample_invM}). 
In operator algebra, $\odot$ is conventionally used for the \emph{algebraic} tensor product, while $\otimes$ is reserved to its completion.} $(\cat{C},\tensor,I)$ together with a strict symmetric monoidal endofunctor
\[
	\involution \colon \cat{C} \to \cat{C}
\]
called \newterm{involution}, such that $\involution^2 =\id_{\cat{C}}$ and $\involution{(A)}=A$ for every object $A$ of $\cat{C}$. When needed, such a $\cat{C}$ will be called \newterm{hom-involutive symmetric monoidal category}.\footnote{Alternatively, this is a symmetric monoidal category enriched in $\mathbb{Z}_2$-sets. We leave the details of the equivalence to the interested reader.} We will write $\involution{(\phi)}= \invo{\phi}$ for morphisms.
\begin{definition}\label{def:icd}
	A hom-involutive symmetric monoidal category $\cat{C}$ is an \newterm{ICD-category} if every object $A$ comes equipped with a \newterm{copy morphism} $\cop_A\colon A \to A \tensor A$ and a \newterm{delete morphism} $\discard_A : A \to I$,	diagrammatically written as
	\begin{equation*}
		\tikzfig{copy_del}
	\end{equation*}
	such that:
	\begin{enumerate}
		\item Every object becomes a comonoid: 
		\begin{equation}\label{eq:icd_def_comonoid}
			\tikzfig{icd_def_comonoid}
		\end{equation}
		\item They behave nicely with the involution:
		\begin{equation}\label{eq:icd_def_involution}
			\tikzfig{icd_def_involution}
		\end{equation}
			where we use $\invo{\bullet}$ to denote $\invo{\cop_{A}}$ and $\invo{\discard_{A}}$, respectively.
		\item They respect the symmetric monoidal structure:
			\begin{equation}\label{eq:icd_def_monoidal}
				\tikzfig{icd_def_monoidal}
			\end{equation}
	\end{enumerate}
\end{definition}

As is commonly done in string diagram calculus, we leave out the associators and unitors in the diagrams.
For example, the final equation in~\eqref{eq:icd_def_monoidal} needs to be understood as holding modulo the coherence isomorphism $I \tensor I \stackrel{\cong}{\to} I$.

\begin{definition}\label{def:selfadj_total}
	A morphism $\phi \colon A \to B$ in an ICD-category $\cat{C}$ is
	\begin{enumerate}
		\item \newterm{total} if $\discard_B \comp \phi = \discard_A$, i.e.
		\begin{equation*}
			\tikzfig{total_morphism}
		\end{equation*}
		\item \newterm{self-adjoint} if $\phi= \invo{\phi}$.
	\end{enumerate}
	If all morphisms in $\cat{C}$ are total, $\cat{C}$ is called an \newterm{involutive Markov category}. 
\end{definition}

In particular, all associators, unitors and swap morphisms are self-adjoint as the involution functor $\involution$ is assumed strict symmetric monoidal.
Similarly, identities are self-adjoint as $\involution$ is identity-on-objects.

\begin{remark}
	An involutive Markov category is, equivalently, an ICD-category whose monoidal unit is terminal. 
	In fact, delete morphisms are natural precisely on total morphisms.
\end{remark}

We will restrict our focus to involutive Markov categories from \Cref{sec:classical_compatible} onward.

\subsection{Main example: pre-C*-algebras}\label{sec:mainexample_invM}
The study of quantum probability generally concerns C*-algebras equipped with completely positive unital maps. 
In particular, it seems necessary to restrict to C*-algebras as objects and bounded linear maps as morphisms (or further to some subcategory thereof, like von Neumann algebras and normal completely positive or completely bounded maps).
Unfortunately this is not possible in the context of involutive Markov categories, since the C*-algebra multiplication, which is the expected formal dual of our copy, is typically not bounded for infinite-dimensional C*-algebras (refer to~\cite[Remark~3.12]{quantum-markov} for a precise discussion).
If we drop the boundedness requirement, then we do not have any tensor product of morphisms, since the tensor product of C*-algebras is a \emph{completion} of their algebraic tensor product, and without boundedness we no longer have a natural way to extend a linear map to the completion.

In order to circumvent these issues, we expand our interest to \emph{pre-}C*-algebras.
Moreover, we emphasize that all algebras (and rings) are \underline{assumed unital} without further mention.
In \cref{sec:pictures}, we will discuss how to recover a more standard framework in which maps are completely positive and probabilistic aspects become more apparent.

Let us start by recalling that a \newterm{$*$-ring} $R$ is a \emph{unital} ring $R$ together with a unary operation $*$ that is:
\begin{itemize}
	\item Additive, $x^* + y^*= (x+y)^*$;
	\item Unit-preserving, $1^*=1$;
	\item Multiplication-reversing, $(xy)^* = y^*x^*$;
	\item Involutive, $x^{**}=x$.
\end{itemize}
A $*$-ring $A$ is a \newterm{complex $*$-algebra}, sometimes simply $*$-algebra, if it is a $\mathbb{C}$-algebra (i.e.~$\lambda (xy)=(\lambda x)y=x(\lambda y)$) that additionally satisfies $(\lambda x)^* = \overline{\lambda} x^*$ for all $x,y \in A$ and $\lambda \in \mathbb{C}$.
\begin{definition}
	\label{def:precalg}
	A \newterm{pre-C*-algebra} ${A}$ is a complex $*$-algebra equipped with a  \newterm{C*-norm}, i.e.\ a submultiplicative norm that satisfies the C*-identity
	\begin{equation}\label{eq:cstar_id}
		\norm{ x^* x }	= \norm{ x }^2 	
	\end{equation}
	for every $x \in {A}$. 

If the norm on ${A}$ is complete, we say that $A$ is a \newterm{C*-algebra}.
\end{definition}
By submultiplicativity and the C*-identity, the star operation is isometric, i.e.~$\norm{x^*}= \norm{x}$.

As we will see, most of the relevant standard theory from the C*-setting still works if our algebras are not complete, provided that the definitions are suitably adapted.
Of course, not all results on Banach spaces extend to normed vector spaces: 
For example, the open mapping theorem and the closed graph theorem are two well-known examples of statements that fail to hold without assuming completeness.
For our purposes, however, virtually everything works without completeness.
The only exception is a detail on boundedness, namely \cref{ex:homnotbound}.

\begin{lemma}\label{lem:closure}
	The completion $\closed{A}$ of a pre-C*-algebra ${A}$ is a C*-algebra. 
	Conversely, any $*$-subalgebra of a C*-algebra is a pre-C*-algebra with respect to the induced norm.
\end{lemma}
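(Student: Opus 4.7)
The plan is to handle the two directions separately, treating the forward one as the substantive part.

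For the forward direction, let $\closed{A}$ denote the metric completion of $A$ as a normed vector space; this is a Banach space containing $A$ as a dense subspace. I would extend the algebraic operations one at a time by uniform continuity. The involution $*$ on $A$ is isometric (as remarked just above, this follows from submultiplicativity together with the C*-identity), so it is uniformly continuous and therefore extends uniquely to an isometric map on $\closed{A}$. The multiplication $m\colon A\times A\to A$ is bilinear and satisfies $\|m(x,y)\|\le\|x\|\|y\|$ by submultiplicativity, hence it is jointly continuous; by the standard extension lemma for bounded bilinear maps on dense subsets of Banach spaces, it extends uniquely to a jointly continuous bilinear map $\closed{A}\times\closed{A}\to\closed{A}$ satisfying the same submultiplicativity bound. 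Scalar multiplication extends analogously.

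Once the operations are in place, I would verify the algebraic axioms on $\closed{A}$ by a routine continuity argument: each identity (associativity of multiplication, distributivity, unitality, the $*$-ring and $*$-algebra axioms, and the C*-identity $\|x^*x\|=\|x\|^2$) is a continuous equation in the relevant variables and holds on the dense subset $A$, so it holds throughout $\closed{A}$. Submultiplicativity has already been transferred by the extension step. Completeness of $\closed{A}$ is by construction, so $\closed{A}$ is a C*-algebra.

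For the converse, let $B$ be a $*$-subalgebra of a C*-algebra $C$, endowed with the norm inherited from $C$. All algebraic axioms of a complex $*$-algebra are inherited from $C$. Submultiplicativity and the C*-identity hold for every element of $C$, hence a fortiori for every element of $B$. Nothing else is required, since completeness is not part of the definition of a pre-C*-algebra.

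The only mildly delicate point is the extension of the multiplication to the completion, since joint continuity (rather than separate continuity in each argument) is needed to identify $\lim_n x_n y_n$ with the product of the limits; submultiplicativity gives this via the familiar estimate $\|x_n y_n - xy\|\le\|x_n-x\|\|y_n\|+\|x\|\|y_n-y\|$, so no genuine obstacle arises.
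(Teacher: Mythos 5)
Your proof is correct and follows the same route as the paper, which simply notes that addition, multiplication, and the involution respect limits and that the converse is immediate; you have just spelled out the continuity/density argument in full detail.
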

\begin{proof}
	Since the addition, the product and the involution respect limits, the completion becomes a C*-algebra.
	The second part is immediate.
\end{proof}

\begin{lemma}\label{lem:trick}
	Let $A$ and $B$ be pre-C*-algebras and $\opm{\phi}{}\colon {B} \rightsquigarrow {A}$ a bounded linear map.
	Then $\opm{\phi}{}$ has a unique bounded extension $\close{\opm{\phi}{}}\colon \closed{B} \rightsquigarrow \closed{A}$ between C*-algebras, and $\norm{ \close{\opm{\phi}{}}} = \norm{ \opm{\phi}{} }$. 
\end{lemma}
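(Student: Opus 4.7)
The plan is to invoke the standard bounded linear extension theorem from functional analysis, which is where the whole argument lives: a bounded linear map into a Banach space extends uniquely by continuity to the completion of its domain, with norm preserved. Since here both the domain and codomain are pre-C*-algebras, we take completions of both (using \cref{lem:closure} to know that $\closed{A}$ and $\closed{B}$ are genuine C*-algebras and therefore Banach spaces) and apply this principle.

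Concretely, I would first show existence. Given $b \in \closed{B}$, choose a Cauchy sequence $(b_n) \subseteq B$ with $b_n \to b$. Boundedness of $\opm{\phi}{}$ gives
\[
	\norm{\opm{\phi}{}(b_n) - \opm{\phi}{}(b_m)} \leq \norm{\opm{\phi}{}} \cdot \norm{b_n - b_m},
\]
so $(\opm{\phi}{}(b_n))$ is Cauchy in $A \subseteq \closed{A}$ and hence converges in $\closed{A}$. Setting $\close{\opm{\phi}{}}(b) := \lim_n \opm{\phi}{}(b_n)$ is well-defined: if $(b'_n)$ is another Cauchy sequence with limit $b$, then $b_n - b'_n \to 0$, and the same norm estimate forces the two limits to agree. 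Linearity of $\close{\opm{\phi}{}}$ follows from linearity of $\opm{\phi}{}$ by passing to limits, while the bound $\norm{\close{\opm{\phi}{}}(b)} \leq \norm{\opm{\phi}{}} \cdot \norm{b}$ is obtained by taking the limit of $\norm{\opm{\phi}{}(b_n)} \leq \norm{\opm{\phi}{}} \cdot \norm{b_n}$. This shows $\norm{\close{\opm{\phi}{}}} \leq \norm{\opm{\phi}{}}$, and since $\close{\opm{\phi}{}}$ extends $\opm{\phi}{}$ to a possibly larger domain the reverse inequality is automatic, giving the equality of norms.

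For uniqueness, any bounded extension $\psi \colon \closed{B} \to \closed{A}$ of $\opm{\phi}{}$ is continuous and agrees with $\close{\opm{\phi}{}}$ on the dense subset $B$, hence they coincide everywhere by continuity.

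I do not anticipate any real obstacle: this is the textbook bounded extension theorem, adapted only in that nothing about the $*$-algebra structure enters the statement — we are merely using that pre-C*-algebras are normed vector spaces whose completions are Banach spaces. The only minor point to keep in mind is notational, namely that the symbol $\opm{\phi}{}$ (denoted with the $\op$ decoration per the paper's conventions) refers to the map considered in the ``opposite'' direction $B \rightsquigarrow A$, but this plays no role in the functional-analytic argument.
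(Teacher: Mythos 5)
Your proof is correct and follows essentially the same route as the paper, which also defines $\close{\opm{\phi}{}}(x) \coloneqq \lim_n \opm{\phi}{}(x_n)$ and appeals to boundedness for well-definedness, density for uniqueness, and continuity of the norm for the norm equality. You simply spell out the standard bounded-extension argument in more detail than the paper does.
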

Here we see the first occurrences of squiggly arrows ($\rightsquigarrow$) and the superscript $\op$. 
Throughout the article, this notation will be used to indicate the operator algebra direction, as discussed in \cref{rem:op_nota}.
\begin{proof}
	For $x = \lim_n x_n$ with $x \in {B}$, we define $\close{\opm{\phi}{}}(x) \coloneqq \lim_n \opm{\phi}{}(x_n)$. Since $\opm{\phi}{}$ is bounded, this is well-defined. Uniqueness is immediate since for every limit a bounded map satisfies $\opm{\phi}{}(\lim_n x_n)= \lim_n \opm{\phi}{}(x_n)$.
	The equation $\norm{ \close{\opm{\phi}{}}} = \norm{ \opm{\phi}{}}$ holds by continuity of the norm.
\end{proof}
It is worth noting that for every pre-C*-algebra $A$, its completion $\closed{A}$ is the unique (up to a unique $*$-isomorphism) C*-algebra that contains $A$ as a dense $*$-subalgebra.
Here \newterm{dense} is meant with respect to the topology induced by the norm.

\begin{notation}\label{nota:completion}
	According to \cref{lem:closure,lem:trick}, we use $\closed{A}$ to denote the {completion} of the pre-C*-algebra $A$ and $\close{\opm{\phi}{}}\colon \closed{B} \rightsquigarrow \closed{A}$ to denote the unique bounded extension of a bounded map $\opm{\phi}{}\colon B \rightsquigarrow A$.

	When a pre-C*-algebra is in particular a C*-algebra, we will denote it with $\closed{A}$ for emphasis.
\end{notation}

In order to obtain an involutive Markov category with pre-C*-algebras as objects, we need to understand how one can equip the algebraic tensor product with a C*-norm. 
This can be done by considering standard choices of C*-tensor norms.

\begin{proposition}\label{prop:prec_tensor_norm}
	Let $A$ and $B$ be pre-C*-algebras. 
	Then ${A} \odot {B}$ is dense in $\closed{A} \otimes \closed{B}$, the C*-algebra obtained by completing $\closed{A} \odot \closed{B}$ with respect to any C*-norm, i.e.~a submultiplicative norm satisfying the C*-identity \eqref{eq:cstar_id}.
\end{proposition}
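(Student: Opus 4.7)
The plan is to exploit transitivity of density: since $\closed{A} \odot \closed{B}$ is dense in its completion $\closed{A} \otimes \closed{B}$ by construction, it suffices to show that $A \odot B$ is dense in $\closed{A} \odot \closed{B}$ with respect to whichever C*-norm has been fixed.

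First, I would recall (or briefly verify) the standard fact that any C*-norm $\norm{\ph}$ on the algebraic tensor product $\closed{A} \odot \closed{B}$ of two C*-algebras is a cross-norm, in the sense that
\[
	\norm{a \odot b} \leq \norm{a}\, \norm{b} \qquad \text{for all } a \in \closed{A},\, b \in \closed{B}.
\]
This follows from general operator-algebraic considerations: the elementary maps $a \mapsto a \odot b$ and $b \mapsto a \odot b$ are bounded by $\norm{b}$ and $\norm{a}$ respectively, as can be seen for instance by representing $\closed{A}$ and $\closed{B}$ faithfully on Hilbert spaces and using that any C*-norm is dominated by the minimal one (or by appealing directly to the characterization of C*-norms as cross-norms, see e.g.\ Takesaki's book).

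Given this, the main step is a routine approximation. Take an arbitrary element $x = \sum_{i=1}^{n} a_i \odot b_i \in \closed{A} \odot \closed{B}$ and $\eps > 0$. Since $A$ is dense in $\closed{A}$ and $B$ is dense in $\closed{B}$, for each $i$ we choose $a'_i \in A$ and $b'_i \in B$ with $\norm{a_i - a'_i}$ and $\norm{b_i - b'_i}$ both smaller than some $\delta > 0$ to be determined. Writing
\[
	a_i \odot b_i - a'_i \odot b'_i = (a_i - a'_i) \odot b_i + a'_i \odot (b_i - b'_i),
\]
the cross-norm inequality gives
\[
	\norm{x - \textstyle\sum_i a'_i \odot b'_i} \leq \sum_{i=1}^n \bigl( \delta\, \norm{b_i} + (\norm{a_i} + \delta)\, \delta \bigr),
\]
which can be made smaller than $\eps$ by taking $\delta$ sufficiently small in terms of the (finite) data $\norm{a_i}, \norm{b_i}, n, \eps$. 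Hence $\sum_i a'_i \odot b'_i \in A \odot B$ approximates $x$ to within $\eps$.

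The only genuine subtlety is the cross-norm property, which is the step I would spend the most care on, although it is classical; everything else reduces to a routine $\eps$/$\delta$ argument combined with the transitivity of topological density.
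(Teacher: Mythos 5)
Your proposal is correct and follows essentially the same route as the paper: reduce by transitivity of density to showing $A \odot B$ is dense in $\closed{A} \odot \closed{B}$, then approximate finite sums of simple tensors term by term. The paper compresses the estimate into the phrase ``by submultiplicativity of the norm'' where you invoke the cross-norm property explicitly, but these amount to the same standard input, and your $\eps$/$\delta$ bookkeeping is just a more detailed rendering of the paper's ``standard argument.''
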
 

To emphasize the distinction between the algebraic tensor product $\odot$ and a tensor product of C*-algebras $\otimes$, we use different symbols.

\begin{proof}
	Clearly $\closed{A} \odot \closed{B}$ is dense in $\closed{A} \otimes \closed{B}$ by definition of the latter, so it is enough to show that ${A} \odot {B}$ is dense in $\closed{A} \odot \closed{B}$.
For a simple tensor $x\odot y \in \closed{A} \odot \closed{B}$, we have that $x=\lim_n x_n$ and $y=\lim_m y_m$ are limits of sequences $(x_n)$ in ${A}$ and $(y_m)$ in ${B}$, respectively. 
Then a standard argument shows that $x \odot y = \lim_{n} ({x_n \odot y_n})$ by submultiplicativity of the norm:
\begin{equation*}
	\begin{split}
\norm{x\odot y - x_n \odot y_n} & \le \norm{x\odot y - x\odot y_n}+ \norm{x\odot y_n - x_n\odot y_n} \\
&\le \norm{x\odot 1}\norm{1\odot (y-y_n)} + \norm{(x-x_n)\odot 1} \norm{1\odot y_n}	\\
& = \norm{x}\norm{(y-y_n)} + \norm{(x-x_n)} \norm{y_n} \xrightarrow{n\to \infty} 0,
	\end{split}
\end{equation*}
where the equalities $\norm{x}=\norm{x\odot 1}$ and $\norm{y}=\norm{1\odot y}$ follow from the fact that C*-norms on C*-algebras are unique (see, e.g., \cite[p.~2]{khalkhali13noncommutative}).
	Since any element of $\closed{A} \odot \closed{B}$ is a sum of simple tensors, and limits commute with addition, we conclude that ${A} \odot {B}$ is dense in $\closed{A} \odot \closed{B}$, as claimed.
\end{proof}

There are different notions of norm for the tensor product of C*-algebras. 
The most important ones are the \newterm{minimal} and the \newterm{maximal} C*-tensor norms. 
We omit a precise definition of these norms and refer the reader to the literature (see e.g.~\cite[Section~11.3]{kadison1997operatoralgebrasII} or~\cite[Section~3.3]{brownozawa}). 
Despite the importance of these two norms in giving a well-defined symmetric monoidal structure, their relevance in our current treatment is marginal.
\begin{notation}
	Given two C*-algebras $\closed{A}$ and $\closed{B}$, we denote by $\closed{A} \otimes_{\min} \closed{B}$, resp.~$\closed{A} \otimes_{\max} \closed{B}$, the minimal tensor product, resp.~the maximal tensor product.
\end{notation}
\begin{definition}\label{def:tensorproducts}
	Given two pre-C*-algebras $A$ and $B$, the \newterm{minimal algebraic tensor product}, denoted by $A \odot_{\min} B$, is the algebraic tensor product $A \odot B$ equipped with the norm of the minimal tensor product $\closed{A} \otimes_{\min} \closed{B}$.

	Analogously, the \newterm{maximal algebraic tensor product}, denoted by $A \odot_{\max} B$, is the algebraic tensor product with the norm induced by the inclusion $A \odot B \subseteq \closed{A} \otimes_{\max} \closed{B}$.
\end{definition}

\begin{notation}\label{nota:alg_tensor}
	Whenever we need to refer to both the minimal and the maximal algebraic tensor products, we will use $\odot$ to avoid overloading notation. 
	Similarly, $\otimes$ will denote both the minimal and the maximal tensor products between C*-algebras.

	This may cause confusion if the reader is considering the case of $*$-algebras, but since we focus mainly on pre-C*-algebras throughout (with the sole exception of \cref{sec:cofree}), the algebraic tensor product only makes sense when equipped with a norm.
\end{notation}

\begin{definition}[Main examples]\label{def:mainexample}
	Let $\calgone$ be the category defined as follows: 
	\begin{itemize}
		\item Objects are {pre-C*-algebras};
		\item Morphisms $\phi \colon A \to B$ are formal opposites of \emph{unital linear} maps 
		\[
			\opm{\phi}{}\colon {B} \rightsquigarrow {A}
		\]
		with the obvious composition.
	\end{itemize} 
	We write $\calgmin$ (resp. $\calgmax$) for the symmetric monoidal category given by equipping $\calgone$ with the tensor product 
	\[
		 A\tensor B\coloneqq A \odot_{\min} B \quad (\text{resp. } A\tensor B\coloneqq A \odot_{\max} B).
	\] 
\end{definition}

The naming of $\calgone$ is to recall that we will consider \emph{u}nital \emph{lin}ear maps.

Let us now describe the motivating example for considering such a setting, instead of restricting to bounded maps (for instance).
Let $\mathcal{H}$ be a separable Hilbert space and consider the C*-algebra of bounded operators $\mathcal{B}(\mathcal{H})$. 
Then the multiplication $\opm{\mu}{} \colon \mathcal{B}(\mathcal{H}) \odot \mathcal{B}(\mathcal{H}) \rightsquigarrow \mathcal{B}(\mathcal{H}) $ is neither bounded nor a $*$-homomorphism (see \cite[Remark 3.12]{quantum-markov}), regardless of whether one chooses the minimal or the maximal tensor norm.

\begin{remark}\label{rem:op_nota}
	The notation $\opm{\phi}{}$ has a dual purpose. 
	First, it indicates a switch in the direction using the terminology of opposite categories. 
	Second, it reminds the reader that these maps follow the \emph{op}erator algebra direction. 
	In physics terminology, this means that we use the \emph{Heisenberg picture}.

	To further emphasize the operator algebra direction, we will adopt squiggly arrows ($\rightsquigarrow$), as above.
\end{remark}

\begin{remark}\label{rem:calgmin_calgmax_differ}
	The symmetric monoidal categories $\calgmin$ and $\calgmax$ are isomorphic (via a strict symmetric monoidal functor), since the norms do not currently play any role.
	However, this will change when we consider pictures (\cref{def:picture}).
	Indeed, the subcategories of interest will require morphisms to be bounded, and this is sensitive to the choice of the tensor C*-norm.
\end{remark}

\begin{proposition}\label{prop:pcalg_icd}
	$\calgmin$ and $\calgmax$ are involutive Markov categories with respect to the involution induced by the \newterm{star operation}\footnote{We prefer to reserve the word ``involution'' for our functor $\involution \colon \phi \mapsto \invo{\phi}$ and therefore refer to the operation $*$ on a pre-C*-algebra or similar structure as ``star''.} $x \mapsto x^*$, i.e.
	\begin{equation}
		\label{eq:invo_calg}
		\invo{\opm{\phi}{}} (x)\coloneqq \opm{\phi}{}(x^*)^*,
	\end{equation}
	and copy morphisms
	\[
		\opm{\cop}{}_A \: : \: {A} \odot {A} \rightsquigarrow {A}
	\]
	given by multiplication (recall that $\odot$ is a shorthand for $\odot_{\min}$ and $\odot_{\max}$).
\end{proposition}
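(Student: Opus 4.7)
The plan is to verify the involutive Markov category axioms for both $\calgmin$ and $\calgmax$ uniformly by working in the formally opposite (operator algebra) direction, where the copy morphism is multiplication $A \odot A \rightsquigarrow A$, $x \odot y \mapsto xy$, and the delete morphism is the unit inclusion $\mathbb{C} \rightsquigarrow A$, $\lambda \mapsto \lambda 1_A$; both are unital linear and hence morphisms of $\calgone$. Since none of the axioms to come involves any norm or any completion---only the algebraic tensor product and the $*$-algebra structure---the argument will be insensitive to the choice between $\odot_{\min}$ and $\odot_{\max}$, consistent with \cref{rem:calgmin_calgmax_differ}.

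First, I would check that \eqref{eq:invo_calg} defines a strict symmetric monoidal endofunctor $\involution$ satisfying $\involution^2 = \id$ and identity-on-objects. Linearity of $\invo{\opm{\phi}{}}$ follows from the conjugate-linearity of the star applied twice, unitality from $1^* = 1$, and $\involution^2 = \id$ from $x^{**} = x$. Functoriality reduces to the short computation $\invo{f \comp g}(x) = f(g(x^*))^* = \invo{f}(\invo{g}(x))$, where $f,g$ are composable in the operator algebra direction. Strict monoidality is seen on simple tensors: using $(a \odot b)^* = a^* \odot b^*$, one has $\invo{\opm{\phi}{}\odot\opm{\psi}{}}(a \odot b) = \opm{\phi}{}(a^*)^* \odot \opm{\psi}{}(b^*)^* = (\invo{\opm{\phi}{}} \odot \invo{\opm{\psi}{}})(a \odot b)$, and both sides extend uniquely by linearity.

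With $\involution$ in place, the comonoid equations \eqref{eq:icd_def_comonoid} dualize to the associativity and unitality of multiplication in a $*$-algebra; the involution compatibility \eqref{eq:icd_def_involution} dualizes to $(y^* x^*)^* = xy$ (giving $\invo{\cop_A} = \swap \comp \cop_A$) and $1_A^* = 1_A$ (giving $\invo{\discard_A} = \discard_A$); and the monoidality conditions \eqref{eq:icd_def_monoidal} dualize to the componentwise rules $(a \odot b)(a' \odot b') = a a' \odot b b'$ and $1_{A \odot B} = 1_A \odot 1_B$. Finally, every morphism is total because $\discard_B \comp \phi = \discard_A$ dualizes to $\opm{\phi}{}(1_B) = 1_A$, which holds by construction of $\calgone$. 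The proof presents no genuine obstacle; the main task is simply to keep track of the reversal of direction when dualizing each axiom.
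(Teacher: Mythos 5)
Your proof is correct and follows exactly the route the paper indicates: the paper omits the proof as a ``direct check'' and notes only that \eqref{eq:icd_def_involution} translates to $(xy)^* = y^*x^*$ and $1^*=1$, which is precisely the verification you carry out in full, together with the (correct) observation that no axiom depends on the choice of norm, so $\odot_{\min}$ and $\odot_{\max}$ are handled uniformly. The only step you leave implicit is that $\involution$ fixes the coherence isomorphisms (associators, unitors, swaps), which is immediate since these are $*$-preserving.
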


The proof is omitted, as all axioms can be verified by direct check.
For instance, \eqref{eq:icd_def_involution} translates to $(xy)^* = y^* x^*$ and $1^*=1$.
The reader may also refer to \cite[Example 3.9]{quantum-markov}, which addresses the example of finite dimensional C*-algebras (the correspondence between quantum CD-categories and ICD-categories is established in \cref{prop:qcdicd} below).
Further examples of ICD-categories will be discussed in \cref{sec:cofree}. 
In particular, the description given there generalizes the one adopted here for $\calgmin$ and $\calgmax$.

\begin{remark}
	Continuing \cref{rem:calgmin_calgmax_differ}, the two involutive Markov categories $\calgmin$ and $\calgmax$ are in fact isomorphic as involutive Markov categories. 
	This is intuitively obvious from the definitions since the only difference is in the norm and the norm does not have any bearing on the morphisms.
	And indeed this can be straightforwardly formalized with the relevant definition of morphism of ICD-categories, namely that of strong ICD-functors (\cref{def:strongICDfun}).
\end{remark}

Let us now give some notions relevant to the theory of pre-C*-algebras. 
In particular, the second one motivates our use of the term ``self-adjoint'' in \cref{def:selfadj_total}.
For the time being, we refrain from discussing positivity, as it is not needed in this section.
\begin{definition}
	Let $A$ and $B$ be pre-C*-algebras.
	\begin{itemize}
		\item An element $x \in A$ is \newterm{self-adjoint} if $x=x^*$;\footnote{Recall also that each element is a linear combination of two self-adjoints: $x = \left(\frac{x+x^*}{2}\right) + \mathsf{i}\, \left(\frac{x-x^*}{2\mathsf{i}}\right)$, where $\mathsf{i}$ is the imaginary unit.}
		\item A linear map $\opm{\phi}{}\colon B \rightsquigarrow A$ is \newterm{self-adjoint} if and only if $\opm{\phi}{}(x)^* = \opm{\phi}{}(x^*)$ for all $x \in B$.
		\item A self-adjoint map is a \newterm{$*$-homomorphism} if it additionally is an algebra homomorphism (and in particular it preserves the unit).
	\end{itemize}
\end{definition}

\begin{remark}
	In both $\calgmin$ and $\calgmax$, a morphism $\phi \colon A \to B$ is self-adjoint if and only if it is self-adjoint as a map between pre-C*-algebras, i.e.~$\opm{\phi}{}(x^*) = \opm{\phi}{}(x)^*$ for all $x \in {B}$.
\end{remark}

\begin{example}\label{ex:observables}
	Let $B = \mathbb{C}^{\{0,1\}}$ be the two-dimensional commutative C*-algebra whose elements are pairs of complex numbers $(b_0, b_1)$, multiplication is defined componentwise, and the unit is $(1,1)$.
	For any pre-C*-algebra $A$, the morphisms $\phi : A \to B$ correspond to pairs of elements $(a_0, a_1) \in A \times A$ with $a_0 + a_1 = 1$---such elements are the images of $(1,0)$ and $(0,1)$ respectively.
	Since such a pair is uniquely determined by one component, it follows that the morphisms $A \to \mathbb{C}^{\{0,1\}}$ can be identified with the elements of ${A}$.
	The self-adjoint morphisms correspond to the self-adjoint elements.
\end{example}

Before proceeding, let us recall a fundamental result on C*-algebras. Although it will only appear in a few arguments throughout the paper (specifically, in \cref{ex:homnotbound,prop:class_representable,rem:non_simplices}), we believe its insight on C*-algebras is highly valuable.

\begin{proposition}[{Gelfand duality, e.g.~\cite[Theorem~1.1.1]{khalkhali13noncommutative}}]
	Let us consider
	\begin{itemize}
		\item $\cat{CHaus}$, the category of compact Hausdorff spaces with continuous maps, and
		\item $\mathsf{CC}^*\operatorname{-}\hspace{-0.3ex}\cat{alg}$, the category of commutative C*-algebras with (bounded) $*$-homomorphisms.
	\end{itemize}	
	Then there is a \emph{contravariant} equivalence 
	\[
		\cat{CHaus}\xrightarrow{\,\cong\,} \mathsf{CC}^*\operatorname{-}\hspace{-0.3ex}\cat{alg}
	\]
	sending each compact Hausdorff space $X$ to the commutative C*-algebra $C(X)$ of complex-valued continuous maps.
\end{proposition}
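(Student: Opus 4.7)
The plan is to construct an explicit quasi-inverse to $C(-)$, namely the \emph{spectrum} functor $\Sigma \colon \mathsf{CC}^*\operatorname{-}\hspace{-0.3ex}\cat{alg} \to \cat{CHaus}^{\op}$, and then exhibit natural isomorphisms on both sides. Concretely, for a commutative unital C*-algebra $\closed{A}$, I would define $\Sigma(\closed{A})$ to be the set of characters (nonzero unital $*$-homomorphisms $\closed{A} \to \mathbb{C}$), topologized as a subspace of the unit ball of $\closed{A}^{*}$ with the weak-$*$ topology. The first step is to verify that $\Sigma(\closed{A})$ is compact Hausdorff: characters are automatically continuous of norm $1$ (they preserve positivity, hence are bounded by the unit), so $\Sigma(\closed{A})$ sits inside the unit ball, which is weak-$*$ compact by Banach--Alaoglu, and it is weak-$*$ closed since the defining equations $\chi(xy) = \chi(x)\chi(y)$ and $\chi(1)=1$ are weak-$*$ closed conditions. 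Functoriality of $\Sigma$ is straightforward via precomposition.

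Next I would construct the two natural transformations. On the algebraic side, define the \emph{Gelfand transform}
\[
	\Gamma_{\closed{A}} \colon \closed{A} \longrightarrow C(\Sigma(\closed{A})), \qquad a \longmapsto \bigl(\chi \mapsto \chi(a)\bigr),
\]
which is clearly a unital $*$-homomorphism. On the topological side, define the evaluation map
\[
	\eta_X \colon X \longrightarrow \Sigma(C(X)), \qquad x \longmapsto \ev_x,
\]
where $\ev_x(f) = f(x)$. The naturality of $\Gamma$ and $\eta$ is immediate from the definitions.

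The substantive work is showing both $\Gamma_{\closed{A}}$ and $\eta_X$ are isomorphisms. For $\Gamma_{\closed{A}}$: using commutativity, one shows the spectrum of any element $a \in \closed{A}$ coincides with $\{\chi(a) : \chi \in \Sigma(\closed{A})\}$; combined with the spectral radius formula $\norm{a} = \lim_n \norm{a^n}^{1/n}$ and the C*-identity applied to $a^*a$, this yields $\norm{\Gamma_{\closed{A}}(a)}_\infty = \norm{a}$, giving injectivity and isometry. Surjectivity then follows from the complex Stone--Weierstrass theorem: the image is a unital $*$-subalgebra that separates points of $\Sigma(\closed{A})$ (different characters differ on some $a$), hence is uniformly dense; closedness under the isometric inclusion forces equality. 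For $\eta_X$: injectivity uses Urysohn's lemma to separate points of $X$ by continuous functions, and continuity is automatic from the definition of the weak-$*$ topology. Surjectivity is the bijection between points of $X$ and maximal ideals of $C(X)$: given a character $\chi$ on $C(X)$, its kernel is a maximal ideal, and a compactness argument shows that the zero sets $\{f^{-1}(0) : f \in \ker\chi\}$ have the finite intersection property, so they share a common point $x$, forcing $\chi = \ev_x$. Finally, $\eta_X$ is a continuous bijection from a compact space to a Hausdorff space, hence a homeomorphism.

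The main obstacle is the surjectivity of the Gelfand transform and the identification of maximal ideals of $C(X)$ with points of $X$; both rest on nontrivial analytic input (Stone--Weierstrass, the spectral radius formula, and compactness arguments via Urysohn). Everything else is formal bookkeeping, and since the result is classical and the paper cites \cite[Theorem~1.1.1]{khalkhali13noncommutative}, I would present only the structural outline above rather than reproduce the full analytic details.
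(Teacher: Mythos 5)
The paper does not prove this proposition at all: it is quoted as a classical result with a pointer to the literature (Khalkhali's Theorem~1.1.1), so there is no in-paper argument to compare against. Your outline is the standard proof of Gelfand duality --- character space with the weak-$*$ topology, Banach--Alaoglu for compactness, the spectral radius formula plus the C*-identity for isometry of the Gelfand transform, Stone--Weierstrass for its surjectivity, and the finite-intersection-property argument identifying characters of $C(X)$ with point evaluations --- and it is correct as stated. One small point to watch: your parenthetical justification that characters are bounded ``because they preserve positivity'' is only non-circular because you have built $*$-preservation into your definition of character; with the more common definition (nonzero multiplicative linear functionals), one instead argues that $\chi(a)$ lies in the spectrum of $a$, so $\abs{\chi(a)} \le \norm{a}$, and $*$-preservation is then derived from the reality of the spectrum of self-adjoint elements. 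Either route is fine, but you should be consistent about which facts are definitional and which are proved.
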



\subsection{Relation to existing literature}\label{sec:literature}

This subsection explains some important connections between ICD-categories and the existing theory of CD-categories~\cite{chojacobs2019strings,fritz2019synthetic} and quantum CD-categories in the sense of Parzygnat~\cite{quantum-markov}. 
For the sake of brevity, we refrain from a precise definition of quantum CD-categories, as they are tangential to our interest.
Readers unfamiliar with these concepts may find it sufficient to know that the basic structure resembles that of ICD-categories, but morphisms are divided in two classes: \emph{even} and \emph{odd} morphisms, which correspond to linear and \emph{antilinear} maps respectively. 
This approach obviates the need for an explicit involution functor by equipping every object $A$ with a star (odd) morphism $*_A$ satisfying suitable conditions.
In the example of pre-C*-algebras, these star morphisms are the antilinear maps $*_A \colon x \mapsto x^*$.

\begin{remark}
	CD-categories are precisely the ICD-categories with trivial involution.
\end{remark}

\begin{theorem}\label{prop:qcdicd}
	The following two kinds of structures are equivalent:
	\begin{enumerate}
		\item Quantum CD-categories;
		\item ICD-categories.
	\end{enumerate}
	More precisely, to any quantum CD-category $\cat{C}'$~\cite[Definition~3.4]{quantum-markov}, we can associate an ICD-category, denoted by $J(\cat{C}')$, by restricting to even morphisms and considering the involution
	\begin{equation}
		\label{eq:invo_qcd}
		\invo{\phi} \coloneqq *_B\comp \phi\comp *_A 
	\end{equation}
	for every morphism $\phi \colon A \to B$.\footnote{Note the similarity with the involution given in \cref{def:mainexample}.}
	Conversely, there is a construction $\cat{C} \mapsto Q(\cat{C})$ turning every ICD-category $\cat{C}$ into a quantum CD-category $Q(\cat{C})$ such that:
	\begin{enumerate}
		\item $JQ(\cat{C})=\cat{C}$ for every ICD-category $\cat{C}$;
		\item $QJ(\cat{C}')$ is canonically strictly monoidally isomorphic to $\cat{C}'$ for any quantum CD-category $\cat{C}'$. 
	\end{enumerate} 
	Furthermore, the constructions $J$ and $Q$ are mutually inverse.
\end{theorem}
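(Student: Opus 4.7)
The plan is to verify the equivalence in both directions by constructing $Q$ explicitly, checking that $J$ and $Q$ land in the correct kinds of categories, and then computing the composites $JQ$ and $QJ$ directly.

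First, that $J(\cat{C}')$ is an ICD-category reduces to mechanical checks. The assignment $\phi \mapsto *_B \comp \phi \comp *_A$ on even morphisms is identity-on-objects by construction, respects composition via $*_B \comp *_B = \id_B$, respects the monoidal product via $*_{A \tensor B} = *_A \tensor *_B$ in any quantum CD-category, and squares to the identity by the same relation. Axiom~\eqref{eq:icd_def_involution} is then a rewriting of the compatibility of each star morphism with the copy and delete in the quantum CD-category, while~\eqref{eq:icd_def_comonoid} and~\eqref{eq:icd_def_monoidal} are directly inherited from $\cat{C}'$.

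Next I would define $Q(\cat{C})$ by formally adjoining an odd morphism $*_A \colon A \to A$ at each object of $\cat{C}$. Concretely, objects are those of $\cat{C}$; morphisms $A \to B$ form the disjoint union $\cat{C}(A,B) \sqcup \cat{C}(A,B)$, with the two summands labelled ``even'' and ``odd''; and composition is forced by the stipulated relations $*_A \comp *_A = \id_A$ and $\phi \comp *_A = *_B \comp \invo{\phi}$, yielding the four cases
\begin{align*}
\text{even}\comp\text{even} &: \psi \comp \phi \; (\text{even}), &
\text{odd}\comp\text{even} &: \psi \comp \phi \; (\text{odd}), \\
\text{even}\comp\text{odd} &: \invo{\psi} \comp \phi \; (\text{odd}), &
\text{odd}\comp\text{odd} &: \invo{\psi} \comp \phi \; (\text{even}).
\end{align*}
Associativity across the eight parity patterns for three composable morphisms reduces to functoriality of $\invo{(-)}$ together with $\invo{\invo{\phi}} = \phi$; the monoidal product extends similarly using $\invo{\phi \tensor \psi} = \invo{\phi} \tensor \invo{\psi}$. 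The remaining quantum CD-category axioms follow from those of the ICD-category by case analysis on parity.

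Finally, $JQ(\cat{C}) = \cat{C}$ holds on the nose: restricting to even morphisms recovers $\cat{C}$, and the reconstructed involution $*_B \comp \phi \comp *_A$ evaluates to $\invo{\phi}$ by the odd--odd composition rule. Conversely, the canonical strict monoidal comparison $QJ(\cat{C}') \to \cat{C}'$ sends $(\phi, \text{even}) \mapsto \phi$ and $(\phi, \text{odd}) \mapsto *_B \comp \phi$; this is an isomorphism because every odd $f \colon A \to B$ in $\cat{C}'$ admits the unique presentation $f = *_B \comp (*_B \comp f)$ with $*_B \comp f$ even. The main obstacle will be the bookkeeping required in verifying associativity and monoidality of composition in $Q(\cat{C})$; each case reduces to a short calculation using functoriality and involutivity of $\invo{(-)}$, but parities must be tracked meticulously throughout.
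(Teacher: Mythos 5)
Your proposal is correct and follows essentially the same route as the paper: both construct $Q(\cat{C})$ by doubling each hom-set into an even and an odd copy, force the composition rules from the relation between the star morphisms and the involution, define the tensor of odd morphisms so that $*_{A\tensor B}=*_A\tensor *_B$, and then check $JQ(\cat{C})=\cat{C}$ on the nose and $QJ(\cat{C}')\cong\cat{C}'$ via the even/odd bijection given by composition with star. The only cosmetic difference is your convention of encoding an odd morphism as $*_B\comp\phi$ (post-composition) where the paper uses $\phi\comp *_A$, which accounts for the superficially different but equivalent composition formulas.
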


\begin{proof}
	It is a direct check that the category $J(\cat{C}')$ associated to a quantum CD-category $\cat{C'}$ is an ICD-category.
	Indeed, the even morphisms are closed under composition and under tensoring with morphisms of the same parity, and the copy and delete maps of $\cat{C}'$ are even.
	The formula~\eqref{eq:invo_qcd} defines a strict monoidal involution on the even subcategory: the equation $*^2=\id$ gives $\involution^2=\id$, and (QCD3) gives the compatibility with tensor products and delete maps.
	The remaining compatibility of the copy maps with the involution is exactly the last equation in (QCD1).

	Let us now explain in detail how we get a quantum CD-category $Q(\cat{C})$ from an ICD-category $\cat{C}$.
	This $Q(\cat{C})$ is defined as follows:
	\begin{itemize}
		\item Its objects are the ones of $\cat{C}$.
		\item Its hom-sets are
			\[
				{Q(\cat{C})}(A,B)\coloneqq {\cat{C}}(A,B) \times \Z_2.
			\]
			We write $\phi$ instead of $(\phi,0)$ and $\phi^*$ instead of $(\phi,1)$. 
			The former are the even morphisms while the latter are the odd ones.
		\item Composition is given by the rule
			\[
				(\psi, s)\comp(\phi, t)
				\coloneqq
				(\psi\comp \involution^s(\phi),s+t),
			\]
			where $\involution^0$ denotes the identity functor and $\involution^1 = \involution$.
			The identity morphisms are $(\id,0)$.
			For associativity, we compute both bracketings of a composable triple of morphisms,
			\begin{align*}
				\big((\chi,r)\comp(\psi,s)\big)\comp(\phi,t)
				& =
				(\chi\comp\involution^r(\psi),r+s)\comp(\phi,t)
				=
				(\chi\comp\involution^r(\psi)\comp\involution^{r+s}(\phi),r+s+t).
				\\
				(\chi,r)\comp\big((\psi,s)\comp(\phi,t)\big)
				& =
				(\chi,r)\comp(\psi\comp\involution^s(\phi),s+t)
				=
				(\chi\comp\involution^r(\psi\comp\involution^s(\phi)),r+s+t).
			\end{align*}
			The right-hand expressions coincide by functoriality of $\involution^r$ and $\involution^2=\id_{\cat{C}}$.

		\item The graded symmetric monoidal structure is such that the tensor product of equal parity is the corresponding tensor product in $\cat{C}$,
			\[
				(\psi, s)\tensor(\psi', s)
				=
				(\psi\tensor\psi', s).
			\]
			The interchange law follows from this formula and the fact that $\involution$ is strict monoidal,
			\begin{align*}
				\big((\psi,s)\tensor(\psi',s)\big)\comp
				\big((\phi,t)\tensor(\phi',t)\big)
				&=
				(\psi\tensor\psi',s)\comp(\phi\tensor\phi',t)
				\\
				&=
				((\psi\tensor\psi')\comp\involution^s(\phi\tensor\phi'),s+t)
				\\
				&=
				(\psi\comp\involution^s(\phi)\tensor \psi'\comp\involution^s(\phi'),s+t)
				\\
				&=
				(\psi,s)\comp(\phi,t)\tensor
				(\psi',s)\comp(\phi',t).
			\end{align*}
			The associators, unitors and swap morphisms are those of $\cat{C}$ itself included in $Q(\cat{C})$ as even morphisms. Their coherence equations are the ones of $\cat{C}$, while naturality for morphisms of a common parity follows from the corresponding naturality equation in $\cat{C}$ and the self-adjointness of these coherence morphisms.
			For example, for the associator $\alpha$ and morphisms of common parity $s$, one side of naturality is
			\begin{align*}
				(\alpha,0)\comp\big((\phi,s)\tensor((\psi,s)\tensor(\omega,s))\big)
				&=
				(\alpha,0)\comp(\phi\tensor(\psi\tensor\omega),s)
				\\
				&=
				(\alpha\comp(\phi\tensor(\psi\tensor\omega)),s),
			\end{align*}
			while the other side is
			\begin{align*}
				\big(((\phi,s)\tensor(\psi,s))\tensor(\omega,s)\big)\comp(\alpha,0)
				&=
				((\phi\tensor\psi)\tensor\omega,s)\comp(\alpha,0)
				\\
				&=
				(((\phi\tensor\psi)\tensor\omega)\comp\involution^s(\alpha),s)
				\\
				&=
				(((\phi\tensor\psi)\tensor\omega)\comp\alpha,s).
			\end{align*}
			The last equality uses the self-adjointness of $\alpha$.
			The two right-hand sides are equal by naturality of $\alpha$ in $\cat{C}$.
			Analogous arguments show naturality of the unitors and the swap morphisms.
	\end{itemize}
	To equip $Q(\cat{C})$ with a quantum Markov category structure,
	it is useful to simplify the notation a bit and write $\phi=(\phi,0)$ and $\phi^*=(\phi,1)$ for the morphisms of $Q(\cat{C})$.
	Like this, composition is given by
	\[
		\psi\comp \phi\coloneqq\psi\comp \phi, \qquad \psi\comp \phi^* \coloneqq (\psi\comp \phi)^*, \qquad \psi^*\comp \phi\coloneqq (\psi\comp \invo{\phi})^*, \qquad \psi^*\comp \phi^* \coloneqq \psi\comp \invo{\phi}.
	\]
	In particular, we have $\phi^*=\phi\comp \id^*$, and the equations are then constructed precisely such that $\invo{\phi} = \id^* \comp \phi \comp \id^*$.
	Since $\id^* \comp \id^* = \id$, this also shows that an even morphism $\phi$ is self-adjoint in $\cat{C}$ in our sense if and only if $\phi \comp \id^* = \id^* \comp \phi$,
	i.e.~if $\phi$ is $*$-preserving.
	For the tensor product, we have
	\begin{equation}\label{eq:defqcdstar}
			\phi^* \tensor \psi^* \coloneqq (\phi\tensor \psi)^*.
	\end{equation}

	We now show that $Q(\cat{C})$ is a quantum Markov category with respect to $*_A \coloneqq \id_A^*$ on every object $A$. First of all, we note $\invo{\phi}=* \comp \phi\comp *$ for all morphisms $\phi$.
	We then prove that Parzygnat's (QCD3) axiom holds:
	\begin{itemize}
		\item $\id^*\comp \id^* = \id \comp \invo{\id} = \invo{\id} =\id$,
		\item $\id_{A \tensor B} ^* = (\id_A\tensor \id_B )^*=\id_A^* \tensor \id_B^* $.
		\item Moreover, $\discard\comp  *= *\comp\discard$ because $\discard=\invo{\discard}$.
	\end{itemize}
	The only thing we still need to prove is the last condition of Parzygnat's (QCD1), i.e.
	\[
		\cop\comp * = \swap\comp  *\comp \cop \!.
	\]
	This follows from
	\[
		\cop \comp * = *\comp *\comp \cop\comp * = * \comp \invo{\cop} = * \comp \swap\comp \cop= \swap\comp  *\comp \cop,
	\]
	where in the last equality we used the self-adjointness of the swap morphism.

	From this construction, we immediately conclude that $JQ(\cat{C}) = \cat{C}$ holds by definition.
	Conversely, starting from a quantum CD-category $\cat{C}'$, note that all odd morphisms $\psi$ are of the form $\phi \comp *$ where $\phi$ is an even morphism. Indeed, it suffices to take $\phi\coloneqq\psi \comp *$ since $*\comp * = \id$. More precisely, we have a bijection
	\[
		{\cat{C}'}(A,B)^{\text{even}}\cong {\cat{C}'}(A,B)^{\text{odd}}
	\]
	given by precomposition with $*$. This implies that the even morphisms completely describe the morphisms of $\cat{C}'$, in the sense that there is a canonical isomorphism $F$ of categories between $\cat{C}'$ and $QJ(\cat{C}')$ given by the identity on objects and on even morphisms, and on odd morphisms the composite bijection
	\[
		{\cat{C}'}(A,B)^{\text{odd}}\cong {J(\cat{C}')}(A,B)\times \lbrace 1 \rbrace = {QJ(\cat{C}')} (A,B)^{\text{odd}}
	\]
	for any two objects $A$ and $B$.
	Compatibility of $F$ with identities and composition is precisely the statement that composition in $QJ(\cat{C}')$ was defined by transporting composition along this even--odd bijection.
	Equivalently, the four parity cases displayed above are the four possible composition cases in $\cat{C}'$, rewritten only in terms of the even part and the star maps.
	Furthermore, the graded symmetric monoidal structure on $QJ(\cat{C}')$ is exactly the one induced by transporting the one on $\cat{C}'$ via $F$.
	For even morphisms this is immediate.
	For odd morphisms, let $\phi^*$ and $\psi^*$ be any odd morphisms in $\cat{C}'$. Then 
	\begin{equation*}
		\tikzfig{tensorstar}
	\end{equation*}
	where $(\clubsuit)$ holds by (QCD3). Therefore, \eqref{eq:defqcdstar} still holds.
	The coherence morphisms are even, and hence are preserved by $F$ on the nose.
	We conclude that $F$ is a strict symmetric monoidal isomorphism $\cat{C}' \to QJ(\cat{C}')$.
\end{proof}

\begin{remark}
	\begin{enumerate}
		\item \Cref{prop:qcdicd} tells us that quantum CD-categories and ICD-categories are different ways to talk about the same concept.
			This raises the question of which of these two formulations is preferable, if any.
			We think that ICD-categories should be preferred for defining and working with concrete examples, since one only needs to consider even morphisms.
			On the other hand, quantum CD-categories facilitate more general diagrammatic proofs by allowing the star $*$ to appear in string diagrams. 
			For example, for every morphism $\phi\colon A \to B$ in an ICD-category, we can write
			\begin{equation*}
				\tikzfig{invof}
			\end{equation*}
			where the circled $*$ denotes the star of the associated quantum CD-category.
			The problem with this approach is that one needs to be careful not to tensor an even with an odd morphism, as this is not defined.
			Moreover, for the purposes of this paper, using the star $*$ in string diagrams is not needed.
		\item While every CD-category also gives rise to a quantum CD-category, it seems unlikely that its odd morphisms will be of any interest at all.
			This suggests that ICD-categories encode classical examples better than quantum CD-categories do, and therefore should be slightly preferred.
	
			For example, consider the Markov category $\finstoch$ with finite sets as objects and stochastic matrices as morphisms~\cite[Example~2.5]{fritz2019synthetic}.
		Then $Q(\finstoch)$ can also be described as the quantum Markov category whose odd morphisms are matrices with \emph{nonpositive} entries such that any column sums to $-1$ with $*_A \coloneqq -\id_A$.
	\end{enumerate}
\end{remark}

\begin{remark}[ICD-categories and involutive symmetric monoidal categories]\label{rem:ismc}
	Symmetric monoidal categories equipped with an involution have already appeared in the literature: see, for instance, \cite[Definition~4]{jacobsinvolutive}.
	In particular, also Parzygnat wondered whether quantum CD-categories could be described using symmetric monoidal categories with an involution~\cite[Question~3.24]{quantum-markov}.
	Although our ICD-categories are involutive symmetric monoidal categories (with additional structure), our conditions on the involution are stronger than what is generally required in that context: our requirement $A = \invo{A}$ for all objects $A$ is not satisfied for typical involutive symmetric monoidal categories.
	A good example is the category of complex vector spaces, where $\invo{A}$ for a vector space $A$ is the complex conjugate vector space, which has the same vectors as $A$ but conjugate scalar multiplication,
	\[
		\lambda \cdot_{\invo{A}} a \coloneqq \invo{\lambda} \cdot_{A} a.	
	\]
	Clearly this category does not respect the equality $A = \invo{A}$ required for hom-involutive symmetric monoidal categories.
	
	It is conceivable that using complex conjugation as involution on pre-C*-algebras could result in an interesting approach different from ours, and more similar to the one envisioned at~\cite[Question~3.24]{quantum-markov}.
	This amounts to equipping every object $A$ with an involution morphism $*_A \colon A \to \invo{A}$.
	Instead of going in this direction, we prefer to avoid fixing such a star operation for every object, since this results in a more complex setting and we currently see no real advantage in such additional complexity. 
	For our purposes, it is enough that the axioms fix how the involution acts on copy and delete morphisms, which is exactly what we have in \Cref{def:icd}, and allows us to talk about self-adjoint morphisms.
	Moreover, \cref{prop:qcdicd} ensures the possibility of using the star operation itself in string diagrams also for ICD-categories, suggesting that these categories provide a sufficiently powerful framework without the need to explore alternative possibilities at this stage. 
\end{remark}

\subsection{Deterministic morphisms and ICD-functors}\label{sec:icdfun}
Now that ICD-categories are defined, it is natural to ask what the corresponding notion of functor between such categories is.
For our framework, the introduction of such a concept is actually crucial, because string diagrams do not illustrate associators and unitors.
We are therefore allowed to use them as usual \emph{only if} we have a strictification theorem.

In the consideration of functors between ICD-categories, an important role is played by \emph{deterministic morphisms}. 
As we will see in this section, coherence morphisms of ICD-functors turn out to be deterministic (\cref{rem:cohF_det}), while ICD-natural transformations are defined as having deterministic components. 
For this reason, we first define what a deterministic morphism is.

\begin{definition}
	Let $\cat{C}$ be an ICD-category. A \newterm{deterministic morphism} is a self-adjoint total morphism (\cref{def:selfadj_total}) $\phi\colon A \to B$ such that 
	\begin{equation}\label{eq:deterministic}
		\tikzfig{deterministic}
	\end{equation}
\end{definition} 

It is easy to see that deterministic morphisms form a subcategory of $\cat{C}$, and we denote it by $\cat{C}_{\det}$.

\begin{example}\label{ex:homnotbound}
	In $\calgmin$ and $\calgmax$, a morphism $\phi$ is deterministic if and only if $\opm{\phi}{}$ is a $*$-homomorphism, i.e.~an algebra homomorphism that is also self-adjoint.

	Note that, contrary to the case of C*-algebras, $*$-homomorphisms between pre-C*-algebras can be unbounded. 
	For example, let us set $A = \mathbb{C}$ and $B= \mathbb{C}[x]$. 
	The latter is a pre-C*-algebra by identifying each polynomial $f \in \mathbb{C}[x]$ with the associated function $x \mapsto f(x)$ in the C*-algebra $C([0,1])$ of complex-valued continuous maps on the unit interval $[0,1]$.
	By the Stone--Weierstrass theorem, such an identification makes $\mathbb{C}[x]$ a dense $*$-subalgebra of $C([0,1])$.
	In particular, this equips $\mathbb{C}[x]$ with the C*-norm $\norm{f}\coloneqq \sup_{x\in [0,1]} \abs{f(x)}$.
	Let us now take $\opm{\phi}{}\colon \mathbb{C}[x] \rightsquigarrow \mathbb{C}$ given by evaluating polynomials at any fixed real number outside of $[0,1]$. 
	This is a $*$-homomorphism, but it cannot be bounded, because by Gelfand duality every bounded $*$-homomorphism $C([0,1]) \rightsquigarrow \mathbb{C}$ is given by evaluation at a point of $[0,1]$.

	A more abstract example of an unbounded $*$-homomorphism is given by the identity map
	\[
		\opm{\iota}{} \colon C \odot_{\min} D \rightsquigarrow C \odot_{\max} D
	\]
	for some choices of pre-C*-algebras $C$ and $D$.	Indeed, if $\opm{\iota}{}$ was bounded for all choices of ${C}$ and ${D}$, then we would get a $*$-homomorphism
	\[
		\close{\opm{\iota}{}}\colon \closed{C} \tensor_{\min} \closed{D} \rightsquigarrow \closed{C} \tensor_{\max} \closed{D}
	\]
	by \cref{lem:trick}, where $\closed{C}$ and $\closed{D}$ are the completions of $C$ and $D$.
	Moreover, $\norm{\opm{\iota}{}} = \norm{\close{\opm{\iota}{}}}=1$ by \cref{lem:trick} and the fact that bounded $*$-homomorphisms have norm 1 (cf.~\cref{prop:pu_bounded}; see \cite[Proposition~2.1]{paulsen2002} for a proof).
	In particular, we obtain that $\norm{ \sum_i c_i \tensor d_i }_{\max} \le \norm{ \sum_i c_i \tensor d_i }_{\min}$ for all elements of the algebraic tensor product.
	Since the reverse inequality always holds, this would mean that the two norms coincide, which is generally not the case~\cite[Theorem~6]{takesaki}. 
	This therefore contradicts our assumption of $\opm{\iota}{}$ being bounded for all $C$ and $D$.
\end{example}

\begin{example}\label{ex:det_coherence}
	Associators, unitors, and swap morphisms are deterministic, the proof being analogous to the one for classical Markov categories~\cite[Lemma~10.12]{fritz2019synthetic}.
	The same is true for $\discard$ simply because $\cop$ is total.
	In particular, $\cat{C}_{\det}$ is a semicartesian\footnote{Recall that this means that its monoidal unit is terminal.} symmetric monoidal subcategory of $\cat{C}$, since tensor products and composites of deterministic morphisms are easily seen to be deterministic.
	This generalizes the known statement for Markov categories~\cite[Remark~10.13]{fritz2019synthetic} and was already briefly pointed out by Parzygnat~\cite[Remark~3.19]{quantum-markov} for quantum Markov categories, although the question of determinism of the coherence isomorphisms was not addressed.

	An important difference between the classical and the quantum setting is that a copy morphism $\cop_A$ is deterministic if and only if $A$ is a classical object in the sense of \cref{def:classical}, which we will discuss in \cref{sec:classical_compatible}. 
\end{example}

We turn our attention to functors.
As for monoidal categories in general, it may be of interest to consider different versions like strong, lax or oplax functors between ICD-categories.
For our purposes, the strong version will be sufficient. 
Let us recall that a \newterm{strong symmetric monoidal functor} between two symmetric monoidal categories $\cC$ and $\cD$ is a functor $F \colon \cC \to \cD$ together with a natural isomorphism
\[
\phi_{A,B} \colon FA \tensor FB \to F(A \tensor B)
\]
between functors $\cC \times \cC \to \cD$ and an isomorphism $\phi_I \colon I \to F(I)$, satisfying compatibility with associators, unitors, and swap morphisms. The isomorphisms $\phi_{A,B}$ and $\phi_I$ are called \newterm{coherence isomorphisms}.
\begin{definition}
	Let $\cat{C}$ and $\cD$ be hom-involutive symmetric monoidal categories. A \newterm{hom-involutive strong symmetric monoidal functor} is a strong symmetric monoidal functor $F\colon \cat{C} \to \cD$ such that
	\begin{enumerate}
		\item $F$ strictly commutes with the involution: $F(\invo{\phi}) = \invo{F(\phi)}$ for every morphism $\phi$ in $\cat{C}$;
		\item The coherence isomorphisms $\phi_{A,B}$ and $\phi_I$ are self-adjoint.
	\end{enumerate}
\end{definition}

This concept specializes the general notion of involutive monoidal functor between involutive monoidal categories~\cite[Definition~5]{jacobsinvolutive}, with the difference that we require the additional coherence isomorphisms $F(\invo{A}) \stackrel{\cong}{\longrightarrow} \invo{F(A)}$ to be identities, which is natural in our setting.

\begin{definition}\label{def:strongICDfun}
	Let $\cat{C}$ and $\cD$ be ICD-categories. Then $F \colon \cat{C} \to \cD$ is a \newterm{strong ICD-functor} if
	\begin{enumerate}
		\item $F$ is a hom-involutive strong symmetric monoidal functor;
		\item The coherence isomorphisms $\phi_{A,B}$ and $\phi_I$ additionally satisfy the commutativity of  
	\begin{equation}\label{eq:strong_icd_fun}
	\begin{tikzcd}
		& FA \ar[rd,"F(\cop_A)"]\ar[ld,"\cop_{FA}" above left] & \\
		FA \tensor FA \ar[rr, "\phi_{A,A}"] && F(A \tensor A)
	\end{tikzcd}\qquad 	\text{and} \qquad 
	\begin{tikzcd}
		& FA \ar[rd,"F(\discard_A)"]\ar[ld,"\discard_{FA}" above left] & \\
		I \ar[rr, "\phi_{I}"] && F(I).
	\end{tikzcd}
\end{equation}
\end{enumerate}
\end{definition}

If $\cat{C}$ and $\cD$ are CD-categories, then this notion specializes to the existing notion of strong CD-functor (called \emph{strong gs-monoidal functor} in~\cite[Definition~2.5]{fritz2023lax}).

\begin{remark}\label{rem:cohF_det}
	The coherence morphisms of a strong ICD-functor are necessarily deterministic.
	This statement is an instance of~\cite[Theorem~4.7]{fong2020supply}.
\end{remark}
\begin{example}
	The isomorphism between $\calgmin$ and $\calgmax$ is actually a strong ICD-functor (or even strict, by which we mean that the coherence isomorphisms are identities).
\end{example}

For the relevant notion of natural transformation, see also~\cite[Definition~10.14]{fritz2019synthetic} in the Markov categories context.

\begin{definition}
	Given two strong ICD-functors $F,G \colon \cat{C} \to \cD$, an \newterm{ICD-natural transformation} $\eta\colon F \to G$ is a monoidal natural transformation whose components are deterministic. 
\end{definition}

It is then straightforward to see that ICD-categories, strong ICD-functors and ICD-natural transformations form a 2-category.
The invertible 2-cells in this 2-category, which we call \newterm{ICD-natural isomorphisms}, are precisely those ICD-natural transformations whose components are deterministic isomorphisms.
This explains why we require the components of ICD-natural transformations to be deterministic: 
An ICD-natural isomorphism should make the comonoid structures on any two objects between which it interpolates match up.

The equivalences in the 2-category of ICD-categories are the \newterm{ICD-equivalences}.
Concretely, an ICD-equivalence thus is a strong ICD-functor $F\colon \cat{C} \to \cD$ for which there exist a strong ICD-functor $G\colon \cD\to \cat{C}$ and ICD-natural isomorphisms $\eta\colon F \comp G \to \id_{\cD}$ and $\mu \colon G \comp F \to \id_{\cat{C}}$.
As for ordinary categories and for Markov categories~\cite[Proposition~10.16]{fritz2019synthetic}, ICD-equivalences can also be characterized more concretely.

\begin{proposition}\label{prop:icd_equiv}
	A strong ICD-functor $F \colon \cat{C} \to \cD$ is an ICD-equivalence if and only if it is fully faithful and for every $B \in \cD$ there exist $A\in \cat{C}$ and a deterministic isomorphism $B \to FA$.
\end{proposition}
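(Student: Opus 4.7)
The plan is to follow the standard pattern of the analogous characterization for Markov categories~\cite[Proposition~10.16]{fritz2019synthetic}, adapted to handle the additional involutive structure. The forward direction is almost immediate from the definitions, while the backward direction requires us to construct a pseudo-inverse by hand and verify a number of coherences; faithfulness of $F$ will always reduce these verifications to checks in $\cD$.

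For the forward direction, assume $F$ is an ICD-equivalence with quasi-inverse $G$ and ICD-natural isomorphisms $\eta\colon F\comp G \to \id_{\cD}$, $\mu\colon G\comp F \to \id_{\cat{C}}$. Full faithfulness is just the usual consequence of being an equivalence of underlying categories. For essential surjectivity with deterministic isomorphisms, given $B \in \cD$, take $A \coloneqq G(B)$; the component $\eta_B \colon FA \to B$ is a deterministic isomorphism by definition of an ICD-natural isomorphism. A short diagram chase shows that the inverse of a deterministic isomorphism is deterministic: totality of $\eta_B^{-1}$ follows from totality of $\eta_B$, self-adjointness is preserved by $\invo{(\ph)}$ being an involutive functor, and the copy equation $\cop \comp \eta_B^{-1} = (\eta_B^{-1} \tensor \eta_B^{-1}) \comp \cop$ is obtained by precomposing and postcomposing the copy equation for $\eta_B$ with $\eta_B^{-1}$.

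For the backward direction, choose for every $B \in \cD$ an object $G(B) \in \cat{C}$ and a deterministic isomorphism $\sigma_B \colon B \to F(G(B))$. On morphisms, define $G(g)$ for $g\colon B \to B'$ as the unique morphism such that $F(G(g)) = \sigma_{B'} \comp g \comp \sigma_B^{-1}$; this exists by full faithfulness and is functorial because $F$ is faithful. The coherence isomorphisms for $G$ are defined similarly: let $\psi_{B,B'}\colon G(B)\tensor G(B')\to G(B\tensor B')$ and $\psi_I\colon I \to G(I)$ be the unique morphisms in $\cat{C}$ whose images under $F$ equal the evident composites of $\sigma$'s, $\sigma^{-1}$'s, and the coherence isomorphisms of $F$. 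These are isomorphisms because $F$ is fully faithful.

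The main step is to verify that $G$ is a strong ICD-functor and that the isomorphisms assemble into ICD-natural isomorphisms. The key observation, used throughout, is that \emph{$F$ reflects determinism}: if $F(\phi)$ is deterministic in $\cD$, then by expanding $F(\cop\comp \phi)$ and $F((\phi\tensor \phi)\comp \cop)$ using the first coherence diagram in \eqref{eq:strong_icd_fun} one finds they coincide in $\cD$, hence by faithfulness in $\cat{C}$; totality and self-adjointness are reflected analogously (self-adjointness uses that $F$ strictly commutes with $\invo{(\ph)}$). Consequently, since each $F(\psi_{B,B'})$ is a composite of deterministic isomorphisms, $\psi_{B,B'}$ itself is deterministic, and similarly for $\psi_I$. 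All monoidal coherences (associator, unitor, symmetry, naturality of $\psi$), compatibility with copy/delete as in~\eqref{eq:strong_icd_fun}, and commutation with the involution are then checked by applying $F$ and invoking faithfulness. Finally, $\sigma \colon \id_{\cD} \to F\comp G$ is ICD-natural by construction; the pseudo-inverse $\mu\colon G\comp F \to \id_{\cat{C}}$ is defined at $A$ as the unique morphism with $F(\mu_A) = \sigma_{FA}^{-1}$, and its components are deterministic because $F$ reflects determinism.

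The main obstacle is not any single calculation but rather the bookkeeping around the coherence isomorphisms $\phi^F$ of $F$ and $\psi$ of $G$: since $F$ and $G$ are only strong (not strict) monoidal, every diagram must be instrumented with the right $\phi^F$ and $\psi$ insertions before applying faithfulness. This bookkeeping is simplified considerably by the strictification theorem (\cref{prop:strictification}), which one may freely invoke to assume $F$ is strict monoidal, reducing the verifications to routine chases.
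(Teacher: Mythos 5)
Your proof is correct and follows essentially the same route as the paper's: the forward direction extracts the deterministic isomorphism from the counit of the equivalence, and the backward direction builds the quasi-inverse as in the Markov-category case, using faithfulness of $F$ (together with the determinism of its coherence morphisms and its strict commutation with the involution) to transfer all verifications to $\cD$; your explicit "reflection of determinism" lemma is a clean packaging of what the paper does implicitly. One caveat: your closing suggestion to invoke the strictification theorem (\cref{prop:strictification}) here is circular, since the paper's proof of that theorem concludes by applying \cref{prop:icd_equiv} (and strictifying the categories would in any case not make the functor $F$ strict monoidal); fortunately this remark is inessential to your argument and can simply be dropped.
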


\begin{proof}
	If $F$ is an ICD-equivalence, then it is in particular an equivalence of the underlying categories and hence fully faithful.
	For $B\in\cD$, take $A\coloneqq GB$ for an essential inverse $G$ as in the definition.
	Then the inverse component $\eta_B^{-1}\colon B\to FGB$ is a deterministic isomorphism.

	Conversely, suppose that $F$ is fully faithful and satisfies the stated deterministic essential surjectivity condition.
	The usual construction for equivalences of Markov categories~\cite[Proposition~10.16]{fritz2019synthetic} gives a strong symmetric monoidal inverse $G\colon\cD\to\cat{C}$ and a monoidal natural isomorphism
	\[
		\epsilon\colon F\comp G \to \id_{\cD}
	\]
	whose components are deterministic isomorphisms.
	The strong monoidal structure on $G$ is the one transported across $F$ and $\epsilon$:
	the coherence isomorphisms $\gamma_{A,B}\colon GA\tensor GB\to G(A\tensor B)$ and $\gamma_I\colon I\to G(I)$ are the unique morphisms whose images under $F$ satisfy
	\[
		\epsilon_{A\tensor B}\comp F(\gamma_{A,B})\comp \phi_{GA,GB} = \epsilon_A\tensor\epsilon_B,
		\qquad
		\epsilon_I\comp F(\gamma_I)\comp \phi_I = \id_I,
	\]
	where $\phi_{-,-}$ and $\phi_I$ are the coherence isomorphisms of $F$.
	Since all other morphisms involved are self-adjoint,
	these equations also imply that these coherences are self-adjoint, as required for $G$ to be hom-involutive strong monoidal.

	We show next that $G$ is hom-involutive.
	To this end, let $f\colon A\to B$ be a morphism of $\cD$.
	Naturality of $\epsilon$ on $\invo{f}$ and $f$ as well as self-adjointness of $\epsilon_A$ and $\epsilon_B$ give
	\[
		\epsilon_B\comp FG(\invo{f})
		= \invo{f}\comp \epsilon_A
		= \invo{f\comp \epsilon_A}
		= \invo{\epsilon_B\comp FG(f)}
		= \epsilon_B\comp \invo{FG(f)}.
	\]
	Since $\epsilon_B$ is an isomorphism, $FG(\invo{f})=\invo{FG(f)}$.
	Using that $F$ preserves the involution, this says
	\[
		F(G(\invo{f})) = F(\invo{G(f)}),
	\]
	and faithfulness of $F$ implies $G(\invo{f})=\invo{G(f)}$.
	Thus $G$ is hom-involutive.

	We now verify the two diagrams in~\eqref{eq:strong_icd_fun} for $G$.
	For the copy morphism of $A\in\cD$, we obtain
	\[
		\epsilon_{A\tensor A}\comp FG(\cop_A)
		= \cop_A\comp \epsilon_A
		= (\epsilon_A\tensor\epsilon_A)\comp \cop_{FGA}
		= \epsilon_{A\tensor A}\comp F(\gamma_{A,A})\comp F(\cop_{GA}).
	\]
	The first equality is naturality of $\epsilon$, the second is determinism of $\epsilon_A$, and the third is monoidality of $\epsilon$ together with the copy-preservation axiom for $F$.
	Since $\epsilon_{A\tensor A}$ is an isomorphism and $F$ is faithful, this is exactly the copy-preservation axiom for $G$.
	The delete axiom is proved in the same way, using totality of $\epsilon_A$ and the unit part of monoidality of $\epsilon$.
	Therefore $G$ is a strong ICD-functor.

	Finally, define $\mu_A\colon GFA\to A$ as the unique morphism with $F(\mu_A)=\epsilon_{FA}$.
	Full faithfulness of $F$ and monoidality of $\epsilon$ imply that $\mu$ is a monoidal natural isomorphism.
	Its components are deterministic because the components $\epsilon_{FA}$ are deterministic and $F$ reflects determinism.
	Hence $\epsilon$ and $\mu$ are ICD-natural isomorphisms, so $F$ is an ICD-equivalence.
\end{proof}

\subsection{Cofree ICD-categories}\label{sec:cofree}
Before addressing strictification, let us present some examples beyond $\calgmin$ and $\calgmax$.

Cofree ICD-categories are ICD-categories that possess a universal property (\cref{prop:cofree_universal}) dual to that of free groups, hence the name. 
As we will see shortly, the objects of this category are involutive comonoids.

Recall that a \newterm{comonoid} in a symmetric monoidal category $\cat{C}$ is a triple $(A,\mu_A, \epsilon_A)$ where $\mu_A \colon A \to A \tensor A$ and $\epsilon_A\colon A \to I$ are like $\cop$ and $\discard$ in the sense that they satisfy \eqref{eq:icd_def_comonoid}, i.e.~the comultiplication is associative and we have left and right unit laws. In the involutive setting, it is natural to impose an additional compatibility condition with the involution.

\begin{definition}\label{def:involutive_comonoid}
	A comonoid $(A,\mu_A, \epsilon_A)$ in a hom-involutive symmetric monoidal category $\cat{C}$ is \newterm{involutive} if \eqref{eq:icd_def_involution} holds, i.e.
	\[
		\invo{\mu_A}=\swap_{A,A}\comp \mu_A\qquad \text{and} \qquad \invo{\epsilon_A}=\epsilon_A.
	\]
\end{definition} 

\begin{example}
If $\cat{C}$ carries the trivial involution, then an involutive comonoid is just a commutative comonoid.
\end{example}

\begin{definition}
	For a hom-involutive symmetric monoidal category $\cat{C}$, its \newterm{cofree ICD-category} $\cofree(\cat{C})$ is the category whose objects are involutive comonoids $(A,\mu_A, \epsilon_A)$ and morphisms are the ones in $\cat{C}$. This category becomes symmetric monoidal by lifting the tensor of $\cat{C}$ to involutive comonoids as $\epsilon_{A \tensor B}\coloneqq \epsilon_A \tensor \epsilon_B$ and 
	\begin{equation*}
		\tikzfig{tensorcomonoid}
	\end{equation*}
	In particular, the monoidal unit is $(I,\id_I, \id_I)$.
\end{definition}

In light of \cref{prop:qcdicd}, the following two propositions answer Parzygnat's~\cite[Question~3.24]{quantum-markov}, where it was asked whether involutive comonoids form a quantum Markov category and whether the objects in a quantum Markov category are involutive comonoids in a suitable sense.

\begin{proposition}\label{prop:cof_icd}
	For any hom-involutive symmetric monoidal category $\cat{C}$, the cofree ICD-category $\cofree(\cat{C})$ is an ICD-category, where:
	\begin{enumerate}
		\item The involution $\phi \mapsto \invo{\phi}$ is the one of $\cat{C}$.
		\item The copy and delete morphisms on each object are given by the comonoid structure.
	\end{enumerate} 
\end{proposition}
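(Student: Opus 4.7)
The proof reduces to a direct verification of the three axioms of \cref{def:icd}, most of which are immediate. For any object $(A,\mu_A,\epsilon_A)$ of $\cofree(\cat{C})$, axiom (i) of \cref{def:icd} is just the comonoid condition, which is part of the definition of (involutive) comonoid, and axiom (ii) is exactly the involutivity requirement on $\mu_A$ and $\epsilon_A$. So the real content lies in showing that $\cofree(\cat{C})$ is a well-defined hom-involutive symmetric monoidal category: namely, that the tensor product of involutive comonoids, and the proposed unit $(I,\id_I,\id_I)$, are again involutive comonoids, and that the involution inherited from $\cat{C}$ respects this structure. Once that is established, axiom (iii) of \cref{def:icd} holds tautologically, because the claimed equations are literally the defining formulas for $\cop_{A\tensor B}$, $\cop_I$, $\discard_{A\tensor B}$, and $\discard_I$.

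The plan is therefore to first handle the monoidal unit, where the required identities collapse trivially (and $\id_I$ is self-adjoint since $\involution$ is identity-on-objects and strict symmetric monoidal). Then I would check that $(A\tensor B,\mu_{A\tensor B},\epsilon_{A\tensor B})$ is an involutive comonoid. The coassociativity and counit laws follow from those for $A$ and $B$ together with naturality of the symmetry, by a standard diagrammatic computation; this is the same verification that shows the tensor of commutative comonoids is a commutative comonoid. The self-adjointness $\invo{\epsilon_{A\tensor B}}=\epsilon_{A\tensor B}$ is immediate, since $\involution$ is a strict symmetric monoidal endofunctor and hence $\invo{\epsilon_A\tensor\epsilon_B}=\invo{\epsilon_A}\tensor\invo{\epsilon_B}=\epsilon_A\tensor\epsilon_B$.

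The single non-routine verification is the involutivity of $\mu_{A\tensor B}$, i.e.\ $\invo{\mu_{A\tensor B}}=\swap_{A\tensor B,A\tensor B}\comp\mu_{A\tensor B}$. Using that $\involution$ strictly preserves composition, tensors and swaps,
\[
\invo{\mu_{A\tensor B}} = (\id_A\tensor\swap_{A,B}\tensor\id_B)\comp(\invo{\mu_A}\tensor\invo{\mu_B}) = (\id_A\tensor\swap_{A,B}\tensor\id_B)\comp\bigl((\swap_{A,A}\comp\mu_A)\tensor(\swap_{B,B}\comp\mu_B)\bigr),
\]
and the plan is to rewrite the composite of swaps $(\id\tensor\swap_{A,B}\tensor\id)\comp(\swap_{A,A}\tensor\swap_{B,B})$ as $\swap_{A\tensor B,A\tensor B}\comp(\id\tensor\swap_{A,B}\tensor\id)$ using the symmetric-monoidal coherence (hexagon/naturality of symmetry), and then pull the outer swap out of $\mu_A\tensor\mu_B$ via naturality. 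This is the only step with any combinatorial content, and it is a short string-diagram manipulation.

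Finally, I would observe that the involution on $\cat{C}$ does descend to a strict symmetric monoidal endofunctor on $\cofree(\cat{C})$: on objects, $\invo{(A,\mu_A,\epsilon_A)}=(A,\mu_A,\epsilon_A)$ because the axioms on $\mu_A$ and $\epsilon_A$ force the comonoid structure to be carried back to itself; on morphisms and on tensors it acts as in $\cat{C}$; and it squares to the identity because $\involution^2=\id_{\cat{C}}$. The expected main (mild) obstacle is thus the symmetry-coherence computation in the previous paragraph; everything else is bookkeeping.
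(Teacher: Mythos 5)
Your proposal is correct and takes essentially the same route as the paper, which simply asserts that the claim ``follows straightforwardly from the definition of involutive comonoid and the definition of their monoidal product''; your identification of the involutivity of $\mu_{A\tensor B}$ (via the symmetry-coherence identity for the composite of swaps) as the only non-tautological check is exactly the verification the paper leaves implicit. One minor remark: the involution descends to $\cofree(\cat{C})$ simply because it is identity-on-objects and the hom-sets of $\cofree(\cat{C})$ are those of $\cat{C}$ --- nothing about the comonoid structure being ``carried back to itself'' is needed, since morphisms of $\cofree(\cat{C})$ are not required to be comonoid homomorphisms.
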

\begin{proof}
	This follows straightforwardly from the definition of involutive comonoid and the definition of their monoidal product.
\end{proof}

Of course, by restricting further to total morphisms (those which respect the counits), one obtains an involutive Markov category.

We now want to show that the cofree ICD-categories contain all ICD-categories in a similar way as to how any group is contained in the symmetric group over itself. For this, let us briefly mention that an \newterm{ICD-subcategory} $\cD$ of an ICD-category $\cat{C}$ is simply any symmetric monoidal subcategory $\cD \subseteq \cat{C}$ which is closed under the involution and contains all copy and delete morphisms.
It is clear that an ICD-subcategory is then an ICD-category in its own right.
This is analogous to the notion of Markov subcategory~\cite[Definition~A.1.2]{fritz2023supports}.

\begin{proposition}\label{prop:icd_subcat_cofree}
	Let $\cat{C}$ be an ICD-category. Then $\cat{C}$ is a full ICD-subcategory of $\cofree(\cat{C})$.
\end{proposition}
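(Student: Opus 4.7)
The plan is to construct an explicit embedding $\iota \colon \cat{C} \hookrightarrow \cofree(\cat{C})$ which is the identity on morphisms, and then check that its image is a full ICD-subcategory.

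First, I would note that every object $A$ of $\cat{C}$ is canonically an involutive comonoid with the data $(A, \cop_A, \discard_A)$ it already carries. The comonoid axioms hold by \eqref{eq:icd_def_comonoid}, and the involutivity $\invo{\cop_A} = \swap_{A,A} \comp \cop_A$ and $\invo{\discard_A} = \discard_A$ are precisely \eqref{eq:icd_def_involution}. This defines $\iota$ on objects, and we set $\iota(\phi) = \phi$ on morphisms; functoriality is trivial since hom-sets in $\cofree(\cat{C})$ are the hom-sets of $\cat{C}$.

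The key step is verifying that $\iota$ is a strict symmetric monoidal functor between hom-involutive symmetric monoidal categories, which will follow directly from axiom \eqref{eq:icd_def_monoidal}: this says that the copy morphism $\cop_{A \tensor B}$ in $\cat{C}$ coincides with the composite $(\id \tensor \swap \tensor \id) \comp (\cop_A \tensor \cop_B)$, and similarly $\discard_{A \tensor B} = \discard_A \tensor \discard_B$, which are exactly the formulas used to define the tensor of involutive comonoids in $\cofree(\cat{C})$. Hence $\iota(A \tensor B) = \iota(A) \tensor \iota(B)$ as involutive comonoids, and $\iota(I)$ is the monoidal unit $(I, \id_I, \id_I)$ since $\cop_I = \id_I = \discard_I$ by the unit laws in \eqref{eq:icd_def_comonoid}. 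Preservation of the involution is automatic since $\iota$ is the identity on morphisms and the involution on $\cofree(\cat{C})$ was defined to be that of $\cat{C}$.

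It then remains to observe that $\iota$ is fully faithful (identity on morphisms) and injective on objects, so $\iota(\cat{C})$ is a full subcategory of $\cofree(\cat{C})$; that it is closed under the involution and tensor product by the above; and that it contains all copy and delete morphisms of $\cofree(\cat{C})$ restricted to its objects, since for each object $\iota(A) = (A, \cop_A, \discard_A)$ the copy and delete morphisms in $\cofree(\cat{C})$ are by definition $\cop_A$ and $\discard_A$, which lie in $\iota(\cat{C})$. This gives $\cat{C}$ as a full ICD-subcategory of $\cofree(\cat{C})$ via $\iota$. There is no real obstacle here beyond keeping track of the matching of structures; the content of the proposition is essentially that axioms \eqref{eq:icd_def_comonoid}, \eqref{eq:icd_def_involution}, and \eqref{eq:icd_def_monoidal} are precisely what is needed to make the underlying objects into involutive comonoids in a way compatible with the monoidal product.
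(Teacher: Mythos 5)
Your proposal is correct and follows exactly the paper's (one-line) proof: the inclusion $A \mapsto (A, \cop_A, \discard_A)$, with the verifications you spell out being precisely the content of axioms \eqref{eq:icd_def_comonoid}--\eqref{eq:icd_def_monoidal}. The only nitpick is that $\cop_I$ and $\discard_I$ being (coherence) identities follows from \eqref{eq:icd_def_monoidal} rather than the unit laws of \eqref{eq:icd_def_comonoid}, but this does not affect the argument.
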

\begin{proof}
	We just consider $\cat{C}$ as a subcategory via $A \mapsto (A, \cop_A , \discard_A)$.
\end{proof}

The cofree ICD-category enjoys also a universal property, as the name suggests.
The relevant universal arrow is the forgetful functor $\operatorname{Forg} : \cofree(\cD) \to \cD$, for any fixed hom-involutive symmetric monoidal category $\cD$.\footnote{If $\cD$ is an ICD-category, $\operatorname{Forg}$ is generally not an ICD-functor.
Indeed, $\operatorname{Forg}$ is an ICD-functor if and only if every object of $\cD$ admits exactly \emph{one} involutive comonoid structure. To see that this fails in general, it suffices to assume the existence of a noncommutative involutive comonoid $(A, \mu_A, \epsilon_A)$ and take the involutive comonoid given by its involution, $(A, \invo{\mu_A}, \epsilon_A)$.}
 
\begin{proposition}[Universal property of the cofree ICD-category]\label{prop:cofree_universal}
	Let $\cat{C}$ be an ICD-category and let $\cD$ be a hom-involutive symmetric monoidal category.
	Then, any hom-involutive strong symmetric monoidal functor $F:\cat{C} \to \cD$ factors uniquely through $\cofree(\cD)$: there is a commutative diagram
	\begin{equation}
		\label{eq:cofree_universal}
		\begin{tikzcd}
			\cat{C} \ar[r,"\tilde{F}"]\ar[rd,swap,"F"] & \cofree(\cD)\ar[d,"\operatorname{Forg}"]\\
			& \cD
		\end{tikzcd}	
	\end{equation} 
	for a unique strong ICD-functor $\tilde{F}$.
\end{proposition}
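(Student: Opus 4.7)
The plan is to let commutativity of \eqref{eq:cofree_universal} force the definition of $\tilde{F}$. Since $\operatorname{Forg}$ is the identity on morphisms and sends an involutive comonoid to its underlying object, the equation $\operatorname{Forg} \comp \tilde{F} = F$ forces $\tilde{F}(\phi) := F(\phi)$ on every morphism $\phi$ and the underlying object of $\tilde{F}(A)$ to be $F(A)$. The only freedom left is the involutive comonoid structure on $F(A)$, which we pin down by transporting the copy and delete of $A$ across the coherence isomorphisms of $F$:
\[
    \mu_{F(A)} := \phi_{A,A}^{-1} \comp F(\cop_A), \qquad \epsilon_{F(A)} := \phi_{I}^{-1} \comp F(\discard_A).
\]
This choice is also the unique one making \eqref{eq:strong_icd_fun} hold with coherence isomorphisms equal to those of $F$.

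First, I would verify that $(F(A), \mu_{F(A)}, \epsilon_{F(A)})$ is an involutive comonoid in $\cD$. Coassociativity and counitality follow by applying $F$ to the corresponding equations \eqref{eq:icd_def_comonoid} for $A$ and then rewriting via the standard coherence conditions that any strong symmetric monoidal functor satisfies. The self-adjointness $\invo{\epsilon_{F(A)}} = \epsilon_{F(A)}$ is immediate from the self-adjointness of $\phi_I$, the hom-involutivity $F(\invo{\discard_A}) = \invo{F(\discard_A)}$, and the equation $\invo{\discard_A} = \discard_A$. The swap condition $\invo{\mu_{F(A)}} = \swap_{F(A),F(A)} \comp \mu_{F(A)}$ is the main computation: it combines $F(\invo{\cop_A}) = F(\swap_{A,A} \comp \cop_A)$ from \eqref{eq:icd_def_involution} with the symmetric-monoidality coherence $F(\swap_{A,A}) \comp \phi_{A,A} = \phi_{A,A} \comp \swap_{F(A),F(A)}$, together with self-adjointness of $\phi_{A,A}$.

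Second, I would promote $\tilde{F}$ to a strong ICD-functor by equipping it with exactly the coherence isomorphisms $\phi_{A,B}$ and $\phi_{I}$ of $F$; this is consistent with the factorization through the strict monoidal functor $\operatorname{Forg}$. Hom-involutivity of $\tilde{F}$ and self-adjointness of its coherence isomorphisms are inherited from $F$, while the diagrams \eqref{eq:strong_icd_fun} become tautologies given the definitions of $\mu_{F(A)}$ and $\epsilon_{F(A)}$ above.

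Finally, uniqueness follows by running the forcing argument in reverse: any $\tilde{F}'$ fitting into \eqref{eq:cofree_universal} must agree with $F$ on morphisms and, because $\operatorname{Forg}$ is strict monoidal, must share the coherence isomorphisms of $F$; then \eqref{eq:strong_icd_fun} uniquely determines the comonoid structure assigned to each $F(A)$. The only step that is not entirely routine is the verification of the involutive swap condition for $\mu_{F(A)}$, which hinges on the interaction between the involution on morphisms and the symmetry coherence of $F$; once that is in hand, everything else reduces to bookkeeping with coherences.
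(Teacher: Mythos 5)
Your proposal is correct and follows essentially the same route as the paper's proof: define $\tilde{F}(A)$ by transporting the comonoid structure along the inverse coherence isomorphisms of $F$, derive uniqueness from the forcing via \eqref{eq:cofree_universal} and \eqref{eq:strong_icd_fun}, and verify involutivity of the resulting comonoid by combining \eqref{eq:icd_def_involution}, hom-involutivity of $F$, and its symmetry coherence. The only difference is cosmetic: the paper dispatches coassociativity and counitality by citing the standard fact that strong monoidal functors send comonoids to comonoids, where you spell the rewriting out.
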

\begin{proof}
	It is a standard fact that strong (and more generally oplax) monoidal functors send comonoids to comonoids.
	Applied to the canonical comonoid structure $(A, \cop_A, \discard_A)$ on every object $A$ in $\cat{C}$, this works by defining $\tilde{F}$ as the functor
	\[
		\tilde{F}(A) \coloneqq (FA, \psi_{A,A} \comp F(\cop_A), \psi_I \comp F(\discard_A)), \qquad \tilde{F}(\phi) \coloneqq F(\phi),
	\]
	where $\psi_{A, A} : F(A \tensor A) \to FA \tensor FA$ and $\psi_I : F(I) \to I$ are coherence morphism of $F$, with $\psi_{A,A} = \phi_{A,A}^{-1}$ and $\psi_I = \phi_I^{-1}$ relative to~\eqref{eq:strong_icd_fun}.
	By the same diagrams~\eqref{eq:strong_icd_fun}, it is clear that this is the only choice that makes~\eqref{eq:cofree_universal} commute and has a chance of making $\tilde{F}$ into an ICD-functor.
	Hence we already obtain the uniqueness part of the claim.

	For the existence, we show that $\tilde{F}$ actually lands in $\cofree(\cD)$, i.e.~that the comonoids $\tilde{F}(A)$ are involutive. 
	The first property in \cref{def:involutive_comonoid} follows from the commutativity of the diagram
	\[
		\begin{tikzcd}[column sep=4pc,row sep=3pc]
			F(A) \ar[r,"F(\cop_A)"] \ar[rd,"F(\invo{\cop_A}) \,=\, \invo{F(\cop_A)}" below left] & F(A \tensor A) \ar[d,"F(\swap_{A,A})"] \ar[r,"\psi_{A,A}"] & F(A) \tensor F(A) \ar[d,"\swap_{F(A),F(A)}"] \\
			& F(A \tensor A) \ar[r,"\psi_{A,A} \,=\, \invo{\psi_{A,A}}"] & F(A) \tensor F(A)
		\end{tikzcd}
	\]
	where the commutativity of the triangle is involutivity of the comonoid together with the fact that $F$ strictly commutes with the involution, while the commutativity of the square is the symmetry preservation of $F$. 
	Self-adjointness of the counit $\psi_I F(\discard_A)$ is immediate, since $\psi_I$ and $\discard_A$ and therefore also $F(\discard_A)$ are all self-adjoint.

	Finally, $\tilde{F}$ is clearly a strong symmetric monoidal functor with respect to the same self-adjoint coherence morphisms as $F$, and it commutes with the involution because $F$ does.
\end{proof}

We now investigate opposite categories of $*$-algebras as instances of cofree ICD-categories.

\begin{definition}[Involutive modules]
	Let $R$ be a commutative $*$-ring.
	\begin{enumerate}
		\item An $R$-module $M$ is \newterm{involutive} if it is equipped with an additive map $*:M \to M$ such that 
		\[ 
		m^{**} = m\qquad \text{and}\qquad (rm)^* = r^* m^*
		\]  
		for all $m \in M$ and $r \in R$.
		\item The \newterm{category of involutive modules} $\iMod{R}$ is the category whose objects are involutive modules and whose morphisms are $R$-linear maps, i.e. $f \colon M \to N$ such that 
		\[
		f(m+n)=f(m)+f(n) \qquad \text{and}\qquad f(rm)= r f(m)
		\]
		for all $m,n \in M$ and $r \in R$.
	\end{enumerate}
\end{definition}

In the case $R = \mathbb{C}$ with complex conjugation, $* : M \to M$ is thus required to be antilinear, and involutive modules are then also known as \emph{$*$-vector spaces}.

\begin{remark}\label{rem:inv_mod}
	The category $\iMod{R}$ comes equipped with a natural involution that fixes the objects and is given on morphisms by
	\[
		\invo{{\phi}}(x) \coloneqq {\phi}(x^*)^*.
	\]
	Further, $\iMod{R}$ is a hom-involutive symmetric monoidal category when we consider the symmetric monoidal structure given by the tensor product of $R$-modules with involution the unique antilinear extension of
	\[
		(m \otimes n)^* \coloneqq m^* \otimes n^*.
	\]
\end{remark}

Let us now consider an important example generalizing the idea underlying the main examples (\cref{def:mainexample}). 
To this end, we say that an \newterm{involutive monoid} is just the dual notion of an involutive comonoid. 
In other words, an involutive monoid is an object $A$, together with morphisms $\opm{\mu_A}{} \colon A \tensor A \to A$ and $\opm{\epsilon_A}{} \colon I \to A$, that satisfies \eqref{eq:icd_def_comonoid} and \eqref{eq:icd_def_involution} when read from top to bottom.
\begin{remark}\label{rem:star_alg}
	Unfolding the definitions shows that an {involutive monoid} in $\iMod{R}$ is the same thing as a \emph{$*$-algebra} over $R$.
	Indeed, a monoid simply gives an associative unital algebra $A$ over $R$. 
	In particular, bilinearity of the multiplication follows from $r (x\tensor y)= (rx)\tensor y = x \tensor (ry) \in A \tensor A$ for all $r \in R$ and $x,y \in A$. The requirement of being involutive shows that $A$ is also a $*$-algebra: for $x,y \in A$, $(xy)^*=y^*x^*$, and moreover $1^*=1$.
	Since the opposite category of $*$-algebras over $R$ with $R$-linear maps is therefore just $\cofree(\iMod{R}^{\op})$, we conclude that it is an ICD-category by \Cref{prop:cof_icd}.

	In this category, it is easy to understand the notions of morphisms we already encountered.
	\begin{enumerate}
		\item The total morphisms are the formal opposites of \emph{unital} $R$-linear maps.
		\item The self-adjoint morphisms are the formal opposites of $R$-linear maps which commute with the star operation, $\phi(x)^* = \phi(x^*)$.
		\item The deterministic morphisms are the formal opposites of \emph{$*$-homomorphisms}, i.e.~unital algebra homomorphisms commuting with the star operation.
	\end{enumerate}
\end{remark}

\begin{example}\label{ex:star_alg}
	If $R$ is just $\mathbb{Z}$ with the trivial involution, then the involutive monoids in $\iMod{\mathbb{Z}}$ are precisely the $*$-rings. If $R$ is the complex numbers $\mathbb{C}$ with conjugation, then the involutive monoids in $\iMod{(\mathbb{C},\overline{\,\cdot\,})}$ are exactly the complex $*$-algebras in the usual sense.
	It follows that complex $*$-algebras with linear maps form an ICD-category with respect to the formal opposites of the multiplication and the unit as the copy and delete morphisms, respectively.
	In particular, $\calgmin\cong \calgmax$ are both ICD-subcategories of $\cofree(\iMod{(\mathbb{C},\overline{\,\cdot\,})}^{\op})$ (by forgetting the norm).
\end{example}

\subsection{Strictification} \label{sec:strictification}

To formally justify the use of string diagrams, it is crucial to establish a strictification theorem.
A \newterm{strictification} of a given ICD-category $\cC$ consists of the data of a \newterm{strict ICD-category} $\cC'$, i.e.~an ICD-category whose underlying monoidal category is strict, together with an ICD-equivalence $\cC' \to \cC$ (recall \cref{prop:icd_equiv}).

\begin{theorem}[Strictification for ICD-categories]\label{prop:strictification}
	Every ICD-category is ICD-equivalent to a strict one.
\end{theorem}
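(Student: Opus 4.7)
The plan is to adapt Mac Lane's strictification of symmetric monoidal categories to the ICD setting, transporting the involution and the copy/delete along the resulting equivalence.

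First I would construct the strict symmetric monoidal category $\cC^{\text{str}}$ whose objects are finite tuples $(A_1, \ldots, A_n)$ of objects of $\cC$ (with the empty tuple as unit), with hom-sets
\[
	\cC^{\text{str}}((A_1, \ldots, A_n), (B_1, \ldots, B_m)) \coloneqq \cC(A_1 \tensor \cdots \tensor A_n, B_1 \tensor \cdots \tensor B_m)
\]
relative to a fixed, say left-biased, parenthesization. Composition is as in $\cC$, and the tensor product acts on objects by concatenation and on morphisms via the relevant coherence isomorphisms of $\cC$. This is the classical Mac Lane--Joyal--Street construction: one obtains a strict symmetric monoidal category together with a strong symmetric monoidal equivalence $F\colon \cC^{\text{str}} \to \cC$ that sends a tuple to its left-biased tensor product and is the identity on morphisms under the hom-set identification.

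Next I would transport the remaining ICD-structure. Since $\involution$ is identity on objects and strict symmetric monoidal on $\cC$, it lifts to a strict symmetric monoidal involution on $\cC^{\text{str}}$ acting as the identity on tuples and as $\involution$ on morphisms; by construction $F$ commutes strictly with this involution and its coherence isomorphisms are self-adjoint, since they are coherence isomorphisms of $\cC$, which are self-adjoint because $\involution$ is strict. For copy and delete, I would define $\cop_{\vec{A}}$ and $\discard_{\vec{A}}$ on each tuple $\vec{A}$ to be the morphisms in $\cC^{\text{str}}$ that correspond, under the hom-set identification, to $\cop_{F(\vec{A})}$ and $\discard_{F(\vec{A})}$ in $\cC$. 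This choice ensures that the diagrams in \eqref{eq:strong_icd_fun} commute on the nose for $F$.

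Then I would verify the ICD axioms in $\cC^{\text{str}}$. The comonoid laws \eqref{eq:icd_def_comonoid} and the involution compatibilities \eqref{eq:icd_def_involution} are immediate, since they hold in $\cC$ for $\cop_{F(\vec{A})}$ and $\discard_{F(\vec{A})}$ and the coherences of $\cC$ have been absorbed into the definition of the hom-sets of $\cC^{\text{str}}$. The monoidal compatibility \eqref{eq:icd_def_monoidal} for a concatenation $\vec{A}\vec{B}$ reduces, after translation through $F$, to the same axiom in $\cC$ applied to $F(\vec{A})$ and $F(\vec{B})$, together with a standard unwinding of the coherence morphisms comparing $F(\vec{A}\vec{B})$ with $F(\vec{A}) \tensor F(\vec{B})$. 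Finally, since $F$ is fully faithful by construction and every object $A$ of $\cC$ is the image $F(A)$ of the singleton tuple via the identity morphism (which is deterministic by \cref{ex:det_coherence}), \cref{prop:icd_equiv} yields that $F$ is an ICD-equivalence.

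The main obstacle will be the bookkeeping for equation \eqref{eq:icd_def_monoidal}: matching $\cop_{\vec{A}\vec{B}}$ in $\cC^{\text{str}}$ with $\cop_{F(\vec{A}) \tensor F(\vec{B})}$ in $\cC$ requires a careful but routine inductive comparison of parenthesizations, for which Mac Lane's coherence theorem is doing all the heavy lifting.
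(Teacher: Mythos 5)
Your proposal is correct, and it reaches the same strict category as the paper does, but it gets there by a slightly different route. The paper's proof first strictifies the deterministic subcategory $\cat{C}_{\det}$ as a symmetric monoidal category, and only then enlarges the hom-sets to $\cat{C}(F_{\det}(A),F_{\det}(B))$; the point of this detour is that the coherence data of the resulting equivalence then automatically lives in $\cat{C}_{\det}$, which is exactly what is needed to invoke \cref{prop:icd_equiv}, since the central difficulty is that a general isomorphism in an ICD-category need not be deterministic. You instead apply the explicit Mac Lane tuple construction to $\cat{C}$ itself and observe that the coherence isomorphisms of the comparison functor are composites of associators and unitors, hence deterministic and self-adjoint by \cref{ex:det_coherence}; this handles the same difficulty concretely rather than structurally. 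What your version buys is explicitness (the strict category and the functor are described directly, and the determinism of the coherences is visible by inspection); what the paper's version buys is modularity (it black-boxes the classical strictification theorem, applied to $\cat{C}_{\det}$, and does not depend on which strictification one chooses). Your final appeal to \cref{prop:icd_equiv} via fully faithfulness and essential surjectivity witnessed by identities is exactly how the paper concludes as well.
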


The main difficulty in the proof is the fact that an isomorphism in an ICD-category is not necessarily deterministic~\cite[Remark~10.10]{fritz2019synthetic}.
We follow the same arguments as in the Markov category case~\cite[Theorem~10.17]{fritz2019synthetic}.

\begin{proof}
	Let $\cat{C}$ be an ICD-category.
	We first consider a strictification $\cat{C}'_{\det}$ of $\cat{C}_{\det}$ (recall \cref{ex:det_coherence}), and write $F_{\det} : \cat{C}'_{\det} \to \cat{C}_{\det}$ for the strong symmetric monoidal equivalence, with coherence isomorphisms $\varphi_{A,B}\colon F_{\det}A\tensor F_{\det}B\to F_{\det}(A\tensor B)$ and $\varphi_I\colon I\to F_{\det}I$.
	We then define a category $\cat{C}'$ whose objects are those of $\cat{C}'_{\det}$ and whose morphisms are given by
	\[
		\cat{C}'(A,B) \coloneqq \cat{C}(F_{\det}(A), F_{\det}(B)).	
	\]
	Composition is inherited from $\cat{C}$.
	The tensor product on objects is the strict one of $\cat{C}'_{\det}$, while on morphisms it is transported through the coherences:
	\[
		f\tensor g \coloneqq \varphi_{B,B'}\comp (f\tensor g)\comp \varphi^{-1}_{A,A'}
	\]
	for $f\colon A\to B$ and $g\colon A'\to B'$ in $\cat{C}'$, where the right-hand side is computed in $\cat{C}$.
	The coherence axioms for $\varphi$ imply that this makes $\cat{C}'$ into a strict symmetric monoidal category (cf.~\cite[Theorem~10.17]{fritz2019synthetic}).
	Also, the above definition of the hom-sets shows that $\cat{C}'$ directly inherits an involution from the one of $\cat{C}$.
	This involution is immediately strictly monoidal, and it is a symmetric monoidal functor because the coherence morphisms of $F_{\det}$ are deterministic and in particular self-adjoint.
	Therefore, $\cat{C}'$ becomes a hom-involutive symmetric monoidal category.

	Let us name $F\colon \cat{C}' \to \cat{C}$ the obvious functor which extends $F_{\det}$, i.e.~$F(A)=F_{\det}(A)$ and $F$ is the identity on the hom-sets just defined.
	This is a strong monoidal functor with coherence isomorphisms $\varphi_{A,B}$ and $\varphi_I$.
	We now transport the ICD-structure along these coherences.
	Since $F$ is fully faithful, for every object $A$ of $\cat{C}'$ there are unique morphisms $\cop_A\colon A\to A\tensor A$ and $\discard_A\colon A\to I$ in $\cat{C}'$ determined by
	\[
		F(\cop_A) \coloneqq \varphi_{A,A}\comp \cop_{F A}
		\qquad\text{and}\qquad
		F(\discard_A) \coloneqq \varphi_I\comp \discard_{F A}.
	\]
	These are the only possible definitions making the diagrams~\eqref{eq:strong_icd_fun} commute for $F$.

	It remains to verify the ICD-axioms for these morphisms.
	Since $F$ is faithful, it is enough to check them after applying $F$.
	After applying $F$, the comonoid axioms and the compatibility with the tensor product are exactly the corresponding axioms for $(FA,\cop_{FA},\discard_{FA})$ in $\cat{C}$, with the coherence isomorphisms $\varphi$ inserted; the required equalities are precisely the standard coherence equations for a strong symmetric monoidal functor.
	The compatibility with the involution follows similarly from $\invo{\varphi_{A,B}}=\varphi_{A,B}$ and $\invo{\varphi_I}=\varphi_I$, together with the involutivity equations for $\cop_{FA}$ and $\discard_{FA}$ in $\cat{C}$ and the symmetry preservation axiom for $\varphi$.
	Thus $\cat{C}'$ is an ICD-category, and by construction $F$ is a strong ICD-functor.
	It is fully faithful, and every object of $\cat{C}$ is deterministically isomorphic to one in the image of $F$ because the same is true for $F_{\det}$.
	Hence $F$ is an ICD-equivalence by \cref{prop:icd_equiv}.
\end{proof}

\subsection{Classicality and compatibility}\label{sec:classical_compatible}

In our setting, an interesting question is which objects in an involutive Markov category can be considered ``classical'' or ``commutative'' and which ones display genuinely ``quantum'' or ``noncommutative'' behavior.
With this in mind, we introduce \emph{classical objects}, which, as the name suggests, are those that behave as in classical probability.
Whenever an object $A$ is not classical, it is still relevant to understand when a pair of morphisms out of $A$ displays ``classical'' behavior.  
This gives rise to the notion of \emph{compatibility}.

\begin{notation}\label{nota:invM}
	Throughout the rest of the paper, we restrict our attention to involutive Markov categories.
	This is motivated by the main categories of interest to us ($\calgmin$ and $\calgmax$), where morphisms are formal opposites of \emph{unital} maps, such as quantum channels.
	In classical Markov categories, this restriction corresponds to the normalization of probability.
	Note that the total morphisms in any ICD-category form an involutive Markov category.

	This restriction to involutive Markov categories is also motivated by the recent successful research in categorical probability, including work on quantum Markov categories~\cite{quantum-markov}, where the restriction to total morphisms facilitates arguments that would be impossible or more cumbersome for (quantum) CD-categories in general.
\end{notation}

Let us start with classical objects. In ordinary Markov categories, the copy morphisms are required to be invariant under swapping the outputs. This motivates the following.
\begin{lemma}
	\label{lem:cop_deterministic}
	The following are equivalent for an object $A$:
	\begin{enumerate}
		\item $\invo{\cop_A} = \cop_A$.
		\item $\swap_{A,A} \comp \cop_A = \cop_A$.
		\item $\cop_A$ is deterministic.
	\end{enumerate}
\end{lemma}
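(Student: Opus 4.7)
The plan is to prove (i) $\iff$ (ii) directly from the axioms and close the triangle via (iii) $\Rightarrow$ (i) and (ii) $\Rightarrow$ (iii). The equivalence (i) $\iff$ (ii) should follow immediately from the first identity in~\eqref{eq:icd_def_involution}, which identifies $\invo{\cop_A}$ with $\swap_{A,A} \comp \cop_A$; since $\swap_{A,A}$ is self-inverse, either of $\invo{\cop_A} = \cop_A$ and $\swap_{A,A} \comp \cop_A = \cop_A$ yields the other by postcomposition with $\swap_{A,A}$. Similarly, (iii) $\Rightarrow$ (i) is immediate from \cref{def:selfadj_total}, since any deterministic morphism is self-adjoint.

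The substantive step is (ii) $\Rightarrow$ (iii). I need to verify that $\cop_A$ is total, self-adjoint, and satisfies the copy-preservation identity~\eqref{eq:deterministic}. Totality is automatic in an involutive Markov category by \cref{nota:invM}; self-adjointness follows from (ii) via the equivalence (i) $\iff$ (ii) that was just proved. The remaining task is to establish
\[
\cop_{A \tensor A} \comp \cop_A \,=\, (\cop_A \tensor \cop_A) \comp \cop_A.
\]
Expanding $\cop_{A \tensor A}$ via~\eqref{eq:icd_def_monoidal} as $(\id \tensor \swap_{A,A} \tensor \id) \comp (\cop_A \tensor \cop_A)$, this reduces to showing that the transposition of the middle two outputs fixes $(\cop_A \tensor \cop_A) \comp \cop_A$.

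For this fixed-point statement, the plan is to rewrite $(\cop_A \tensor \cop_A) \comp \cop_A$ in the nested form $(\id \tensor \cop_A \tensor \id) \comp (\cop_A \tensor \id) \comp \cop_A$ using two applications of coassociativity~\eqref{eq:icd_def_comonoid}. In this presentation, the swap $(\id \tensor \swap_{A,A} \tensor \id)$ acts exactly on the outputs of the central $\cop_A$, producing $\id \tensor (\swap_{A,A} \comp \cop_A) \tensor \id$; by~(ii) this collapses to $\id \tensor \cop_A \tensor \id$, so the swap is absorbed and we recover the original four-fold copy.

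The main obstacle is not conceptual but rather the coassociativity bookkeeping in the last rewrite: the identity $(\cop_A \tensor \cop_A) \comp \cop_A = (\id \tensor \cop_A \tensor \id) \comp (\cop_A \tensor \id) \comp \cop_A$ is standard for commutative comonoids but requires care to unwind. The strictification theorem (\cref{prop:strictification}) lets me suppress associators and unitors throughout, keeping the string-diagrammatic manipulation clean and making it clear that coassociativity alone justifies the rearrangement.
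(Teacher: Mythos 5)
Your proof is correct and follows essentially the same route as the paper: (i) $\iff$ (ii) is immediate from the axiom $\invo{\cop_A} = \swap_{A,A} \comp \cop_A$, and the (ii) $\Rightarrow$ (iii) step is exactly the ``straightforward calculation'' that the paper delegates to~\cite[Remark~10.2]{fritz2019synthetic}, which you instead write out in full via coassociativity; closing the cycle through (iii) $\Rightarrow$ (i) rather than (iii) $\Rightarrow$ (ii) is an inessential variation. The key rewriting $(\cop_A \tensor \cop_A) \comp \cop_A = (\id \tensor \cop_A \tensor \id) \comp (\cop_A \tensor \id) \comp \cop_A$ does indeed follow from coassociativity alone, so the swap-absorption argument goes through.
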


\begin{proof}
	The equivalence of the first two conditions is clear by the requirement $\invo{\cop_A} = \swap_{A,A} \comp \cop_A$.
	The equivalence of the second and third conditions follows by a straightforward calculation as in~\cite[Remark~10.2]{fritz2019synthetic} for ordinary Markov categories.
\end{proof}

\begin{definition}
	\label{def:classical}
	Let $\cat{C}$ be an involutive Markov category. An object $A$ in $\cat{C}$ is \newterm{classical} if it satisfies the equivalent conditions of \cref{lem:cop_deterministic}.
\end{definition}

\begin{definition}
	\label{def:classical_subcat}
	The \newterm{classical subcategory} $\classical{\cat{C}}$ of an involutive Markov category $\cat{C}$ is the subcategory whose objects are classical and morphisms are the self-adjoint morphisms.
\end{definition}

It is easy to see that $\classical{\cat{C}}$ is a Markov category, and in particular that if $A$ and $B$ are classical, then so is $A \tensor B$.

\begin{example}
	In the involutive Markov category given by the total morphisms of a cofree ICD-category $\cofree(\cat{C})$, the classical subcategory consists of the commutative comonoids with self-adjoint morphisms.
	Similarly in $\calgmin$ and $\calgmax$, the classical subcategory consists of the commutative pre-C*-algebras (with self-adjoint morphisms).
	These statements are immediate from \Cref{lem:cop_deterministic} since the copy morphisms are given by multiplication.
\end{example}

\begin{notation}\label{nota:product_power}
	For two morphisms $\phi,\psi$ with the same domain $A$, and $n$ any positive integer, we will use the shorthand notations
	\begin{equation*}
		\tikzfig{product}
	\end{equation*}
	for easier writing, where the right-hand equation defines $\phi^{(n)}$ recursively starting with $\phi^{(0)} \coloneqq \discard_A$.
\end{notation}

In $\calgmin$ and $\calgmax$, the left-hand equation amounts to the formation of the \emph{product map}, which is defined as the unique linear extension of
\[
	\opm{\pairing{\phi}{\psi}}{}(x \odot y) \coloneqq \opm{\phi}{}(x) \, \opm{\psi}{}(y).
\]

\begin{definition}\label{def:comp}
	Let $\phi,\psi$ be morphisms with the same domain. Then:
	\begin{enumerate}
		\item $\phi$ and $\psi$ are \newterm{compatible} if $\pairing{\phi}{\psi} = \swap \comp \pairing{\psi}{\phi}$.
		\item $\phi$ is \newterm{autocompatible} if it is compatible with itself.
	\end{enumerate} 
\end{definition}

\begin{remark}\label{rem:compatibility_prop}
	\begin{enumerate}
		\item\label{it:class_comp} If the domain of $\phi$ and $\psi$ is classical, then $\phi$ and $\psi$ are immediately compatible because the copy morphism is invariant under swapping:
			\begin{equation*}
				\tikzfig{classical_product}
			\end{equation*}
		\item Compatibility of morphisms is clearly a symmetric relation. Indeed, if we assume $\pairing{\phi}{\psi} = \swap\comp\pairing{\psi}{\phi}$, then we have
			\[
				\pairing{\psi}{\phi}= \swap \comp \swap \comp\pairing{\psi}{\phi} = \swap \comp \pairing{\phi}{\psi},
			\]
			and hence also $\psi$ and $\phi$ are compatible.
		\item \label{it:self_comp} Whenever $\phi$ and $\psi$ are self-adjoint, then compatibility is equivalent to $\pairing{\phi}{\psi}$ being self-adjoint.
	\end{enumerate}
\end{remark}

\begin{example}\label{ex:commutingranges}
	In $\calgmin$ and $\calgmax$, two morphisms $\phi \colon A \to B$ and $\psi : A \to C$ are compatible if and only if
	\[
		\opm{\phi}{} (x) \, \opm{\psi}{}(y) = \opm{\psi}{}(y) \,\opm{\phi}{}(x)
	\]
	for all $x \in \pre{B}$ and $y \in \pre{C}$, which means that $\opm{\phi}{}$ and $\opm{\psi}{}$ have commuting ranges.
	In particular, $\phi$ is autocompatible if and only if the range of $\opm{\phi}{}$ is commutative.
	
	We can now explain the term ``compatible'' by showing how the compatibility of quantum observables is an instance of this notion.
	If we take $B = C = \mathbb{C}^{\{0,1\}}$, then a self-adjoint morphism $\phi: A \to \mathbb{C}^{\{0,1\}}$ corresponds to a self-adjoint element $x \in \pre{A}$ by \Cref{ex:observables}, and similarly $\psi : A \to \mathbb{C}^{\{0,1\}}$ corresponds to a self-adjoint element $y \in \pre{A}$.
	Then $\phi$ and $\psi$ are compatible if and only if the elements $x$ and $y$ commute, and this is the usual notion of compatibility of observables in quantum theory.
\end{example}

Another important aspect of compatibility is that it allows us to state a version of the universal property of the maximal tensor product of C*-algebras (\cite[Exercise 3.5.1]{brownozawa}) in \emph{all} involutive Markov categories.

\begin{proposition}
	In any involutive Markov category, there is a bijective correspondence between the following two sets:
	\begin{equation}
		\label{eq:comp_det_bij}
		\Set{\hspace{-1ex}\begin{array}{c}
			\text{Compatible pairs of deterministic morphisms}\\
			A \to B\text{ and }A \to C
		\end{array}\hspace{-1ex}} 
		\quad \stackrel{\cong}{\longleftrightarrow} \quad
		\Set{\hspace{-1ex}\begin{array}{c}
			\text{Deterministic morphisms}\\
			A \to B \tensor C
		\end{array}\hspace{-1ex}}
	\end{equation}
	where a pair $(\phi\colon A \to B, \, \psi \colon A \to C)$ is sent to $\pairing{\phi}{\psi}$.
\end{proposition}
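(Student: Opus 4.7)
The plan is to exhibit the inverse as the marginalization map
\[
	\alpha \;\longmapsto\; \Bigl((\id_B \tensor \discard_C) \comp \alpha,\ (\discard_B \tensor \id_C) \comp \alpha \Bigr),
\]
and to verify both round-trips. Starting from a pair $(\phi, \psi)$, forming $\pairing{\phi}{\psi}$ and then marginalizing recovers $\phi$ and $\psi$ directly by the counit laws and totality of $\phi, \psi$. The substance therefore lies in two claims: (a) $\pairing{\phi}{\psi}$ is deterministic whenever $\phi$ and $\psi$ are deterministic and compatible; (b) for a deterministic $\alpha \colon A \to B \tensor C$, the two marginals form a compatible pair of deterministic morphisms whose pairing recovers $\alpha$.

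Claim (b) is the easier direction. The marginals are composites of deterministic maps, hence deterministic. To see $\pairing{\phi}{\psi} = \alpha$ for the marginals, I would expand the pairing, use determinism of $\alpha$ to obtain $\cop_{B \tensor C} \comp \alpha$, unfold $\cop_{B \tensor C}$ via axiom (iii) of \cref{def:icd}, and collapse the result using the counit laws. A completely symmetric computation yields $\alpha = \swap \comp \pairing{\psi}{\phi}$, which is exactly the compatibility of $\phi$ and $\psi$.

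Claim (a) is the main obstacle. Totality of $\pairing{\phi}{\psi}$ is immediate from totality of $\phi$ and $\psi$, and self-adjointness is \cref{rem:compatibility_prop}(iii). The copy condition requires more care. Unfolding $\cop_{B \tensor C}$ by axiom (iii) of \cref{def:icd} and then pushing the two inner copies through $\phi$ and $\psi$ using their determinism, the condition reduces to proving
\[
	(\phi \tensor \psi \tensor \phi \tensor \psi) \comp (\id \tensor \swap_{A,A} \tensor \id) \comp (\cop_A \tensor \cop_A) \comp \cop_A \;=\; (\phi \tensor \psi \tensor \phi \tensor \psi) \comp (\cop_A \tensor \cop_A) \comp \cop_A.
\]
Because $A$ need not be classical, cocommutativity of $\cop_A$ is unavailable, and this is precisely where compatibility must enter. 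By coassociativity I would rewrite $(\cop_A \tensor \cop_A) \comp \cop_A$ in the form $(\id \tensor \cop_A \tensor \id) \comp (\cop_A \tensor \id) \comp \cop_A$, so that the two middle outputs of the four-fold copy arise as siblings from a common $\cop_A$. The central $\swap_{A,A}$ then converts this inner $\cop_A$ into $\invo{\cop_A}$ via axiom (ii) of \cref{def:icd}, and composing with the middle $(\psi \tensor \phi)$, the compatibility identity $(\psi \tensor \phi) \comp \swap_{A,A} \comp \cop_A = (\psi \tensor \phi) \comp \cop_A$ (equivalent to the definition in \cref{def:comp} after using self-adjointness of $\phi, \psi$) absorbs the swap, completing the argument.
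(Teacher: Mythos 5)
Your proposal is correct and follows the same overall strategy as the paper: marginalization provides the inverse, injectivity is immediate from the counit laws, and surjectivity is checked by pairing the marginals of a deterministic $\pi\colon A \to B \tensor C$ and using its determinism and self-adjointness. The one place where you go beyond the paper is claim (a): the paper dismisses determinism of $\pairing{\phi}{\psi}$ in one line (``since the copy morphisms respect the symmetric monoidal structure''), whereas your reduction to the four-fold-copy identity with the middle $\swap_{A,A}$, followed by coassociativity and the absorption of that swap via compatibility, is the actual content of the step --- the monoidal axiom for $\cop$ alone does not suffice when $A$ is not classical (in $\calgone$ this is exactly the statement that $\opm{\pairing{\phi}{\psi}}{}$ is multiplicative iff the ranges commute). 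Your parenthetical attributing the identity $(\psi \tensor \phi)\comp\swap_{A,A}\comp\cop_A = (\psi\tensor\phi)\comp\cop_A$ to self-adjointness is a harmless detour: it already follows from compatibility together with naturality of the swap, though the route through $\invo{\cop_A}$ and self-adjointness is also valid since deterministic morphisms are self-adjoint by definition.
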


This result is reminiscent of the universal property of the maximal tensor product of C*-algebras (cf.~\cref{prop:display_compatibility}; see \cite[Proposition 3.3.7]{brownozawa} for an explicit statement restricted to $*$-ho\-mo\-mor\-phisms).
The crucial distinction is that, in the statement above, morphisms are not required to be bounded (see \cref{ex:homnotbound}), whereas in the universal property of the maximal tensor product all maps are completely positive, and hence bounded (\cref{prop:pu_bounded}).

\begin{proof}
	We first prove that the map is well-defined by showing that such $\pairing{\phi}{\psi}$ is indeed deterministic.
	Given a pair of compatible deterministic morphisms $\phi \colon A \to B$ and $\psi\colon A \to C$, by definition of compatibility $\pairing{\phi}{\psi}$ is immediately self-adjoint. 
	It is also deterministic, since the copy morphisms respect the symmetric monoidal structure~\eqref{eq:icd_def_monoidal}.

	Injectivity of the map~\eqref{eq:comp_det_bij} now immediately follows from marginalization, since $\phi$ and $\psi$ can be reconstructed from $\pairing{\phi}{\psi}$ by composing with deletion on either output.
	For surjectivity, consider a deterministic morphism $\pi \colon A \to B \tensor C$, and take its marginals
\begin{equation*}
	\tikzfig{det_compatible2}
\end{equation*}
	As composites of deterministic morphisms, $\phi$ and $\psi$ are deterministic as well.
	Further we have
	\begin{equation*}
		\tikzfig{det_compatible}
	\end{equation*}
	by determinism of $\pi$. 
	Finally since $\pi$ is self-adjoint, $\phi$ and $\psi$ are also compatible by \cref{rem:compatibility_prop}\ref{it:self_comp}.
\end{proof}

\begin{proposition}\label{prop:comp_product}
	Let $\phi,\psi$ and $\omega$ be pairwise compatible morphisms. Then $\phi$ is also compatible with $\pairing{\psi}{\omega}$.
\end{proposition}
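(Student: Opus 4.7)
The plan is to argue diagrammatically by pushing $\phi$ past $\psi$ and then past $\omega$, using each of the two pairwise compatibility assumptions once. Concretely, let $\phi\colon A\to B$, $\psi\colon A\to C$, $\omega\colon A\to D$, and note that by coassociativity of $\cop_A$, the triple product $\pairing{\phi}{\pairing{\psi}{\omega}}$ can be written (up to the associator) as $(\phi\tensor\psi\tensor\omega)\comp\cop^{3}_A$, where $\cop^{3}_A\colon A\to A\tensor A\tensor A$ is the unambiguous iterated copy.

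First I would group the two leftmost outputs using one parenthesization of $\cop^{3}_A$ to rewrite this as $(\pairing{\phi}{\psi}\tensor\omega)\comp\cop_A$. Applying the compatibility of $\phi$ with $\psi$, i.e.\ $\pairing{\phi}{\psi}=\swap_{C,B}\comp\pairing{\psi}{\phi}$, and pulling the swap outside through naturality of the monoidal product, this becomes
\[
(\swap_{C,B}\tensor\id_D)\comp(\psi\tensor\phi\tensor\omega)\comp\cop^{3}_A.
\]
Now re-associate $\cop^{3}_A$ the other way so that the two rightmost copies are grouped, rewriting $(\psi\tensor\phi\tensor\omega)\comp\cop^{3}_A=(\psi\tensor\pairing{\phi}{\omega})\comp\cop_A$. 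Applying the compatibility of $\phi$ with $\omega$, namely $\pairing{\phi}{\omega}=\swap_{D,B}\comp\pairing{\omega}{\phi}$, and again pulling the swap outside, we reach
\[
(\swap_{C,B}\tensor\id_D)\comp(\id_C\tensor\swap_{D,B})\comp(\psi\tensor\omega\tensor\phi)\comp\cop^{3}_A.
\]

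The final step is to recognize the composite $(\swap_{C,B}\tensor\id_D)\comp(\id_C\tensor\swap_{D,B})$ as the symmetry $\swap_{C\tensor D,\,B}\colon (C\tensor D)\tensor B\to B\tensor(C\tensor D)$; this is a standard consequence of the hexagon axiom for a symmetric monoidal category and is the ``braid move'' that slides a single wire past two adjacent wires. Since $(\psi\tensor\omega\tensor\phi)\comp\cop^{3}_A=\pairing{\pairing{\psi}{\omega}}{\phi}$, combining the chain of equalities yields $\pairing{\phi}{\pairing{\psi}{\omega}}=\swap\comp\pairing{\pairing{\psi}{\omega}}{\phi}$, which is exactly the required compatibility.

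I do not anticipate a genuine obstacle: everything reduces to the comonoid axioms from~\eqref{eq:icd_def_comonoid}, the naturality of the symmetry, and two applications of the compatibility hypothesis. The mildly delicate part is bookkeeping the associators and the precise swap in the target (which is why the strictification theorem \cref{prop:strictification} is convenient here); phrasing the argument in string diagrams makes this essentially automatic.
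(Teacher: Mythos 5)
Your proposal is correct and follows essentially the same route as the paper, which also proves the claim by a direct string-diagram calculation using coassociativity of the copy morphism and the two compatibility hypotheses involving $\phi$ (first sliding $\phi$ past $\psi$, then past $\omega$, and assembling the two swaps into $\swap_{C\tensor D,\,B}$ via the hexagon axiom). The only remark worth adding is one your argument makes visible: the compatibility of $\psi$ with $\omega$ is never used, so the hypothesis of pairwise compatibility is slightly stronger than necessary.
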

\begin{proof}
	This follows by associativity of the copy morphisms via the following direct calculation: 
	\begin{equation*}
		\tikzfig{comp_product1}
	\end{equation*}
	\begin{equation*}
		\tikzfig{comp_product2}
	\end{equation*}
\end{proof}

\begin{corollary}\label{cor:autocomp_powers}
	Let $\phi$ be an autocompatible morphism. Then $\phi^{(n)}$ is compatible with $\phi^{(m)}$ for any $n,m \in \N$. 
\end{corollary}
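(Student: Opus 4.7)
The plan is to use \cref{prop:comp_product} twice in a nested induction, after first observing that $\phi^{(0)} = \discard_A$ is trivially compatible with every morphism (the two sides of the compatibility equation are both equal to the morphism in question, up to the unique unitor isomorphism).

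First I would prove the one-sided version: $\phi^{(n)}$ is compatible with $\phi$ for every $n \in \N$, by induction on $n$. The base cases $n = 0$ (trivial) and $n = 1$ (autocompatibility of $\phi$) are immediate. For the inductive step, I would apply \cref{prop:comp_product} to the triple $(\phi, \phi, \phi^{(n)})$: these are pairwise compatible by autocompatibility and the inductive hypothesis, so $\phi$ is compatible with $\pairing{\phi}{\phi^{(n)}} = \phi^{(n+1)}$, and hence $\phi^{(n+1)}$ is compatible with $\phi$ by symmetry of the compatibility relation (\cref{rem:compatibility_prop}).

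Next I would induct on $m$ to establish the general statement. The case $m = 0$ is trivial, $m = 1$ is the previous step, and for the inductive step from $m$ to $m+1$ I would apply \cref{prop:comp_product} to the triple $(\phi^{(n)}, \phi, \phi^{(m)})$: pairwise compatibility follows from the first step (for $\phi^{(n)}$--$\phi$ and $\phi$--$\phi^{(m)}$) and from the inductive hypothesis (for $\phi^{(n)}$--$\phi^{(m)}$). This yields compatibility of $\phi^{(n)}$ with $\pairing{\phi}{\phi^{(m)}} = \phi^{(m+1)}$, closing the induction.

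I do not expect any genuine obstacle here, since \cref{prop:comp_product} is exactly tailored to this kind of bootstrapping; the only subtlety is to be careful about the fact that $\phi^{(n+1)}$ is defined as $\pairing{\phi}{\phi^{(n)}}$ (with $\phi$ on the left and the recursive term on the right), so that the relevant triple in the inductive step is $(\phi^{(n)}, \phi, \phi^{(m)})$ rather than $(\phi^{(n)}, \phi^{(m)}, \phi)$.
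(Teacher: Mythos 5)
Your proposal is correct and is essentially the paper's own proof: the paper simply states that the result ``follows by a double induction on $n$ and $m$ using \cref{prop:comp_product}'', and your argument is a careful spelling-out of exactly that double induction, including the correct handling of the base case $\phi^{(0)} = \discard_A$ and of the recursive convention $\phi^{(n+1)} = \pairing{\phi}{\phi^{(n)}}$.
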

\begin{proof}
	This follows by a double induction on $n$ and $m$ using \cref{prop:comp_product}.
\end{proof}

Autocompatibility has a strong relation with classicality, as the following results suggest.

\begin{lemma}\label{lem:noninv_mono}
	Let $\phi\colon A \to B$ be an autocompatible morphism such that $\phi \tensor \phi$ is monic.
	Then $A$ is classical.
\end{lemma}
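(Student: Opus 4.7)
The plan is to unpack the definition of autocompatibility, rewrite the swap using naturality, and then cancel $\phi \tensor \phi$ using the monicity hypothesis to arrive at one of the equivalent conditions for classicality from \cref{lem:cop_deterministic}.

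More concretely, autocompatibility of $\phi$ means, by \cref{def:comp} and \cref{nota:product_power},
\[
(\phi \tensor \phi) \comp \cop_A \;=\; \swap_{B,B} \comp (\phi \tensor \phi) \comp \cop_A.
\]
Since $\swap$ is a natural transformation on the underlying symmetric monoidal category, we have $\swap_{B,B} \comp (\phi \tensor \phi) = (\phi \tensor \phi) \comp \swap_{A,A}$, so the right-hand side equals $(\phi \tensor \phi) \comp \swap_{A,A} \comp \cop_A$. This gives the equation
\[
(\phi \tensor \phi) \comp \cop_A \;=\; (\phi \tensor \phi) \comp \swap_{A,A} \comp \cop_A,
\]
and since $\phi \tensor \phi$ is monic by hypothesis, we cancel it to obtain $\cop_A = \swap_{A,A} \comp \cop_A$. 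By \cref{lem:cop_deterministic}, this precisely says that $A$ is classical.

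There is no serious obstacle here; the only thing to be careful about is to apply the naturality of $\swap$ correctly, which in the string-diagrammatic language of \cref{sec:strictification} is automatic. The argument also makes clear why the hypothesis on $\phi \tensor \phi$ rather than on $\phi$ alone is what matters: autocompatibility is a statement about the joint $\pairing{\phi}{\phi}$ rather than about $\phi$ individually, and it is only after cancelling the joint that one recovers the internal condition on $\cop_A$.
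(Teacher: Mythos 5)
Your proof is correct and follows essentially the same route as the paper: the paper's string-diagram computation is precisely the naturality-of-swap rewriting you perform algebraically, after which the monicness of $\phi \tensor \phi$ cancels it to yield $\cop_A = \swap_{A,A} \comp \cop_A$, which is classicality of $A$ by \cref{lem:cop_deterministic}.
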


\begin{proof}
	This is a direct check:
	\begin{equation*}
		\tikzfig{autocomp_classical}
	\end{equation*}
	The monicness assumption can now be applied to obtain the desired conclusion.
\end{proof}

\begin{remark}
	\label{rem:nondet_iso}
	The mere existence of an autocompatible isomorphism $A \to B$ does not suffice to show that $B$ is classical. 
	For instance in $\calgmin$ or $\calgmax$, consider $B=M_n$, the matrix algebra of $n\times n$ matrices, and $A=\mathbb{C}^{n^2}$. 
	Then there exist linear unital isomorphisms $\opm{\phi}{} \colon B \rightsquigarrow A$, and any such $\phi$ is autocompatible since it has commutative range.
\end{remark}

\begin{lemma}\label{lem:noninv_det_epi}
	Let $\phi\colon A \to B$ be a deterministic autocompatible epimorphism. Then $B$ is classical.
\end{lemma}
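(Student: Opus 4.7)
The plan is to combine the three hypotheses to conclude that the swap-invariance condition characterizing classicality holds after precomposition with $\phi$, and then use the epimorphism assumption to cancel $\phi$.

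First, I would use determinism of $\phi$ to rewrite the product $\pairing{\phi}{\phi}$ as $\cop_B \comp \phi$. Indeed, by definition
\[
\pairing{\phi}{\phi} = (\phi \tensor \phi) \comp \cop_A,
\]
and determinism of $\phi$ (equation~\eqref{eq:deterministic}) gives exactly $(\phi \tensor \phi) \comp \cop_A = \cop_B \comp \phi$.

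Next, autocompatibility of $\phi$ states $\pairing{\phi}{\phi} = \swap_{B,B} \comp \pairing{\phi}{\phi}$. Substituting the identity from the previous step yields
\[
\cop_B \comp \phi \;=\; \swap_{B,B} \comp \cop_B \comp \phi.
\]
Since $\phi$ is an epimorphism, I can cancel it on the right, obtaining $\cop_B = \swap_{B,B} \comp \cop_B$, which is exactly the second (hence all) of the equivalent characterizations of classicality from \cref{lem:cop_deterministic}. Thus $B$ is classical.

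This argument is entirely routine given the preparations already in place, and there is no real obstacle: the whole point is that determinism lets us transport the copy morphism of $B$ through $\phi$, autocompatibility provides the symmetry on the resulting composite, and epimorphism lets us strip $\phi$ away. The only thing worth noting is that this is essentially the ``dual'' counterpart of \cref{lem:noninv_mono}, where monicness of $\phi \tensor \phi$ was used to move in the opposite direction along $A$; here epimorphism of $\phi$ plays the analogous role on $B$.
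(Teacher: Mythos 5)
Your proof is correct and follows essentially the same route as the paper's: determinism rewrites $\pairing{\phi}{\phi}$ as $\cop_B \comp \phi$, autocompatibility supplies the swap-invariance, and epicness of $\phi$ cancels it to give $\swap_{B,B}\comp\cop_B = \cop_B$. Nothing is missing.
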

\begin{proof}
	By autocompatibility, $\pairing{\phi}{\phi}$ is invariant under swap. Since $\phi$ is deterministic, we infer that
	\begin{equation*}
		\tikzfig{autocomp_det_classical}
	\end{equation*}
	The epicness of $\phi$ implies that the same holds when $\phi$ is dropped; hence $B$ is classical.
\end{proof}

Combining \cref{lem:noninv_mono,lem:noninv_det_epi}, we also obtain the following.

\begin{corollary}
	The domain and codomain of a deterministic autocompatible isomorphism are classical.
\end{corollary}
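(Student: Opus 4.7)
The plan is to derive this corollary directly from the two preceding lemmas by using the fact that an isomorphism is both monic and epic, and that tensoring preserves isomorphisms. Concretely, let $\phi \colon A \to B$ be a deterministic autocompatible isomorphism.

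For the codomain $B$, I would observe that every isomorphism is in particular an epimorphism. Thus $\phi$ satisfies all the hypotheses of \cref{lem:noninv_det_epi} (deterministic, autocompatible, epi), which immediately yields that $B$ is classical.

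For the domain $A$, the strategy is to apply \cref{lem:noninv_mono}, which needs $\phi \tensor \phi$ to be monic. Since $\phi$ is invertible and the tensor product is a bifunctor, $\phi \tensor \phi$ has inverse $\phi^{-1} \tensor \phi^{-1}$, hence is itself an isomorphism and in particular monic. Together with autocompatibility of $\phi$, the lemma then gives that $A$ is classical.

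I do not expect any real obstacle: both lemmas are stated in exactly the form needed, and the only thing to check is that the hypotheses (epi and $\phi \tensor \phi$ monic) follow from $\phi$ being an isomorphism, which is standard. The proof is essentially a two-line application of the preceding results, so the main task is simply to present this assembly clearly.
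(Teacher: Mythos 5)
Your proof is correct and is exactly the intended argument: the paper derives this corollary by "combining" \cref{lem:noninv_mono,lem:noninv_det_epi}, using the epimorphism half for the codomain and the monicness of $\phi \tensor \phi$ (an isomorphism with inverse $\phi^{-1} \tensor \phi^{-1}$) for the domain. Nothing is missing.
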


\begin{definition}
	A morphism $\phi\colon A \to B$ in an involutive Markov category $\cat{C}$ is said to be \newterm{non-invasive} if it is compatible with the identity of $A$, i.e.~if
	\[
		\pairing{\phi}{\id} = \swap\comp \pairing{\id}{\phi}.
	\]
\end{definition}

\begin{remark}\label{rem:ni_prop}
	From \cref{rem:compatibility_prop} we deduce the following facts:
	\begin{enumerate}
		\item Any morphism with classical domain is non-invasive.
		\item Whenever $\phi$ is non-invasive and self-adjoint, then also $\pairing{\phi}{\id}$ is self-adjoint. 
		In particular, the non-invasiveness of the identity morphism $\id_A$ of $A$ is equivalent to the classicality of the object $A$.
	\end{enumerate}
	Moreover, a non-invasive morphism $\phi \colon A \to B$ is compatible with all morphisms out of $A$, as a simple string diagram calculation shows. In particular, every non-invasive morphism is also autocompatible.
\end{remark}

\begin{example}\label{ex:ni_center}
	A morphism $\phi\colon A \to B$ in $\calgmin$ or $\calgmax$ is non-invasive if and only if
	\[
		\opm{\phi}{}(x) \, y = y \, \opm{\phi}{}(x)
	\]
	for all $x \in \pre{B}$ and $y \in \pre{A}$. In other words, we can equivalently characterize non-invasive morphisms as those that map to the center of ${A}$.

	This notion of non-invasiveness is not to be confused with the concept of \emph{interaction-free measurement} in quantum theory~\cite{vaidman2001paradoxes}.
	While a general definition of what counts as ``interaction-free'' does not seem to exist, we can see as follows that our notion of non-invasiveness is quite different.
	If $A$ is a matrix algebra (of matrix size $>1$), then its centre is trivial, and therefore the only non-invasive morphisms $A \to B$ are those that factor across $I = \mathbb{C}$.
	This is a physically uninteresting operation, as it amounts to discarding the original system and preparing a new state.
	But the case of matrix algebras is the one that most quantum information literature focuses on, including the literature on interaction-free measurements, which can still be interesting and nontrivial despite trivial center.

	On the other hand, our notion of non-invasiveness also applies to morphisms that are not measurements.\footnote{Measurements can be defined as morphisms $A \to B$ for which $B$ is classical; for $B = \mathbb{C}^n$ in $\cpu_{\min/\max}$ (see \cref{def:cpu}), this reproduces the standard notion of POVM with finitely many outcomes.}
\end{example}

\begin{proposition}\label{prop:noninv_closure}
	Let $\phi$ and $\psi$ be any two morphisms in an involutive Markov category.
	\begin{enumerate}
		\item\label{it:tensor_ni} If $\phi$ and $\psi$ are non-invasive, then so is $\phi \tensor \psi$.
		\item\label{it:product_ni} If $\phi$ and $\psi$ have the same domain, then both are non-invasive if and only if $\pairing{\phi}{\psi}$ is non-invasive.
		\item \label{it:composition_ni} If $\phi$ and $\psi$ are composable and $\phi$ is non-invasive, then $\psi \comp \phi$ is non-invasive as well.
	\end{enumerate}
\end{proposition}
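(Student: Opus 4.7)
The plan is to prove the three parts in the order (iii), (i), (ii), since (iii) is the cleanest computation and provides leverage for the other two.

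For (iii), the strategy is a direct string-diagrammatic calculation. Starting from $\pairing{\psi\comp\phi}{\id_A}$, I would pull $\psi$ out of the copy (so that $\pairing{\psi\comp\phi}{\id_A} = (\psi \tensor \id_A)\comp \pairing{\phi}{\id_A}$), apply the non-invasiveness of $\phi$ to rewrite this as $(\psi\tensor\id_A)\comp\swap_{B,A}\comp\pairing{\id_A}{\phi}$, and then use naturality of the swap to move $\psi$ across, yielding $\swap_{C,A}\comp(\id_A\tensor\psi)\comp\pairing{\id_A}{\phi} = \swap_{C,A}\comp\pairing{\id_A}{\psi\comp\phi}$, which is exactly non-invasiveness of $\psi\comp\phi$.

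For (i), I would factor $\phi\tensor\psi = (\phi\tensor\id_D)\comp(\id_A\tensor\psi)$ (or rather $(\phi \tensor \id_C) \comp (\id_A \tensor \psi)$ once domains are straightened). By (iii) it then suffices to show that $\id_A \tensor \psi$ is non-invasive (so that composing on the left with any morphism, in particular $\phi \tensor \id$, preserves non-invasiveness). To handle $\id_A \tensor \psi$, I would expand $\cop_{A\tensor C}$ via the monoidal-compatibility axiom~\eqref{eq:icd_def_monoidal} into $\cop_A \tensor \cop_C$ followed by a middle swap, and observe that this factorization reduces the identity $\pairing{\id_A\tensor\psi}{\id_{A\tensor C}} = \swap\comp\pairing{\id_{A\tensor C}}{\id_A\tensor\psi}$ to non-invasiveness of $\psi$ on the $C$ wires (with the $A$ wires contributing a trivial identity statement, the axiom for the classical object $\id_A$ being replaced simply by the obvious auto-compatibility of the identity, since $\pairing{\id}{\id} = \cop$ is swap-invariant is not needed, only the obvious $\pairing{\id_A}{\id_A} = \pairing{\id_A}{\id_A}$). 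The symmetric case $\phi\tensor\id_C$ is analogous.

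For (ii), the forward direction uses \cref{prop:comp_product} decisively. If $\phi,\psi$ are both non-invasive with common domain $A$, then by \cref{rem:ni_prop} each is compatible with every morphism out of $A$; in particular $\phi$ and $\psi$ are compatible with each other, and both are compatible with $\id_A$. The three morphisms $\id_A, \phi, \psi$ are therefore pairwise compatible, and \cref{prop:comp_product} (applied with $\phi\mapsto\id_A$, $\psi\mapsto\phi$, $\omega\mapsto\psi$) yields that $\id_A$ is compatible with $\pairing{\phi}{\psi}$, i.e.~that $\pairing{\phi}{\psi}$ is non-invasive. For the converse, postcompose the non-invasive $\pairing{\phi}{\psi}$ with $\id_B\tensor\discard_C$ to obtain $\phi$, which is non-invasive by part (iii); symmetrically for $\psi$.

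The main obstacle will be keeping the bookkeeping of wires in part (i) transparent: since $\cop_{A\tensor C}$ involves a middle swap, one has to carry associators/swaps through the calculation, and it is easy for the diagram to become unwieldy. Strictification (\cref{prop:strictification}) legitimises suppressing associators, so the remaining task is to handle the one interchange swap and invoke non-invasiveness of the relevant factor.
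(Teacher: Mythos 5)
Your parts (ii) and (iii) are correct and coincide with the paper's argument: (iii) is the same direct string-diagram calculation (pull $\psi$ out of the copy, apply non-invasiveness of $\phi$, slide $\psi$ across the swap by naturality), and (ii) is exactly ``marginalization plus \cref{prop:comp_product}'' as in the paper, with your use of (iii) to justify that marginals of a non-invasive morphism are non-invasive being a fine way to phrase the marginalization step.

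Part (i), however, has a genuine gap. Your reduction factors $\phi \tensor \psi = (\phi \tensor \id)\comp(\id_A \tensor \psi)$ and then invokes (iii), which requires the \emph{inner} morphism $\id_A \tensor \psi$ to be non-invasive. This is false in general. Concretely, in $\calgmin$ the non-invasiveness of $\id_A \tensor \psi \colon A \tensor C \to A \tensor D$ amounts to the range of $\id_A \odot \opm{\psi}{}$ lying in the centre of $\pre{A} \odot \pre{C}$ (\cref{ex:ni_center}); since $\opm{\psi}{}$ is unital, that range contains every $a \odot 1$, so this forces $A$ to be commutative. The failure is already visible in your own sketch: after expanding $\cop_{A\tensor C}$ via \eqref{eq:icd_def_monoidal}, the identity you need on the $A$-wires is not the tautology $\pairing{\id_A}{\id_A} = \pairing{\id_A}{\id_A}$ but rather $\swap_{A,A}\comp\cop_A = \cop_A$, i.e.\ classicality of $A$ --- equivalently, non-invasiveness of $\id_A$ (\cref{rem:ni_prop}) --- which is not assumed. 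The repair is to not split the tensor at all: expand $\cop_{A\tensor C}$ into $\cop_A \tensor \cop_C$ followed by the middle swap, apply non-invasiveness of $\phi$ on the $A$-wires and of $\psi$ on the $C$-wires \emph{simultaneously}, and recombine using naturality of the swap. This is precisely the paper's (one-line) proof of item~(i).
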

\begin{proof}
		Item~\ref{it:tensor_ni} is proved using naturality of the swap and the monoidal multiplicativity of copy from~\eqref{eq:icd_def_monoidal}.
		The ``if'' implication of Item~\ref{it:product_ni} follows by marginalization, while the ``only if'' is a consequence of \cref{prop:comp_product}.
	Concerning Item~\ref{it:composition_ni}, we have the following direct calculation:
	\begin{equation*}
		\tikzfig{composition_ni}\qedhere
	\end{equation*}
\end{proof}

\subsection{Almost sure equalities}\label{sec:as}

Almost sure equality has been introduced in categorical probability in a special case in~\cite[Definition~5.1]{chojacobs2019strings} and in~\cite[Definition~13.1]{fritz2019synthetic} in general.
We now develop this notion for involutive Markov categories
based on Parzygnat's~\cite[Definition~5.1]{quantum-markov}, where the first two of the following variants were considered.

\begin{definition}
	Let $\omega\colon A \to B$ and $\phi,\psi\colon B \to C$ be morphisms in an involutive Markov category.
	\begin{enumerate}
		\item $\phi$ and $\psi$ are \newterm{left (resp.~right) $\as{\omega}$~equal}, in symbols $\phi \asel{\omega} \psi$ (resp.~$\phi\aser{\omega} \psi$), if 
			\begin{equation}
				\tikzfig{as_equal}
			\end{equation}
		\item $\phi$ and $\psi$ are \newterm{$\as{\omega}$~equal}, in symbols $\phi \ase{\omega} \psi$, if they are both left and right $\as{\omega}$ equal.
		\item $\phi$ and $\psi$ are \newterm{symmetrically $\as{\omega}$~equal}, in symbols $\phi \asets{\omega}\psi$, if
			\begin{equation}\label{eq:as_eq_twosided}
				\tikzfig{as_equal_twosided}
			\end{equation}
	\end{enumerate}
\end{definition}

\begin{remark}
	We emphasize that $\phi \asets{\omega} \psi$ implies $\phi \ase{\omega}\psi$, simply by marginalizing the left and the right output of \eqref{eq:as_eq_twosided} separately.
	The converse is not true in general, as we will see in \cref{ex:ase_not_sym} as part of a more detailed study of almost sure equalities in $\calgmin$ and $\calgmax$.
\end{remark}

\begin{remark}
	For $\omega \colon A \to B$ with $B$ classical, all four notions of almost sure equality coincide, as a direct consequence of $\swap_{B,B}\comp \cop_B = \cop_B$.  
\end{remark}

We now recall the following tool that will be used in \cref{sec:as_nullspaces}, and which has appeared as~\cite[Corollary~5.6]{quantum-markov}.

\begin{lemma}\label{lem:asleftright}
	If $\omega$, $\phi$ and $\psi$ are self-adjoint, then
	\[
		\phi \asel{\omega} \psi \quad \Longleftrightarrow \quad {\phi} \aser{\omega} {\psi}.
	\]
In particular, whenever one of the two \as{} equalities holds, then $\phi \ase{\omega}\psi$ also holds.
\end{lemma}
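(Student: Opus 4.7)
The plan is to apply the involution $\invo{\,\cdot\,}$ to one side of the left almost sure equality and use the hypotheses to recognize the result as the right almost sure equality. Since $\invo{\,\cdot\,}$ is a strict symmetric monoidal functor, it preserves composition and tensor products, so it sends the defining diagram of $\phi \asel{\omega} \psi$ to a new diagram in which each component piece has been transformed. Under the self-adjointness assumptions on $\omega$, $\phi$, $\psi$ (and on $\id$, which is always self-adjoint), each of these morphisms is fixed by the involution; the only piece that changes nontrivially is the copy $\cop_B$, which by the axiom~\eqref{eq:icd_def_involution} is sent to $\swap_{B,B}\comp \cop_B$.

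Concretely, the first step is to write out $\invo{\,\cdot\,}$ applied to both sides of the diagram defining $\phi \asel{\omega}\psi$. By functoriality and the self-adjointness hypotheses, this produces the same diagram as before, except that the copy morphism is replaced by $\swap_{B,B}\comp \cop_B$. The next step is to use naturality of the swap to push this $\swap$ past the $\id \tensor \phi$ (respectively $\id \tensor \psi$), turning each of the two sides into $\swap \comp$ (the diagram for right a.s.~equality). Finally, since $\swap$ is an isomorphism, we may cancel it from both sides of the resulting equation, yielding exactly $\phi \aser{\omega}\psi$.

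The reverse implication is proved by exactly the same computation, starting instead from $\phi \aser{\omega}\psi$, or simply by noting that the above argument is an involution on equations and is therefore self-inverse. The ``in particular'' clause is then immediate from the definition of $\ase{\omega}$.

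I do not anticipate any real obstacle; the only thing to be slightly careful about is keeping track of which side the swap appears on after applying $\invo{\,\cdot\,}$ and then correctly invoking naturality of $\swap_{B,B}$ against $\id_B \tensor \phi$ (or $\phi \tensor \id_B$, depending on the orientation convention for left versus right). Because $\swap$ is self-inverse and the remaining string diagram manipulations are routine applications of~\eqref{eq:icd_def_involution} together with functoriality of $\invo{\,\cdot\,}$, the whole argument should fit in a few lines of string-diagram calculus.
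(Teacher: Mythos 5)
Your argument is correct: applying the involution functor to the defining equation of $\phi \asel{\omega} \psi$, using self-adjointness of $\omega$, $\phi$, $\psi$ together with $\invo{\cop_B}=\swap_{B,B}\comp\cop_B$, then sliding the swap outward by naturality and cancelling it, is exactly the right computation, and the reverse direction follows by the same (self-inverse) manipulation. Note that the paper does not prove this lemma itself but cites it as \cite[Corollary~5.6]{quantum-markov}; your proof is essentially the argument underlying that reference, translated into the involutive (rather than starred) formalism via \cref{prop:qcdicd}.
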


Nevertheless, even in that case the symmetric \as{} equality may still be different (\cref{ex:ase_not_sym}).

\begin{lemma}\label{lem:asel_asets}
	If $\omega\colon A \to B$ is such that $\asel{\omega}$ coincides with $\aser{\omega}$, then all four notions of $\as{\omega}$ equality coincide.
\end{lemma}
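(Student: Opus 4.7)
Since left \as{} and right \as{} equality coincide by assumption, both automatically coincide with $\ase{\omega}$. The only real content of the lemma is therefore that symmetric \as{} equality coincides with the others. One direction is immediate: given $\phi \asets{\omega} \psi$, marginalizing either of the two outer copies of $B$ in~\eqref{eq:as_eq_twosided} (i.e.~applying $\discard_B \tensor \id \tensor \id$ or $\id \tensor \id \tensor \discard_B$) and using the counit law $(\discard_B \tensor \id_B) \comp \cop_B = \id_B$ recovers $\phi \asel{\omega} \psi$ and $\phi \aser{\omega} \psi$ respectively. It therefore suffices to derive $\phi \asets{\omega} \psi$ from $\phi \ase{\omega} \psi$.

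The plan for this direction is to apply the hypothesis to the ``decorated'' morphisms $\phi'' \coloneqq (\phi \tensor \id_B) \comp \cop_B$ and $\psi'' \coloneqq (\psi \tensor \id_B) \comp \cop_B$, both of type $B \to C \tensor B$, rather than to $\phi$ and $\psi$ themselves. First, I would verify that $\phi \asel{\omega} \psi$ implies $\phi'' \asel{\omega} \psi''$: post-composing the defining equation of $\phi \asel{\omega} \psi$ with $\id_C \tensor \cop_B$ and factoring $\phi \tensor \cop_B = (\phi \tensor \id_B \tensor \id_B) \comp (\id_B \tensor \cop_B)$, one obtains, after invoking coassociativity of $\cop_B$, exactly the defining equation of $\phi'' \asel{\omega} \psi''$.

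By the hypothesis $\asel{\omega} = \aser{\omega}$, this upgrades to $\phi'' \aser{\omega} \psi''$. Unfolding this via $\id_B \tensor \phi'' = (\id_B \tensor \phi \tensor \id_B) \comp (\id_B \tensor \cop_B)$ then yields precisely the defining equation~\eqref{eq:as_eq_twosided} of $\phi \asets{\omega} \psi$. By \cref{prop:strictification} we may safely perform the manipulation in a strict setting, so the whole argument is just a short string-diagram calculation relying only on coassociativity and the counit law; the one mildly clever step, and thus the main obstacle, is the choice of decorated morphisms used to invoke the hypothesis.
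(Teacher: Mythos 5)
Your proposal is correct and is essentially the paper's own proof: your decorated morphisms $\phi''$ and $\psi''$ are exactly $\pairing{\phi}{\id}$ and $\pairing{\psi}{\id}$, and the paper likewise passes from $\phi \asel{\omega} \psi$ to $\pairing{\phi}{\id} \asel{\omega} \pairing{\psi}{\id}$ by coassociativity of copy, applies the hypothesis to get $\pairing{\phi}{\id} \aser{\omega} \pairing{\psi}{\id}$, and observes that this is the defining equation of $\phi \asets{\omega} \psi$. The easy direction (symmetric implies the others by marginalization) is handled in the paper in a preceding remark, so nothing is missing.
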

\begin{proof}
	It suffices to note that, whenever $\phi \asel{\omega} \psi$, then $\pairing{\phi}{\id} \asel{\omega} \pairing{\psi}{\id}$, and this is simply by associativity of copy. Then $\pairing{\phi}{\id} \aser{\omega} \pairing{\psi}{\id}$ follows by assumption, and this is exactly $\phi \asets{\omega} \psi$.
\end{proof}

\section{Pictures}\label{sec:pictures}

Involutive Markov categories like $\calgmin$ and $\calgmax$ contain ``too many'' morphisms to be directly relevant to quantum probability.
This is already indicated by the isomorphisms $M_n \cong \mathbb{C}^{n^2}$ from \cref{rem:nondet_iso}, but illustrated more drastically by the well-known \emph{no--broadcasting theorem}~\cite{barnum2007nobroadcasting}.
In our setting, this states that the copy morphisms do not represent physically realizable processes.\footnote{See semicartesian categorical probability~\cite{fritz2022dilations,houghtonlarsen2021dilations} for a different categorical approach to quantum probability which considers only physically realizable processes from the start.}
So involutive Markov categories constitute an abstract framework where we can broadcast, but this operation does not correspond to any physically implementable operation.\footnote{See also~\cite{coecke2012picturing}, where the phrase \emph{logical broadcasting} has been coined for this idea.}
For this reason, an involutive Markov category acts as an \emph{environment} that enables string diagrammatic calculus, while physically realizable operations form a subcategory. 
This is analogous to how operator algebra provides an environment for quantum probability, while the physical observables are only the self-adjoint ones, and the physical operations are the completely positive ones.
With these considerations in mind, in this section we introduce the notion of \emph{picture}, which restricts from the environment to a subclass of morphisms, to be thought of as the physically realizable ones.

In contrast to involutive Markov categories, a picture is a good candidate for satisfying information flow axioms such as positivity and causality, as noted similarly by Parzygnat~\cite[Theorem~4.2 and Proposition~8.34]{quantum-markov}.
Moreover, it is a good framework to discuss representability, as we will investigate in \cref{sec:representability_qdf}.

\subsection{Definition of pictures}\label{sec:picture_def}

Before introducing the concept of picture, let us consider a toy example that exists in every involutive Markov category. 

\begin{example}[The self-adjoint subcategory]
	Let $\cat{C}$ be an involutive Markov category. We define $\operatorname{SA}(\cat{C})$ to be the subcategory of self-adjoint morphisms.
	This subcategory has the following properties:
	\begin{enumerate}
		\item\label{it:sa_tensor} It is a symmetric monoidal subcategory containing the delete morphisms (by definition).
		\item Every morphism in it is self-adjoint (trivially).
		\item Whenever an autocompatible self-adjoint morphism $\phi\colon A \to B$ is compatible with a self-adjoint morphism $\psi\colon A \to C$, then also $\pairing{\phi}{\psi}\colon A \to B \tensor C$ is self-adjoint, as a special case of \cref{rem:compatibility_prop}\ref{it:self_comp}.
	\end{enumerate}
\end{example}

These properties are the ones we now impose in the general definition of ``picture''. 

\begin{definition}[Picture]\label{def:picture}
	Given an involutive Markov category $\cat{C}$, a wide subcategory\footnote{This means that every object of $\cat{C}$ also belongs to $\cD$, and implements the idea that a picture is a choice of morphisms rather than a choice of objects.} $\cD \subseteq \cat{C}$ is a \newterm{picture} in $\cat{C}$ if:
	\begin{enumerate}
		\item\label{it:det_weak} It is a symmetric monoidal subcategory containing the delete morphisms. 
		\item Every morphism of $\cD$ is self-adjoint.
		\item\label{it:ni_weak} Whenever an autocompatible morphism $\phi\colon A \to B$ (of $\cD$) is compatible with a morphism $\psi\colon A \to C$ (of $\cD$), then also $\pairing{\phi}{\psi}\colon A \to B \tensor C$ belongs to $\cD$. 
	\end{enumerate} 
\end{definition}

When dealing with pictures, we reserve the term \newterm{morphism} for the morphisms in $\cD$, while morphisms in $\cat{C}$ will be called \newterm{generalized morphisms}. 
Generalized morphisms in string diagrams will be denoted by dashed boxes: 
\begin{equation*}
	\tikzfig{gen_morphism}
\end{equation*}
The copy morphisms form an exception: although these typically do not belong to the picture, we will still use the same notation as before. 

Further, the term \newterm{state} will refer to the morphisms (not generalized!) whose domain is the monoidal unit. In string diagrams, this type of morphism is depicted as 
\begin{equation*}
	\tikzfig{state}
\end{equation*}
to emphasize that such morphisms ``have no input''.

For brevity, we usually leave $\cat{C}$ implicit and say that $\cD$ is a picture.
Whenever the containing category $\cat{C}$ needs to be referenced, we will denote it by $\gen{\cD}$.
This reminds us of the idea that $\gen{\cD}$ is the category of generalized morphisms of $\cD$.
Also, we will use the shorthand 
\[
\cD_{\det} \coloneqq \cD \cap \cat{C}_{\det}	
\]
to indicate the symmetric monoidal subcategory of deterministic morphisms.
We do not require $\cD_{\det}= \gen{\cD}_{\det}$ to hold, as this will be false in our setting (our pictures of interest are given by bounded maps, and $*$-homomorphisms are not always bounded by \cref{ex:homnotbound}).

\begin{remark}\label{rem:sam}
	As suggested to us by Sam Staton, a picture could alternatively be defined in terms of a strong symmetric monoidal faithful functor $U\colon \cD \to \operatorname{SA}(\cat{C})$ rather than in terms of $\cD$ being a subcategory.
	This version may be preferred for better analogy with forgetful functors.
	However, we have not adopted this definition as it would clutter our notation further, while being essentially equivalent: $U$ can always be taken to be such an inclusion functor, at least modulo replacing $\gen{\cD}$ by an equivalent involutive Markov category.
\end{remark}

Given how we mimicked the situation of the self-adjoint subcategory, one might wonder whether property \ref{it:ni_weak} in \cref{def:picture} should be strengthened by requiring that for all compatible morphisms $\phi$ and $\psi$ in $\cD$, also $\pairing{\phi}{\psi}$ belongs to $\cD$ (see \cref{rem:compatibility_prop}\ref{it:self_comp}). 
While this is possible, imposing such a condition would exclude the minimal C*-tensor norm as a viable monoidal structure (\cref{prop:display_compatibility}). 
For this reason, we prefer to treat this strenghtening as an additional property of a picture.
For our present purposes, this property is used only to distinguish the minimal and the maximal tensor product and will not be needed otherwise.

\begin{definition}\label{def:display_compatibility}
	A picture $\cD$ \newterm{displays compatibility} if, whenever $\phi$ and $\psi$ are compatible, then $\pairing{\phi}{\psi}$ is a morphism in $\cD$.
\end{definition}
As already discussed above, given any involutive Markov category $\cat{C}$, the self-adjoint subcategory $\operatorname{SA}(\cat{C})$ is a picture that displays compatibility: see \cref{rem:compatibility_prop}\ref{it:self_comp}.

\begin{remark}\label{rem:display_autocomp}
	Property~\ref{it:ni_weak} of \cref{def:picture} already allows some interesting ``displaying''. 
	For instance, if a morphism $\phi$ is non-invasive, then $\pairing{\phi}{\id}$ is also a morphism by using the mentioned property with $\psi=\id$.

	Another example is given by autocompatibility: if $\phi$ is an autocompatible morphism, an inductive argument combining \cref{cor:autocomp_powers} and Property~\ref{it:ni_weak} shows that $\phi^{(n)}$, for any positive integer $n$, also belongs to the picture. 
	In fact, this can also be reversed using~\cref{rem:compatibility_prop}\ref{it:self_comp}: 
	if $\phi^{(n)}$ is a morphism for some $n\ge 2$, then $\phi$ is autocompatible.
\end{remark}

\begin{remark}\label{rem:class_pic}
	The classical subcategory of a picture $\cD$, defined as 
	\begin{equation*}
		\classical{\cD} \coloneqq \cD \cap \classical{\gen{\cD}},
	\end{equation*}
	is a Markov category. 
	Indeed, by Property~\ref{it:ni_weak} of \cref{def:picture},  \cref{rem:compatibility_prop}\ref{it:class_comp} implies that the copy morphisms belong to $\classical{\cD}$.
\end{remark}

\subsection{Main example: completely positive maps}\label{sec:mainexample_cpu}

The standard study of quantum probability is concerned with states on C*-algebras, and more generally completely positive unital maps between them.
Our aim is to retrieve this setting, although not perfectly (see \cref{rem:prec_and_c_notequal}).
For this reason, we consider the following notion of positivity, defined with respect to the containing C*-algebra.

\begin{definition}\label{def:positive_el}
	For a pre-C*-algebra $A$ with completion $\closed{A}$, we say that $x \in {A}$ is \newterm{positive}, and we write $x \ge 0$, if $x=y^* y$ for some $y \in \closed{A}$.
\end{definition}
Positivity induces a partial order, with $x \ge y$ shorthand for $x-y \ge 0$. 

\begin{definition}\label{def:positive_map}
	Let ${A}$ and ${B}$ be pre-C*-algebras. Then a unital linear map $\opm{\phi}{}\colon {B} \rightsquigarrow {A}$ is 
	\begin{itemize}
		\item \newterm{positive} if $\opm{\phi}{}(x)\ge 0$ for all $x\ge 0$;
		\item \newterm{completely positive} if $\opm{\phi}{} \odot \id_{M_n}$ is positive\footnote{One may wonder if we should emphasize which C*-norm is used; this is luckily not the case because $M_n$ is a nuclear C*-algebra (see, for example,~\cite[Proposition~T.5.20]{weggeolsen}).} for all $n$ ($M_n$ is the C*-algebra of matrices $n \times n$ with complex entries).
	\end{itemize}
	A positive unital functional $\opm{\phi}{} \colon {B} \rightsquigarrow \mathbb{C}$ is called \newterm{state}.
\end{definition}

\begin{proposition}\label{prop:pu_bounded}
	The positive unital maps ${B} \rightsquigarrow {A}$ are precisely the linear unital maps of norm $\le 1$, and they are self-adjoint.
\end{proposition}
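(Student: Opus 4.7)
The plan is to verify the three assertions in turn, leveraging the completion $\closed{B}$, $\closed{A}$ supplied by \cref{lem:closure,lem:trick} whenever an argument profits from working inside genuine C*-algebras.

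First, I would show that a positive unital $\opm{\phi}{}$ is self-adjoint. The key observation is that for any self-adjoint $x \in B$, the elements $\norm{x}\cdot 1 \pm x$ belong to $B$ and are positive in the sense of \cref{def:positive_el}: viewed in $\closed{B}$, they are self-adjoint with spectrum contained in $[0, 2\norm{x}]$, so continuous functional calculus produces square roots for them in $\closed{B}$. Positivity of $\opm{\phi}{}$ then forces $\norm{x}\cdot 1 \pm \opm{\phi}{}(x)$ to be positive, and in particular self-adjoint, in $A$. Subtracting yields self-adjointness of $\opm{\phi}{}(x)$ whenever $x$ is self-adjoint, and the decomposition $x = \frac{x+x^*}{2} + \mathsf{i}\,\frac{x-x^*}{2\mathsf{i}}$ extends this to all $x \in B$.

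Next, I would establish the norm bound $\norm{\opm{\phi}{}} \le 1$ for positive unital $\opm{\phi}{}$. The sandwich $-\norm{x}\cdot 1 \le x \le \norm{x}\cdot 1$ in $\closed{B}$, valid for self-adjoint $x$, passes through $\opm{\phi}{}$ to the analogous sandwich in $\closed{A}$, hence $\norm{\opm{\phi}{}(x)} \le \norm{x}$ on self-adjoint elements. The real-imaginary decomposition then gives the crude bound $\norm{\opm{\phi}{}} \le 2$, which is enough to extend $\opm{\phi}{}$ to $\close{\opm{\phi}{}}\colon \closed{B} \rightsquigarrow \closed{A}$ via \cref{lem:trick}. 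The classical C*-algebra estimate---Kadison's Schwarz inequality $\close{\opm{\phi}{}}(U)^*\close{\opm{\phi}{}}(U)\le \close{\opm{\phi}{}}(U^*U)=1$ on the normal elements provided by unitaries $U \in \closed{B}$, combined with Russo--Dye to cover the whole unit ball---then yields $\norm{\close{\opm{\phi}{}}} \le 1$, and hence $\norm{\opm{\phi}{}} \le 1$ by norm preservation in \cref{lem:trick}.

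Conversely, I would show that a unital linear $\opm{\phi}{}$ with $\norm{\opm{\phi}{}} \le 1$ is positive. Such a map is bounded, so by \cref{lem:trick} it extends to $\close{\opm{\phi}{}}\colon \closed{B} \rightsquigarrow \closed{A}$ of the same norm. For every state $\omega$ on $\closed{A}$, the composite $\omega \circ \close{\opm{\phi}{}}$ is a unital linear functional of norm at most one on $\closed{B}$; a standard elementary argument---using $\norm{x - \mathsf{i}\, t\cdot 1}^2 = \norm{x}^2 + t^2$ for self-adjoint $x$ to force $\operatorname{Im}(\omega \circ \close{\opm{\phi}{}})(x)$ to vanish as $|t|\to\infty$, and then $\abs{1 - f(y)} \le 1$ to deduce positivity on elements with $0 \le y \le 1$---implies that every such functional is a state. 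Consequently, for $x \in B$ with $x \ge 0$ one has $\omega(\opm{\phi}{}(x)) \ge 0$ for every state $\omega$ on $\closed{A}$, so $\opm{\phi}{}(x)$ is positive in $\closed{A}$ by the characterization of positive elements as those on which all states are non-negative; this is exactly positivity in the sense of \cref{def:positive_el}. The main obstacle is less any one of these arguments in isolation than the careful bookkeeping of moving between $B$ and $\closed{B}$: positivity, spectra, square roots, and Russo--Dye all live naturally in the C*-completion, while the statement is phrased in the pre-C*-algebra, and \cref{lem:trick} together with \cref{def:positive_el} is what makes this passage seamless.
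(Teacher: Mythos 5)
Your proof is correct, but it is worth noting that the paper does not actually prove this statement: it simply defers to Paulsen (Exercise~2.1, Proposition~2.1, Corollary~2.9 and Proposition~2.11 of \cite{paulsen2002}), which are precisely the three assertions for operator systems and C*-algebras. What you have written is a self-contained rendering of those standard arguments --- self-adjointness via the positive elements $\norm{x}1 \pm x$, the bound $\norm{\opm{\phi}{}}\le 1$ via Russo--Dye and the Schwarz inequality on unitaries, and the converse via the fact that unital norm-one functionals are states --- together with the bookkeeping between $B$ and $\closed{B}$ that the paper leaves implicit. Two small points deserve attention. First, to run Russo--Dye you must apply the Schwarz inequality to $\close{\opm{\phi}{}}$ on unitaries of $\closed{B}$ (which need not lie in $B$), so you need $\close{\opm{\phi}{}}$ to be positive; this follows from your crude bound $\norm{\opm{\phi}{}}\le 2$ together with the closedness of the positive cone, which is exactly the ``only if'' direction of \cref{prop:Phi_closPhi_positivity} --- say this explicitly, and note there is no circularity since that argument only needs boundedness, which you have already established. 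Second, the inequality $\close{\opm{\phi}{}}(U)^*\close{\opm{\phi}{}}(U)\le \close{\opm{\phi}{}}(U^*U)$ for a \emph{merely positive} unital map and a unitary $U$ is not the paper's \cref{prop:ks_inequality} (which assumes complete positivity); it is Choi's extension of Kadison's inequality to normal elements, or alternatively it follows by restricting $\close{\opm{\phi}{}}$ to the commutative C*-subalgebra $C^*(U,1)$, where positivity of the domain upgrades to complete positivity as in \cref{prop:com_dom_codom}. With either justification supplied, the argument is complete.
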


\begin{proof}
	By composing with the inclusion ${A} \hookrightarrow \closed{A}$, we can assume without loss of generality that ${A}$ is complete.
	Then the claim follows from standard results in the literature, which we recall for the reader's convenience.

	First, assume $\opm{\phi}{}\colon B\rightsquigarrow A$ is positive unital. 
	The proof of \cite[Corollary 2.9]{paulsen2002} goes through even if $B$ is not complete.
	This shows that $\norm{\opm{\phi}{}}$ coincides with $\norm{\opm{\phi}{}(1)} =\norm{1}$, so $\opm{\phi}{}$ is indeed linear unital of norm $\le 1$. 

	Conversely, if $\opm{\phi}{}\colon B\rightsquigarrow A$ is linear unital of norm $\le 1$, then $\opm{\phi}{}$ is positive by \cite[Proposition 2.11]{paulsen2002}.
	Finally, the fact that all positive maps are self-adjoint is covered in \cite[Exercise 2.1]{paulsen2002}.\footnote{This exercise is quite simple once enough theory is in place. For example, it follows from the fact that $x+\norm{x}\ge 0$ for all self-adjoint $x$ (see \cref{cor:bound_with_norm}), since every positive element is self-adjoint.}
\end{proof}

\begin{proposition}\label{prop:Phi_closPhi_positivity}
	A unital map $\opm{\phi}{}\colon {B} \rightsquigarrow {A}$ is positive or completely positive if and only if the unique extension $\close{\opm{\phi}{}} \colon \closed{B} \rightsquigarrow \closed{A}$ is. 
\end{proposition}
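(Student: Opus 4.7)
The plan is to handle the ``positive'' case first and then reduce the complete positivity statement to it. Note that we may assume $\opm{\phi}{}$ is bounded throughout: on the one hand, positivity of $\opm{\phi}{}$ implies boundedness by \cref{prop:pu_bounded}; on the other hand, existence of the extension $\close{\opm{\phi}{}}$ already forces boundedness by \cref{lem:trick}. In either case, we get the unique bounded extension $\close{\opm{\phi}{}} \colon \closed{B} \rightsquigarrow \closed{A}$ with the same operator norm.

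For the ``if'' direction, suppose $\close{\opm{\phi}{}}$ is positive, and let $x \in B$ be positive. By \cref{def:positive_el}, we have $x = y^* y$ for some $y \in \closed{B}$, so in particular $x$ is positive as an element of $\closed{B}$. Then $\opm{\phi}{}(x) = \close{\opm{\phi}{}}(x)$ is positive in $\closed{A}$, which by \cref{def:positive_el} exactly means that $\opm{\phi}{}(x)$ is positive as an element of $A$.

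For the ``only if'' direction, suppose $\opm{\phi}{}$ is positive, and let $x \in \closed{B}$ be positive, so $x = y^* y$ for some $y \in \closed{B}$. Choose a sequence $y_n \in B$ with $y_n \to y$; by submultiplicativity and continuity of the star operation, we obtain $y_n^* y_n \to x$ in $\closed{B}$. Each $y_n^* y_n$ lies in $B$ and is positive there (the witness $y_n$ even belongs to $B \subseteq \closed{B}$), so $\opm{\phi}{}(y_n^* y_n)$ is positive in $A$, hence in $\closed{A}$. Since $\close{\opm{\phi}{}}$ is continuous, we get $\close{\opm{\phi}{}}(y_n^* y_n) \to \close{\opm{\phi}{}}(x)$, and the standard fact that the positive cone of a C*-algebra is norm-closed lets us conclude $\close{\opm{\phi}{}}(x) \geq 0$. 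The main (and only genuine) technical point in the proof is precisely this use of closedness of the positive cone, which is what makes the density argument go through.

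For complete positivity, the reduction goes as follows. Since $M_n$ is finite-dimensional, and hence nuclear, the minimal and maximal C*-tensor products with $M_n$ coincide, so the algebraic tensor products $B \odot M_n$ and $A \odot M_n$ carry unambiguous pre-C*-structures whose completions are $\closed{B} \otimes M_n$ and $\closed{A} \otimes M_n$ respectively. By uniqueness in \cref{lem:trick}, the extension of $\opm{\phi}{} \odot \id_{M_n}$ is $\close{\opm{\phi}{}} \otimes \id_{M_n}$ (both agree on the dense subalgebra $B \odot M_n$). Applying the positivity statement just proved to each $\opm{\phi}{} \odot \id_{M_n}$ in place of $\opm{\phi}{}$ then yields: $\opm{\phi}{} \odot \id_{M_n}$ is positive if and only if $\close{\opm{\phi}{}} \otimes \id_{M_n}$ is, i.e.~$\opm{\phi}{}$ is completely positive if and only if $\close{\opm{\phi}{}}$ is.
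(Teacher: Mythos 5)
Your proof is correct and takes essentially the same approach as the paper's: the ``only if'' direction uses the identical density argument ($y_n^* y_n \to x$ with $y_n \in B$, followed by norm-closedness of the positive cone of a C*-algebra), and the complete positivity case is reduced to the positive case by the same identification $\close{\opm{\phi}{} \odot \id_{M_n}} = \close{\opm{\phi}{}} \odot \id_{M_n}$ via nuclearity of $M_n$ and uniqueness of bounded extensions from \cref{lem:trick}. Your preliminary remark on boundedness is a harmless addition that the paper leaves implicit.
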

This proposition is crucial in our study, as it allows to restrict some proofs to the case where $A$ and $B$ are both C*-algebras. 
For example, we can infer that bounded $*$-homomorphisms between pre-C*-algebras are necessarily completely positive because this is true for C*-algebras~\cite[Example~1.5.2]{brownozawa}.
\begin{proof}
	The ``if'' direction is clear.
	For the ``only if'', let us consider a positive element $x=y^*y$ of $\closed{{B}}$. Since $y \in \closed{{B}}$, there is a convergent sequence $y_n \to y$ with $y_n \in {B}$ for all $n$. In particular, this means that $y_n^* y_n \to x$.
	By definition of $\close{\opm{\phi}{}}$, we have $\close{\opm{\phi}{}}(x) = \lim_n {\opm{\phi}{}} (y_n^* y_n)$.
	Since the set of positive elements is closed~\cite[Theorem~1.4.8]{lin2001amenable}, we conclude that $\close{\opm{\phi}{}} (x)$ is still positive. 

	In order to deal with complete positivity, let us first show that $\close{\opm{\phi}{} \odot \id_{M_n}}= \close{\opm{\phi}{}} \odot \id_{M_n}$. Indeed, $M_n$ is nuclear and the algebraic tensor products with $M_n$ are already complete~\cite[Proposition~T.5.20]{weggeolsen}. This means that $\close{\opm{\phi}{} \odot \id_{M_n}} \colon \closed{B} \odot M_n \rightsquigarrow \closed{A} \odot M_n$. In particular, both $\close{\opm{\phi}{} \odot \id_{M_n}}$ and $\close{\opm{\phi}{}} \odot \id_{M_n}$ extend $\opm{\phi}{} \odot \id_{M_n}$, so they must coincide, according to \cref{lem:trick}. 
	We now can use the first part of this proof also for complete positivity: indeed, $\opm{\phi}{} \odot \id_{M_n}$ is positive if and only if $\close{\opm{\phi}{} \odot \id_{M_n}}= \close{\opm{\phi}{}} \odot \id_{M_n}$ is positive. In particular, $\opm{\phi}{}$ is completely positive if and only if $\close{\opm{\phi}{}}$ is completely positive.
\end{proof}

\begin{proposition}\label{prop:cpu_tensor}
	Let $\opm{\phi}{}\colon {B} \rightsquigarrow {A}$ be a completely positive unital map.
	Then for every pre-C*-algebra ${C}$, the map
	\[
		\opm{\phi}{} \tensor \id_{{C}} \: \colon \: {B} \tensor {C} \rightsquigarrow {A} \tensor {C}
	\]
	is completely positive unital with respect to both the minimal and the maximal tensor norm.
\end{proposition}
\begin{proof}
	By \cref{prop:Phi_closPhi_positivity}, the extension $\close{\opm{\phi}{}}\colon \closed{B}\rightsquigarrow \closed{A}$ is completely positive unital.
  	Applying the standard functoriality of the minimal and maximal C*-tensor products for completely positive maps~\cite[Theorem~3.5.3]{brownozawa} then yields that
  	\[
  		\close{\opm{\phi}{}}\otimes \id_{\closed{C}}
		\: \colon \:
  		\closed{B}\otimes \closed{C}
  		\rightsquigarrow
  		\closed{A}\otimes \closed{C}
  	\]
  	is completely positive unital, where as before $\otimes$ denotes the relevant completed tensor product.

	Clearly this map restricts to $\opm{\phi}{} \tensor \id_C$ on ${B} \odot {C}$, and similarly at the matrix level.
	Therefore this restriction is completely positive unital as well.
\end{proof}

\begin{proposition}[{\cite[Theorems~3.9 and 3.11]{paulsen2002}}]
	\label{prop:com_dom_codom}
	A positive unital map with commutative domain or commutative codomain is completely positive. In particular, all states are completely positive.
\end{proposition}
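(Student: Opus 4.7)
The plan is to reduce to the case of C*-algebras using Proposition~\ref{prop:Phi_closPhi_positivity} and then invoke the classical Gelfand-duality arguments.

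First I would observe that the completion of a commutative pre-C*-algebra is again commutative, since the commutator is jointly continuous. Combined with Proposition~\ref{prop:Phi_closPhi_positivity}, this reduces the statement to proving that a positive unital map $\close{\opm{\phi}{}} \colon \closed{B} \rightsquigarrow \closed{A}$ between C*-algebras with commutative domain or commutative codomain is completely positive; the final claim about states is the special case $\closed{A} = \mathbb{C}$ of the commutative-codomain case.

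For the commutative-codomain case, Gelfand duality identifies $\closed{A}$ with $C(Y)$ for some compact Hausdorff space $Y$, so $\close{\opm{\phi}{}}$ is determined by the family of states $\opm{\phi_y}{} \colon \closed{B} \rightsquigarrow \mathbb{C}$ obtained by evaluation at each $y \in Y$. Hence it suffices to verify that every state is completely positive, which is a direct GNS-style calculation: given $(b_{ij}) = c^*c \in M_n(\closed{B})_+$ with $c = (c_{ij})$ and $\lambda = (\lambda_1, \dots, \lambda_n) \in \mathbb{C}^n$, setting $z_k \coloneqq \sum_j \lambda_j c_{kj}$ yields
\[
	\sum_{i,j} \overline{\lambda_i}\, \opm{\phi_y}{}(b_{ij})\, \lambda_j \;=\; \opm{\phi_y}{}\Bigl(\sum_k z_k^* z_k\Bigr) \;\ge\; 0,
\]
so the scalar matrix $(\opm{\phi_y}{}(b_{ij}))$ is positive for every $y \in Y$, and this positivity for all $y$ is exactly positivity of $(\close{\opm{\phi}{}}(b_{ij}))$ in $M_n(C(Y))$.

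For the commutative-domain case, I would similarly identify $\closed{B} \cong C(X)$ and use the isomorphism $C(X) \otimes M_n \cong C(X; M_n)$, under which positive elements are precisely the pointwise-positive matrix-valued continuous functions. Such functions can be uniformly approximated by finite sums $\sum_k f_k \otimes P_k$ with $f_k \in C(X)_+$ and $P_k \in (M_n)_+$ (e.g.\ via partitions of unity), on which positivity of $\close{\opm{\phi}{}} \odot \id_{M_n}$ is immediate from positivity of $\close{\opm{\phi}{}}$. I expect the main obstacle to lie precisely in this approximation step for the commutative-domain case; rather than reproduce it in full, I would cite Paulsen~\cite[Theorems~3.9 and~3.11]{paulsen2002} for the C*-algebra statement, letting the reduction above supply the pre-C*-algebra version.
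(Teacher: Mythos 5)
Your proposal is correct and matches the paper's treatment: the paper offers no proof beyond the citation of Paulsen's Theorems~3.9 and~3.11, implicitly relying on \cref{prop:Phi_closPhi_positivity} for the passage between a pre-C*-algebra and its (still commutative) completion, which is exactly the reduction you spell out. Your added sketches of the two Paulsen arguments (the GNS-style positivity check for states and the partition-of-unity approximation for commutative domains) are accurate but supplementary to what the paper records.
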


\begin{definition}\label{def:cpu}
	In the category $\calgone$, we denote by $\cpu$ the subcategory whose morphisms are (formal opposites of) completely positive unital linear maps.
\end{definition}
\begin{notation}
Whenever $\cpu$ is considered as a subcategory of either $\calgmin$ or $\calgmax$, we will decorate it accordingly: 
$\cpu_{\min}$ and $\cpu_{\max}$, respectively.
For brevity, we also use $\cpu_{\min/\max}$ to refer to both at the same time.
\end{notation}

As anticipated in \cref{rem:calgmin_calgmax_differ}, $\cpu_{\min}$ and $\cpu_{\max}$ are different \emph{symmetric monoidal} categories, since completely positive maps are bounded, and therefore influenced by the choice of the norm.
This difference manifests itself also in formal categorical properties in \cref{prop:display_compatibility}.

\begin{remark}[Relation with the context of C*-algebras]\label{rem:prec_and_c_notequal}
	The additional $\mathsf{p}$ in front of $\mathsf{CPU}$ is to stress the distinction between our setting and the more standard one where C*-algebras are considered instead of pre-C*-algebras.
	Indeed, the forgetful functor from the category of C*-algebras with completely positive unital maps to $\cpu^{\op}$ is a right adjoint, but not an equivalence. 

	The adjunction is an immediate consequence of \cref{prop:Phi_closPhi_positivity}. 
	To prove that this forgetful functor is not an equivalence, note that any pre-C*-algebra isomorphic to a C*-algebra in $\cpu$ is necessarily a C*-algebra, because isomorphisms are in particular isometries (and therefore completeness is preserved).

	This situation may suggest refining the idea considered in \cref{rem:sam} to incorporate C*-algebras with completely positive unital maps as a picture, while retaining $\calgone_{\min/\max}$ as the category of generalized morphisms. 
	However, keeping track of the two different tensors (in general, $\closed{A} \odot \closed{B} \subsetneq \closed{A} \otimes \closed{B}$) would result in subtleties that we believe would make the formalism harder to digest. 
	For this reason, we content ourselves with working with $\cpu$ and leave this alternative approach for future exploration.
\end{remark}

\begin{proposition}\label{prop:cpu_picture}
	The subcategories $\cpu_{\min/\max}$ are pictures.
	\end{proposition}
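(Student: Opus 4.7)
The plan is to verify each of the three defining conditions of a picture (\cref{def:picture}) in turn for $\cpu_{\min}$ and $\cpu_{\max}$. Conditions \ref{it:det_weak} and the self-adjointness condition are essentially bookkeeping, while the compatibility closure condition \ref{it:ni_weak} is the main content.

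First, for the symmetric monoidal wide subcategory structure containing the delete morphisms: identities, associators, unitors and swap morphisms in $\calgmin/\calgmax$ are formal opposites of $*$-isomorphisms and hence CPU; composition of CPU maps is CPU; and the delete morphism $\discard_A \colon A \to I$ corresponds to the unit inclusion $\mathbb{C} \rightsquigarrow A$, $\lambda \mapsto \lambda \cdot 1_A$, a unital $*$-homomorphism, hence CPU. Closure under tensor products follows from the factorization $\phi \odot \psi = (\phi \odot \id) \comp (\id \odot \psi)$ combined with \cref{prop:cpu_tensor} applied to each factor. Self-adjointness of every CPU morphism is immediate from \cref{prop:pu_bounded}.

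For the compatibility closure condition, suppose $\phi\colon A \to B$ is autocompatible and compatible with $\psi\colon A \to C$, both in $\cpu_{\min/\max}$. I must show that $\pairing{\phi}{\psi}\colon A \to B \odot C$, which corresponds to the unital linear map $\opm{\pairing{\phi}{\psi}}{}(x \odot y) = \opm{\phi}{}(x)\,\opm{\psi}{}(y)$, is CPU. By \cref{ex:commutingranges}, autocompatibility says that $\opm{\phi}{}$ has commutative image in $A$, and compatibility of $\phi$ with $\psi$ says this image commutes with the image of $\opm{\psi}{}$. Let $E_0 \subseteq A$ be the $*$-subalgebra generated by $\opm{\phi}{}(B)$: by \cref{lem:closure} this is a commutative pre-C*-algebra, and every element of $E_0$ commutes with every $\opm{\psi}{}(y)$. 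The product map then factors as
\[
	B \odot C \xrightarrow{\,\opm{\phi}{}|^{E_0} \odot \id_C\,} E_0 \odot C \xrightarrow{\,m\,} A,
\]
where $\opm{\phi}{}|^{E_0}$ is the corestriction of $\opm{\phi}{}$ (still CPU, since positivity in a $*$-subalgebra and the ambient pre-C*-algebra agree) and $m(e \odot c) \coloneqq e\,\opm{\psi}{}(c)$; the first arrow is CPU by \cref{prop:cpu_tensor}.

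To finish, I need $m$ to be CPU with respect to either tensor norm. Passing to completions via \cref{prop:Phi_closPhi_positivity} and \cref{lem:trick}, the inclusion $E_0 \hookrightarrow A$ extends to a $*$-homomorphism $\close{\iota}\colon \closed{E_0} \to \closed{A}$ whose image is commutative, while $\close{\opm{\psi}{}}\colon \closed{C} \rightsquigarrow \closed{A}$ has range commuting with $\closed{E_0}$ by continuity. The universal property of the maximal C*-tensor product applied to this pair of CP maps with commuting ranges produces a CP unital map $\closed{E_0} \otimes_{\max} \closed{C} \rightsquigarrow \closed{A}$ extending $m$; since $\closed{E_0}$ is commutative, hence nuclear, the two C*-tensor norms on $\closed{E_0} \odot \closed{C}$ coincide, and the same map serves also for the minimal tensor product. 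Restricting to $E_0 \odot C$ (which is dense in either completion by \cref{prop:prec_tensor_norm}) and applying \cref{prop:Phi_closPhi_positivity} in the reverse direction shows that $m$ is CPU in both cases, and therefore so is $\pairing{\phi}{\psi}$. The main obstacle is this last step for the minimal tensor norm: unlike the maximal norm, it lacks a direct universal property for CP maps with commuting ranges, and one really needs the autocompatibility of $\phi$, together with nuclearity of commutative C*-algebras, to obtain a uniform argument.
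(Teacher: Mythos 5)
Your proposal is correct and follows essentially the same route as the paper's proof: the bookkeeping for the symmetric monoidal structure and self-adjointness is handled identically, and for the compatibility-closure condition you factor $\phi$ through the commutative pre-C*-subalgebra generated by its range (your $E_0$ is the paper's $R$) and then invoke the universal property of the maximal tensor product together with nuclearity of commutative C*-algebras to cover both tensor norms. The only difference is that you spell out the passage to completions and the corestriction argument in more detail than the paper does, which is harmless.
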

	\begin{proof}
		All morphisms are self-adjoint by \cref{prop:pu_bounded}. 
		Moreover, the subcategories $\cpu_{\min/\max}$ are symmetric monoidal since the coherence morphisms are bounded~\cite[Exercises~3.1.2, 3.3.1 and 3.3.2]{brownozawa} (recall \cref{lem:trick}), and the tensor product is respected by \cref{prop:cpu_tensor}.
		Additionally, the unique unital map $\mathbb{C} \rightsquigarrow \pre{A}$ corresponding to deletion is completely positive.
	
		Concerning Property~\ref{it:ni_weak}, we recall \cref{ex:commutingranges}: 
		For any autocompatible morphism $\phi\colon A \to B$, the completely positive map $\opm{\phi}{}$ has commutative range. In particular, the range $\im(\opm{\phi}{})$ generates a commutative pre-C*-subalgebra $R \subseteq A$, and so $\phi$ admits a factorization $\phi = \phi' \comp \iota$, where $\opm{\iota}{} \colon R \rightsquigarrow A$ is the inclusion.
		Since any morphism ${\psi}\colon A \to C$ compatible with $\phi$ is also compatible with $\iota$, the map $\opm{\pairing{\iota}{\psi}}{}$ is therefore completely positive because commutative (pre-)C*-algebras are nuclear~\cite[Proposition~12.9]{paulsen2002}, so that we can apply the universal property of the maximal tensor product~\cite[Exercise~3.5.1]{brownozawa} for both monoidal structures. 
		We conclude that $\pairing{\phi}{\psi}$ is the composition of two morphisms $\phi' \tensor \id$ and $\pairing{\iota}{\psi}$ which are both completely positive, and so $\pairing{\phi}{\psi}$ is completely positive as well.
	\end{proof}

\begin{remark}
	In~\cite{quantum-markov}, the focus is on subcategories that are not necessarily symmetric monoidal. 
	In fact, the information flow axioms considered there are obtained by the Kadison--Schwarz inequality (\cref{prop:ks_inequality} below), and all results can therefore be stated in the context of positive unital maps satisfying such an inequality.  
	However, there does not seem to be any concrete advantage to considering such generality, since the physically realizable operations are only the completely positive unital maps, and also the mathematical literature mostly focuses on these.
	Furthermore, considering only symmetric monoidal subcategories provides a more understandable playground, as the reader does not have to pay attention to when tensoring is allowed.
	An additional motivation is given by the proof of \cref{prop:cpu_picture}: 
	Property~\ref{it:ni_weak} holds provided that the morphisms considered are completely positive, since without this hypothesis the universal property of the maximal tensor product~\cite[Exercise~3.5.1]{brownozawa} could not be applied.
\end{remark}

\begin{example}\label{rem:positive_elements}
	Let $A$ be a pre-C*-algebra. 
	Following \cref{ex:observables}, the formal opposites of (completely) positive unital maps $\opm{\phi}{}\colon \mathbb{C}^{\lbrace 0,1\rbrace} \rightsquigarrow A$ correspond to the elements of the unit interval $[0, 1]$ in ${A}$.\footnote{As is standard for C*-algebras, the unit interval $[0,1] \subseteq {A}$ is the set of elements $x \in {A}$ such that $0\le x \le 1$.}
\end{example}

\begin{proposition}\label{prop:display_compatibility}
	$\cpu_{\max}$ displays compatibility, while $\cpu_{\min}$ does not.
\end{proposition}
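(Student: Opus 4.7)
The plan is to handle the two cases separately, using the universal property of the maximal C*-tensor product for the positive statement and exploiting a non-nuclear C*-algebra for the counterexample.

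For the $\cpu_{\max}$ part, suppose $\phi\colon A \to B$ and $\psi\colon A \to C$ are compatible morphisms in $\cpu_{\max}$. By \cref{ex:commutingranges}, the underlying completely positive unital maps $\opm{\phi}{}$ and $\opm{\psi}{}$ have commuting ranges in $\pre{A}$, hence also in the completion $\closed{A}$. By \cref{prop:Phi_closPhi_positivity}, their extensions $\close{\opm{\phi}{}}\colon \closed{B}\rightsquigarrow \closed{A}$ and $\close{\opm{\psi}{}}\colon \closed{C}\rightsquigarrow \closed{A}$ are completely positive unital with commuting ranges. The universal property of the maximal C*-tensor product (\cite[Exercise~3.5.1]{brownozawa}) then gives a completely positive unital extension $\closed{B}\otimes_{\max} \closed{C}\rightsquigarrow \closed{A}$ sending $b\otimes c$ to $\close{\opm{\phi}{}}(b)\close{\opm{\psi}{}}(c)$. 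Restricting to the algebraic tensor product yields $\opm{\pairing{\phi}{\psi}}{}\colon B\odot_{\max} C \rightsquigarrow A$ as a completely positive unital map, so $\pairing{\phi}{\psi}$ lies in $\cpu_{\max}$.

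For the failure in $\cpu_{\min}$, the idea is to build a counterexample from any non-nuclear C*-algebra, for instance $B = C = C^*_r(F_2)$, the reduced C*-algebra of the free group on two generators. Let $A\coloneqq \closed{B}\otimes_{\max}\closed{C}$, and define unital $*$-homomorphisms $\opm{\phi}{}\colon B\rightsquigarrow A$ by $b\mapsto b\otimes 1$ and $\opm{\psi}{}\colon C\rightsquigarrow A$ by $c\mapsto 1\otimes c$. Both are completely positive unital (being $*$-homomorphisms) and hence morphisms in $\cpu_{\min}$, and their ranges clearly commute, so $\phi$ and $\psi$ are compatible by \cref{ex:commutingranges}.

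The product map $\opm{\pairing{\phi}{\psi}}{}\colon B\odot_{\min} C\rightsquigarrow A$ is given by $b\odot c\mapsto b\otimes c$, i.e.~by the canonical inclusion. If this were completely positive, then by \cref{prop:pu_bounded} it would be bounded of norm at most $1$, which translates to $\norm{b\otimes c}_{\max}\le \norm{b\odot c}_{\min}$ on all of $B\odot C$. Combined with the universal inequality $\norm{\cdot}_{\min}\le \norm{\cdot}_{\max}$, this would force the minimal and maximal C*-tensor norms on $C^*_r(F_2)\odot C^*_r(F_2)$ to coincide, contradicting the non-nuclearity of $C^*_r(F_2)$ (see e.g.~\cite[Theorem~6]{takesaki} together with standard facts on non-nuclear group C*-algebras). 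Hence $\pairing{\phi}{\psi}\notin \cpu_{\min}$, showing that $\cpu_{\min}$ does not display compatibility.

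The main subtlety is really just locating a clean source of non-coincidence between the two norms, which is where the appeal to a non-nuclear C*-algebra is essential; the rest of the argument reduces to unpacking the definition of displaying compatibility and invoking the universal property of $\otimes_{\max}$ together with the boundedness consequence of positivity (\cref{prop:pu_bounded}).
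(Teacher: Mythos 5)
Your proof is correct and takes essentially the same route as the paper: the positive half is exactly the universal property of the maximal C*-tensor product applied to commuting ranges (via \cref{ex:commutingranges}), and the negative half rests on the failure of $\norm{\cdot}_{\min}$ and $\norm{\cdot}_{\max}$ to coincide for a non-nuclear algebra, which the paper simply delegates to \cite[Exercise~3.6.3]{brownozawa}. Your explicit counterexample with $C^*_r(F_2)$ and \cite[Theorem~6]{takesaki} is the same mechanism the paper itself spells out in \cref{ex:homnotbound} for the unboundedness of the identity $A \odot_{\min} B \rightsquigarrow A \odot_{\max} B$, so you have merely made explicit what the paper cites.
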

\begin{proof}
	Since compatibility is equivalent to commuting ranges in $\calgmin$ and $\calgmax$ (\cref{ex:commutingranges}), this is the universal property of the maximal tensor product: \cite[Exercises~3.5.1 and 3.6.3]{brownozawa} explain why $\cpu_{\max}$ has this property and $\cpu_{\min}$ does not.
\end{proof}

We now conclude by discussing some other results that will be used in our study and are related to the notion of spectrum of an element. 
For a brief introduction to this concept, the reader may refer to~\cite[Section~1.3]{weggeolsen}.

\begin{definition}
	\begin{enumerate}
		\item For a C*-algebra $\closed{A}$ and $x \in \closed{A}$, the \newterm{spectrum} $\spec{x}$ is the set of values $\lambda \in \mathbb{C}$ such that $x-\lambda 1$ is not invertible. 
		\item For a pre-C*-algebra ${A}$ and $x \in {A}$, the \newterm{spectrum} $\spec{x}$ is the spectrum of $x$ in the completion $\closed{A}$.
	\end{enumerate}
\end{definition}

It is a crucial fact of the theory that every element has nonempty spectrum~\cite[Theorem~3.2.3]{kadison1997operatoralgebrasI}.

\begin{proposition}[{e.g.~\cite[Proposition~7.8]{conway2000}}]
	\label{prop:spectrum_states}
	Let $x \in A$ be a self-adjoint. Then its spectrum is a compact subset of $\mathbb{R}$ with
	\[
		\left[ \min \spec{x}, \max \spec{x} \right] = \Set{\opm{\phi}{}(x) \given \opm{\phi}{}\colon A \rightsquigarrow\mathbb{C} \text{ is a state}}.
	\]
\end{proposition}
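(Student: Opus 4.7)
The plan is to reduce the entire statement to the case of C*-algebras via completion and then appeal to (or reprove) the standard convex-hull description of the numerical range of a self-adjoint element.

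First I would note that by \cref{prop:pu_bounded} every state $\opm{\phi}{}\colon A \rightsquigarrow \mathbb{C}$ has norm $\le 1$, hence extends uniquely to a bounded linear map $\close{\opm{\phi}{}}\colon \closed{A}\rightsquigarrow\mathbb{C}$ by \cref{lem:trick}; this extension is again unital, and positive by the closedness of the positive cone (as used in the proof of \cref{prop:Phi_closPhi_positivity}), so it is a state on $\closed{A}$. Conversely, every state on $\closed{A}$ restricts to a state on $A$, and since $A$ is norm-dense in $\closed{A}$, the map $\opm{\phi}{}(x)$ depends only on $x \in A \subseteq \closed{A}$. Together with the fact that, by definition, $\spec{x}$ is computed inside $\closed{A}$, both sides of the claimed equality are unchanged when $A$ is replaced by $\closed{A}$, so I may assume $A$ is a C*-algebra.

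Next I would recall the two basic facts from C*-algebra theory: that for self-adjoint $x$ the spectrum $\spec{x}$ is a nonempty compact subset of $\mathbb{R}$, and that $x$ satisfies the operator inequalities $(\min\spec{x})\,1 \le x \le (\max\spec{x})\,1$; this is standard (e.g.\ via the continuous functional calculus). With these in hand, the inclusion ``$\supseteq$'' of the displayed equation (values of states lie in the interval) is easy: any state $\opm{\phi}{}$ satisfies $\opm{\phi}{}(x) = \opm{\phi}{}(x^*) = \overline{\opm{\phi}{}(x)}$, hence $\opm{\phi}{}(x) \in \mathbb{R}$, and then positivity yields $\min\spec{x} \le \opm{\phi}{}(x) \le \max\spec{x}$.

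For the reverse inclusion I would first show that every $\lambda \in \spec{x}$ is of the form $\opm{\phi}{}(x)$ for some state $\opm{\phi}{}$. For this, consider the commutative C*-subalgebra $C^*(x,1) \subseteq A$, which by Gelfand duality is isomorphic to $C(\spec{x})$ with $x$ corresponding to the identity function; evaluation at $\lambda$ is then a character, in particular a state, on $C^*(x,1)$. The key step---and the one I expect to be the main obstacle if a reader is unfamiliar with the technique---is to extend this state to a state on all of $A$. This is done by first using Hahn--Banach to extend it as a bounded linear functional of norm~$1$ sending $1$ to $1$, and then invoking the standard fact that a unital linear functional of norm~$1$ on a unital C*-algebra is automatically positive (see e.g.\ \cite[Proposition~2.1.4]{brownozawa} or~\cite{paulsen2002}). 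This produces a state $\opm{\phi}{}$ with $\opm{\phi}{}(x) = \lambda$, so $\spec{x}$ is contained in the right-hand side. Finally, since the state space is convex and $\opm{\phi}{}\mapsto \opm{\phi}{}(x)$ is linear, the right-hand side is a convex subset of $\mathbb{R}$ containing $\min\spec{x}$ and $\max\spec{x}$, hence contains the whole closed interval, completing the proof.
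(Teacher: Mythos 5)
Your proof is correct. The paper states \cref{prop:spectrum_states} without proof, citing it as a standard fact from the literature, and your argument is precisely the standard one: reduce to the completion (which is consistent with the paper's conventions, since both the spectrum and the states of a pre-C*-algebra are by definition those of $\closed{A}$, via \cref{lem:trick} and \cref{prop:Phi_closPhi_positivity}), get one inclusion from $(\min\spec{x})1 \le x \le (\max\spec{x})1$ and positivity, and get the other from Gelfand duality on $C^*(x,1)$ plus a Hahn--Banach extension of a character to a norm-one unital (hence positive, by the equivalence recorded in \cref{prop:pu_bounded}) functional, finishing with convexity of the state space. There is no gap.
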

Using Beurling's formula~\cite[p.~204]{khalkhali13noncommutative}, from which $\norm{x}=\max\Set{ \abs{\lambda} \given \lambda \in \spec{x}}$ for all self-adjoint $x$, we also get the following.
\begin{corollary}\label{cor:norm_state}
	The norm of a self-adjoint $x$ can be characterized in terms of states as
	\begin{equation*}
		\norm{x} = \max \, \Set{  \abs{\opm{\phi}{}(x)} \given \opm{\phi}{}\colon A \rightsquigarrow \mathbb{C} \text{ is a state}}.
	\end{equation*}
\end{corollary}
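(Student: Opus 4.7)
The plan is to sandwich $\norm{x}$ between two bounds, both of which will turn out to coincide with the supremum of $\abs{\opm{\phi}{}(x)}$ over states $\opm{\phi}{}$. Beurling's formula says $\norm{x} = \lim_n \norm{x^n}^{1/n}$, which equals the spectral radius $r(x) \coloneqq \sup\Set{\abs{\lambda} \given \lambda \in \spec{x}}$. Since the spectrum of $x \in A$ is by definition computed in the completion $\closed{A}$, where $A$ sits as a dense $*$-subalgebra with the induced (hence isometric) norm, this formula applies directly to $x$ as an element of the pre-C*-algebra.

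Because $x$ is self-adjoint, \cref{prop:spectrum_states} tells us that $\spec{x}$ is a compact subset of $\mathbb{R}$, so
\[
    r(x) = \max\bigl(\abs{\min \spec{x}},\ \abs{\max \spec{x}}\bigr).
\]
The same proposition identifies the interval $[\min \spec{x}, \max \spec{x}]$ with the set $\Set{\opm{\phi}{}(x) \given \opm{\phi}{}\colon A \rightsquigarrow \mathbb{C} \text{ is a state}}$, so in particular both endpoints are of the form $\opm{\phi}{}(x)$ for some state. Picking the endpoint of larger absolute value produces a state $\opm{\phi}{}$ with $\abs{\opm{\phi}{}(x)} = r(x) = \norm{x}$, which gives the inequality $\norm{x} \le \max\Set{\abs{\opm{\phi}{}(x)} \given \opm{\phi}{} \text{ state}}$ and, at the same time, shows that the maximum is attained.

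For the reverse inequality, \cref{prop:pu_bounded} ensures that every state, being a positive unital linear map, has norm at most $1$, so $\abs{\opm{\phi}{}(x)} \le \norm{x}$ for every state $\opm{\phi}{}$. Combining the two inequalities yields the claim, with the supremum being a maximum. There is no real obstacle here; the statement is essentially a direct packaging of Beurling's formula together with \cref{prop:spectrum_states,prop:pu_bounded}, and the only subtlety worth noting is the (harmless) fact that the spectrum is computed in $\closed{A}$ rather than in $A$ itself.
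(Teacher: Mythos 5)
Your proof is correct and takes essentially the same route as the paper, which gives no more than the one-line indication ``Using Beurling's formula'' after \cref{prop:spectrum_states}: the norm of a self-adjoint element is its spectral radius, and the spectral endpoints are attained by states, with the reverse bound being automatic. The only cosmetic imprecision is that Beurling's formula itself gives $r(x) = \lim_n \norm{x^n}^{1/n}$; identifying this limit with $\norm{x}$ for self-adjoint $x$ additionally uses the C*-identity (via $\norm{x^{2^n}} = \norm{x}^{2^n}$), a step worth making explicit.
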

\begin{corollary}\label{cor:bound_with_norm}
	Every self-adjoint $x$ satisfies $- \norm{x} \le x \le \norm{x}$. 
	In particular, every element can be written as a linear combination of positive elements.
\end{corollary}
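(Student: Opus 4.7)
The plan is to derive both statements directly from \cref{prop:spectrum_states} and \cref{cor:norm_state} together with the standard spectral characterization of positivity in a C*-algebra.

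First I would fix a self-adjoint $x \in A$ and combine the two preceding results to locate the spectrum. By \cref{prop:spectrum_states}, $\spec{x}$ is a compact subset of $\mathbb{R}$ whose convex hull equals the set of values $\opm{\phi}{}(x)$ as $\opm{\phi}{}$ ranges over states on $A$. By \cref{cor:norm_state}, every such value has absolute value at most $\norm{x}$. Hence $\spec{x} \subseteq [-\norm{x}, \norm{x}]$, and by the spectral mapping property we get $\spec{\norm{x} \cdot 1 - x} \subseteq [0, 2\norm{x}]$ and $\spec{x + \norm{x} \cdot 1} \subseteq [0, 2\norm{x}]$. Passing to the completion $\closed{A}$ (so that \cref{def:positive_el} applies), the standard equivalence in C*-algebras between having nonnegative spectrum and being of the form $y^*y$~\cite[e.g.][]{kadison1997operatoralgebrasI} then yields $\norm{x} \cdot 1 \pm x \ge 0$, which is the assertion $-\norm{x} \le x \le \norm{x}$.

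For the second part I would reduce a general $a \in A$ to the self-adjoint case via the usual decomposition $a = \frac{a+a^*}{2} + \mathsf{i}\,\frac{a-a^*}{2\mathsf{i}}$ into self-adjoint real and imaginary parts. Applying the first statement to each self-adjoint component $y$, we may write
\[
	y = \tfrac{1}{2}\bigl(\norm{y}\cdot 1 + y\bigr) - \tfrac{1}{2}\bigl(\norm{y}\cdot 1 - y\bigr),
\]
a difference of two positive elements. Combining the two decompositions expresses $a$ as a $\mathbb{C}$-linear combination of four positive elements, as claimed.

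The only delicate point is that our positivity is defined inside $\closed{A}$ rather than $A$ itself; however this creates no obstacle, since $x \in A$ is self-adjoint in $A$ iff it is self-adjoint in $\closed{A}$, it has the same norm and the same spectrum there (by definition of $\spec{x}$), and the decomposition above lives in $A$ as soon as $a$ does. The deepest fact used is the spectral characterization of positivity, which is a classical result about C*-algebras and therefore applicable to $\closed{A}$ without further work.
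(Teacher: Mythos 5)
Your proof is correct and follows the route the paper intends: the corollary is stated without proof precisely because it is the standard consequence of \cref{prop:spectrum_states} and \cref{cor:norm_state} together with the spectral characterization of positivity in the completion, which is exactly the argument you spell out. Your handling of the only subtle point (that positivity is defined via $\closed{A}$, where the spectral characterization applies) is also the right one.
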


Since every $x \in A$ can be written uniquely as $x = y + iz$ for self-adjoint $y$ and $z$, we also obtain:

\begin{corollary}\label{cor:zero_states}
	Let $x \in A$. If $\opm{\phi}{}(x)=0$ for every state $\opm{\phi}{}$, then $x=0$.
\end{corollary}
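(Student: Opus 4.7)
The plan is to reduce the statement to the self-adjoint case and then invoke \cref{cor:norm_state} directly.

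First, I would decompose $x$ into its self-adjoint and anti-self-adjoint parts,
\[
	x = a + \mathsf{i} b, \qquad a \coloneqq \frac{x + x^*}{2}, \qquad b \coloneqq \frac{x - x^*}{2\mathsf{i}},
\]
so that $a$ and $b$ are self-adjoint elements of $A$. Next I would use \cref{prop:pu_bounded}, which tells us that every state $\opm{\phi}{}\colon A \rightsquigarrow \mathbb{C}$ is self-adjoint; consequently $\opm{\phi}{}(a)$ and $\opm{\phi}{}(b)$ are real numbers. Writing $\opm{\phi}{}(x) = \opm{\phi}{}(a) + \mathsf{i}\, \opm{\phi}{}(b) = 0$ and separating real and imaginary parts then yields $\opm{\phi}{}(a) = \opm{\phi}{}(b) = 0$ for every state $\opm{\phi}{}$.

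At this point the problem is reduced to showing that a self-adjoint element annihilated by all states is zero. But \cref{cor:norm_state} characterises the norm of a self-adjoint element $y$ as $\norm{y} = \max\,\{|\opm{\phi}{}(y)| : \opm{\phi}{}\text{ a state}\}$. Applying this to $a$ and $b$ gives $\norm{a} = \norm{b} = 0$, hence $a = b = 0$, and therefore $x = 0$. No step poses any difficulty: the whole argument is a one-line consequence of \cref{cor:norm_state} once the reduction to self-adjoint elements is carried out, which itself only relies on the self-adjointness of states recorded in \cref{prop:pu_bounded}.
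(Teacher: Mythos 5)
Your proof is correct and is precisely the argument the paper intends: the corollary is stated immediately after \cref{cor:norm_state} with no written proof, and the reduction to self-adjoint elements via $x = \frac{x+x^*}{2} + \mathsf{i}\,\frac{x-x^*}{2\mathsf{i}}$ is exactly the decomposition the paper records in a footnote, with the self-adjointness of states from \cref{prop:pu_bounded} guaranteeing that $\opm{\phi}{}(a)$ and $\opm{\phi}{}(b)$ are real. Nothing is missing.
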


\begin{corollary}\label{cor:self_contr_positive}
	Let $\opm{\phi}{}\colon {B}\rightsquigarrow {A}$ be a self-adjoint unital map which is contractive on self-adjoint elements. Then $\opm{\phi}{}$ is positive.
\end{corollary}
\begin{proof}[\proofname.\protect\footnotemark\protect\nopunct]
	\footnotetext{This proof is based on Eric Wofsey's answer at \href{https://math.stackexchange.com/questions/3304114}{math.stackexchange.com/questions/3304114}.}
	By the Hahn--Banach theorem, an element of ${A}$ is positive if and only if it is positive under all states.
	Since composing $\opm{\phi}{}$ with any state produces a map $B \rightsquigarrow \mathbb{C}$ with the same properties,
	it is therefore enough to prove the claim for $A=\mathbb{C}$.

	Let $x\in {B}$ be a positive element and consider $\lambda \in \mathbb{R}$ such that $\lambda < -\norm{x}$. 
	By \cref{cor:bound_with_norm} and positivity of $x$,
	\[ 
	\lambda \le x+\lambda \le \norm{x}+\lambda \le 0,
	\] 
	from which we obtain $\norm{x+\lambda} \le \abs{\lambda}$.
	By contractibility on self-adjoint elements, $\abs{\opm{\phi}{}(x+\lambda)} \le \norm{x+\lambda}$. 
	Self-adjointness of $\opm{\phi}{}$ ensures that $\opm{\phi}{}(x)$ is real.
	Since $\lambda < 0$, the inequality $\abs{\opm{\phi}{}(x)+\lambda}\le \abs{\lambda}$ implies
	\[
		(\opm{\phi}{}(x)+\lambda)^2\le \lambda^2,
	\]
	or equivalently $\opm{\phi}{}(x)(\opm{\phi}{}(x)+2\lambda)\le 0$.
	Thus $\opm{\phi}{}(x)$ lies between the two roots $0$ and $-2\lambda$, and in particular $\opm{\phi}{}(x)\ge 0$.
\end{proof}

\subsection{Almost sure equalities via nullspaces}\label{sec:as_nullspaces}

Almost sure equality is a central notion in probability, and we have already considered the various definitions in the involutive setting in \cref{sec:as}.
We devote this subsection to studying the meaning of these almost sure equalities in $\cpu_{\min/\max}$.

\begin{definition}
	Let $\opm{\omega}{}\colon \pre{B} \rightsquigarrow \pre{A}$ be a completely positive unital map between pre-C*-algebras. Then:
	\begin{enumerate}
		\item Its \newterm{left nullspace} (resp.~\newterm{right nullspace}) is the subset of $\pre{B}$ given by
			\[
				{}_{\opm{\omega}{}} N \coloneqq \Set{ x \in \pre{B} \given \opm{\omega}{} (xx^*)= 0} \qquad \left(\text{resp. }N_{\opm{\omega}{}} \coloneqq \Set{ x \in \pre{B} \given \opm{\omega}{}(x^*x)=0 } \right).
			\] 
		\item Its \newterm{symmetric nullspace} is the subset of $\pre{B}$ given by
			\[
				SN_{\opm{\omega}{}} \coloneqq \Set{x \in \pre{B} \given \opm{\omega}{}(y^*x^* xy)=0 \quad \forall\, y \in \pre{B}}.
			\]
	\end{enumerate}
\end{definition}
	
Let us recall the following important inequality.
\begin{proposition}\label{prop:ks_inequality}
	Any completely positive unital map $\opm{\phi}{}\colon {B}\rightsquigarrow {A}$ satisfies the \newterm{Kadison--Schwarz inequality}: for all $x \in {B}$, 
	\[
		\opm{\phi}{}(x^*x) \ge \opm{\phi}{}(x)^* \opm{\phi}{}(x).
	\]
\end{proposition}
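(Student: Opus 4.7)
My proof plan is the classical $2\times 2$ matrix trick, which only requires $2$-positivity of $\opm{\phi}{}$ and therefore applies \textit{a fortiori} to completely positive maps. Since $M_2$ is finite-dimensional, the algebraic tensor product ${A} \odot M_2$ is already complete, i.e.~${A} \odot M_2 = \closed{A} \otimes M_2$ as pre-C*-algebras with the unique C*-norm. This means that positivity of matrices with entries in a pre-C*-algebra coincides with positivity in the matrix C*-algebra over its completion, so we can work freely with $2\times 2$ matrices over ${A}$ and ${B}$.

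The first step is to observe that for any $x \in {B}$, the matrix
\[
	M \coloneqq \begin{pmatrix} 1 & x \\ x^* & x^* x \end{pmatrix} \in {B} \odot M_2
\]
is positive, since it factors as $M = v^* v$ with $v \coloneqq \begin{pmatrix} 1 & x \\ 0 & 0 \end{pmatrix}$. Next, I would apply the $2$-positive map $\opm{\phi}{} \odot \id_{M_2}$, whose positivity follows from complete positivity of $\opm{\phi}{}$ together with \cref{prop:cpu_tensor} (or directly from \cref{def:positive_map}). This yields the positive matrix
\[
	(\opm{\phi}{} \odot \id_{M_2})(M) = \begin{pmatrix} 1 & \opm{\phi}{}(x) \\ \opm{\phi}{}(x)^* & \opm{\phi}{}(x^* x) \end{pmatrix} \ge 0
\]
in ${A} \odot M_2$.

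The final step is a Schur complement compression. Writing $b \coloneqq \opm{\phi}{}(x)$ and $c \coloneqq \opm{\phi}{}(x^*x)$, I compress the above matrix by the column vector $\begin{pmatrix} -b \\ 1 \end{pmatrix} \in \closed{A} \oplus \closed{A}$; a direct computation gives
\[
	\begin{pmatrix} -b^* & 1 \end{pmatrix} \begin{pmatrix} 1 & b \\ b^* & c \end{pmatrix} \begin{pmatrix} -b \\ 1 \end{pmatrix} = c - b^* b,
\]
and the left-hand side is of the form $w^* (\opm{\phi}{} \odot \id_{M_2})(M) \, w$, hence positive in $\closed{A}$. This precisely says $\opm{\phi}{}(x^*x) - \opm{\phi}{}(x)^* \opm{\phi}{}(x) \ge 0$, which is the claim.

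There is no real obstacle here; the only point requiring mild care is the pre-C*-algebra setting, where positivity is defined via the completion (\cref{def:positive_el}). This is harmless because $M_2$ is nuclear and finite-dimensional, so positivity in ${A} \odot M_2$ is the same as positivity in $\closed{A} \otimes M_2$, and the compression argument with $w \in \closed{A} \oplus \closed{A}$ produces an element $c - b^*b \in {A}$ which is positive in $\closed{A}$, as required by \cref{def:positive_el}. Alternatively, one could invoke \cref{prop:Phi_closPhi_positivity} to pass to the extension $\close{\opm{\phi}{}}$ between C*-algebras, prove the inequality there by the same trick, and then restrict back.
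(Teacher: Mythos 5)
Your proof is correct, and it takes a more self-contained route than the paper. The paper's own proof is a two-line reduction: it invokes \cref{prop:Phi_closPhi_positivity} to replace $\opm{\phi}{}$ by its extension $\close{\opm{\phi}{}}$ between C*-algebras and then cites the Schwarz inequality for $2$-positive maps from Paulsen's book. What you have done is unfold that citation into the explicit $2\times 2$ matrix argument (the factorization $M = v^*v$, application of $\opm{\phi}{}\odot\id_{M_2}$, and the Schur-complement compression), carried out directly at the level of the pre-C*-algebras. This buys two things: the argument is self-contained, and it makes visible that only $2$-positivity of $\opm{\phi}{}$ is used, not full complete positivity. The paper's version buys brevity and defers the pre-C*/C* bookkeeping entirely to \cref{prop:Phi_closPhi_positivity}.

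One statement in your write-up is false as phrased, though harmlessly so: ${A} \odot M_2$ is \emph{not} complete when ${A}$ is incomplete; what is complete is $\closed{A} \odot M_2 = \closed{A} \otimes M_2$ (this is what \cref{prop:Phi_closPhi_positivity} actually uses). Your argument does not need completeness of ${A} \odot M_2$ anyway: positivity in a pre-C*-algebra is by \cref{def:positive_el} defined via the completion, so $M = v^*v$ is positive in ${B}\odot M_2$, its image under the positive map $\opm{\phi}{}\odot\id_{M_2}$ is positive in ${A}\odot M_2$, and the compression $w^*(\cdot)w = \bigl(uw\bigr)^*\bigl(uw\bigr)$ (for $N = u^*u$ with $u$ in the completion) exhibits $\opm{\phi}{}(x^*x) - \opm{\phi}{}(x)^*\opm{\phi}{}(x)$, an element of ${A}$, as positive in $\closed{A}$, which is exactly what \cref{def:positive_el} asks for. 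You may also want to note explicitly that the off-diagonal entry $\opm{\phi}{}(x^*) = \opm{\phi}{}(x)^*$ uses self-adjointness of completely positive unital maps (\cref{prop:pu_bounded}).
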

\begin{proof}
	By \cref{prop:Phi_closPhi_positivity}, we can assume the map to be between C*-algebras. 
	Now the result follows from~\cite[Proposition~3.3]{paulsen2002}.
\end{proof}

\begin{lemma}
	\label{lem:ideals}
	The left (resp.~right) nullspace is the largest closed right (resp.~left) ideal contained in the kernel of $\opm{\omega}{}$, and
	\[
		x \in N_{\opm{\omega}{}} \qquad \Longleftrightarrow \qquad x^* \in {}_{\opm{\omega}{}} N.
	\]
\end{lemma}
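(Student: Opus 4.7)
The plan is to prove the adjoint equivalence first and then treat the left nullspace, the right-nullspace case being symmetric (or deducible from the equivalence). The equivalence $x \in N_{\opm{\omega}{}} \iff x^* \in {}_{\opm{\omega}{}} N$ is immediate from the definitions: substituting $x^*$ into $\opm{\omega}{}(y y^*) = 0$ yields $\opm{\omega}{}(x^* x) = 0$, which is precisely the defining condition of $N_{\opm{\omega}{}}$.

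For the characterization of ${}_{\opm{\omega}{}} N$, I would first verify that it is a closed right ideal contained in $\ker \opm{\omega}{}$, and then that any right ideal in $\ker \opm{\omega}{}$ is contained in it. The inclusion ${}_{\opm{\omega}{}} N \subseteq \ker \opm{\omega}{}$ comes from the Kadison--Schwarz inequality (\cref{prop:ks_inequality}) applied to $x^*$: using self-adjointness of $\opm{\omega}{}$ from \cref{prop:pu_bounded}, one obtains $\opm{\omega}{}(xx^*) \ge \opm{\omega}{}(x)\opm{\omega}{}(x)^*$, so $\opm{\omega}{}(xx^*) = 0$ makes $\opm{\omega}{}(x)\opm{\omega}{}(x)^*$ both positive and non-positive, hence zero, whence $\opm{\omega}{}(x) = 0$ by the C*-identity. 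For the right-ideal property, the key input is the bound $bb^* \le \norm{b}^2 \cdot 1$ from \cref{cor:bound_with_norm}: conjugation $y \mapsto xyx^*$ preserves positivity (since it sends $z^*z$ to $(zx^*)^*(zx^*)$), so $(xb)(xb)^* = x bb^* x^* \le \norm{b}^2 xx^*$, and then positivity of $\opm{\omega}{}$ together with $\opm{\omega}{}(xx^*)=0$ forces $\opm{\omega}{}((xb)(xb)^*) = 0$. Closedness in $\pre{B}$ follows because $x \mapsto xx^*$ is norm-continuous (by submultiplicativity and the C*-identity) and $\opm{\omega}{}$ is bounded of norm at most $1$ by \cref{prop:pu_bounded}, so ${}_{\opm{\omega}{}} N$ is the zero set of a continuous map.

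Maximality is then the easy direction: for any right ideal $J \subseteq \ker \opm{\omega}{}$, if $x \in J$ then $xx^* = x \cdot x^* \in J \subseteq \ker \opm{\omega}{}$, so $\opm{\omega}{}(xx^*) = 0$ and $x \in {}_{\opm{\omega}{}} N$. This shows that ${}_{\opm{\omega}{}} N$ dominates every right ideal (closed or not) inside $\ker \opm{\omega}{}$, and in particular it is the largest \emph{closed} such ideal. The main subtle point I expect is a bookkeeping one: positivity is defined via the completion $\closed{B}$ (\cref{def:positive_el}), so the operator inequalities above technically live in $\closed{B}$, but \cref{prop:Phi_closPhi_positivity} ensures that positivity of $\opm{\omega}{}$ on $\closed{B}$ transfers cleanly to $\pre{B}$, so no modification is needed.
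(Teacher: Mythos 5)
Your proposal is correct and follows essentially the same route as the paper's proof: Kadison--Schwarz applied to $x^*$ for the containment in $\ker\opm{\omega}{}$, the bound $0 \le bb^* \le \norm{b}^2$ together with positivity of $\opm{\omega}{}$ for the right-ideal property, and the trivial observation $x \in J \Rightarrow xx^* \in J$ for maximality. You simply spell out the details (conjugation preserving positivity, closedness as a zero set, the completion bookkeeping) that the paper leaves implicit.
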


\begin{proof}
	First, ${}_{\opm{\omega}{}}N$ (resp.~$N_{\opm{\omega}{}}$) is a right (resp.~left) ideal by the inequality $0 \le y y^* \le \norm{y}^2$ (\cref{cor:bound_with_norm}) and positivity of $\opm{\omega}{}$.
	The closedness is straightforward.
	To prove that both ideals are contained in the kernel of $\opm{\omega}{}$, we note that, by the Kadison--Schwarz inequality (\cref{prop:ks_inequality}),  	
	\[
		0=\opm{\omega}{}(x x^*) \ge \opm{\omega}{}(x) \, \opm{\omega}{}(x)^* \ge 0.	
	\]
	Hence these inequalities must be equalities, and $\opm{\omega}{}(x)=0$.
	
	For the maximality, let $J$ be a right ideal contained in the kernel of $\opm{\omega}{}$. 
	Then we indeed must have $J \subseteq {}_{\opm{\omega}{}}N$, because $x \in J$ gives $xx^* \in J$, and therefore $\opm{\omega}{}(xx^*)=0$.
	The final statement holds by definition.
\end{proof}

\begin{lemma}
	\label{lem:SN_ideal}
	The symmetric nullspace $SN_{\opm{\omega}{}}$ is the largest closed two-sided ideal contained in the kernel of $\opm{\omega}{}$, and it coincides with
	\[
		S {}_{\opm{\omega}{}} N \coloneqq \Set{x \in \pre{B} \given \opm{\omega}{}(yxx^*y^*)=0 \quad \forall \, y \in \pre{B}}.
	\]
	Moreover, it is a $*$-ideal, i.e.~$x \in SN_{\opm{\omega}{}}$ if and only if $x^* \in SN_{\opm{\omega}{}}$.
\end{lemma}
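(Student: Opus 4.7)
The strategy is to identify $SN_{\opm{\omega}{}}$ as the largest closed two-sided ideal of $\pre{B}$ contained in $\ker \opm{\omega}{}$; once this is established, all three assertions of the lemma follow easily.

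The first step is to rewrite the defining condition as
\[
	SN_{\opm{\omega}{}} = \{x \in \pre{B} \,:\, xy \in N_{\opm{\omega}{}} \text{ for all } y \in \pre{B}\},
\]
using $\opm{\omega}{}(y^*x^*xy) = \opm{\omega}{}((xy)^*(xy))$. This reformulation converts the structural claims about $SN_{\opm{\omega}{}}$ into ideal-theoretic facts about $N_{\opm{\omega}{}}$. Since $N_{\opm{\omega}{}}$ is a closed left ideal by \cref{lem:ideals}, one verifies directly that $SN_{\opm{\omega}{}}$ is a closed two-sided ideal: for $x \in SN_{\opm{\omega}{}}$ and $a \in \pre{B}$, left multiplication is absorbed via $(ax)y = a(xy) \in a N_{\opm{\omega}{}} \subseteq N_{\opm{\omega}{}}$, while right multiplication is absorbed via $(xa)y = x(ay) \in N_{\opm{\omega}{}}$. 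Closedness is inherited from that of $N_{\opm{\omega}{}}$, and the inclusion $SN_{\opm{\omega}{}} \subseteq N_{\opm{\omega}{}} \subseteq \ker \opm{\omega}{}$ is obtained by setting $y = 1$.

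For maximality, take any closed two-sided ideal $J \subseteq \ker \opm{\omega}{}$; as a closed left ideal it satisfies $J \subseteq N_{\opm{\omega}{}}$ by \cref{lem:ideals}, and then for every $x \in J$ and $y \in \pre{B}$ we have $xy \in J \subseteq N_{\opm{\omega}{}}$, so $x \in SN_{\opm{\omega}{}}$. The completely symmetric argument --- using that $J$ is also a closed right ideal, hence $J \subseteq {}_{\opm{\omega}{}} N$, together with the analogous reformulation $S {}_{\opm{\omega}{}} N = \{x \in \pre{B} : yx \in {}_{\opm{\omega}{}} N \text{ for all } y \in \pre{B}\}$ --- shows that $S {}_{\opm{\omega}{}} N$ is likewise the largest closed two-sided ideal in $\ker \opm{\omega}{}$, giving the equality $SN_{\opm{\omega}{}} = S {}_{\opm{\omega}{}} N$.

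Finally, the $*$-ideal property follows formally from maximality: \cref{prop:pu_bounded} ensures that $\opm{\omega}{}$ is self-adjoint, so $\ker \opm{\omega}{}$ is closed under the star operation; therefore $SN_{\opm{\omega}{}}^*$ is another closed two-sided ideal contained in $\ker \opm{\omega}{}$, and maximality forces $SN_{\opm{\omega}{}}^* \subseteq SN_{\opm{\omega}{}}$, which upon starring once more yields equality. I do not anticipate any real obstacle; the only step that requires a moment of thought is the initial reformulation, since that is what makes \cref{lem:ideals} directly applicable and converts everything to routine ideal-theoretic bookkeeping.
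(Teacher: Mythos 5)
Your proof is correct and follows essentially the same route as the paper: both identify $SN_{\opm{\omega}{}}$ as the ideal quotient $(N_{\opm{\omega}{}} : \pre{B})$, deduce that it is a closed two-sided ideal contained in $\ker\opm{\omega}{}$, and obtain maximality and the equality with $S\,{}_{\opm{\omega}{}}N$ from \cref{lem:ideals}. The only (minor) divergence is at the very end: the paper reads off the $*$-ideal property directly from $SN_{\opm{\omega}{}} = S\,{}_{\opm{\omega}{}}N$, whereas you derive it from maximality together with the $*$-invariance of $\ker\opm{\omega}{}$ --- both are valid.
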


\begin{proof}
	$SN_{\opm{\omega}{}}$ is a two-sided ideal by the following general fact applied to $J = N_{\opm{\omega}{}}$: Whenever $J$ is a left ideal in a ring $R$, then the ideal quotient
	\[
		(J : R) \coloneqq \Set{x \in R \given xy \in J\quad \forall\, y \in R}	
	\]
	is clearly a two-sided ideal.
	It is closed by construction as the intersection of the closed sets
	\[
		\Set{ x \given \opm{\omega}{}(y^* x^* x y) = 0 }.
	\]
	Moreover, it is contained in the kernel of $\opm{\omega}{}$ because $SN_{\opm{\omega}{}}\subseteq N_{\opm{\omega}{}}$, and it is the largest two-sided ideal contained in the kernel since any other such ideal $J$ must satisfy $J \subseteq N_{\opm{\omega}{}}$ by \Cref{lem:ideals} and therefore $J \subseteq SN_{\opm{\omega}{}}$ by the closedness under multiplication from the right.
	This maximality property also implies that $SN_{\opm{\omega}{}}$ coincides with the given $S {}_{\opm{\omega}{}} N$.
	The final statement follows from $SN_{\opm{\omega}{}}=S {}_{\opm{\omega}{}} N$.
\end{proof}

\begin{lemma}\label{lem:multiplication_nullspace}
	If $\opm{\omega}{}\colon \pre{B} \rightsquigarrow \pre{A}$ is a completely positive unital map and $\opm{\omega}{}(x^*x)=0$ for some $x\in \pre{B}$, then also
	\[
		\opm{\omega}{} (x^* y) = 0 = \opm{\omega}{}(y^*x)
	\]
	for every $y \in \pre{B}$.
\end{lemma}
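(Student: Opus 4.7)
The plan is to exploit complete positivity of $\opm{\omega}{}$ via the standard $2 \times 2$ matrix trick. The key observation is that the matrix
\[
	M \coloneqq \begin{pmatrix} x^*x & x^*y \\ y^*x & y^*y \end{pmatrix}
\]
is positive in $M_2(\closed{B})$, because it equals $z^*z$ where $z$ is the $2\times 2$ matrix with first row $(x, y)$ and second row $(0,0)$. Applying the positive map $\opm{\omega}{} \odot \id_{M_2}$ (which is positive thanks to complete positivity of $\opm{\omega}{}$ and the definition of complete positivity, after passing to completions via \cref{prop:Phi_closPhi_positivity}), we conclude that
\[
	\begin{pmatrix} \opm{\omega}{}(x^*x) & \opm{\omega}{}(x^*y) \\ \opm{\omega}{}(y^*x) & \opm{\omega}{}(y^*y) \end{pmatrix}
	= \begin{pmatrix} 0 & \opm{\omega}{}(x^*y) \\ \opm{\omega}{}(y^*x) & \opm{\omega}{}(y^*y) \end{pmatrix}
\]
is positive in $M_2(\closed{A})$, using the hypothesis $\opm{\omega}{}(x^*x) = 0$.

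The remaining (and main) step is then a standard general fact: \emph{if a positive element of $M_2(\closed{A})$ has a zero diagonal entry, then the corresponding off-diagonal entries vanish.} I would prove this by writing the positive matrix as $w^*w$ for some $w = (w_{ij}) \in M_2(\closed{A})$. Then the $(1,1)$-entry equals $w_{11}^*w_{11} + w_{21}^*w_{21}$, and its vanishing forces $w_{11} = w_{21} = 0$ because a sum of positive elements of a C*-algebra is zero only if each summand is zero (using that $0 \le w_{i1}^*w_{i1} \le w_{11}^*w_{11}+w_{21}^*w_{21}$ and the C*-identity $\norm{w_{i1}^*w_{i1}} = \norm{w_{i1}}^2$). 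Once $w_{11} = w_{21} = 0$, the $(1,2)$ entry $w_{11}^*w_{12} + w_{21}^*w_{22}$ and the $(2,1)$ entry $w_{12}^*w_{11} + w_{22}^*w_{21}$ both vanish.

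Combining these two steps yields $\opm{\omega}{}(x^*y) = 0 = \opm{\omega}{}(y^*x)$. The main obstacle is purely bookkeeping: making sure that positivity in $\closed{A} \odot M_2$ really suffices (and that nothing breaks because we are working with pre-C*-algebras rather than C*-algebras), but this is handled by \cref{prop:Phi_closPhi_positivity}, which lets us pass to completions and back; note also that the nullspace is the relevant object only in $\pre{B}$, while the factorization $M = z^*z$ involves only elements of $\pre{B}$ itself, so the whole argument stays within the pre-C*-algebra framework after absorbing the final positivity check into the completion.
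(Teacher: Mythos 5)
Your proof is correct, but it takes a genuinely different route from the paper's. The paper deduces the lemma in two lines from \cref{lem:ideals}: since the right nullspace $N_{\opm{\omega}{}}$ is a left ideal contained in $\ker \opm{\omega}{}$, the hypothesis $x \in N_{\opm{\omega}{}}$ gives $y^*x \in N_{\opm{\omega}{}} \subseteq \ker\opm{\omega}{}$, and self-adjointness of $\opm{\omega}{}$ then yields $\opm{\omega}{}(x^*y) = \opm{\omega}{}(y^*x)^* = 0$. The ingredients hidden in \cref{lem:ideals} are the order inequality $x^*y^*yx \le \norm{y}^2\, x^*x$ (which needs only positivity) and the Kadison--Schwarz inequality (which needs 2-positivity). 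Your argument instead runs the standard $2\times 2$ matrix trick directly: the factorization $M = z^*z$ is correct, applying $\opm{\omega}{}\odot\id_{M_2}$ uses exactly 2-positivity, and your reduction of the key fact (a positive $2\times 2$ matrix with a vanishing diagonal entry has vanishing corresponding off-diagonal entries) to a factorization $w^*w$ in the C*-algebra $M_2(\closed{A})$ is sound -- the passage to the completion is needed precisely there, since writing a positive element as $w^*w$ requires functional calculus, and $\pre{A}\odot M_2$ is already complete when $\closed{A}$ is, so nothing breaks. What the paper's approach buys is brevity and reuse of the ideal-theoretic structure it needs anyway for \cref{lem:SN_ideal} and \cref{thm:ase_nullspace}; what yours buys is self-containedness (no appeal to \cref{lem:ideals} or to Kadison--Schwarz as a black box) and a transparent identification of the minimal hypothesis, namely 2-positivity rather than full complete positivity. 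Both establish $\opm{\omega}{}(y^*x)=0$ first and obtain $\opm{\omega}{}(x^*y)=0$ by an adjoint/symmetry step, so the conclusions match exactly.
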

\begin{proof}
	By \cref{lem:ideals}, the right nullspace is a left ideal contained in the kernel of $\opm{\omega}{}$.
	This means that $x \in N_{\opm{\omega}{}}$ implies $y^*x \in N_{\opm{\omega}{}}$, and in particular $\opm{\omega}{}(y^*x)=0$.
	Self-adjointness of $\opm{\omega}{}$ now implies $\opm{\omega}{}(x^*y)=0$ as well.
\end{proof}

The nullspaces now give us the following characterization of the almost sure equalities, extending results of Parzygnat~\cite[Theorem~5.12]{quantum-markov}.

\begin{theorem}\label{thm:ase_nullspace}
	Let us consider a morphism $\omega\colon A \to B$ in $\cpu_{\min/\max}$ and two generalized morphisms $\phi,\psi\colon B\to C$. Then:
	\[
	\begin{array}{rcll}
		\phi \asel{\omega} \psi &\hspace{0.5ex}\Longleftrightarrow\hspace{0.5ex}& \opm{\phi}{}(y)-\opm{\psi}{}(y) \in {}_{\opm{\omega}{}} N &\quad \forall y \in \pre{C}, \\
		\phi \aser{\omega} \psi  &\hspace{0.5ex}\Longleftrightarrow\hspace{0.5ex}& \opm{\phi}{}(y)-\opm{\psi}{}(y) \in N_{\opm{\omega}{}} &\quad \forall y \in \pre{C}, \\[3pt]
		\phi \asets{\omega} \psi &\hspace{0.5ex}\Longleftrightarrow\hspace{0.5ex}& \opm{\phi}{}(y)-\opm{\psi}{}(y) \in SN_{\opm{\omega}{}} &\quad \forall y \in \pre{C}.
	\end{array}
	\]
\end{theorem}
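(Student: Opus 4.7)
My plan is to translate each of the three diagrammatic equations into a concrete algebraic identity in the pre-C*-algebras involved, and then reduce each biconditional to the ideal-theoretic content of the nullspaces already established in \cref{lem:ideals,lem:multiplication_nullspace,lem:SN_ideal}.

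First, I would unfold the string diagrams in the operator picture. Since in $\cpu_{\min/\max}$ the copy morphism on $B$ is the multiplication $\pre{B} \odot \pre{B} \rightsquigarrow \pre{B}$, the diagrams defining $\phi \asel{\omega} \psi$, $\phi \aser{\omega} \psi$ and $\phi \asets{\omega} \psi$ unpack respectively to the requirement that, for every $y \in \pre{C}$ and all $x, z \in \pre{B}$,
\[
\opm{\omega}{}\bigl(\opm{\phi}{}(y)\,x\bigr) = \opm{\omega}{}\bigl(\opm{\psi}{}(y)\,x\bigr), \qquad \opm{\omega}{}\bigl(x\,\opm{\phi}{}(y)\bigr) = \opm{\omega}{}\bigl(x\,\opm{\psi}{}(y)\bigr),
\]
\[
\opm{\omega}{}\bigl(x\,\opm{\phi}{}(y)\,z\bigr) = \opm{\omega}{}\bigl(x\,\opm{\psi}{}(y)\,z\bigr).
\]
Writing $w_y \coloneqq \opm{\phi}{}(y) - \opm{\psi}{}(y) \in \pre{B}$, each of the three almost-sure equalities thus becomes a vanishing statement for products of $w_y$ with elements of $\pre{B}$ under $\opm{\omega}{}$.

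Next, I would treat the three biconditionals in parallel. The forward directions amount to specializing the free $\pre{B}$-argument(s) suitably. In the left case, setting $x = w_y^*$ gives $\opm{\omega}{}(w_y w_y^*) = 0$, i.e.~$w_y \in {}_{\opm{\omega}{}} N$; in the right case, setting $x = w_y^*$ gives $\opm{\omega}{}(w_y^* w_y) = 0$, i.e.~$w_y \in N_{\opm{\omega}{}}$; in the symmetric case, specializing to $(x,z) = (y'^{\,*} w_y^*, y')$ for an arbitrary $y' \in \pre{B}$ yields $\opm{\omega}{}(y'^{\,*} w_y^* w_y y') = 0$ for all such $y'$, i.e.~$w_y \in SN_{\opm{\omega}{}}$. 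The backward directions are exactly the content of the structural lemmas proved just before: for the one-sided equalities, \cref{lem:multiplication_nullspace} (combined with the passage between $w_y$ and $w_y^*$ afforded by \cref{lem:ideals} in the left case) extends the nullspace membership to vanishing of $\opm{\omega}{}(w_y x)$ resp.~$\opm{\omega}{}(x w_y)$ for arbitrary $x \in \pre{B}$; for the symmetric equality, \cref{lem:SN_ideal} ensures that $SN_{\opm{\omega}{}}$ is a two-sided ideal contained in $\ker(\opm{\omega}{})$, whence $x w_y z \in SN_{\opm{\omega}{}}$ automatically lies in the kernel.

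The main obstacle is really only notational: one has to keep the left/right conventions of the string diagrams consistent with the sidedness of the products $\opm{\omega}{}(w_y x)$ versus $\opm{\omega}{}(x w_y)$, and correctly identify which of ${}_{\opm{\omega}{}} N$ and $N_{\opm{\omega}{}}$ arises on which side. Once this dictionary is fixed, no further C*-algebraic input is required beyond the ideal properties of the nullspaces; in particular, the Kadison--Schwarz inequality has already been absorbed into \cref{lem:ideals,lem:multiplication_nullspace} and is not invoked again in the present argument.
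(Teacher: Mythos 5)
Your proposal is correct and follows essentially the same route as the paper: the paper delegates the first two equivalences to Parzygnat's Theorem~5.12 (whose proof is exactly your specialization $x = w_y^*$ plus \cref{lem:ideals,lem:multiplication_nullspace}), and for the symmetric case it uses the very same substitution $z_1 = z_2^*\,(\opm{\phi}{}(y)-\opm{\psi}{}(y))^*$ for the forward direction. The only cosmetic difference is in the backward direction of the symmetric case, where the paper invokes \cref{lem:multiplication_nullspace} while you appeal to the two-sided-ideal property of $SN_{\opm{\omega}{}}$ from \cref{lem:SN_ideal}; both are valid and interchangeable.
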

\begin{proof}
	The first two characterizations are already covered in~\cite[Theorem~5.12]{quantum-markov}.\footnote{Although that reference is concerned with finite-dimensional C*-algebras only, the same proof works in general.}
	The third one uses a similar argument as follows.
	We first note that $\phi \asets{\omega} \psi$ can be spelled out explicitly as
	\[
	\opm{\omega}{} (z_1 \opm{\phi}{}(y) z_2) = \opm{\omega}{} (z_1 \opm{\psi}{}(y) z_2)	
	\]
	for all $z_1,z_2 \in \pre{B}$ and $y \in \pre{C}$, or equivalently $\opm{\omega}{}(z_1 (\opm{\phi}{}(y)-\opm{\psi}{}(y)) z_2)=0$. By choosing
	\[
		z_1 \coloneqq z_2^*(\opm{\phi}{}(y)-\opm{\psi}{}(y))^*,
	\]
	we conclude $\opm{\phi}{}(y)-\opm{\psi}{}(y)\in SN_{\opm{\omega}{}}$. 
	Conversely, if we set $x\coloneqq \opm{\phi}{}(y) - \opm{\psi}{}(y) \in SN_{\opm{\omega}{}}$, then $\opm{\omega}{}((xz_2)^*xz_2)=0$ for every $z_2\in \pre{B}$, and \cref{lem:multiplication_nullspace} implies that $\opm{\omega}{}(z_1 x z_2) = 0$ for any choice of $z_1\in \pre{B}$.
\end{proof}

\begin{remark}\label{rem:simple}
	Consider a \emph{simple} pre-C*-algebra $\pre{B}$, i.e.~such that the only closed two-sided ideals are $\lbrace 0 \rbrace$ and $\pre{B}$ itself. 
	Then for every completely positive unital map $\opm{\omega}{} \colon \pre{B} \rightsquigarrow \pre{A}$, the symmetric nullspace $SN_{\opm{\omega}{}}$ must be trivial, since it is a closed two-sided ideal not containing $1$.
	By \cref{thm:ase_nullspace}, for every generalized $\phi, \psi \colon B \to C$, it follows that
	\[
		\phi \asets{\omega} \psi \quad \Longleftrightarrow \quad \phi = \psi
	\]
	whenever $\pre{B}$ is simple. 
\end{remark}

\begin{example}[Almost sure equalities are not necessarily symmetric]\label{ex:ase_not_sym}
	In general, $\phi \ase{\omega} \psi$ does not imply $\phi \asets{\omega} \psi$. The following counterexample is a special case of~\cite[Remark~2.75]{parzygnat2023disintegration}.
	For the sake of simplicity, we just consider the case of the morphisms defined below, where all omitted matrix entries are zero.
	\begingroup
	\[
		\setlength{\tabcolsep}{2pt}
		\begin{array}{rclcrclcrcl}
			\opm{\omega}{} \colon M_4 & \rightsquigarrow & \mathbb{C}&\quad& \opm{\phi}{} \colon M_2 & \rightsquigarrow & M_4 &\quad & \opm{\psi}{} \colon M_2 & \rightsquigarrow &M_4\\
			x&\mapsto & \operatorname{tr}\left(\left(\begin{smallmatrix}
				1/2 & & & \\
				    & 1/2 & & \\
				    & & 0 & \\
				    & & & 0
			\end{smallmatrix}\right) \cdot x \right) &&
			x&\mapsto & \left(\begin{smallmatrix}
				x && \\
				  & \operatorname{tr}(x)/2 & \\
				  && \operatorname{tr}(x)/2
			\end{smallmatrix}\right) &&x&\mapsto & \left(\begin{smallmatrix}
				x & \\
				  & x
			\end{smallmatrix}\right)
		\end{array}
	\]
	\endgroup
	These are completely positive unital maps such that $\opm{\phi}{}(y)-\opm{\psi}{}(y)\in N_{\opm{\omega}{}}$, which gives $\phi \aser{\omega} \psi$.
	Since all three morphisms are self-adjoint, we also obtain $\phi \asel{\omega} \psi$ from \cref{lem:asleftright}, and therefore $\phi \ase{\omega} \psi$.
	Since $M_n$ is simple for every $n$, by \cref{rem:simple} we can infer $\phi \not\asets{\omega} \psi$ from $\opm{\phi}{} \ne \opm{\psi}{}$. 
\end{example}

\begin{lemma}\label{lem:synthetic_supp}
	Let us consider $\cpu_{\min / \max}$ and a morphism $\omega\colon A \to B$. Then the following are equivalent:
	\begin{enumerate}
		\item\label{it:asel_aser} $\asel{\omega}$ coincides with $\aser{\omega}$ on all generalized morphisms out of $B$.
		\item\label{it:nphi_phin} $N_{\opm{\omega}{}}={}_{\opm{\omega}{}}N$.
	\end{enumerate}
\end{lemma}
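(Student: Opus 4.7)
The plan is to reduce the equivalence directly to \cref{thm:ase_nullspace}, which characterises both $\asel{\omega}$ and $\aser{\omega}$ in terms of the nullspaces ${}_{\opm{\omega}{}}N$ and $N_{\opm{\omega}{}}$.

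For \ref{it:nphi_phin} $\Rightarrow$ \ref{it:asel_aser}, the implication is essentially automatic: once $N_{\opm{\omega}{}} = {}_{\opm{\omega}{}}N$, the two equivalent characterisations in \cref{thm:ase_nullspace} yield the same condition on $\opm{\phi}{}(y) - \opm{\psi}{}(y)$ for every $y \in \pre{C}$, so $\phi \asel{\omega} \psi \Leftrightarrow \phi \aser{\omega} \psi$.

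For the converse \ref{it:asel_aser} $\Rightarrow$ \ref{it:nphi_phin}, I would take an arbitrary $x \in \pre{B}$ and manufacture a pair of generalised morphisms whose differences pick out precisely the scalar multiples of $x$. Concretely, using the description of morphisms into $\mathbb{C}^{\{0,1\}}$ recalled in \cref{ex:observables}, define generalised morphisms $\phi, \psi \colon B \to \mathbb{C}^{\{0,1\}}$ by
\[
    \opm{\phi}{}(e_0) = 1, \quad \opm{\phi}{}(e_1) = 0, \qquad \opm{\psi}{}(e_0) = 1 - x, \quad \opm{\psi}{}(e_1) = x,
\]
so that $\opm{\phi}{}(y) - \opm{\psi}{}(y) = (y_0 - y_1)\,x$ for every $y = y_0 e_0 + y_1 e_1 \in \mathbb{C}^{\{0,1\}}$. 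Since both ${}_{\opm{\omega}{}}N$ and $N_{\opm{\omega}{}}$ are vector subspaces of $\pre{B}$ (being closed ideals, by \cref{lem:ideals}), \cref{thm:ase_nullspace} gives $\phi \asel{\omega} \psi \Leftrightarrow x \in {}_{\opm{\omega}{}}N$ and $\phi \aser{\omega} \psi \Leftrightarrow x \in N_{\opm{\omega}{}}$. Assumption \ref{it:asel_aser} then forces $x \in {}_{\opm{\omega}{}}N \Leftrightarrow x \in N_{\opm{\omega}{}}$, and since $x$ was arbitrary the two nullspaces coincide.

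There is no real obstacle here beyond unpacking \cref{thm:ase_nullspace}; the only point worth checking carefully is that the maps $\phi, \psi$ are genuinely well-defined generalised morphisms, which amounts to verifying the normalisation $\opm{\phi}{}(e_0) + \opm{\phi}{}(e_1) = 1 = \opm{\psi}{}(e_0) + \opm{\psi}{}(e_1)$ in $\pre{B}$, and that exploiting the $\mathbb{C}^{\{0,1\}}$-valued test case is strong enough to probe nullspace membership of an arbitrary element of $\pre{B}$.
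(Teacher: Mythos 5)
Your proof is correct and follows essentially the same route as the paper: the forward direction is read off from \cref{thm:ase_nullspace}, and the converse tests the hypothesis against a pair of generalized morphisms into $\mathbb{C}^{\{0,1\}}$ built via \cref{ex:observables} so that the differences $\opm{\phi}{}(y)-\opm{\psi}{}(y)$ sweep out exactly $\mathbb{C}x$ for an arbitrary $x \in \pre{B}$. Your explicit computation of the differences and the remark that the nullspaces are vector subspaces just spell out what the paper leaves implicit.
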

\begin{proof}
	The implication \ref{it:nphi_phin} $\Rightarrow$ \ref{it:asel_aser} is an immediate application of \cref{thm:ase_nullspace}, so we are left to show the converse.
	
	For the other direction, by \Cref{thm:ase_nullspace} it is enough to show that every $x \in \pre{B}$ can be written as $x = \opm{\phi}{}(y) - \opm{\psi}{}(y)$ for some $y \in \pre{C}$ for suitable generalized morphisms $\phi, \psi \colon B \to C$ and suitable $C$.
	Taking $C = \mathbb{C}^{\{0,1\}}$ and $y = e_1\coloneqq (0,1)$, this is clear from \Cref{ex:observables}, which lets us achieve $\opm{\phi}{}(e_1) = 0$ and $\opm{\psi}{}(e_1) = x$.
\end{proof}

\begin{remark}
	In the proof of \cref{lem:synthetic_supp}, $\phi$ and $\psi$ are generally not self-adjoint because of \cref{lem:asleftright}.
\end{remark}

\begin{remark}
	By looking at \cref{lem:synthetic_supp} and \cref{lem:asel_asets}, one may wonder whether $N_{\opm{\omega}{}} = {}_{\opm{\omega}{}}N$ holds if and only if $N_{\opm{\omega}{}} = SN_{\opm{\omega}{}}$.
	This is indeed the case, because $N_{\opm{\omega}{}} = {}_{\opm{\omega}{}}N$ implies that $N_{\opm{\omega}{}}$ is a closed two-sided ideal, and hence we have $N_{\opm{\omega}{}} \subseteq SN_{\opm{\omega}{}}$ by the maximality statement of \Cref{lem:SN_ideal}.
	On the other hand, $SN_{\opm{\omega}{}} \subseteq N_{\opm{\omega}{}}$ holds trivially.
\end{remark}

\subsection{Kolmogorov products}\label{sec:kolmogorov}

In classical Markov categories, the notion of \emph{Kolmogorov product} axiomatizes the idea of taking infinite tensor products of objects~\cite{fritzrischel2019zeroone}. 
This is relevant when talking about joint distributions of infinitely many random variables. 
We now generalize this notion to the involutive setting in terms of pictures.

\begin{definition}\label{def:kolmogorov}
	Let $\cD$ be a picture.
	For any set $J$, a \newterm{Kolmogorov product} of a family of objects $(A_j)_{j \in J}$ in $\cD$ is an object $A_J$ together with a natural bijection between
	\begin{enumerate}
		\item generalized morphisms $B\to A_J \tensor E$, and
		\item families of generalized morphisms $(\phi_F \colon B \to A_F \tensor E)$, where $A_F$ is shorthand for $\bigtensor_{j \in F} A_j$, indexed by finite subsets $F \subseteq J$, such that whenever $F' \subseteq F$, we have
		\begin{equation}\label{eq:kolmogorov}
			\tikzfig{kolmogorov}
		\end{equation}
		where the projection $\pi_{F,F'}\colon A_F \to A_{F'}$ is the identity on $A_{F'}$ and the delete morphism in the other components.
	\end{enumerate}
	Moreover, this natural bijection is required to restrict to natural bijections on $\cD$ and $\gen{\cD}_{\det}$.
\end{definition}
It is worth emphasizing that, by definition, a Kolmogorov product is a (cofiltered) limit that is preserved under tensoring. 
In particular, such a product is unique up to a unique isomorphism in $\cD_{\det}\coloneqq \cD \cap \gen{\cD}_{\det}$ because the projections $\pi_{F,F'}$ belong to $\cD_{\det}$ (recall \cref{ex:det_coherence}).

The Kolmogorov product of a constant family $(A)_{j \in J}$ is called \newterm{Kolmogorov power}.

\begin{definition}
	A picture $\cD$ \newterm{has (countable) Kolmogorov products} if every (countable) family of objects has a Kolmogorov product. 
\end{definition}

The notion of Kolmogorov products has already appeared implicitly in the study of the quantum de Finetti theorem by Staton and Summers~\cite[Definition~2.23 and Theorem~2.24]{staton2023quantum}.

\begin{remark}\label{rem:display_autocomp_N}
	Let us consider a picture with countable Kolmogorov products and a morphism $\phi\colon A \to B$. Then we can construct a generalized morphism $\phi^{(\N)}\colon A \to B_{\N}$ obtained by the family $(\phi^{(n)})$, where $n$ is any nonnegative integer.

	Then $\phi^{(\N)}$ is a morphism if and only if $\phi$ is autocompatible. 
	This is immediate from \cref{rem:display_autocomp}.
\end{remark}

\begin{proposition}\label{prop:kolmogorovproducts}
The pictures $\cpu_{\min/\max}$ have Kolmogorov products.
\end{proposition}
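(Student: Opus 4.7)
The plan is to construct $A_J$ as a directed union of finite tensor products, exhibited as a dense $*$-subalgebra of an infinite C*-tensor product. For each finite $F \subseteq J$, the pre-C*-algebra $A_F$ embeds into $A_{F'}$ for $F \subseteq F'$ via tensoring with units in the new coordinates, and these embeddings extend to isometric inclusions $\closed{A_F} \hookrightarrow \closed{A_{F'}}$ in both the minimal and the maximal C*-norms (a standard fact in operator algebra). I would therefore set
\[
A_J \coloneqq \bigcup_{F \subseteq J \text{ finite}} A_F,
\]
regarded as a $*$-subalgebra of $\closed{A_J} \coloneqq \bigotimes_{j \in J} \closed{A_j}$ with the corresponding minimal or maximal C*-norm. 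By \cref{lem:closure}, $A_J$ is a pre-C*-algebra, and the projections $\pi_F \colon A_J \to A_F$ are the formal opposites of the canonical inclusions $A_F \hookrightarrow A_J$, hence deterministic.

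Next, I would verify the universal property at the level of generalized morphisms, i.e.~in $\gen{\cD} = \calgone$. The key observation is that any finite sum in $A_J \odot E$ involves only finitely many coordinates in $J$, so $A_J \odot E = \bigcup_F (A_F \odot E)$ as vector spaces. Consequently, a linear unital map $\opm{\phi}{} \colon A_J \odot E \rightsquigarrow B$ is determined by, and can be freely prescribed as, a compatible family $(\opm{\phi_F}{} \colon A_F \odot E \rightsquigarrow B)$, where compatibility means that $\opm{\phi_{F'}}{}$ restricts to $\opm{\phi_F}{}$ whenever $F \subseteq F'$. Reversing arrows yields the required bijection between generalized morphisms $B \to A_J \tensor E$ and compatible families satisfying~\eqref{eq:kolmogorov}, and naturality in $B$ and $E$ is immediate.

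To show the bijection restricts to $\cD = \cpu_{\min/\max}$, assume each $\opm{\phi_F}{}$ is CPU; then each has norm $\le 1$ by \cref{prop:pu_bounded}, so $\opm{\phi}{}$ is bounded of norm $\le 1$ on $A_J \odot E$ and by \cref{lem:trick} extends uniquely to a bounded map $\close{\opm{\phi}{}} \colon \closed{A_J} \otimes \closed{E} \rightsquigarrow \closed{B}$. By uniqueness, this extension restricts to the CPU map $\close{\opm{\phi_F}{}}$ (furnished by \cref{prop:Phi_closPhi_positivity}) on each $\closed{A_F} \otimes \closed{E}$. For fixed $n$, the bounded map $\close{\opm{\phi}{}} \odot \id_{M_n}$ agrees with the positive $\close{\opm{\phi_F}{}} \odot \id_{M_n}$ on the dense subspace $\bigcup_F \closed{A_F} \otimes \closed{E} \odot M_n$, and since the positive cone is closed, continuity implies that $\close{\opm{\phi}{}} \odot \id_{M_n}$ is positive everywhere. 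Hence $\close{\opm{\phi}{}}$ is CPU, and so is $\opm{\phi}{}$ by \cref{prop:Phi_closPhi_positivity}. The converse is trivial by restriction, and an analogous density argument handles the bijection on $\gen{\cD}_{\det}$, since being a $*$-homomorphism is also preserved under pointwise limits of bounded maps.

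The main obstacle I expect is ensuring that positivity testing extends from the dense subalgebra to the whole completion uniformly in $M_n$, so that complete positivity (and not only positivity) is preserved; this is the only genuinely analytic step and relies on closedness of the positive cone together with continuity of $\close{\opm{\phi}{}} \odot \id_{M_n}$. Everything else reduces to algebraic bookkeeping about directed unions and the standard norm-coherence of the minimal and maximal C*-tensor norms under adjoining unit factors.
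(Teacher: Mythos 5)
Your proposal is correct and follows essentially the same route as the paper: realize $A_J$ as the filtered colimit (directed union) of the finite tensor products $A_F$ with the C*-norm inherited via the isometric inclusions, obtain the universal property for generalized morphisms from the fact that every element of $A_J \odot E$ lies in some $A_F \odot E$, and restrict to $\cpu_{\min/\max}$ via the uniform norm bound from \cref{prop:pu_bounded}. The only (immaterial) deviations are that you verify complete positivity by extending to the completion and invoking closedness of the positive cone, where the paper simply varies the auxiliary tensor factor, and that you omit the degenerate case where some $A_j$ is the zero algebra, in which the inclusions fail to be isometric and the Kolmogorov product is just the zero algebra.
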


\begin{proof}
	We use the same notation as in \cref{def:kolmogorov} above. 
	If some $A_j$ is the zero algebra, then it is easy to see that the zero algebra also serves as a Kolmogorov product $A_J$.
	We can therefore assume that all $A_j$ are non-zero.

	Then we need to consider the diagram formed by the finite tensor products $A_F \coloneqq \bigodot_{j \in F} A_j$ of pre-C*-algebras, for any finite subset $F \subseteq J$, and for any $F' \subseteq F$, the canonical morphisms $\pi_{F,F'} \colon A_{F} \to A_{F'}$ corresponding to the inclusion $*$-homomorphisms
	\[
		\opm{\pi{}}{}_{F,F'}\colon \pre{A}_{F'} \rightsquigarrow \pre{A}_F.
	\]
	Our assumption that all $A_j$ are non-zero implies that these $*$-homomorphisms are split isometries.
	In one direction, the $\opm{\pi}{}_{F,F'}$ are clearly contractive since they are obtained by tensoring with units in the relevant C*-tensor products.
	In the other direction, choose any state on $A_j$ for every $j\in F\setminus F'$.
	Tensoring these states with the identities on the tensor factors in $F'$ gives a unital completely positive retraction $A_F\rightsquigarrow A_{F'}$ of $\opm{\pi}{}_{F,F'}$.
	By \cref{prop:cpu_tensor,prop:pu_bounded}, this retraction is contractive for either the minimal or the maximal tensor norm.
	Hence $\opm{\pi}{}_{F,F'}$ is a split isometry.

	In $\calgone$, the filtered colimit of the diagram formed by the $\opm{\pi}{}_{F,F'}$ can be constructed as the colimit in the category of sets.
	This is
	\[
		\pre{A_J}\coloneqq \colim_{F} \pre{A_F} = \newfaktor{\left(\displaystyle\bigsqcup_F \pre{A_F}\right)\,}{\sim}
	\] 
	where, given $x_F \in \pre{A_F}$ and $x_{F'} \in \pre{A_{F'}}$, we have $x_F \sim x_{F'}$ if and only if there exists $G \supseteq F,F'$ such that $\opm{\pi{}}{}_{G,F}(x_F) = \opm{\pi{}}{}_{G,F'} (x_{F'})$. 
	The induced algebraic structure clearly makes $\pre{A_J}$ the colimit both in the category of $*$-algebras and in the category of vector spaces.
	Since all $\opm{\pi}{}_{F,F'}$ are isometries, the colimit also inherits a C*-norm from the $\pre{A_F}$.
	(As a matter of fact, the completion $\closed{A_J}$ is the colimit of the $\closed{A_F}$ in the category of C*-algebras, since this is exactly how filtered colimits of C*-algebras can be constructed~\cite[Section~6.1]{murphy1990}.\footnote{See also the more general~\cite[Corollary~7.22]{bunke2021calgccat}, or the historical reference~\cite[Chapter~2]{guichardet66produitstensoriels}.}) 

	So the desired bijection holds with $E = \mathbb{C}$ both at the level of generalized morphisms and at the level of generalized deterministic morphisms (which are the $*$-homomorphisms, \cref{ex:homnotbound}).
	To check that the colimit is preserved by $- \odot E$ for any pre-C*-algebra $E$, recall that $\colim_F (A_F\odot E) \cong \left(\colim_F A_F\right) \odot E$ in the category of vector spaces since $-\odot E$ is a left adjoint endofunctor (\cite[Application 2.6.2]{weibel1994homological}). 
	By construction, this isomorphism is compatible with the induced multiplication and involution on both sides. 
	In other words, it is a $*$-isomorphism, and in particular it induces a bijection between $*$-homomorphisms out of either side. 

	This $*$-isomorphism is also isometric.
	Indeed for any $x \in \colim_F (A_F\odot E)$, choose a finite subset $F \subseteq J$ such that $x$ has a representative $x_F \in A_F \odot E$.
	Then
	\begin{equation}\label{eq:norm_colim}
	\norm{x}_{\colim_F (A_F\odot E)} = \norm{x_F}_{A_F \odot E},	
	\end{equation}
	because the connecting maps $A_{F'}\odot E\rightsquigarrow A_F\odot E$ for $F'\subseteq F$ are isometries by the splitness of $\opm{\pi}{}_{F,F'}$ noted above, applied with $E$ as an extra tensor factor.
	Again by splitness, also the inclusion maps $A_F\odot E\rightsquigarrow A_J \odot E$ are isometries, so that the norm of $x$ in $(\colim_F A_F) \odot E$ coincides with~\eqref{eq:norm_colim}.

	To show that we also obtain bijections at the level of pictures, let us consider a completely positive unital map $\pre{A_J} \odot \pre{E} \rightsquigarrow \pre{B}$. 
	Then the composite
	\[
		\pre{A_F} \odot \pre{E} \rightsquigarrow \pre{A_J} \odot \pre{E} \rightsquigarrow \pre{B}
	\]
	is a completely positive unital map for any $F$ because it is a composition of two such morphisms. 
	Conversely, let us assume that all $\opm{\phi}{}_F \colon \pre{A_F} \odot \pre{E} \rightsquigarrow \pre{B}$ are completely positive unital maps (for $F \subseteq J$ finite).
	Then the induced $\opm{\phi}{} \colon \pre{A_J} \odot \pre{E} \rightsquigarrow \pre{B}$ has norm $\le 1$: given any $x \in \pre{A}_J \odot \pre{E}$ with representative $x_F \in \pre{A}_F \odot \pre{E}$, we have
	\[
		\norm{\opm{\phi}{}(x)} =\norm{\opm{\phi_F}{}(x_F)}= \norm{\opm{\phi_F}{} (x_F)} \le \norm{x_F} = \norm{x},
	\]
	By \cref{prop:pu_bounded}, $\opm{\phi}{}$ is positive. Since $B$ and $E$ are arbitrary, also $\opm{\phi}{} \odot \id_C$ is positive for any $C$ because $\opm{\phi}{}_F \odot \id_{C}$ is. Therefore, $\opm{\phi}{}$ is completely positive.
\end{proof}

\begin{notation}\label{nota:sigma}
	Let us consider a picture $\cD$ and an object $A$.
	Suppose that $J$ and $J'$ are sets for which the Kolmogorov powers $A_J$ and $A_{J'}$ exist, and let $\iota\colon J\hookrightarrow J'$ be an injection.
	This induces a morphism
	\[
		A_\iota \: : \: A_{J'}\to A_J
	\]
	which keeps the factors of $A_{J'}$ indexed by $\im(\iota)$, relabels the factor $\iota(j)$ as the $j$-th factor, and marginalizes all factors indexed by $J'\setminus\im(\iota)$.
	Formally, $A_\iota$ is the unique morphism such that for every finite $F\subseteq J$,
	\[
		\pi_F\comp A_\iota = A_{\iota|_F}\comp \pi_{\iota(F)}.
	\]
	Here $\pi_F\colon A_J\to A_F$ and similarly $\pi_{\iota(F)}$ are the finite marginalizations,
	while $A_{\iota|_F}\colon A_{\iota(F)}\to A_F$ is the isomorphism induced by the bijection $\iota|_F\colon F\to\iota(F)$.
\end{notation}

This construction is contravariant in the sense that $A_{\iota\comp\kappa}=A_\kappa\comp A_\iota$ for composable injections $\iota$ and $\kappa$.
We will mainly apply this construction to obtain $A_\sigma : A_J \to A_J$ for any finite permutation $\sigma\colon J\to J$ of an infinite set $J$.
For an example with non-bijective $\iota$, if $\iota\colon \lbrace 1,2\rbrace \to \lbrace 1,2,3\rbrace$ is given by $\iota(1) = 2$ and $\iota(2) = 1$, then $A_\iota\colon A_{\lbrace 1,2,3\rbrace} \to A_{\lbrace 1,2\rbrace}$ is
\begin{equation*}
	\tikzfig{injection_tensors}
\end{equation*}

\begin{remark}[Zero-one laws]\label{rem:01laws}
	Some readers may wonder whether the zero--one laws of~\cite{fritzrischel2019zeroone} generalize to the involutive setting.
	Indeed the abstract Kolmogorov zero--one law~\cite[Theorem~5.3]{fritzrischel2019zeroone} holds with the same proof, as long as one takes care to preserve the order of the tensor factors throughout the argument, and where all morphisms in the statement and proof are allowed to be generalized morphisms.
	At the same time, this vast generality also indicates that Kolmogorov's zero--one law is not a particularly deep or interesting result, which is why we do not elaborate further.

	The abstract Hewitt--Savage zero--one law~\cite[Theorem~5.4]{fritzrischel2019zeroone} is a bit deeper, and Parzygnat already asked for a quantum generalization of it~\cite[Question~8.41]{quantum-markov}. 
	Unfortunately, this zero--one law is essentially vacuously true even in the whole involutive Markov categories $\calgmin$ and $\calgmax$.
	The reason is that for infinite $J$, \emph{every} generalized morphism $\phi \colon A_J \to B$ which satisfies $\phi \comp A_\sigma = \phi$ for all finite permutations $\sigma$ of $J$ already factors through the delete morphism.
	Indeed for such $\phi$, every $\opm{\phi}{}(y) \in \pre{A_J}$ for $y \in \pre{B}$ must be invariant under finite permutations.
	Now the construction of $\pre{A_J}$ implies that $\opm{\phi}{}(y) = [x]$ for some $x \in \pre{A_F}$ and some finite subset $F\subseteq J$. 
	By choosing any finite permutation $\sigma \colon J \to J$ such that $F \cap \sigma(F)= \emptyset$, 
	we conclude that $x = \lambda 1$, so that $\phi$ in fact factors through the delete morphism. 

	One may wonder if such an unfortunate turn of events happens because we are working with pre-C*-algebras instead of C*-algebras. 
	This is not the case as the issue still persists for tensor powers of C*-algebras (\cref{prop:no_01HSlaw}). 
\end{remark}

\section{Representability and the quantum de Finetti theorem}\label{sec:representability_qdf}
In the following, we introduce and investigate representability, as well as its role in quantum de Finetti theorems.
In the classical case, this was defined in~\cite{fritz2023representable} as a way of describing a Markov category as the Kleisli category of an associated monad on its deterministic subcategory.
For involutive Markov categories, we will argue that representability comes in two distinct flavors. 
The more straightforward one is given by extending the classical notion from~\cite[Definitions~3.7 and 3.10]{fritz2023representable} to the involutive world. 
In the pictures that are of interest to quantum probability, the distribution objects (the objects defined by the universal property of representability) are then obtained by the \emph{universal C*-algebras}~\cite{kirchberg} of an operator system, namely of the underlying operator system of the C*-algebra that we start with (see \cref{prop:cpu_representable}).
The second flavour of representability is to require it only against classical objects. 
We call this concept \emph{classical representability}.
It describes the (pre-C*-algebra of complex-valued continuous maps on the) state space of a pre-C*-algebra through the associated universal property (\cref{prop:class_representable}).
For this reason, we call \emph{state space objects} the objects obtained from classical representability.
A first interesting consequence of our synthetic approach is that the state space is Gelfand dual to the abelianization of the universal C*-algebra (\cref{cor:abel_state}).

Our final topic is the quantum de Finetti theorem.
This result, which we prove for both the maximal and the minimal algebraic tensor products of pre-C*-algebras following~\cite{hulanicki68tensor}, is central to our topic because of its inherent connection to classical representability, as it is categorically formulated as a factorization of exchangeable morphisms through the state space object (\cref{thm:QA_SA}). This formulation is rather strong in that it not only applies to exchangeable states, but to exchangeable morphisms with general input and with an extra output as an additional tensor factor on the codomain.

\subsection{Representability}\label{sec:representability}

We start with the definition of representability. The reader may refer to~\cite{fritz2023representable} for representability of classical Markov categories; here we sketch most of the ideas and generalize them to the involutive setting.

\begin{definition}
	Let $\cD$ be a picture. Given an object $A$, a \newterm{distribution object} is an object $PA$ together with a \newterm{sampling morphism} $\samp_A \colon PA \to A$ such that the induced map
	\[
		\samp_A \circ \_ \colon \cD_{\det} (B,PA) \to \cD (B,A)
	\]
	is bijective for every object $B \in \cD$.
	We say that $\cD$ is \newterm{representable} if every object $A$ admits a distribution object $PA$.
\end{definition}

\begin{notation}
	We will denote by $(\_)^{\sharp}$ the inverse of $\samp_A \circ \_$, and $\delta_A \coloneqq (\id_A)^{\sharp}\colon A \to PA$ is also called the \newterm{delta morphism}. We will drop the subscript $A$ if it is sufficiently clear from context.
\end{notation}

Our first observation extends~\cite[Lemma~3.9]{fritz2023representable} to the involutive setting.

\begin{lemma}\label{lem:rep_adjoint}
	Let $\cD$ be a representable picture. Then $A \mapsto PA$ extends to a functor $\cD \to \cD_{\det}$ which is right adjoint to the inclusion $\cD_{\det} \hookrightarrow \cD$ with:
	\begin{enumerate}
		\item Unit given by the delta morphisms;
		\item Counit given by the sampling morphisms.
	\end{enumerate}
\end{lemma}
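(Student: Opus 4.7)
The plan is to follow the standard argument for the classical case~\cite[Lemma~3.9]{fritz2023representable}, which transfers essentially verbatim: the universal property of distribution objects does not refer to any specifically involutive structure, and the key ingredient throughout is uniqueness in the defining bijection $\cD_{\det}(B, PA) \cong \cD(B, A)$.

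First I would define the action of $P$ on morphisms. Given $\phi \colon A \to B$ in $\cD$, the composite $\phi \comp \samp_A \colon PA \to B$ is a morphism in $\cD$ whose source $PA$ lies in $\cD_{\det}$, so representability of $B$ produces a unique deterministic morphism $P\phi \coloneqq (\phi \comp \samp_A)^{\sharp} \colon PA \to PB$, characterized by $\samp_B \comp P\phi = \phi \comp \samp_A$. Functoriality ($P(\id_A) = \id_{PA}$ and $P(\psi \comp \phi) = P\psi \comp P\phi$) then follows by invoking the uniqueness of $(\_)^{\sharp}$: the identity $\id_{PA}$ and the composite $P\psi \comp P\phi$ are deterministic since $\cD_{\det}$ is a subcategory, while post-composition with the appropriate sampling morphism reproduces in each case the expected morphism. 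Taking $\phi$ to vary, this shows at the same time that the sampling morphisms assemble into a natural transformation $\samp \colon P \to \id_{\cD}$ (upon composing $P$ with the inclusion back into $\cD$).

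For the adjunction, the bijection $\cD_{\det}(B, PA) \xrightarrow{\cong} \cD(B, A)$ given by $\samp_A \comp \_$ is natural in $B \in \cD_{\det}$ by associativity of composition and in $A \in \cD$ by the very definition of $P\phi$. Thus we obtain $\iota \dashv P$, with counit $\samp_A$, which is the adjunct of $\id_{PA}$ under the bijection at $(PA, A)$. The unit at $B \in \cD_{\det}$ is the adjunct of $\id_B$, which is $(\id_B)^{\sharp} = \delta_B$, so the unit is given by the delta morphisms as claimed. The triangle identities reduce to $\samp_A \comp \delta_A = \id_A$, which holds by definition of $\delta_A$, and to $P(\samp_A) \comp \delta_{PA} = \id_{PA}$, which follows by uniqueness since both sides are deterministic and sampling both produces $\samp_A$ (using the first triangle identity at $PA$ and naturality of $\samp$).

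The only mildly delicate point is to check that $P\phi$ genuinely lands in $\cD_{\det}$ when $\phi$ is a general morphism of $\cD$, and that the various composites appearing in the triangle identities remain deterministic. Both observations are immediate from the fact that $(\_)^{\sharp}$ takes values in $\cD_{\det}$ by definition, together with $\cD_{\det}$ being closed under composition; beyond this, the argument is purely formal manipulation of the universal property.
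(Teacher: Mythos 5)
Your proposal is correct and is essentially the same argument as the paper's: the paper's proof is a one-line appeal to the standard construction of a right adjoint from universal morphisms, and your write-up simply unpacks that construction explicitly (defining $P\phi = (\phi \comp \samp_A)^{\sharp}$, checking functoriality and the triangle identities via uniqueness in the defining bijection). No gap; the extra detail you supply is exactly what the paper leaves implicit.
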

\begin{proof}
	Representability is exactly the requirement of $\cD_{\det} \hookrightarrow \cD$ being left adjoint, using the definition via universal morphisms with $P$ as the right adjoint.
\end{proof}

\begin{notation}
	In view of \cref{lem:rep_adjoint}, we write $P\phi$ to denote the morphism $PA \to PB$ obtained from $\phi \colon A \to B$ via the functor $P$.
\end{notation}

\begin{proposition}
	Let $\cD$ be a representable picture. Then the right adjoint $P\colon \cD \to \cD_{\det}$ has a canonical symmetric lax monoidal structure given by
	\begin{equation}\label{eq:nabla}
		\begin{tikzcd}[column sep=15ex]
			\nabla_{A,B} \colon \quad PA \tensor PB \ar[r,"\delta"] & P(PA \tensor PB) \ar[r,"P(\samp \tensor \samp)"] & P(A \tensor B)
		\end{tikzcd}
	\end{equation}
	and the adjunction between $P$ and the inclusion $\operatorname{incl}\colon \cD_{\det} \to \cD$ is symmetric monoidal.
\end{proposition}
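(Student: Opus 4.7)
The plan is to recognize this as a direct instance of doctrinal adjunction: since the inclusion $\operatorname{incl}\colon \cD_{\det} \hookrightarrow \cD$ is a strict symmetric monoidal functor (thanks to \cref{ex:det_coherence}, where $\cD_{\det}$ was noted to be a symmetric monoidal subcategory of $\cD$) and $\operatorname{incl} \dashv P$ by \cref{lem:rep_adjoint}, the right adjoint $P$ canonically inherits a symmetric lax monoidal structure such that the adjunction becomes a monoidal adjunction. Both claims will be established by checking the formulas directly, using the universal property of distribution objects.

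First, I would establish the useful identity $\phi^{\sharp} = P(\phi) \comp \delta_B$ for any $\phi\colon B \to A$, which follows from the triangle identities: $\samp_A \comp P(\phi) \comp \delta_B = \phi \comp \samp_B \comp \delta_B = \phi$, so $P(\phi) \comp \delta_B$ coincides with $\phi^{\sharp}$ by uniqueness. Applying this to $\phi = \samp_A \tensor \samp_B \colon PA \tensor PB \to A \tensor B$ shows that the composite in \eqref{eq:nabla} is exactly $(\samp_A \tensor \samp_B)^{\sharp}$, i.e.~the unique deterministic morphism $\nabla_{A,B}$ satisfying
\[
    \samp_{A \tensor B} \comp \nabla_{A,B} = \samp_A \tensor \samp_B.
\]
Analogously, the lax unit $I \to PI$ should be taken to be $\delta_I = (\id_I)^{\sharp}$.

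Verification of the laxator axioms, symmetry and naturality of $\nabla$ then amounts to applying the universal property of $P(X)$: two deterministic morphisms into $P(X)$ coincide as soon as their postcompositions with $\samp_X$ agree. For instance, to show the symmetry axiom $P(\swap_{A,B}) \comp \nabla_{A,B} = \nabla_{B,A} \comp \swap_{PA, PB}$, I would post-compose with $\samp_{B \tensor A}$ and use naturality of $\samp$ to obtain $\swap_{A,B} \comp (\samp_A \tensor \samp_B)$ on the left and $(\samp_B \tensor \samp_A) \comp \swap_{PA, PB}$ on the right, which agree by naturality of $\swap$ in $\cD$. Associativity, unitality, and naturality of $\nabla$ follow from analogous short ``apply $\samp$ and compare'' arguments. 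Finally, monoidality of the adjunction reduces to checking that $\delta$ and $\samp$ are monoidal natural transformations: the relevant diagram for $\samp$ is precisely the defining condition of $\nabla$, and the one for $\delta$ follows similarly from the triangle identities. I expect no substantive obstacle here: the only mild subtlety is to track that every mate obtained via $(\_)^{\sharp}$ genuinely lands in $\cD_{\det}$ so that it can be further fed into $P$, but this is immediate from the definition of $(\_)^{\sharp}$.
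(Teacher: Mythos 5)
Your proposal is correct and matches the paper's approach: the paper simply defers to the verbatim argument for classical Markov categories (\cite[Proposition~3.15]{fritz2023representable}), which is exactly the doctrinal-adjunction/mate construction you describe, with all coherences verified by post-composing with $\samp$ and invoking the bijectivity of $\samp_X \circ \_$ on deterministic morphisms. Your identification of $\nabla_{A,B}$ with $(\samp_A \tensor \samp_B)^{\sharp}$ and your note that all mates land in $\cD_{\det}$ are precisely the points that make the argument go through.
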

\begin{proof}
	This proof follows verbatim the argument of~\cite[Proposition~3.15]{fritz2023representable}.
\end{proof}

From this result, we get a corollary analogous to~\cite[Corollary~3.17]{fritz2023representable}.
\begin{corollary}\label{cor:kleisli}
	Let $\cD$ be a representable picture. 
	The adjunction given by $P\colon \cD \to \cD_{\det}$ and the inclusion $\operatorname{incl}\colon \cD_{\det} \hookrightarrow \cD$ induces a symmetric monoidal and affine\footnote{A monad on a category with terminal object $I$ is \emph{affine} if $P(I)\cong I$.} 
	monad $P \operatorname{incl}$ on $\cD_{\det}$.  
	In particular, $\cD$ and the Kleisli category $K\ell(P\operatorname{incl})$ are isomorphic as symmetric monoidal categories. 
\end{corollary}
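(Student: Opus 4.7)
The plan is to apply standard formal category theory to the symmetric monoidal adjunction $\operatorname{incl} \dashv P$ established in the preceding proposition, following the same strategy as the classical analogue~\cite[Corollary~3.17]{fritz2023representable}. First I would observe that every adjunction produces a monad on the domain of the right adjoint: here this is $T \coloneqq P \circ \operatorname{incl}$ on $\cD_{\det}$, with unit given by the delta morphisms and multiplication given by $P(\samp)$. Since the adjunction is symmetric monoidal, $T$ inherits the structure of a symmetric monoidal monad, whose natural transformation $TA \tensor TB \to T(A \tensor B)$ is precisely the map $\nabla$ of~\eqref{eq:nabla}.

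Next I would verify affineness. The defining bijection of $PI$ reads $\cD_{\det}(B, PI) \cong \cD(B, I)$ for all $B$. The right-hand side is a singleton: $I$ is terminal in the involutive Markov category $\gen{\cD}$, and every morphism in $\cD$ into $I$ is \emph{a fortiori} a morphism in $\gen{\cD}$ into $I$, hence it must be the delete morphism. Therefore $PI$ is terminal in $\cD_{\det}$. Since $I$ is already terminal in $\cD_{\det}$ by the same argument restricted to deterministic morphisms, this yields a canonical deterministic isomorphism $PI \cong I$.

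For the Kleisli description, I would invoke the standard fact that the Kleisli category of a monad arising from an adjunction is isomorphic to the full subcategory of the ambient category spanned by the free objects. Here the inclusion $\operatorname{incl}$ is the identity on objects, so every object of $\cD$ is free and the Kleisli category has the same objects as $\cD$. On morphisms, the adjunction bijection
\begin{equation*}
\cD_{\det}(A, PB) \;\cong\; \cD(A, B), \qquad f \longmapsto \samp_B \comp f,
\end{equation*}
is exactly the definition of Kleisli hom-sets, and it assembles into an identity-on-objects isomorphism of categories. Symmetric monoidality follows from the formula~\eqref{eq:nabla}: unfolding shows that the Kleisli tensor of $f\colon A \to PB$ and $g\colon C \to PD$ corresponds, under the bijection, to $(\samp_B \comp f) \tensor (\samp_D \comp g)$, which is the tensor in $\cD$.

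The only mild subtlety lies in the last paragraph, where one must verify that the two monoidal structures match through the bijection. Since this reduces to the definition of $\nabla$ together with the counit triangle identity $\samp \comp \delta = \id$, I do not expect any conceptual obstacle beyond careful bookkeeping.
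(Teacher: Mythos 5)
Your proposal is correct and takes essentially the same route as the paper, which states this corollary without proof as the direct analogue of~\cite[Corollary~3.17]{fritz2023representable}: the monad is induced by the symmetric monoidal adjunction $\operatorname{incl} \dashv P$, affineness follows from terminality of $I$ in $\gen{\cD}$ (hence in $\cD$ and $\cD_{\det}$), and the identity-on-objects left adjoint makes the Kleisli comparison functor an isomorphism, with the monoidal structures matching via $\nabla$ and the triangle identity $\samp \comp \delta = \id$. Your verification that $\samp \comp \nabla = \samp \tensor \samp$ is exactly the bookkeeping needed, so there is no gap.
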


We can now construct this monad for our two pictures involving completely positive maps.

\begin{proposition}\label{prop:cpu_representable}
	The pictures $\cpu_{\min/\max}$ are representable.
\end{proposition}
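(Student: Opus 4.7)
The plan is to define $PA$ as a suitable pre-C*-algebra version of Kirchberg's universal C*-algebra $C^*_u(A)$ of the operator system underlying $A$, which is characterized by the property that any completely positive unital map $A \rightsquigarrow D$ into a C*-algebra $D$ factors uniquely as $\opm{\tilde\phi}{} \circ \iota$ for some $*$-homomorphism $\opm{\tilde\phi}{} \colon C^*_u(A) \rightsquigarrow D$, where $\iota \colon A \to C^*_u(A)$ is the canonical completely positive unital embedding. Concretely, take $PA$ to be the $*$-subalgebra of $C^*_u(A)$ generated by $\iota(A)$, equipped with the induced norm; by \cref{lem:closure}, this is a pre-C*-algebra, and $\iota$ corestricts to a completely positive unital map $A \rightsquigarrow PA$ which we declare to be $\opm{\samp_A}{}$.

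To verify the universal property, consider any morphism $\phi \colon B \to A$ in $\cpu$, represented by a completely positive unital map $\opm{\phi}{} \colon A \rightsquigarrow B$. First compose with the inclusion $B \hookrightarrow \closed{B}$ to obtain a completely positive unital map into a C*-algebra, then apply Kirchberg's universal property to extend it to a $*$-homomorphism $C^*_u(A) \rightsquigarrow \closed{B}$. Since $PA$ is generated as a $*$-algebra by $\iota(A)$ and this extension sends $\iota(A)$ into $\opm{\phi}{}(A) \subseteq B$, its restriction to $PA$ actually lands inside the $*$-subalgebra $B \subseteq \closed{B}$, producing a $*$-homomorphism $\opm{\tilde\phi}{} \colon PA \rightsquigarrow B$; this corresponds to a deterministic morphism in $\cpu$ since $*$-homomorphisms preserve positivity and hence are completely positive (recall \cref{ex:homnotbound}). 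This assignment is manifestly inverse to precomposition with $\iota$: on the one hand $\opm{\tilde\phi}{} \circ \iota = \opm{\phi}{}$ by construction, and on the other hand the uniqueness part of Kirchberg's universal property together with the fact that $\iota(A)$ generates $PA$ as a $*$-algebra forces $\opm{\tilde\phi}{}$ to be the unique such extension, yielding the desired bijection $\cD_{\det}(B, PA) \xrightarrow{\cong} \cD(B, A)$.

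The same argument works verbatim for both $\cpu_{\min}$ and $\cpu_{\max}$, since representability only constrains the underlying hom-sets and not the monoidal structure. The main obstacle is the passage between C*-algebras and pre-C*-algebras: Kirchberg's universal property is usually stated for maps into a C*-algebra $D$, and the key verification is that, after factoring through $\closed{B}$, the resulting $*$-homomorphism on $C^*_u(A)$ restricts back into the possibly non-complete $B$. As sketched, this reduces cleanly to the fact that $PA$ is generated as a $*$-algebra by $\iota(A)$, so no genuine analytic machinery beyond the existence of $C^*_u$ is needed.
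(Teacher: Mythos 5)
Your proposal is correct and follows essentially the same route as the paper: both take $PA$ to be the $*$-subalgebra of Kirchberg's universal C*-algebra generated by the image of $A$, and both reduce the universal property for pre-C*-algebras to the C*-algebra case via the completion $\closed{B}$, using that $\iota(A)$ generates $PA$ as a $*$-algebra to pull the factorizing $*$-homomorphism back into $B$ and to get uniqueness. The only difference is that the paper additionally spells out the construction of the universal C*-algebra via the free $*$-algebra $\bigoplus_n \closed{A}^{\odot n}$ and the sup-over-CPU-maps seminorm, which you treat as a black box.
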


\begin{proof}
	In the context of C*-algebras and completely positive unital maps, the universal property of $P$ translates to that of \emph{universal C*-algebras}, where the C*-algebra is seen as an operator system~\cite[Section~3]{kirchberg}. 
	Let us sketch the construction of the universal C*-algebra $\closed{PA}$ of $\closed{A}$ described in~\cite[before Lemma~2.5]{kavruk13quotients}. 
	We consider first the free unital $*$-algebra 
	\[
		F\closed{A} \coloneqq \bigoplus_{n\ge 0} {\closed{A}^{\odot n}}
	\] 
	with multiplication given by tensoring and star given by reversing the order of every elementary tensor $x_1 \odot \cdots \odot x_n$, applying the star on $\closed{A}$ to each element, and extending antilinearly.
	For every completely positive unital map $\opm{\phi}{} \colon \closed{A} \rightsquigarrow \closed{B}$, we now obtain a $*$-homomorphism $\opm{\widetilde{\phi}}{} \colon F\closed{A} \rightsquigarrow \closed{B}$ by setting 
	\[
		\opm{\widetilde{\phi}}{} (x_1 \odot \cdots \odot x_n) \coloneqq \opm{\phi}{}(x_1) \cdots \opm{\phi}{}(x_n)
	\] 
	and extending linearly.
	This induces a seminorm on $F\closed{A}$ given by
	\[
		\norm{ x }_{\closed{PA}} \coloneqq \sup_{\phi} \, \norm{ \opm{\widetilde{\phi}}{} (x) },
	\]
	where $\phi$ ranges over completely positive unital maps $\closed{A} \rightsquigarrow \closed{B}$ for any $\closed{B}$.
	Finally, we take $\closed{PA}$ to be the completion of $F\closed{A}$ after quotienting by the nullspace,
	and this turns out to be a C*-algebra (loc.~cit.).
	The map $\opm{\phi}{\sharp} \colon \closed{PA} \rightsquigarrow \closed{B}$ is then the one induced by $\opm{\widetilde{\phi}}{}$.
	The universal property of the universal C*-algebra is now encoded in the completely positive unital inclusion $\closed{A} \rightsquigarrow \closed{PA}$, in the sense that it is the initial completely positive unital map from $\closed{A}$ to a C*-algebra.

	To extend this reasoning to pre-C*-algebras, we define $PA$ as the dense $*$-subalgebra of $\closed{P{A}}$ generated by the image of $\pre{A}$ via $A \rightsquigarrow \closed{PA}$. 
	By definition, $\pre{PA}$ can equivalently be seen as a quotient of the free unital $*$-algebra $F \pre{A}$ associated to $\pre{A}$ instead of $\closed{A}$.
	Therefore, given any completely positive unital map $\opm{\psi}{}\colon \pre{A} \rightsquigarrow \pre{B}$, the free construction yields a map $\opm{\widetilde{\psi}}{}\colon F\pre{A}\rightsquigarrow \pre{B}$, while its closure gives a completely positive unital map $\close{\opm{\psi}{}}\colon \closed{A} \rightsquigarrow \closed{B}$ (\cref{prop:Phi_closPhi_positivity}), and so the map $\pre{PA} \rightsquigarrow \pre{B}$ is simply the one obtained by restricting $\closed{P{A}}\rightsquigarrow \closed{B}$, since $\widetilde{\close{\opm{{\psi}}{}}}\colon F\closed{A} \rightsquigarrow \closed{B}$ restricts to $\opm{\widetilde{\psi}}{}\colon F\pre{A} \rightsquigarrow \pre{B}$.
\end{proof}

By construction, the sampling morphisms in $\cpu_{\min/\max}$ are simply given by the inclusions $x \mapsto [x]$, while the delta morphisms correspond to the multiplication maps
\[
	[x_1 \odot \dots \odot x_n] \longmapsto x_1 \cdots x_n.
\]
In particular, although the multiplication map is usually not bounded as a map $A \odot A \rightsquigarrow A$ with respect to either the minimal or maximal tensor norm, we now have an equivalent formulation given by a bounded $*$-homomorphism $\opm{\delta_A}{} \colon PA \rightsquigarrow A$.

The laxator $\nabla_{A,B}$ defined in \eqref{eq:nabla} corresponds to the following map on simple tensors:
\[
	[(x_1 \odot y_1) \odot \dots \odot (x_n \odot y_n)] \longmapsto [x_1 \odot \dots \odot x_n] \odot [y_1 \odot \dots \odot y_n],
	\]
where $x_j \in A$ and $y_j \in B$. 

In the classical setting, observational representability was introduced in~\cite[Appendix~A.4]{fritz2023supports} based on the notion of observational monad from~\cite{moss2022probability}.
The idea behind this concept is that iterated sampling can distinguish any two distinct probability measures.
Using \cref{nota:product_power}, we can consider the morphisms $\samp^{(n)}_A \colon PA \to A^{\otimes n}$ for any $n\ge 1$.

\begin{definition}
	A picture $\cD$ is \newterm{observationally representable} if it is representable and for every object $A$, the morphisms $(\samp_A^{(n)})_{n\ge 1}$ are jointly monic in $\gen{\cD}$: for any generalized morphisms $\phi, \psi \colon B \to PA$,
	\[
		\samp_A^{(n)} \phi = \samp_A^{(n)}\psi \quad  \forall\, n \qquad \implies \qquad \phi = \psi.
	\]
\end{definition}

\begin{remark}\label{rem:sampNmonic}
	If we assume that $\cD$ has countable Kolmogorov products, then the family $\samp_A^{(n)}$ glues together to a single generalized morphism $\samp_A^{(\mathbb{N})} \colon PA \to A_{\mathbb{N}}$. 
	The requirement of observational representability is then the same as saying that $\samp_A^{(\mathbb{N})}$ is monic in $\gen{\cD}$, since
	\[
		\samp_A^{(\mathbb{N})} \phi = \samp_A^{(\mathbb{N})} \psi \qquad \iff \qquad \samp_A^{(n)} \phi = \samp_A^{(n)} \psi \quad \forall \, n.
	\]
	Indeed the proof of this equivalence is an immediate application of the universal property of Kolmogorov products.
\end{remark}

\begin{proposition}\label{prop:cpu_obs_representable}
	The pictures $\cpu_{\min/\max}$ are observationally representable.
\end{proposition}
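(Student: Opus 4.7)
The plan is to identify the maps $\opm{\samp_A^{(n)}}{}$ concretely and then observe that their images together with the unit span the whole of $\pre{PA}$, from which joint monicness against arbitrary generalized morphisms is immediate. First I would recall from the proof of \cref{prop:cpu_representable} that $\pre{PA}$ was defined as the dense $*$-subalgebra of $\closed{PA}$ generated by the image of $\pre{A}$ under the completely positive unital inclusion $\opm{\samp_A}{}\colon \pre{A} \rightsquigarrow \pre{PA}$. As a vector space, this subalgebra is therefore spanned by the unit and by all products of the form $[x_1]\,[x_2]\cdots[x_n]$ with $n \ge 1$ and $x_1,\dots,x_n \in \pre{A}$.

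Next I would unfold the definition of $\samp_A^{(n)}$. Since copy morphisms in $\cpu_{\min/\max}$ correspond to multiplication (\cref{prop:pcalg_icd}) and $\samp_A^{(n)}$ is built from the recursive rule $\samp_A^{(n)} = \pairing{\samp_A}{\samp_A^{(n-1)}}$ of \cref{nota:product_power}, a straightforward induction shows that the operator-algebra direction $\opm{\samp_A^{(n)}}{}\colon \pre{A}^{\odot n} \rightsquigarrow \pre{PA}$ acts on simple tensors by
\[
	x_1 \odot \cdots \odot x_n \;\longmapsto\; [x_1]\,[x_2]\cdots[x_n].
\]
In particular, the union of the images of the $\opm{\samp_A^{(n)}}{}$ for $n \ge 1$ coincides with the spanning set identified above (apart from the unit).

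With this in place, suppose $\phi,\psi \colon B \to PA$ are generalized morphisms with $\samp_A^{(n)} \comp \phi = \samp_A^{(n)} \comp \psi$ for every $n \ge 1$. Reading this in the operator-algebra direction, the corresponding unital linear maps $\opm{\phi}{},\opm{\psi}{}\colon \pre{PA} \rightsquigarrow \pre{B}$ agree on every element of the form $[x_1]\,[x_2]\cdots[x_n]$, and on the unit by unitality; by linearity this forces $\opm{\phi}{} = \opm{\psi}{}$, hence $\phi = \psi$.

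The only non-routine point is the concrete identification of $\opm{\samp_A^{(n)}}{}$ with iterated multiplication, which amounts to chasing the recursive definition of $\phi^{(n)}$ through the correspondence between copy in $\cpu$ and multiplication in pre-C*-algebras; once this is done, observational representability is a direct consequence of the fact that $\pre{A}$ generates $\pre{PA}$ as a $*$-algebra, which is built into the construction.
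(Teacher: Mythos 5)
Your proof is correct and follows essentially the same route as the paper: both arguments come down to the fact that $\opm{\samp_A^{(n)}}{}$ sends $x_1 \odot \cdots \odot x_n$ to $[x_1]\cdots[x_n]$ and that these classes (together with the unit, which already lies in the image of $\opm{\samp_A}{}$ by unitality) span $\pre{PA}$, since $\pre{PA}$ is a quotient of the free $*$-algebra $F\pre{A}$. The only cosmetic difference is that the paper first invokes \cref{rem:sampNmonic} and \cref{prop:kolmogorovproducts} to package the family $(\samp_A^{(n)})_n$ into the single generalized morphism $\samp_A^{(\N)}$ and then checks that $\opm{\samp_A}{(\N)}$ is surjective, whereas you verify joint monicness of the family directly.
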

\begin{proof}
	In view of \cref{rem:sampNmonic} and \cref{prop:kolmogorovproducts}, we want to show that $\samp_A^{(\mathbb{N})}$ is monic with respect to generalized morphisms for every $A$.
	This translates into proving that the map $\opm{\samp_A}{(\mathbb{N})} \colon \pre{A_{\mathbb{N}}} \rightsquigarrow \pre{PA}$ is epic with respect to linear maps out of $\pre{PA}$.

	We recall that $\pre{PA}$ is a quotient of the free $*$-algebra $F\pre{A}$, as explained in the proof of \cref{prop:cpu_representable}.
	Therefore, the map $\opm{\samp_A}{(\mathbb{N})}$ sends by construction an element $x_1 \odot \dots \odot x_n \in \pre{A^{\odot n}} \subseteq \pre{A_{\mathbb{N}}}$ to the equivalence class of $x_1 \odot \dots \odot x_n$ in $\pre{PA}$. We conclude that $\opm{\samp_A}{(\mathbb{N})}$ is surjective because every element in $\pre{PA}$ is an equivalence class of finite sums of elements of the form $x_1 \odot \dots \odot x_n$ for some $n$.
\end{proof}

\begin{remark}\label{rem:it_samp_not_selfadjoint}
	In $\cpu_{\min/\max}$, the iterated sampling morphism $\samp^{(\N)}_A \colon PA \to A_{\N}$ is not self-adjoint in general.
	
	First, note that for self-adjoint $x, y \in {A}$, the element $x \odot y$ is self-adjoint in $A_\N$ while $[x \odot y]^* = [y \odot x]$ in $PA$. 
	Therefore, $\samp^{(2)}_A$ is self-adjoint if and only if $PA$ is commutative, and arguing likewise for more than two factors shows the statement for $\samp^{(\N)}_A$ as well.

	If $A = 0$, $\mathbb{C}$ or $\mathbb{C}^2$, then $PA$ is commutative: the first two are trivial because $P0=0$ and $P\mathbb{C}=\mathbb{C}$, while the third is discussed in~\cite[Section~5]{kirchberg}. 
	In the same section, Kirchberg and Wassermann prove that $\closed{P(\mathbb{C}^3)}$ and $\closed{P(M_2)}$ are not exact.
	This means in particular that they are not commutative,\footnote{The exactness of a commutative C*-algebra is a standard fact. See e.g.~\cite{brownozawa} for more details (every commutative C*-algebra is nuclear, and every nuclear C*-algebra is exact).} and therefore neither are $P(\mathbb{C}^3)$ and $P(M_2)$.
	Hence in these cases, the generalized morphisms $\samp^{(2)}$ and $\samp^{(\N)}$ are not self-adjoint.
\end{remark} 

\begin{proposition}\label{prop:obs}
	Let $\cD$ be a representable picture. Then the following are equivalent:
	\begin{enumerate}
		\item\label{it:obs_eq} $\cD$ is observationally representable;
		\item \label{it:obs_monic} For all objects $A$ and $B$, the morphisms $(\samp^{(n)}_A \tensor \id_B)_{n \in \N}$ are jointly monic in $\gen{\cD}$.
		\item\label{it:obs_as_eq} For all objects $A$, the morphisms $(\samp^{(n)}_A)_{n \in \N}$ are jointly monic in $\gen{\cD}$ modulo left \as{} equality: 
			given any suitably composable generalized morphisms $\phi,\psi$ and $\omega$, 
			\[
				\samp_A^{(n)} \comp \phi \asel{\omega} \samp_A^{(n)}\comp \psi  \quad  \forall\, n \qquad \implies \qquad \phi \asel{\omega} \psi.
			\]
		\item\label{it:obs_sym_as_eq} For all objects $A$, the morphisms $(\samp^{(n)}_A)_{n}$ are jointly monic in $\gen{\cD}$ modulo symmetric \as{} equality:
			given any suitably composable generalized morphisms $\phi,\psi,\omega$, 
			\[
				\samp_A^{(n)}\comp \phi \asets{\omega} \samp_A^{(n)}\comp \psi  \quad  \forall\, n \qquad \implies \qquad \phi \asets{\omega} \psi.
			\]
		\end{enumerate}
\end{proposition}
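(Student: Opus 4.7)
The plan is to establish the equivalences via the implications (ii) $\Rightarrow$ (i), (ii) $\Rightarrow$ (iii), (ii) $\Rightarrow$ (iv), (iii) $\Rightarrow$ (i) and (iv) $\Rightarrow$ (i), and then to close the loop with the non-trivial direction (i) $\Rightarrow$ (ii). Condition (ii) plays the role of the strongest, most directly usable form of joint monicness; the almost sure variants (iii) and (iv) will be reduced to it by reinterpreting $\asel{\omega}$ and $\asets{\omega}$ as ordinary equalities of morphisms into $B' \tensor PA$ and $B' \tensor PA \tensor B'$ respectively, built from $\omega$ and copy.

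The straightforward half runs as follows. Taking $B = I$ yields (ii) $\Rightarrow$ (i). For (ii) $\Rightarrow$ (iii), unfold $\phi \asel{\omega} \psi$ as the equality in $\gen{\cD}(A', B' \tensor PA)$ of $\Phi \coloneqq (\id_{B'} \tensor \phi) \cop_{B'} \omega$ and $\Psi \coloneqq (\id_{B'} \tensor \psi) \cop_{B'} \omega$; the hypothesis $\samp^{(n)}_A \phi \asel{\omega} \samp^{(n)}_A \psi$ for every $n$ then reads $(\id_{B'} \tensor \samp^{(n)}_A) \Phi = (\id_{B'} \tensor \samp^{(n)}_A) \Psi$ for every $n$, and applying (ii) at $B = B'$ (modulo a swap of the tensor factors) forces $\Phi = \Psi$, which is exactly $\phi \asel{\omega} \psi$. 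The same recipe yields (ii) $\Rightarrow$ (iv) once $\asets{\omega}$ is unfolded as equality of morphisms $A' \to B' \tensor PA \tensor B'$, now invoking (ii) at $B = B' \tensor B'$. Conversely, specializing $\omega = \id_{B'}$ in (iii) or (iv) collapses the respective a.s.\ equalities, via totality and deletion of the outer $B'$ factor(s), to $\phi = \psi$, so (iii) $\Rightarrow$ (i) and (iv) $\Rightarrow$ (i) follow.

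The main obstacle is (i) $\Rightarrow$ (ii), where one has to bootstrap from joint monicness against morphisms into $PA$ to the same statement against morphisms into $PA \tensor B$ with an arbitrary extra factor $B$. The plan is to use the deterministic split mono $\id_{PA} \tensor \delta_B \colon PA \tensor B \hookrightarrow PA \tensor PB$, whose retract is $\id_{PA} \tensor \samp_B$, so that it suffices to prove $(\id \tensor \delta_B) \alpha = (\id \tensor \delta_B) \beta$ for $\alpha, \beta$ as in (ii). A short induction using determinism of $\delta_B$ together with $\samp_B \delta_B = \id_B$ shows that $\samp^{(m)}_B \delta_B$ is the $m$-fold iterated copy morphism $B \to B^{\tensor m}$, so post-composing the hypothesis of (ii) with these copies on the $B$ factor transports it into $(\samp^{(n)}_A \tensor \samp^{(m)}_B)(\id \tensor \delta_B) \alpha = (\samp^{(n)}_A \tensor \samp^{(m)}_B)(\id \tensor \delta_B) \beta$ for all $n, m \in \N$. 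I would then complete the argument by invoking the laxator $\nabla_{A, B} \colon PA \tensor PB \to P(A \tensor B)$ of \Cref{cor:kleisli}: since $\nabla_{A,B}$ is deterministic and satisfies $\samp_{A \tensor B} \nabla_{A,B} = \samp_A \tensor \samp_B$, a direct computation using naturality of copy gives $\samp^{(n)}_{A \tensor B} \nabla_{A, B} = \samp^{(n)}_A \tensor \samp^{(n)}_B$ up to the evident symmetry of $(A \tensor B)^{\tensor n}$, so restricting the mixed equality to the diagonal $n = m$ reduces the remaining task to joint monicness of $(\samp^{(n)}_{A \tensor B})_n$---which is (i) at $A \tensor B$---together with monicness of $\nabla_{A, B}$ in $\gen{\cD}$. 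Verifying this last monicness is the step where I expect the bulk of the work: it is transparent in $\cpu_{\min/\max}$, where the opposite map is a surjective $*$-homomorphism onto $PA \odot PB$, while in the abstract case it should follow from the universal property of $P(A \tensor B)$ together with a generation-type argument for $PA \tensor PB$ from the images of $\samp_A$ and $\samp_B$.
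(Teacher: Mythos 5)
Your overall architecture coincides with the paper's: the easy implications are handled the same way (the paper dismisses them as ``simple checks''), and for the crucial step \ref{it:obs_eq}$\implies$\ref{it:obs_monic} you use exactly the paper's devices --- the split mono $\id_{PA} \tensor \delta_B$ with retraction $\id_{PA} \tensor \samp_B$, the identity $\samp_B^{(m)} \comp \delta_B = \cop_B^{(m)}$, and the laxator $\nabla_{A,B}$ to transport the problem to observational representability at $A \tensor B$. Your reductions of \ref{it:obs_as_eq} and \ref{it:obs_sym_as_eq} to \ref{it:obs_monic} by packaging $\asel{\omega}$ and $\asets{\omega}$ as plain equalities of morphisms into $B' \tensor PA$ (resp.\ $B' \tensor PA \tensor B'$) are correct and are presumably what the authors have in mind.

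The one genuine gap is precisely the step you defer: the monicness of $\nabla_{A,B}$ in $\gen{\cD}$. Your proposed fix --- ``the universal property of $P(A\tensor B)$ together with a generation-type argument for $PA \tensor PB$ from the images of $\samp_A$ and $\samp_B$'' --- is the wrong tool in the abstract setting: there is no notion of ``generation'' available in a general picture, and the universal property of $P(A\tensor B)$ only speaks about morphisms \emph{into} $A \tensor B$, not about distinguishing morphisms into $PA \tensor PB$. The paper's resolution (following the classical case, \cite[Lemma~A.4.1]{fritz2023supports}) is that $\nabla_{A,B}$ is a \emph{split} monomorphism, with an explicit retraction
\[
	\left( P(\id_A \tensor \discard_B) \tensor P(\discard_A \tensor \id_B) \right) \comp \cop_{P(A \tensor B)} \colon P(A \tensor B) \to PA \tensor PB,
\]
i.e.\ copying and then pushing forward along the two marginalizations. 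That this retracts $\nabla_{A,B}$ is a short formal computation using determinism of $\delta$ and of $P$ of a morphism (so that $\cop$ slides past them), the identity $P(f) \comp \delta = f^{\sharp}$, and the counit laws; no appeal to $\cpu_{\min/\max}$ or to any generation argument is needed. With that substitution your proof closes completely.
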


By symmetry, we could equivalently formulate condition~\ref{it:obs_as_eq} with respect to right \as{} equality.

\begin{proof}
	The implication \ref{it:obs_eq}$\implies$\ref{it:obs_monic} is shown exactly as in the classical case~\cite[Proposition~A.4.2]{fritz2023supports}.
	Indeed, a direct computation shows that the coherence morphisms $\nabla_{A,B}$ are split monomorphisms exactly as expressed in~\cite[Lemma~A.4.1]{fritz2023supports}:
\begin{equation*}
	\tikzfig{nabla_mono}
\end{equation*}
	The other implications are simple checks.
\end{proof}

\subsection{Classical representability}\label{sec:classical_representability}

In this subsection, we consider a variation on the theme of representability which will turn out to provide a universal property for the \emph{state space} of a pre-C*-algebra.
This new representability is only required against classical objects, hence the name. 
The associated distribution objects, called state space objects, are realized by the C*-algebras of complex-valued continuous functions on the state spaces.
This fresh perspective on the state space is particularly fruitful.
For example, the universal property implies that the C*-algebra of continuous functions on the state space is the abelianization of the universal C*-algebra, i.e.~of the distribution object (\cref{cor:abel_state}).

\begin{definition}
	Let $\cD$ be a picture.
	\begin{enumerate}
		\item Given an object $A$, a \newterm{state space object} is a \underline{classical} object $SA$, together with an \newterm{evaluation morphism} $\ev_A \colon SA \to A$,
			such that the induced map
			\begin{equation}\label{eq:evaluation}
				\ev_A \circ \_ \colon \cD_{\det} (B, SA)\to \cD (B,A)
			\end{equation}
			is bijective for every \underline{classical} object $B \in \cD$.
		\item We say that $\cD$ is \newterm{classically representable} if every $A$ admits a state space object $SA$.
	\end{enumerate}
\end{definition}

\begin{notation}
	We denote by $(\_)^{\sharpc}$ the inverse of \eqref{eq:evaluation}. 
	In particular, $\phi = \ev \comp \phi^{\sharpc}$ for every morphism $\phi$ with classical domain, and every classical object $B$ will have a right inverse of $\ev_B$ given by $\id^{\sharpc}_B$.

	Also, following \cref{def:classical_subcat} (and \cref{rem:class_pic}), we use the shorthands
	\[
		\classical{\cD}\coloneqq \classical{\gen{\cD}} \cap \cD \qquad \text{and}\qquad \classical{\cD}_{\det} \coloneqq \classical{\cD} \cap \cD_{\det}.
	\]
\end{notation}

Classical representability has an analogue of~\cref{lem:rep_adjoint}.

\begin{lemma}\label{lem:class_rep_adjoint}
	Let $\cD$ be a classically representable picture. Then $A \mapsto SA$ extends to a functor $\cD \to \classical{\cD}_{\det}$ which is right adjoint to the inclusion $\classical{\cD}_{\det} \hookrightarrow \cD$ with:
	\begin{enumerate}
		\item Unit given by the $\id^\sharpc$;
		\item Counit given by the evaluation morphisms $\ev$.
	\end{enumerate}
\end{lemma}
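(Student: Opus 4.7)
The plan is to follow exactly the same pattern as in the proof of \cref{lem:rep_adjoint}. The defining property of classical representability in \eqref{eq:evaluation} asserts precisely that, for every $A \in \cD$, the morphism $\ev_A \colon SA \to A$ is a universal arrow from the inclusion functor $\iota \colon \classical{\cD}_{\det} \hookrightarrow \cD$ to the object $A$: any morphism $\psi \colon \iota B \to A$ with $B$ a classical object factors uniquely through $\ev_A$ as $\psi = \ev_A \comp \iota(\psi^{\sharpc})$. By the standard equivalence between pointwise universal arrows and right adjoints (see e.g.~Mac Lane's \emph{Categories for the Working Mathematician}, Ch.~IV), this promotes the assignment $A \mapsto SA$ to a functor $S \colon \cD \to \classical{\cD}_{\det}$ right adjoint to $\iota$, with counit the family $(\ev_A)_{A}$.

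The action of $S$ on a morphism $\phi \colon A \to A'$ is then forced to be $S\phi \coloneqq (\phi \comp \ev_A)^{\sharpc} \colon SA \to SA'$, which automatically lies in $\classical{\cD}_{\det}$ since $(\,\cdot\,)^{\sharpc}$ takes values there by definition; functoriality is immediate from the uniqueness half of the universal property, and strict commutation with the involution follows from the fact that both $\phi \comp \ev_A$ and its lift are self-adjoint (morphisms in $\cD$ between classical objects are self-adjoint by \cref{def:picture}). The unit at a classical object $B$ is obtained by applying the same universal property to $\id_B \colon B \to B$, yielding $\id_B^{\sharpc} \colon B \to SB$ as claimed.

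I do not foresee any real obstacle: the statement is a direct instance of the standard recipe for manufacturing an adjunction out of pointwise universal arrows, and the only fact needed beyond the proof of \cref{lem:rep_adjoint} is that the construction restricts properly to the classical world. This is built into the definition of a state space object---$SA$ is required to be classical, and morphisms of the form $(\,\cdot\,)^{\sharpc}$ by construction land in $\classical{\cD}_{\det}$---so no further verification is needed.
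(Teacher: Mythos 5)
Your proposal is correct and takes essentially the same route as the paper: the paper gives no separate proof for this lemma, deferring to the analogous \cref{lem:rep_adjoint}, whose proof is precisely the observation that the defining bijection exhibits $\ev_A$ as a universal arrow from the inclusion, so that the adjunction (and the functoriality of $S$) follows from the standard universal-arrows-to-adjunction recipe. Your additional checks (the formula $S\phi = (\phi \comp \ev_A)^{\sharpc}$, that it lands in $\classical{\cD}_{\det}$ because $SA$ is classical and $(\,\cdot\,)^{\sharpc}$ is valued in deterministic morphisms) are exactly the details the paper leaves implicit.
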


\begin{remark}\label{rem:simplex_algebra}
	The adjunction 
	\[
		\begin{tikzcd}[column sep=large]
			\classical{\cD}_{\det} \ar[r,phantom,"\bot"] \ar[r,bend left,start anchor={north east},end anchor={north west},"\operatorname{incl}"] & \cD \ar[l,bend left,start anchor={south west},end anchor={south east}, "S" pos=0.53]
		\end{tikzcd}
	\]
	induces a monad on $\classical{\cD}_{\det}$, which we also denote by $S$ by abuse of notation.
	In particular, for every $A$, the classical object $SA$ becomes an Eilenberg-Moore $S$-algebra with structure morphism $S(\ev_A)$. This result is immediate from the monad construction.  
	It is interesting to note that this $S$-algebra is typically not free, as we will see in \cref{rem:non_simplices} below for $\cpu_{\min/\max}$.
\end{remark}
\begin{remark}
	Let $\cD$ be a classically representable picture having countable Kolmogorov products. 
	For any object $A$, the state space object $S(A_{\N})$ is the limit of the diagram $\langle S(A_F),S(\pi_{F,F'}) \rangle$ indexed by finite $F \subseteq \N$ ordered under inclusion.
	Indeed, $S$ is a right adjoint by \cref{lem:class_rep_adjoint}, and therefore preserves limits.
	This is a version of~\cite[Theorem~4.1]{staton2023quantum} formulated in terms of pictures.
\end{remark}

\begin{proposition}\label{prop:class_representable}
	The pictures $\cpu_{\min/\max}$ are classically representable.
\end{proposition}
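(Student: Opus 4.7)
The plan is to construct the state space object $SA$ as an explicit commutative pre-C*-subalgebra of continuous functions on the state space of $A$, and then verify the universal property directly in terms of $*$-homomorphisms and completely positive unital maps.

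First I take $\Sigma_A$ to be the set of states $\opm{\omega}{}\colon A \rightsquigarrow \mathbb{C}$ equipped with the weak-$*$ topology. Since states are bounded of norm $1$ by \cref{prop:pu_bounded}, they lie in the closed unit ball of the continuous dual $A^*$, and a standard Banach--Alaoglu argument (together with positivity and unitality being closed under weak-$*$ limits) shows that $\Sigma_A$ is a compact Hausdorff space. I then define $SA$ to be the $*$-subalgebra of $C(\Sigma_A)$ generated by the Gelfand-like transforms
\[
	\hat{a}\colon \Sigma_A \to \mathbb{C}, \qquad \hat{a}(\opm{\omega}{}) \coloneqq \opm{\omega}{}(a),
\]
for $a \in A$, endowed with the inherited sup-norm. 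This $SA$ is a commutative pre-C*-algebra by construction, hence a classical object of $\cpu_{\min/\max}$, and the evaluation morphism corresponds to $\opm{\ev_A}{}\colon A \rightsquigarrow SA$, $a \mapsto \hat{a}$. This map is clearly linear and unital, positive because $\opm{\omega}{}(a) \ge 0$ for all states $\opm{\omega}{}$ whenever $a \ge 0$, and automatically completely positive by \cref{prop:com_dom_codom} since its codomain is commutative.

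For the universal property, given a classical object $B$ of $\cpu_{\min/\max}$ (i.e.~a commutative pre-C*-algebra) and a completely positive unital map $\opm{\phi}{}\colon A \rightsquigarrow B$, I will define the matching $*$-homomorphism $\opm{\psi}{}\colon SA \rightsquigarrow B$ on the generators by $\opm{\psi}{}(\hat{a}) \coloneqq \opm{\phi}{}(a)$ and extend multiplicatively. Uniqueness then follows immediately from the requirement $\opm{\psi}{}\comp\opm{\ev_A}{} = \opm{\phi}{}$, together with the $*$-algebra homomorphism property and the fact that the $\hat{a}$'s generate $SA$.

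The main step is well-definedness of $\opm{\psi}{}$: whenever a $*$-polynomial $p$ in commuting variables satisfies $p(\hat{a}_1,\ldots,\hat{a}_n) = 0$ as a function on $\Sigma_A$, I need $p(\opm{\phi}{}(a_1),\ldots,\opm{\phi}{}(a_n))$ to vanish in $B$. The key observation is that characters of $\closed{B}$ separate elements by Gelfand duality, so vanishing need only be checked after applying an arbitrary character $\chi$ of $\closed{B}$. By multiplicativity of $\chi$,
\[
	\chi\big(p(\opm{\phi}{}(a_1),\ldots,\opm{\phi}{}(a_n))\big) = p\big((\chi\comp\opm{\phi}{})(a_1),\ldots,(\chi\comp\opm{\phi}{})(a_n)\big),
\]
and $\opm{\omega}{}\coloneqq \chi\comp\opm{\phi}{}$ is a state of $A$ as a composite of CPU maps, so this expression equals $p(\hat{a}_1,\ldots,\hat{a}_n)(\opm{\omega}{}) = 0$ by hypothesis. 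Everything else --- that $\opm{\psi}{}$ is unital and $*$-preserving, and that the assignment $\opm{\phi}{}\mapsto\opm{\psi}{}$ is inverse to precomposition with $\opm{\ev_A}{}$ --- is a direct consequence of the construction.
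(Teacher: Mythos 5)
Your construction coincides with the paper's: the same commutative pre-C*-algebra $SA$ generated by the evaluation functions inside $C(\Sigma_A)$, the same evaluation morphism, and the same reliance on characters of the commutative codomain to pin down the induced $*$-homomorphism. The only real difference in route is that the paper obtains $\opm{\phi}{\sharpc}$ by dualizing a continuous map $X \to \Sigma_A$ under Gelfand duality, whereas you build it by hand on generators and check well-definedness via the character-separation argument; these are essentially the same computation unpacked to different degrees.

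There is, however, one point you gloss over with ``everything else is a direct consequence of the construction'': you must check that your $\opm{\psi}{}$ is \emph{bounded}. The hom-set $\cD_{\det}(B,SA)$ for $\cD = \cpu_{\min/\max}$ consists of completely positive (equivalently, bounded) unital $*$-homomorphisms, and the paper stresses (\cref{ex:homnotbound}, e.g.\ evaluation of $\mathbb{C}[x] \subseteq C([0,1])$ at a point outside $[0,1]$) that a $*$-homomorphism between commutative pre-C*-algebras defined algebraically on generators need not be bounded. So a multiplicative extension is not automatically a morphism of the picture. Fortunately your own character computation closes this: for any $f \in SA$ and any character $\chi$ of $\closed{B}$ one has $\abs{\chi(\opm{\psi}{}(f))} = \abs{f(\chi \comp \opm{\phi}{})} \le \norm{f}_\infty$ since $\chi \comp \opm{\phi}{}$ is a point of $\Sigma_A$, and taking the supremum over characters gives $\norm{\opm{\psi}{}(f)} \le \norm{f}$, whence positivity by \cref{prop:pu_bounded} and complete positivity by commutativity of $B$ (\cref{prop:com_dom_codom}). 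The paper's detour through Gelfand duality gets this for free, since the dual of a continuous map of compact Hausdorff spaces is a bounded $*$-homomorphism of C*-algebras. With that one line added, your argument is complete.
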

\begin{proof}
	For any pre-C*-algebra, a state is by definition a completely positive unital map $\pre{A} \rightsquigarrow \mathbb{C}$, or equivalently $\closed{A} \rightsquigarrow \mathbb{C}$.
	We write $\Sigma A$ for this set of states and equip it with the weak-* topology as usual, which is the weakest topology making the evaluation maps $\opm{\ev_x}{} \colon \opm{\phi}{} \mapsto \opm{\phi}{}(x)$ continuous for every $x \in A$ (whether $x \in A$ or $x \in \closed{A}$ does not matter because of the uniform limit theorem).
	It is well-known that $\Sigma{A}$ is a compact Hausdorff space.
	We now take
	\[
		\closed{SA} \coloneqq C(\Sigma{A}) = \Set{ f \colon \Sigma{A} \to \mathbb{C} \given f\text{ is continuous}}.
	\]
	This is a commutative C*-algebra, which by construction contains the evaluation $\opm{\ev_x}{}$ for every $x \in A$.
	
	We then define $\pre{SA}$ to be the $*$-subalgebra of $\closed{SA}$ generated by $\opm{\ev_x}{}$ for $x \in \pre{A}$. (As our notation suggests, $\pre{SA}$ is dense in $\closed{SA}$ because the evaluation maps separate the points of $\Sigma{A}$ by the Hahn--Banach theorem, and therefore the density follows by the Stone--Weierstrass theorem).
	Next, we write
	\[
		\opm{\ev_A}{} \colon \pre{A} \rightsquigarrow \pre{S A}
	\]
	for the map sending each $x$ to its evaluation $\opm{\ev_x}{}$. This map is clearly positive unital, hence completely positive by \cref{prop:com_dom_codom}. 
	It is also injective: 
	Indeed, whenever $\opm{\ev_x}{} = \opm{\ev_y}{}$, then $\opm{\omega}{}(x-y)=0$ for all states $\opm{\omega}{}$, so $x=y$ by \cref{cor:zero_states}.

	In order to show that ${SA}$ is the state space object, let us consider a commutative pre-C*-algebra $\pre{B}$.
	Since the C*-algebra $\closed{B}$ is commutative, by Gelfand duality it is $*$-isomorphic to $C(X)$, the C*-algebra of complex-valued continuous map on a compact Hausdorff space $X$. 
	In order to prove the desired bijection, we now consider for any morphism $\phi \colon \closed{B} \to \closed{A}$ the map	
	\[
		\begin{array}{rcl}
			X& \to & \Sigma A\\
			p& \mapsto & (x \mapsto \opm{\phi}{}(x)(p)),
		\end{array}	
	\]
	which is continuous because post-composing with the evaluation maps $\opm{\ev_x}{}$ gives continuous maps.
	By Gelfand duality, we therefore obtain a $*$-homomorphism $\opm{\phi}{\sharpc} \colon \pre{SA} \rightsquigarrow \pre{B}$ sending $\opm{\ev_x}{} \in \pre{SA}$ to $\opm{\phi}{}(x)$ by definition. 
	This satisfies $\ev_A \comp \phi^{\sharpc} = \phi$ by construction.
	Its uniqueness is immediate since this equation fixes $\opm{\phi}{\sharpc}$ on the generators $\ev_x$ and $\opm{\phi}{\sharpc}$ is required to be a $*$-homomorphism.
\end{proof}
\begin{remark}
	\label{rem:non_simplices}
	Let us work out what the induced $S$-algebra structure on state space objects $SA$ for the pictures $\cpu_{\min/\max}$ amounts to.
	As we will show, this is exactly the structure which equips the state spaces $\Sigma A$ with the structure of compact convex sets (cf.~\cite{furber_jacobs_gelfand}, and the explicit formulations~\cite[Theorems~2.18 and 2.19]{staton2023quantum}).
	This is especially important in the foundations of quantum theory, where the convex structure is often considered fundamental due to its simple operational interpretation: a convex combination of states corresponds to a random ``mixing'' of the states that appear in it.\footnote{To see that this was already recognized by von Neumann, note the importance he attributes to discussion of mixtures in his eminent treatise~\cite{vonneumann2018foundations}.}
	So although our formal categorical framework does not have any built-in notion of a convex structure, this comes out automatically from classical representability.

	To begin, note that under Gelfand duality, there is a functor $\classical{\cpu_{\min/\max}}_{\det} \to \chaus$ given by applying Gelfand duality to the completion.
	In particular, if we restrict the objects to C*-algebras for simplicity, then this functor is an equivalence.
	On this subcategory, our monad $S$ corresponds to the \emph{Radon monad} $R : \chaus \to \chaus$, which assigns to every compact Hausdorff space $X$ the space of Radon probability measures $RX$~\cite{furber_jacobs_gelfand}; this is straightforward to see from the proof of \cref{prop:class_representable} by virtue of the natural isomorphism $\Sigma C(X) \cong RX$ given by the Riesz representation theorem.
	By a result of \'Swirszcz~\cite{swirszcz1974convexity}, the $R$-algebras are exactly the compact convex sets in locally convex spaces, where the structure morphism assigns to every Radon probability measure on such a set its barycenter.
	In particular, applying the structure morphism to finitely supported probability measures amounts to the formation of convex combinations. So in brief, the $R$-algebras are the compact convex sets.
	Its free algebras are the \emph{probability simplices}, which are the closed convex hulls of the Dirac deltas on a compact Hausdorff space $X$.\footnote{The precise notion of simplex here is that of \emph{Bauer simplex}. See~\cite[Corollary~II.4.2]{alfsen1971convex} for the theorem which identifies Bauer simplices with the spaces of Radon probability measures on compact Hausdorff spaces.}

	Now that we have clarified the relation between the Radon monad $R$ and our $S$, we can identify the $S$-algebra structure on $S A$ for every object $A$ in $\cpu_{\min/\max}$.
	An inspection of the proof of \cref{prop:class_representable} shows that the structure morphism $S(\ev_A)$ is given by
	\[
		\begin{array}{rcl}
			\Sigma C(\Sigma A) & \rightarrow & \Sigma A \\
			\mu & \mapsto & (a \mapsto \mu(\opm{\ev_a}{})).
		\end{array}
	\]
	Under the isomorphism $\Sigma C(\Sigma A) \cong R(\Sigma A)$, this map becomes
	\[
		\mu \mapsto \left( a \mapsto \int_{\Sigma A} \opm{\ev_a}{}(s) \, \mu(\mathrm{d} s) \right),
	\]
	which is exactly the barycenter formation on $\Sigma A$ considered as a compact convex set in the dual space $A^*$ equipped with the weak-* topology.

	Finally, note that the state space $\Sigma A$ is generally not a simplex, and therefore not a free $R$-algebra.
	For example, $\Sigma M_2$ is famously a three-dimensional Euclidean ball, the \emph{Bloch ball}~\cite{jaeger2007quantum}.
	More generally, the state space is a simplex if and only if $A$ is commutative~\cite[Corollary~II.4.2]{alfsen1971convex}.
	Therefore any non-commutativity manifests itself in the non-uniqueness of the decomposition of a state into extremal states, a central theme in the foundations of quantum theory~\cite[Section~2.4.2]{nielsen2000quantum}.
\end{remark}

\begin{proposition}[Classicalization of the distribution object]\label{prop:abel_dist}
	Assume that $\cD$ is a representable and classically representable picture. Then 
	\[\ev_A^{\sharp} \colon SA \to PA\] 
	induces a bijection
	\[
		\ev^{\sharp}_A \circ \_ \colon \cD_{\det} (B,SA) \to \cD_{\det}(B,PA)
	\]
	for every classical object $B$.
\end{proposition}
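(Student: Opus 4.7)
My plan is to set up two bijections coming from the two universal properties and show that both compose to give post-composition with $\ev_A^{\sharp}$.

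First, I would observe that for classical $B$, the hom-set $\cD_{\det}(B, SA)$ coincides with $\classical{\cD}_{\det}(B, SA)$, because $SA$ is classical by definition of a state space object and $B$ is classical by hypothesis. Therefore, classical representability (applied to $A$) yields a bijection
\[
\ev_A \circ \_ \colon \cD_{\det}(B, SA) \;\xrightarrow{\;\cong\;}\; \cD(B, A),
\]
with inverse $h \mapsto h^{\sharpc}$. On the other hand, representability (applied to $A$) gives a bijection
\[
\samp_A \circ \_ \colon \cD_{\det}(B, PA) \;\xrightarrow{\;\cong\;}\; \cD(B, A),
\]
with inverse $h \mapsto h^{\sharp}$. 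Composing the first bijection with the inverse of the second produces a bijection $\cD_{\det}(B, SA) \to \cD_{\det}(B, PA)$ sending $f$ to $(\ev_A \circ f)^{\sharp}$.

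The remaining step is to identify this composite bijection with post-composition by $\ev_A^{\sharp}$. For this, I note that $\ev_A^{\sharp} \circ f$ is deterministic, since it is the composite of two deterministic morphisms ($f$ by hypothesis and $\ev_A^{\sharp}$ by the definition of $(\_)^{\sharp}$), and it satisfies
\[
\samp_A \circ \ev_A^{\sharp} \circ f \;=\; \ev_A \circ f
\]
by the defining equation $\samp_A \circ \ev_A^{\sharp} = \ev_A$. Uniqueness in the universal property defining $(\_)^{\sharp}$ then forces $\ev_A^{\sharp} \circ f = (\ev_A \circ f)^{\sharp}$, so post-composition with $\ev_A^{\sharp}$ agrees with the composite bijection constructed above and is therefore itself a bijection.

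I do not expect any serious obstacle here: once one recognizes that classical representability and representability give two different parametrizations of the same hom-set $\cD(B, A)$ (for classical $B$), the result is essentially formal, and the only mild subtlety is the identification of the hom-set $\cD_{\det}(B, SA)$ with $\classical{\cD}_{\det}(B, SA)$ that allows the universal property of $SA$ to apply.
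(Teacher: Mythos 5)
Your proof is correct and is essentially the paper's argument: the paper exhibits the same commutative triangle with the two bijections $\ev_A\circ\_$ (classical representability, using that $B$ is classical) and $\samp_A\circ\_$ (representability) and the identity $\samp_A\comp\ev_A^{\sharp}=\ev_A$, concluding that the vertical map is a bijection. Your extra verifications (that $\ev_A^{\sharp}\circ f$ is deterministic and equals $(\ev_A\circ f)^{\sharp}$) just spell out why the triangle commutes.
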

\begin{proof}
	Since $\samp_{A} \comp \ev_{A}^{\sharp}= \ev_A$, we can consider the commutative diagram 
	\[
	\begin{tikzcd}[column sep=huge]
		\cD_{\det}(B,SA)\ar[r,"\ev_A\circ \_"]\ar[d,"\ev^{\sharp}_A\circ \_"'] & \cD(B,A)  \\
		\cD_{\det}(B,PA)\ar[ru,"\samp_A\circ \_"']
	\end{tikzcd}
	\]
	which immediately proves the statement because the two arrows going to the right are bijections.
\end{proof}

We recall that the abelianization of a C*-algebra is defined as the quotient by the closed two-sided ideal generated by the commutators~\cite[Definition~2.8]{blackadar1985shape}.
This definition naturally extends to pre-C*-algebras in an analogous manner, leading to the following result.

\begin{corollary}\label{cor:abel_state}
	For every object $A$ in $\cpu_{\min/\max}$, the state space object ${SA}$ is the abelianization of the distribution object ${PA}$.
\end{corollary}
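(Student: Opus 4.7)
The strategy is to recognize that \cref{prop:abel_dist} gives $SA$ precisely the universal property defining the abelianization of $PA$, so both are characterized as initial objects in the same category and must coincide.

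First I would recall that for any pre-C*-algebra $R$, the abelianization $R^{\mathrm{ab}}$ is the quotient of $R$ by the closed two-sided $*$-ideal generated by the commutators $\{xy-yx : x,y\in R\}$, together with the canonical quotient $*$-homomorphism $q \colon R \to R^{\mathrm{ab}}$. Its defining universal property is that every $*$-homomorphism $R \rightsquigarrow B$ into a commutative pre-C*-algebra $B$ factors uniquely through $q$. (The quotient is indeed a pre-C*-algebra: the quotient seminorm is a C*-seminorm, and it is a genuine norm because $R$ admits at least one $*$-homomorphism to a commutative C*-algebra, namely via its completion and any state.)

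Next I would unwind \cref{prop:abel_dist} in the picture $\cpu_{\min/\max}$. The morphism $\ev_A^{\sharp} \colon SA \to PA$ corresponds, in the operator-algebraic direction, to a unital $*$-homomorphism $\opm{\ev_A^{\sharp}}{} \colon PA \rightsquigarrow SA$ (deterministic morphisms in $\cpu_{\min/\max}$ are exactly $*$-homomorphisms in the opposite direction, and $SA$ is classical, hence commutative). By \cref{prop:abel_dist}, composition with $\ev_A^{\sharp}$ is a bijection $\cD_{\det}(B, SA) \to \cD_{\det}(B, PA)$ for every classical object $B$, i.e.\ every commutative pre-C*-algebra $B$. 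Translating to the CPU direction, this asserts that precomposition with $\opm{\ev_A^{\sharp}}{}$ induces a bijection
\[
	\operatorname{Hom}_{\ast}(SA, B) \xrightarrow{\ \cong\ } \operatorname{Hom}_{\ast}(PA, B)
\]
between sets of unital $*$-homomorphisms, for every commutative pre-C*-algebra $B$.

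Finally I would invoke Yoneda (or the uniqueness of universal objects): $\opm{\ev_A^{\sharp}}{} \colon PA \rightsquigarrow SA$ exhibits $SA$ as the initial commutative pre-C*-algebra under $PA$, which is exactly the defining property of $PA^{\mathrm{ab}}$. Hence there is a unique isomorphism $PA^{\mathrm{ab}} \cong SA$ compatible with the canonical maps from $PA$. The main thing to be careful about is the bookkeeping of directions (passing between $\cpu$ and the operator-algebraic $\rightsquigarrow$ direction) and confirming that ``classical object'' in $\cpu_{\min/\max}$ means ``commutative pre-C*-algebra'', which was already established in \cref{sec:classical_compatible}. No further computation with the explicit construction of $PA$ or $SA$ is needed, since the result is forced by the universal properties alone.
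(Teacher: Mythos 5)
Your proposal is correct and is essentially the paper's own argument: the paper's proof is the one-line observation that \cref{prop:abel_dist} \emph{is} the universal property of the abelianization of $PA$ restricted to $*$-homomorphisms, and your write-up simply unwinds that same observation (deterministic morphisms are $*$-homomorphisms, classical objects are commutative pre-C*-algebras, then uniqueness of representing objects). The only blemish is your parenthetical justification that the quotient seminorm is a genuine norm via a state (a state is not a $*$-homomorphism, and the abelianization can legitimately be the zero algebra, e.g.\ for $M_n$); the correct reason is simply that the ideal one quotients by is closed, but this side remark does not affect the argument.
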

\begin{proof}
	\cref{prop:abel_dist} is exactly the universal property of the abelianization of ${PA}$ restricted to $*$-homomorphisms, so the statement follows.
\end{proof}

\begin{corollary}\label{cor:evsharp_monic}
	For any objects $A$ and $B$ in $\cpu_{\min/\max}$, the morphism $\ev_A^{\sharp}\tensor \id_B$ is monic in $\gen{\cpu_{\min/\max}}$. 
\end{corollary}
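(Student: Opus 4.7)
The plan is to unwrap the statement into the language of pre-C*-algebras, where it becomes a basic surjectivity fact. Since $\gen{\cpu_{\min/\max}} = \calgmin$ or $\calgmax$, a morphism $\ev_A^{\sharp}\tensor \id_B$ in $\gen{\cpu_{\min/\max}}$ is monic precisely when its formal opposite $\opm{(\ev_A^{\sharp})}{}\odot \id_B \colon \pre{PA}\odot \pre{B}\rightsquigarrow \pre{SA}\odot \pre{B}$ is epic in the category of pre-C*-algebras with unital linear maps (composition in $\cpu$ is reverse to composition of the underlying maps). Hence it suffices to show that this map is surjective, since surjective linear maps are always epic against linear maps.

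For this, I would first extract from \cref{cor:abel_state} that $\opm{(\ev_A^{\sharp})}{}\colon \pre{PA}\rightsquigarrow \pre{SA}$ is nothing but the abelianization quotient map. In particular, as a quotient $*$-homomorphism, it is surjective as a linear map. Then I would invoke the standard fact that tensoring a surjective linear map between vector spaces with any identity produces a surjective linear map: any pure tensor $s\odot b\in \pre{SA}\odot \pre{B}$ lifts by choosing a preimage of $s$, and pure tensors generate the algebraic tensor product, so the image of $\opm{(\ev_A^{\sharp})}{}\odot \id_B$ is all of $\pre{SA}\odot \pre{B}$.

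From surjectivity I would conclude epicness against unital linear maps: if $\opm{\phi}{},\opm{\psi}{}\colon \pre{SA}\odot \pre{B}\rightsquigarrow \pre{C}$ are any two unital linear maps agreeing after precomposition with $\opm{(\ev_A^{\sharp})}{}\odot \id_B$, then they agree on the entire codomain by surjectivity, so $\opm{\phi}{}=\opm{\psi}{}$. Translating back to $\cpu_{\min/\max}$ direction gives the desired monicness in $\gen{\cpu_{\min/\max}}$.

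There is essentially no obstacle here beyond being careful with the reversal of arrows between $\cpu_{\min/\max}$ and the underlying operator algebra direction; in particular, I do not need to worry about which of the two C*-tensor norms is chosen, since the argument takes place at the purely algebraic level of the algebraic tensor product, which is common to both $\odot_{\min}$ and $\odot_{\max}$.
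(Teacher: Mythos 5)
Your proposal is correct and follows essentially the same route as the paper: it deduces surjectivity of $\opm{\ev}{\sharp}_A$ from \cref{cor:abel_state} (the abelianization description), observes that $\opm{\ev}{\sharp}_A \odot \id_{B}$ remains surjective, and concludes monicness in the opposite category. The extra detail you supply (lifting pure tensors, and the observation that the argument is independent of the choice of C*-norm since it lives at the level of the algebraic tensor product) is exactly the ``direct check'' the paper leaves implicit.
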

\begin{proof}
	By \cref{cor:abel_state}, we know that $\opm{\ev}{\sharp}_A$ is surjective. 
	In particular, it is epic in the category of vector spaces.
	We obtain that $\opm{\ev}{\sharp}_A \odot \id_{{B}}$ is also epic in the category of vector spaces, because tensoring by a vector space is a left adjoint endofunctor (this actually holds for modules over a ring, see \cite[Application 2.6.2]{weibel1994homological}), and left adjoints preserve colimits, in particular epimorphisms.
	The statement follows since epimorphisms are monomorphisms in the opposite category, and monicity is preserved when restricting to smaller categories.
\end{proof}

\begin{proposition}
	Let $\cD$ be a representable and classically representable picture. 
	Then $(\ev_A^{\sharp})_{A \in \cD}$ is a natural transformation $S \to P$, where $S$ and $P$ are both considered as functors $\cD \to \cD_{\det}$.
\end{proposition}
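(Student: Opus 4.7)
The plan is to verify naturality directly by checking that, for any morphism $\phi \colon A \to B$ in $\cD$, the two composites
\[
P\phi \comp \ev_A^{\sharp} \quad \text{and} \quad \ev_B^{\sharp} \comp S\phi
\]
agree as deterministic morphisms $SA \to PB$. Both are indeed deterministic: the first because $\ev_A^\sharp$ is deterministic by construction (it is obtained via the universal property of $PA$ applied to the morphism $\ev_A$) and $P$ lands in $\cD_{\det}$; the second because $S\phi$ is deterministic ($S$ lands in $\classical{\cD}_{\det}$) and $\ev_B^\sharp$ is also deterministic.

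The key observation is that both composites live in $\cD_{\det}(SA, PB)$, so by representability the bijection $\samp_B \comp (\_) \colon \cD_{\det}(SA, PB) \to \cD(SA, B)$ is injective, and it suffices to verify that the two sides agree after postcomposing with $\samp_B$. For the left side, I would invoke the naturality of the counit $\samp$ of the adjunction from \cref{lem:rep_adjoint}, which gives $\samp_B \comp P\phi = \phi \comp \samp_A$, and then use the defining identity $\samp_A \comp \ev_A^{\sharp} = \ev_A$, yielding $\phi \comp \ev_A$. For the right side, the defining identity $\samp_B \comp \ev_B^{\sharp} = \ev_B$ reduces it to $\ev_B \comp S\phi$, and naturality of the counit $\ev$ from \cref{lem:class_rep_adjoint} rewrites this as $\phi \comp \ev_A$ as well. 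Since both sides coincide after composition with $\samp_B$, the universal property forces the equality of the two composites.

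I do not expect any real obstacle: the argument is a standard diagram chase combining the two universal properties with the naturality of their respective counits. The only thing to keep track of is that one must use the universal property of $PB$ (not of $SB$) to cancel $\samp_B$, which is justified since both composites are deterministic and $SA$ serves as the domain object $B$ in the representability bijection for $PB$.
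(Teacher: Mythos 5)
Your proof is correct and follows essentially the same route as the paper's: both arguments postcompose with $\samp_B$, use that all morphisms involved are deterministic so that this postcomposition is injective on $\cD_{\det}(SA,PB)$, and then perform the identical chase $\samp_B\comp\ev_B^{\sharp}\comp S\phi=\ev_B\comp S\phi=\phi\comp\ev_A=\phi\comp\samp_A\comp\ev_A^{\sharp}=\samp_B\comp P\phi\comp\ev_A^{\sharp}$ via the naturality of the two counits. No gaps.
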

\begin{proof}
	Let $\phi\colon A \to B$ be any morphism in $\cD$. We need to show that 
	\[
	\begin{tikzcd}
		S A \ar[r,"S(\phi)"]\ar[d,"\ev^{\sharp}_A"'] & S B\ar[d,"\ev^{\sharp}_B"] \\
		P A \ar[r,"P(\phi)"] & P B
	\end{tikzcd} 
	\]
	commutes.
	Since all these morphisms are deterministic, it is sufficient to show this after post-composing with $\samp_{B}$. This is a direct computation:
	\[
		\samp_B\comp \ev^{\sharp}_B\comp S(\phi) = \ev_B\comp S (\phi) = \phi \comp \ev_A = \phi \comp \samp_A \comp \ev_A^{\sharp} = \samp_B \comp P(\phi)\comp \ev_A^{\sharp}.
		\qedhere
	\]
\end{proof}
\begin{definition}
	A  picture $\cD$ is \newterm{observationally classically representable} if it is classically representable and for all objects $A$ and $B$, the morphisms $(\ev_A^{(n)}\tensor \id_B)_{n\ge 1}$ are jointly monic in $\gen{\cD}$: For all suitably composable generalized morphisms $\phi$ and $\psi$,
	\[
	(\ev_A^{(n)} \tensor \id_B)\comp\phi = (\ev_A^{(n)} \tensor \id_B)\comp \psi \quad \forall \, n \qquad \implies \qquad \phi= \psi.
	\]
\end{definition}

This definition is motivated by the fact that Items~\ref{it:obs_as_eq} and \ref{it:obs_sym_as_eq} of \cref{prop:obs} now also hold with $\ev^{(n)}$ in place of $\samp^{(n)}$. 
It is not clear to us whether leaving out the extra tensor factor $B$ from the definition would still result in an analogue of \Cref{prop:obs}.

\begin{lemma}\label{lem:obs_obsclass}
	Let $\cD$ be an observationally representable picture, which is also classically representable.
	The following assertions are equivalent:
	\begin{itemize}
		\item The picture $\cD$ is observationally classically representable;
		\item For all objects $A$ and $B$, the morphism $\ev^{\sharp}_A \tensor \id_B$ is monic in $\gen{\cD}$.
	\end{itemize}
\end{lemma}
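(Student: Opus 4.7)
The plan is to reduce the question to the observational representability hypothesis by factoring $\ev_A^{(n)}$ through $\ev_A^{\sharp}$ and $\samp_A^{(n)}$. The key identity, which I would establish first, is
\[
	\ev_A^{(n)} \;=\; \samp_A^{(n)} \comp \ev_A^{\sharp}
\]
for every $n \geq 0$. This holds trivially for $n = 0$ (both sides equal $\discard_{SA}$) and by construction for $n = 1$ (it is the defining equation $\ev_A = \samp_A \comp \ev_A^{\sharp}$). For the inductive step, I would use the fact that $\ev_A^{\sharp}$ is deterministic, so that copy commutes through it: $\cop_{PA} \comp \ev_A^{\sharp} = (\ev_A^{\sharp} \tensor \ev_A^{\sharp}) \comp \cop_{SA}$. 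Unfolding the recursive definition $\phi^{(n+1)} = (\phi \tensor \phi^{(n)}) \comp \cop$ and applying the induction hypothesis then gives the claim. Tensoring on the right with $\id_B$ produces the factorization
\[
	\ev_A^{(n)} \tensor \id_B \;=\; (\samp_A^{(n)} \tensor \id_B) \comp (\ev_A^{\sharp} \tensor \id_B).
\]

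For the implication from joint monicity of $(\ev_A^{(n)} \tensor \id_B)_n$ to monicity of $\ev_A^{\sharp} \tensor \id_B$: suppose $(\ev_A^{\sharp} \tensor \id_B) \comp \phi = (\ev_A^{\sharp} \tensor \id_B) \comp \psi$. Post-composing with $\samp_A^{(n)} \tensor \id_B$ and using the factorization above yields $(\ev_A^{(n)} \tensor \id_B) \comp \phi = (\ev_A^{(n)} \tensor \id_B) \comp \psi$ for all $n$. Observational classical representability then forces $\phi = \psi$, so $\ev_A^{\sharp} \tensor \id_B$ is monic.

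For the converse, assume $\ev_A^{\sharp} \tensor \id_B$ is monic, and suppose $(\ev_A^{(n)} \tensor \id_B) \comp \phi = (\ev_A^{(n)} \tensor \id_B) \comp \psi$ for all $n$. Using the factorization, this rewrites as $(\samp_A^{(n)} \tensor \id_B) \comp \bigl((\ev_A^{\sharp} \tensor \id_B) \comp \phi\bigr) = (\samp_A^{(n)} \tensor \id_B) \comp \bigl((\ev_A^{\sharp} \tensor \id_B) \comp \psi\bigr)$ for every $n$. By observational representability of $\cD$, the morphisms $(\samp_A^{(n)} \tensor \id_B)_n$ are jointly monic in $\gen{\cD}$ (this is exactly condition (ii) of \cref{prop:obs}), so $(\ev_A^{\sharp} \tensor \id_B) \comp \phi = (\ev_A^{\sharp} \tensor \id_B) \comp \psi$. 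Monicity of $\ev_A^{\sharp} \tensor \id_B$ then gives $\phi = \psi$, establishing observational classical representability.

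I do not expect a serious obstacle here; the only mildly delicate point is the factorization identity, which requires careful bookkeeping with the recursive definition of $\phi^{(n)}$ and relies essentially on $\ev_A^{\sharp}$ being deterministic. Once that is in place, the two implications are immediate applications of the relevant joint monicity hypotheses.
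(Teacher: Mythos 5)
Your proof is correct and follows essentially the same route as the paper: factor $\ev_A^{(n)}\tensor\id_B$ as $(\samp_A^{(n)}\tensor\id_B)\comp(\ev_A^{\sharp}\tensor\id_B)$ and invoke the joint monicity of $(\samp_A^{(n)}\tensor\id_B)_n$ from \cref{prop:obs}\ref{it:obs_monic}. The paper simply states this as a one-line equivalence, whereas you spell out the inductive verification of the factorization (via determinism of $\ev_A^{\sharp}$) and the two implications explicitly; there is no substantive difference.
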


\begin{proof}
	Since $\ev^{(n)}_A\tensor \id_B = (\samp^{(n)}_A \comp \ev_A^{\sharp}) \tensor \id_B$, by \cref{prop:obs}\ref{it:obs_monic} we obtain that
	\[
		(\ev^{(n)}_A \tensor \id_B)\comp\phi = (\ev^{(n)}_A \tensor \id_B)\comp \psi
	\]
	for all $n$ if and only if $(\ev^{\sharp}_A \tensor \id_B )\comp \phi = (\ev^{\sharp}_A \tensor \id_B )\comp \psi$. The statement follows.
\end{proof}

\begin{proposition}\label{prop:pu_obsclass}
	The  pictures $\cpu_{\min/\max}$ are observationally classically representable.
\end{proposition}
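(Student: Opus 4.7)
The plan is to observe that this proposition follows almost immediately by assembling three earlier results, so the proof should essentially just invoke them in the right order. First, I would note that $\cpu_{\min/\max}$ are observationally representable by \cref{prop:cpu_obs_representable}, and classically representable by \cref{prop:class_representable}, so the hypotheses of \cref{lem:obs_obsclass} are satisfied. Hence observational classical representability reduces to showing that $\ev_A^{\sharp} \tensor \id_B$ is monic in $\gen{\cpu_{\min/\max}}$ for all objects $A$ and $B$.

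Next, this monicness is exactly what \cref{cor:evsharp_monic} provides: since $\opm{\ev_A}{\sharp} \colon \pre{PA} \rightsquigarrow \pre{SA}$ is surjective (as the abelianization map, by \cref{cor:abel_state}), its tensor with $\id_{\pre{B}}$ remains surjective, and surjections of linear maps correspond to monos in the opposite category, which is the direction that defines morphisms in our setting. Combining these observations yields the result.

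Overall I do not expect any real obstacles here, as the proposition is essentially a corollary of the preceding infrastructure: the substantive work has already been done in establishing observational representability (via the construction of the universal C*-algebra and the surjectivity of $\opm{\samp}{(\N)}$), in constructing the state space object as continuous functions on $\Sigma A$, and in identifying $SA$ with the abelianization of $PA$. The only ``content'' of the present proof is to chain these ingredients through \cref{lem:obs_obsclass}.

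\begin{proof}
By \cref{prop:cpu_obs_representable} and \cref{prop:class_representable}, the pictures $\cpu_{\min/\max}$ are both observationally representable and classically representable. By \cref{cor:evsharp_monic}, for all objects $A$ and $B$, the morphism $\ev_A^{\sharp} \tensor \id_B$ is monic in $\gen{\cpu_{\min/\max}}$. Applying \cref{lem:obs_obsclass}, we conclude that $\cpu_{\min/\max}$ are observationally classically representable.
\end{proof}
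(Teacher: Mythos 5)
Your proof is correct and follows essentially the same route as the paper: the paper's own proof likewise combines \cref{cor:evsharp_monic} with \cref{lem:obs_obsclass}, with the observational and classical representability hypotheses supplied by \cref{prop:cpu_obs_representable} and \cref{prop:class_representable}. You merely make those hypotheses explicit, which the paper leaves implicit.
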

\begin{proof}
	By \cref{cor:evsharp_monic}, $\ev_A^{\sharp}\tensor \id_B$ is monic, so we can apply \cref{lem:obs_obsclass} to conclude the proof for $\cpu_{\min/\max}$. 
\end{proof}

Concerning representability in general, we know that every $\samp_A$ is epic because it has a right inverse given by $\delta_A$. It is therefore natural to ask whether $\ev_A$ is also epic. This is indeed the case, provided we make the additional assumption of local state-separability.
We first generalize the definition from~\cite[Definition~2.2.3(i)]{fritz2023representable}.

\begin{definition}
	A picture $\cD$ is \newterm{locally state-separable} if the following implication holds for all generalized morphisms $\phi,\psi\colon W \tensor A \to B$:
	\begin{equation}\label{eq:locally_state_separable}
		\tikzfig{locally_state_separable}	
	\end{equation}	
\end{definition}
	
\begin{proposition}\label{prop:spu_locstat}
	The pictures $\cpu_{\min/\max}$ are locally state-separable.
\end{proposition}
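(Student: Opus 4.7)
My plan is to unfold the definition of local state-separability in $\cpu_{\min/\max}$ directly in terms of the associated linear maps. A generalized morphism $\phi \colon W \tensor A \to B$ corresponds to a unital linear map $\opm{\phi}{} \colon \pre{B} \rightsquigarrow \pre{W} \odot \pre{A}$, and precomposition with $\omega \tensor \id_A$ in the picture (where $\omega \colon I \to W$ is a state) becomes postcomposition with the slice map $\opm{\omega}{} \otimes \id_{\pre{A}} \colon \pre{W} \odot \pre{A} \rightsquigarrow \pre{A}$. So local state-separability reduces to the claim that if $x \in \pre{W} \odot \pre{A}$ satisfies $(\opm{\omega}{} \otimes \id_{\pre{A}})(x) = 0$ for every state $\opm{\omega}{} \colon \pre{W} \rightsquigarrow \mathbb{C}$, then $x = 0$: applying this to $x = \opm{\phi}{}(b) - \opm{\psi}{}(b)$ for each $b \in \pre{B}$ then yields $\opm{\phi}{} = \opm{\psi}{}$, i.e.\ $\phi = \psi$.

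The central step is to prove this reduced claim, which is a purely algebraic fact about $\pre{W} \odot \pre{A}$ as a vector space. Given such an $x$, I would write
\[
	x = \sum_{i=1}^{n} w_i \odot a_i
\]
with the $a_i \in \pre{A}$ chosen to be linearly independent (always achievable by expanding the second tensor factors against a basis of their finite-dimensional span). Then for every state $\opm{\omega}{}$,
\[
	0 \,=\, (\opm{\omega}{} \otimes \id_{\pre{A}})(x) \,=\, \sum_{i=1}^{n} \opm{\omega}{}(w_i)\, a_i,
\]
and linear independence of the $a_i$ forces $\opm{\omega}{}(w_i) = 0$ for every $i$ and every state $\opm{\omega}{}$. \cref{cor:zero_states} then gives $w_i = 0$ for each $i$, whence $x = 0$.

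The same argument treats $\cpu_{\min}$ and $\cpu_{\max}$ simultaneously, since only the underlying algebraic tensor product $\pre{W} \odot \pre{A}$ and the algebraically defined slice map $\opm{\omega}{} \otimes \id_{\pre{A}}$ enter the proof; the choice of C*-tensor norm plays no role. I do not anticipate any real obstacle: the whole argument boils down to the fact that states separate points of a pre-C*-algebra, which has already been recorded in \cref{cor:zero_states}.
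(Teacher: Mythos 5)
Your proposal is correct and follows essentially the same route as the paper: the paper also reduces local state-separability to the claim that the slice maps $\opm{\omega}{} \tensor \id_A$ jointly separate points of $\pre{W} \odot \pre{A}$, proves it by writing $x = \sum_i w_i \odot a_i$ with the $a_i$ linearly independent, and concludes via \cref{cor:zero_states}.
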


The proof is immediate from the following result, with notation matching the one of \eqref{eq:locally_state_separable}. 

\begin{lemma}
	Let $A$ and $W$ be pre-C*-algebras. If $x \in {W} \tensor {A}$ satisfies $(\opm{\omega}{} \tensor \id_A) (x)=0$ for every state $\opm{\omega}{}\colon W \rightsquigarrow \mathbb{C}$, then $x=0$.
\end{lemma}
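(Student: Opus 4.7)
The plan is to combine two basic inputs: (a) elements of $W \tensor A$ are finite linear combinations of simple tensors (since, in both the minimal and maximal cases, the underlying vector space is the \emph{algebraic} tensor product); and (b) states on a pre-C*-algebra separate points, which is the content of \cref{cor:zero_states}.

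Concretely, I would first write $x$ as a finite sum $x = \sum_{i=1}^n w_i \odot a_i$ with $w_i \in W$ and $a_i \in A$. By a standard linear-algebra reduction (collecting terms), I may assume without loss of generality that the elements $a_1, \dots, a_n$ are linearly independent in $A$, at the cost of replacing the $w_i$ by suitable linear combinations. For any state $\opm{\omega}{} \colon W \rightsquigarrow \mathbb{C}$, the hypothesis then gives
\[
0 = (\opm{\omega}{} \odot \id_A)(x) = \sum_{i=1}^n \opm{\omega}{}(w_i) \, a_i,
\]
and linear independence of the $a_i$ forces $\opm{\omega}{}(w_i) = 0$ for every $i$.

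Since $\opm{\omega}{}$ was an arbitrary state on $W$, \cref{cor:zero_states} yields $w_i = 0$ for every $i$, and hence $x = 0$. There is essentially no obstacle: the only thing to keep in mind is that the identification of $W \tensor A$ with the algebraic tensor product is the key structural fact that makes the argument trivial, so that no analytic issues (such as convergence in a completion) enter the picture.
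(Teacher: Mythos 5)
Your proof is correct and follows exactly the same route as the paper's: decompose $x = \sum_i w_i \odot a_i$ with the $a_i$ linearly independent, deduce $\opm{\omega}{}(w_i) = 0$ for every state $\opm{\omega}{}$, and conclude $w_i = 0$ via \cref{cor:zero_states}. Your added remark that the key structural point is the identification of $W \tensor A$ with the algebraic tensor product is accurate and consistent with the paper's setup.
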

\begin{proof}
	We consider $x \in W \tensor A$ and choose a decomposition $x = \sum_i w_i \tensor a_i$, where the $a_i$ are linearly independent.
	Then
	\[
		0 = (\opm{\omega}{} \tensor \id_A)(x) = \sum_i \opm{\omega}{}(w_i) a_i
	\]
	implies that $\opm{\omega}{}(w_i)=0$ for all $i$.
	It therefore remains to be shown that whenever $\opm{\omega}{}(w_i)=0$ for all states $\opm{\omega}{}$, then $w_i=0$. This is \cref{cor:zero_states}.
\end{proof}

\begin{remark}
	In the classical case, local state-separability was ensured because it is implied by point-separability~\cite[Lemma~2.2.4]{fritz2023supports}.
	Although the proof of this implication is still valid in our setting, 
	this is not of much interest as point-separability is not satisfied for $\cpu_{\min/\max}$,
	because there are not sufficiently many deterministic states. 
	For instance, for $n \ge 2$ there is no multiplicative linear map $M_n \rightsquigarrow \mathbb{C}$, because $M_n$ is a simple noncommutative algebra.
	Thus choosing two distinct parallel unital maps into $M_n$ gives a counterexample to point-separability.
\end{remark}

\begin{lemma}
	Let $\cD$ be a classically representable locally state-separable picture. Then $\ev_A\tensor \id_B$ is epic in $\gen{\cD}$ for all objects $A$ and $B$. 
	Moreover, $S\colon \cD \to \classical{\cD}_{\det}$ is faithful.
\end{lemma}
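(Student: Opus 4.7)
The plan is to first establish that $\ev_A \tensor \id_B$ is epic by combining local state-separability with classical representability, and then to obtain faithfulness of $S$ essentially as a corollary by specializing to $B = I$ and invoking naturality of the counit.

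For the epicness, I would take generalized morphisms $\phi, \psi \colon A \tensor B \to C$ satisfying $\phi \comp (\ev_A \tensor \id_B) = \psi \comp (\ev_A \tensor \id_B)$ and show $\phi = \psi$. By the local state-separability hypothesis applied with $W \coloneqq A$, it suffices to verify that $\phi \comp (\omega \tensor \id_B) = \psi \comp (\omega \tensor \id_B)$ for every state $\omega \colon I \to A$. Since the monoidal unit $I$ is classical (its copy morphism coincides with the inverse of the unitor $I \tensor I \cong I$ and is in particular deterministic), classical representability provides a unique deterministic $\omega^{\sharpc} \colon I \to SA$ with $\omega = \ev_A \comp \omega^{\sharpc}$. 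Then
\[
	\phi \comp (\omega \tensor \id_B) \,=\, \phi \comp (\ev_A \tensor \id_B) \comp (\omega^{\sharpc} \tensor \id_B) \,=\, \psi \comp (\ev_A \tensor \id_B) \comp (\omega^{\sharpc} \tensor \id_B) \,=\, \psi \comp (\omega \tensor \id_B),
\]
which is exactly what we need.

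For the faithfulness of $S$, I would take any two parallel morphisms $\phi, \psi \colon A \to B$ in $\cD$ with $S(\phi) = S(\psi)$, and use the naturality of the counit $\ev$ from \cref{lem:class_rep_adjoint} to deduce
\[
	\phi \comp \ev_A \,=\, \ev_B \comp S(\phi) \,=\, \ev_B \comp S(\psi) \,=\, \psi \comp \ev_A.
\]
The specialization of the epicness statement to $B = I$ then cancels $\ev_A$ on the right, yielding $\phi = \psi$.

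The main potential obstacle is the small auxiliary observation that the monoidal unit $I$ is classical, which is what allows the universal property of $SA$ to be applied to arbitrary states of $A$. Once this is in place, both claims follow by formal manipulation of universal properties, with no further analytic content required beyond the hypotheses.
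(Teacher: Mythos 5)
Your proposal is correct and follows essentially the same route as the paper: local state-separability reduces epicness of $\ev_A \tensor \id_B$ to precomposition with states $\omega \tensor \id_B$, and classicality of $I$ lets each state factor through $\ev_A$ via the universal property of $SA$. For faithfulness the paper simply cites the general fact that a right adjoint is faithful iff the counit is epic, while you unfold exactly the standard proof of that fact (naturality of $\ev$ plus the $B = I$ case of epicness), so the content is identical.
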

\begin{proof}
	Let $\phi,\psi\colon A\tensor B \to C$ be two morphisms such that $\phi \comp (\ev_A \tensor \id_B) = \psi \comp (\ev_A\tensor \id_B)$. 
	By local state-separability, $\phi=\psi$ iff $\phi \comp (\omega \tensor \id_B) = \psi \comp (\omega \tensor \id_B)$ for every state $\omega\colon I \to A$. 
	Since $I$ is classical, classical representability shows that $\omega$ factors through $\ev_A$.
	Hence we have 
	\[
	\tikzfig{ev_epic}
	\]
	for all $\omega$, so that $\phi = \psi$. 

	The last sentence is a general fact about adjoint functors: Faithfulness of the right adjoint is equivalent to the counit being epic. 
\end{proof}

\subsection{De Finetti representability}\label{sec:qdf}

In classical probability theory, the de Finetti theorem states that every permutation-invariant distribution of an infinite sequence of random variables is given by a mixture of i.i.d.\ distributions.
The fact that this representation is unique amounts to a universal property characterizing distribution objects.
In this way, the de Finetti theorem is closely related to representability.

In this subsection, we consider these questions in the involutive setting.
The relevant flavor of representability is classical representability, where the role of distribution objects is now played by state space objects.
One theoretical motivation for this is that we would like to factor through an iterated sampling map, and such an operation should be physically implementable, which is not satisfied for general representability (see \cref{rem:it_samp_not_selfadjoint}).
In \cref{sec:qdf_cpu}, we will prove that these properties apply in our pictures of interest $\cpu_{\min/\max}$ by the quantum de Finetti theorem for states proven as \Cref{thm:qdf_states}.

\begin{definition}\label{def:exchangeable}
	Let $\cD$ be a picture with countable Kolmogorov products. A morphism $\phi \colon C \to A_{\N}\tensor B$ is \newterm{exchangeable in the first factor} if, for any finite permutation $\sigma \colon \N \to \N$, we have 
	\begin{equation*}
		\tikzfig{exchangeable}	
	\end{equation*}
	If $B$ is omitted, or equivalently $B=I$, such $\phi$ is called \newterm{exchangeable}.
\end{definition}

As in earlier works, we use the double wire to denote an infinite Kolmogorov product~\cite{fritz2021definetti}, while $A_\sigma$ is as in \cref{nota:sigma}.

\begin{definition}\label{def:dF}
	Let $\cD$ be a picture with countable Kolmogorov products. 
	For an object $A$, a \newterm{de Finetti object} is an object $QA$ together with a morphism $\ell_A \colon QA \to A$ such that:
	\begin{enumerate}
		\item\label{it:df_autocomp} $\ell_A$ is autocompatible;\footnote{In particular, $\ell_A^{(\N)}$ is a morphism in $\cD$ by \cref{rem:display_autocomp_N}.}
		\item\label{it:df_limit} $\ell_A^{(\N)} \colon QA \to A_{\N}$ is the universal morphism making $Q A$ into the equalizer of all finite permutations
			\[
				A_\sigma \colon A_\N \to A_\N
			\]
			in $\cD$, and this limit is preserved by every $\_ \tensor \id_B$ for every object $B$;
		\item \label{it:ell_N_monic} $\ell_A^{(\N)}\tensor \id_B$ is monic in $\gen{\cD}$.
	\end{enumerate}
\end{definition}

In more detail, Property~\ref{it:df_limit} says the following: every morphism $\phi \colon C \to A_{\N}\tensor B$ exchangeable in the first factor admits a decomposition of the form
\begin{equation*}
	\tikzfig{qdf1}
\end{equation*}
for a unique morphism $\mu \colon C \to QA \tensor B$.

Following Staton and Summers~\cite[Lemma~4.2 and Theorem~4.3]{staton2023quantum}, we offer an alternative characterization of the second item of \cref{def:dF}.

\begin{proposition}[Alternative characterization of de Finetti objects]\label{prop:alt_char_df}
	Let $\cD$ be a picture with countable Kolmogorov products. For objects $A$ and $B$, 
	the following are in bijective correspondence natural in $C$:
	\begin{enumerate}
		\item\label{it:prop_alt_df} 
		Morphisms $C\to A_{\mathbb{N}}\tensor B$ exchangeable in the first factor (or, in other words, cones $C \to A_{\mathbb{N}}\tensor B$ for the diagram of all finite permutations $A_{\sigma}\tensor \id_B$);
		\item\label{it:prop_alt_ss} 
		Cones $C\to A_F \tensor B$ for the diagram given by
		\[
		A_F \tensor B \xlongrightarrow{A_{\iota}\tensor \id_B} A_{F'}\tensor B
		\]
		for all injections $\iota\colon F' \hookrightarrow F$ between finite sets $F,F'\subseteq \mathbb{N}$.
	\end{enumerate}
	In particular, the equalizer for all finite permutations $A_{\sigma}\tensor \id_B$ coincides with the limit of the diagram in Item~\ref{it:prop_alt_ss}.
\end{proposition}
In this way, one can make sense of the universal property of a de Finetti object without requiring the existence of countable Kolmogorov products.
However, Property~\ref{it:df_limit} of \cref{def:dF} seems more natural from an intuitive point of view, as it fits better with the narrative of exchangeable morphisms.
\begin{proof}
	Concerning \ref{it:prop_alt_ss}$\implies$\ref{it:prop_alt_df}, take a cone $(C\to A_F\tensor B)$ for the diagram of Item~\ref{it:prop_alt_ss}. 
	Since this family commutes with the maps $A_F\tensor B \to A_{F'}\tensor B$ associated to the injections $F' \hookrightarrow F$, this family factors through $C\to A_{\mathbb{N}}\tensor B$ (since Kolmogorov products are preserved under tensoring). 
	Moreover, every finite permutation ${\sigma}$ can be described finitely via injections $F'\hookrightarrow F$, so that $C\to A_{\mathbb{N}}\tensor B$ is in fact an exchangeable morphism in the first factor. 
	
	Conversely, for \ref{it:prop_alt_df}$\implies$\ref{it:prop_alt_ss}, take a morphism $C \to A_{\mathbb{N}}\tensor B$ exchangeable in the first factor and consider $C\to A_{{F}}\tensor B$ given by composing the exchangeable morphism with the marginalization $A_{\mathbb{N}}\tensor B \to A_{F}\tensor B$.
	Every injection $\iota\colon F' \hookrightarrow F$ can be seen as a restriction of a finite permutation $\sigma$ on $\N$.\footnote{To see this, set $\sigma(n)\coloneqq\iota(n)$ if $n\in F'$, extend by any bijection between $\im(\iota)\setminus F'$ and $F'\setminus \im(\iota)$, and fix every element outside $F'\cup \im(\iota)$.}
	Then the resulting family of morphisms $(C\to A_{F}\tensor B)$ is indeed a cone thanks to the following commuting diagram:
	\[
	\begin{tikzcd}
		C\ar[r]\ar[rd] & A_{\mathbb{N}}\tensor B \ar[r]\ar[d,"A_{\sigma}\tensor \id_B"] & A_{F}\tensor B\ar[d,"A_{\iota}\tensor \id_B"] \\
		& A_{\mathbb{N}}\tensor B \ar[r]& A_{F'}\tensor B
		\end{tikzcd}
	\]

	Naturality of the bijection in $C$ is immediate, since the construction from \ref{it:prop_alt_df} to \ref{it:prop_alt_ss} is given by post-composition.
\end{proof}

Here is now our main categorical result.

\begin{theorem}\label{thm:QA_SA}
	Let $\cD$ be a picture with countable Kolmogorov products.
	If $A$ has a de Finetti object $QA$, then this is a state space object with evaluation morphism $\ell_A$.
\end{theorem}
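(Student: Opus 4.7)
The plan is to verify the two defining properties of a state space object for $QA$ with evaluation morphism $\ell_A$: namely, that $QA$ is classical, and that post-composition by $\ell_A$ induces a bijection $\cD_{\det}(B,QA) \to \cD(B,A)$ for every classical object $B$.

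First I would establish classicality of $QA$. The key observation is that autocompatibility of $\ell_A$ propagates to $\ell_A^{(\N)}$: coassociativity of $\cop$ yields the identity $\pairing{\phi^{(n)}}{\phi^{(m)}} = \phi^{(n+m)}$ for any $\phi$, and autocompatibility of $\ell_A$ then gives $S_n$-invariance of $\ell_A^{(n)}$ for every $n$ by generating all permutations from adjacent transpositions (the same combinatorics underlying \cref{cor:autocomp_powers}). Projecting to all pairs $(\pi_F,\pi_{F'})$ and using the Kolmogorov universal property shows that $\ell_A^{(\N)}$ itself is autocompatible as a generalized morphism. Using Property~\ref{it:ell_N_monic} of \cref{def:dF} together with the swap symmetry, $\ell_A^{(\N)} \tensor \ell_A^{(\N)}$ is monic in $\gen{\cD}$, so post-composing $\cop_{QA}$ and $\swap \comp \cop_{QA}$ with this monic reduces the desired equality $\cop_{QA} = \swap \comp \cop_{QA}$ to the autocompatibility of $\ell_A^{(\N)}$ already established.

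Next I would prove the bijection. For surjectivity, given $f\colon B \to A$ with $B$ classical, $f$ is autocompatible by \cref{rem:compatibility_prop}\ref{it:class_comp}, so $f^{(\N)}\colon B \to A_{\N}$ is an exchangeable morphism in $\cD$ by the same combinatorial argument as above. Property~\ref{it:df_limit} of \cref{def:dF} then produces a unique $f^\sharpc \colon B \to QA$ in $\cD$ with $\ell_A^{(\N)} \comp f^\sharpc = f^{(\N)}$, and projecting to the first factor gives $\ell_A \comp f^\sharpc = f$. For uniqueness of the factorization among deterministic morphisms, if $g_1, g_2 \in \cD_{\det}(B,QA)$ satisfy $\ell_A \comp g_1 = \ell_A \comp g_2$, then determinism lets us push copies through to obtain $\ell_A^{(n)} \comp g_i = (\ell_A \comp g_i)^{(n)}$ for every $n$, whence $\ell_A^{(\N)} \comp g_1 = \ell_A^{(\N)} \comp g_2$, and Property~\ref{it:ell_N_monic} with $B = I$ forces $g_1 = g_2$.

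The main technical obstacle will be verifying that the morphism $f^\sharpc$ obtained above is in fact deterministic. Self-adjointness and totality are immediate from $f^\sharpc \in \cD$. To check copy-preservation, I would post-compose both $\cop_{QA} \comp f^\sharpc$ and $(f^\sharpc \tensor f^\sharpc) \comp \cop_B$ with the monic $\ell_A^{(\N)} \tensor \ell_A^{(\N)}$, reducing the equation to $\pairing{\ell_A^{(\N)}}{\ell_A^{(\N)}} \comp f^\sharpc = \pairing{f^{(\N)}}{f^{(\N)}}$. Both sides send through any $\pi_F \tensor \pi_{F'}$ to $\ell_A^{(|F|+|F'|)} \comp f^\sharpc = f^{(|F|+|F'|)}$, so equality follows from the Kolmogorov product universal property applied separately to each of the two tensor factors.
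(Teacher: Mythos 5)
Your proposal is correct and follows essentially the same route as the paper's proof: classicality of $QA$ is obtained from autocompatibility of $\ell_A^{(\N)}$ together with monicness of $\ell_A^{(\N)} \tensor \ell_A^{(\N)}$ (the paper packages this as an appeal to \cref{lem:noninv_mono}, which you re-derive inline), and the bijection is established exactly as in the paper, including the key step of verifying determinism of the factorizing morphism by post-composing with the monic $\ell_A^{(\N)} \tensor \ell_A^{(\N)}$ and invoking the Kolmogorov universal property plus coassociativity of copy.
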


This is somewhat surprising, since we did not explicitly ask anything about the behavior of $QA$ with respect to deterministic morphisms.

\begin{proof} 
	First, let us show that $QA$ is classical.
	Indeed, by \cref{cor:autocomp_powers}, the autocompatibility of $\ell_A$ itself and the definition of Kolmogorov products, $\ell_A^{(\mathbb{N})}$ is autocompatible.
	Since $\ell_A^{(\mathbb{N})} \tensor \id_A$ is monic, the same is true for $\ell_A^{(\mathbb{N})} \tensor \ell_A^{(\mathbb{N})}$. 
	The classicality of $QA$ now follows by an application of \cref{lem:noninv_mono}.
	
	Second, we aim to show that $\ell_A \colon QA \to A$ gives a bijection as expressed in \eqref{eq:evaluation}. 
	So let us fix any classical object $B$.
	Then the injectivity of 
	\[
		\ell_A \circ \_ \colon \cD_{\det} (B, QA)\to \cD (B,A)
	\]
	is quite clear:
	If $\ell_A\comp \phi = \ell_A \comp \psi$, then also $\ell_A^{(\mathbb{N})} \comp \phi = \ell_A^{(\mathbb{N})}\comp \psi$ because $\phi$ and $\psi$ are deterministic, and so $\phi=\psi$ by monicness of $\ell_A^{(\mathbb{N})}$. 
	To prove surjectivity, consider a morphism $\phi \colon B \to A$. 
	The classicality of $B$ implies that $\phi$ is autocompatible (\cref{rem:compatibility_prop}\ref{it:class_comp}), and therefore $\phi^{(\mathbb{N})} \colon B \to A_{\mathbb{N}}$ belongs to $\cD$ as mentioned in \cref{rem:display_autocomp_N}.  
	Moreover, $\phi^{(\mathbb{N})}$ is exchangeable by direct check: for any finite permutation $\sigma$, we have 
	$\pi_F \comp A_{\sigma}\comp \phi^{(\mathbb{N})}=\phi^{(n)}$
	for any finite set $F\subseteq \mathbb{N}$ of size $n$, where $\pi_F\colon A_{\mathbb{N}}\to A_{F}$ denotes a finite marginalization (or, in other words, a cone morphism of the Kolmogorov product).
	Since $F$ was arbitrary, we have $A_{\sigma}\comp \phi^{(\mathbb{N})}=\phi^{(\mathbb{N})}$ for all $\sigma$.
	
	For the sake of brevity, we now omit the subscript $A$ from $\ell_A$.
	By the universal property, we have $\phi^{(\mathbb{N})} = \ell^{(\mathbb{N})} \comp \mu$ for some morphism $\mu \colon B \to QA$ (in $\cD$). 
	By marginalization, we immediately get $\phi = \ell \comp \mu$, and hence
	\begin{equation*}
		\tikzfig{dF_rep1}
	\end{equation*}
	Using this equality, together with the definition of Kolmogorov products and associativity of $\cop_B$, we obtain
	\begin{equation*}
		\tikzfig{dF_rep2}
	\end{equation*}
	\begin{equation*}
		\tikzfig{dF_rep3}
	\end{equation*}
	Since $\ell^{(\mathbb{N})} \tensor \ell^{(\mathbb{N})}$ is monic, $\mu$ is deterministic ($\mu$ is self-adjoint because all morphisms in a picture are), and surjectivity is ensured.
\end{proof}

In view of this result, de Finetti objects will be denoted by $(SA, \ev)$. 
We also obtain a new notion of representability.

\begin{definition}
Let $\cD$ be a picture. Then $\cD$ is \newterm{de Finetti representable} if it has countable Kolmogorov products and a de Finetti object for every object.
\end{definition}

Due to Property~\ref{it:ell_N_monic} of \cref{def:dF}, an immediate consequence of \cref{thm:QA_SA} is the following.

\begin{corollary}\label{cor:deF_obsclass}
	Any de Finetti representable picture is observationally classically representable.
\end{corollary}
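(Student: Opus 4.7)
The plan is to reduce observational classical representability to the monicness condition (iii) in \cref{def:dF} via the universal property of countable Kolmogorov products, which is available by hypothesis.

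First, I would invoke \cref{thm:QA_SA} to identify any de Finetti object $(QA, \ell_A)$ with a state space object $(SA, \ev_A)$, so that $\ev_A = \ell_A$. In particular, this already gives classical representability of $\cD$. It remains to verify that for every pair of objects $A$ and $B$, the family $(\ev_A^{(n)} \tensor \id_B)_{n \ge 1}$ is jointly monic in $\gen{\cD}$.

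Since $\cD$ has countable Kolmogorov products, the universal property of $A_\N$ assembles the maps $\ev_A^{(n)}$ into a single generalized morphism $\ev_A^{(\N)} \colon SA \to A_\N$, and tensoring with $B$ preserves this: $\ev_A^{(\N)} \tensor \id_B$ is the generalized morphism glued from the family $(\ev_A^{(n)} \tensor \id_B)_n$. Exactly as in \cref{rem:sampNmonic}, the universal property then implies the equivalence
\[
	(\ev_A^{(\N)} \tensor \id_B) \comp \phi = (\ev_A^{(\N)} \tensor \id_B) \comp \psi \qquad \Longleftrightarrow \qquad (\ev_A^{(n)} \tensor \id_B) \comp \phi = (\ev_A^{(n)} \tensor \id_B) \comp \psi \quad \forall\, n,
\]
so joint monicness of the family is equivalent to monicness of $\ev_A^{(\N)} \tensor \id_B$ as a single generalized morphism.

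The monicness of $\ev_A^{(\N)} \tensor \id_B = \ell_A^{(\N)} \tensor \id_B$ is precisely Property~\ref{it:ell_N_monic} of \cref{def:dF}, which is part of the de Finetti object hypothesis. Hence the family $(\ev_A^{(n)} \tensor \id_B)_{n \ge 1}$ is jointly monic in $\gen{\cD}$, and $\cD$ is observationally classically representable. There is no real obstacle here: the statement is essentially a repackaging of the monicness clause in the definition of de Finetti object through the universal property of Kolmogorov products, once \cref{thm:QA_SA} is applied to identify $SA$ with $QA$.
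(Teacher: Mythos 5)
Your proof is correct and follows the same route as the paper, which simply remarks that the corollary is an immediate consequence of \cref{thm:QA_SA} together with Property~\ref{it:ell_N_monic} of \cref{def:dF}. You have merely made explicit the (correct) intermediate step that, via the universal property of countable Kolmogorov products as in \cref{rem:sampNmonic}, joint monicness of the family $(\ev_A^{(n)} \tensor \id_B)_n$ is equivalent to monicness of the single glued generalized morphism $\ell_A^{(\N)} \tensor \id_B$.
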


It is worth noting that there is a strong connection between this result and the recent proof of the de Finetti theorem for Markov categories~\cite[Theorem~4.4]{fritz2021definetti}. However, we refrain from discussing this here, as it is beyond the scope of the present paper.

\begin{lemma}\label{lem:autocomp}
	Let $\cD$ be a de Finetti representable picture. For every pair of objects $A$ and $B$, we have a natural bijection
	\[
		\cD_{\det} (B, SA) \cong \Set{\textrm{Autocompatible morphisms }B\to A}
	\]
	given by composition with $\ev_A$.
\end{lemma}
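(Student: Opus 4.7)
The plan is to show that the map $\phi \mapsto \ev_A \comp \phi$ gives the claimed bijection, by verifying well-definedness, injectivity and surjectivity separately, and then remark on naturality.

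For well-definedness, given $\phi \in \cD_{\det}(B, SA)$, I want to show that $\ev_A \comp \phi$ is autocompatible. Since $\phi$ is deterministic, $\pairing{\ev_A \comp \phi}{\ev_A \comp \phi} = (\ev_A \tensor \ev_A) \comp \cop_{SA} \comp \phi$, and since $\ev_A = \ell_A$ is itself autocompatible by Property~(i) of \cref{def:dF}, the composite $(\ev_A \tensor \ev_A) \comp \cop_{SA}$ is swap-invariant, so the whole thing is.

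For injectivity, suppose $\ev_A \comp \phi_1 = \ev_A \comp \phi_2$ for $\phi_1, \phi_2 \in \cD_{\det}(B, SA)$. Because both $\phi_i$ are deterministic, a short string-diagrammatic computation shows $\ell_A^{(n)} \comp \phi_1 = \ell_A^{(n)} \comp \phi_2$ for every $n$, and hence also $\ell_A^{(\N)} \comp \phi_1 = \ell_A^{(\N)} \comp \phi_2$ by the universal property of the countable Kolmogorov product. Monicness of $\ell_A^{(\N)}$ in $\gen{\cD}$ (Property~(iii) of \cref{def:dF}, with $B = I$) then yields $\phi_1 = \phi_2$.

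The main work lies in surjectivity, which recycles the key argument in the proof of \cref{thm:QA_SA}. Given an autocompatible $\psi \colon B \to A$, I form $\psi^{(\N)} \colon B \to A_{\N}$, which is a morphism in $\cD$ by \cref{rem:display_autocomp_N}. I then check that $\psi^{(\N)}$ is exchangeable: any finite permutation is a product of adjacent transpositions, each of which acts trivially on $\psi^{(\N)}$ by autocompatibility of $\psi$ combined with \cref{cor:autocomp_powers}. Property~(ii) of \cref{def:dF} (applied with trivial second factor) produces a unique morphism $\mu \colon B \to SA$ in $\cD$ with $\ell_A^{(\N)} \comp \mu = \psi^{(\N)}$, and marginalization gives $\ev_A \comp \mu = \psi$. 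It remains to show that $\mu$ is deterministic; since $\mu$ is automatically self-adjoint and total (as a morphism in the picture $\cD$), only the copy condition must be checked. This is where I replay the computation of \cref{thm:QA_SA}: using $\psi = \ell_A \comp \mu$, the associativity of copy, and the definition of Kolmogorov products, one obtains
\[
	(\ell_A^{(\N)} \tensor \ell_A^{(\N)}) \comp \cop_{SA} \comp \mu \;=\; (\ell_A^{(\N)} \tensor \ell_A^{(\N)}) \comp (\mu \tensor \mu) \comp \cop_B,
\]
and monicness of $\ell_A^{(\N)} \tensor \ell_A^{(\N)}$ (obtained by two applications of Property~(iii)) forces $\cop_{SA} \comp \mu = (\mu \tensor \mu) \comp \cop_B$.

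Naturality of the bijection in $B$ along deterministic morphisms $\beta \colon B' \to B$ is immediate, since both sides are given by precomposition with $\beta$ and $\ev_A$ is fixed. The main obstacle is the determinism of $\mu$ in surjectivity, but this is essentially identical to the final step of the proof of \cref{thm:QA_SA} and I do not foresee complications.
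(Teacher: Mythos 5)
Your proposal is correct and follows essentially the same route as the paper, which itself just defers to the argument of \cref{thm:QA_SA}: injectivity via monicness of $\ell_A^{(\N)}$, surjectivity via exchangeability of $\psi^{(\N)}$ and the equalizer property, with determinism of $\mu$ extracted from monicness of $\ell_A^{(\N)} \tensor \ell_A^{(\N)}$. The only (minor) divergence is in well-definedness: the paper deduces autocompatibility of $\ev_A \comp \phi$ from the fact that $(\ev_A \comp \phi)^{(\N)} = \ev_A^{(\N)} \comp \phi$ is a composite of morphisms in $\cD$ (invoking the converse direction of \cref{rem:display_autocomp}), whereas you verify it by the direct computation $\pairing{\ev_A \comp \phi}{\ev_A \comp \phi} = \pairing{\ev_A}{\ev_A} \comp \phi$ using determinism of $\phi$ and autocompatibility of $\ell_A$ --- both are valid one-line arguments.
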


For $\cpu_{\min/\max}$ this result is expected: autocompatible morphisms translate to maps with commutative image (see~\cref{ex:commutingranges}), and therefore they factor through a deterministic epimorphism with classical codomain.
However, de Finetti representability provides an elegant highway independent of such an explicit factorization.

\begin{proof}
	The reasoning is exactly the same as employed in the proof of \cref{thm:QA_SA}.
	Indeed, the fact that $B$ was classical ensured that, given $\phi \colon B\to A$, we could construct an exchangeable $\phi^{(\mathbb{N})}$, and this is exactly the requirement of $\phi$ to be autocompatible.
	The only thing that may be unclear is why $\ev \comp \phi$, with $\phi\colon B \to SA$ deterministic, is autocompatible.
	This follows from the fact that $\phi$ is deterministic and $SA$ is classical, so that $(\ev \comp \phi)^{(\mathbb{N})} = \ev^{(\mathbb{N})} \comp \phi$, and the latter is a composition of two morphisms in $\cD$.
\end{proof}

\subsection{The quantum de Finetti theorem}\label{sec:qdf_cpu}

The arguments of Hulanicki and Phelps on quantum de Finetti theorems~\cite{hulanicki68tensor} are very insightful. 
In fact, they were able to show that the de Finetti theorem holds in a much more general context than that of C*-algebras~\cite[Theorem~4.1]{hulanicki68tensor}.
Their method is completely independent of St{\o}rmer's essentially simultaneous paper~\cite{stormer69exchangeable}, in which the quantum de Finetti theorem for C*-algebras with respect to the minimal tensor product was proven.

In this section, we show how the ideas of Hulanicki and Phelps that enter the proof of their Theorem 4.1 lead to a quantum de Finetti theorem for both the minimal and the maximal tensor products (\cref{thm:qdf_states}). 
In addition, this result allows for one extra tensor factor as in \cref{def:exchangeable}, and matches the style of a known criterion for the separability of states in quantum information theory~\cite[Theorem~1]{doherty04complete}.
This result will then be used to show that the pictures $\cpu_{\min/\max}$ are de Finetti representable (\cref{thm:qdf}).

\begin{lemma}\label{lem:qdf_constantmulti}
	In $\cpu_{\min/\max}$, let $\opm{\phi}{}\colon \pre{A}_{\mathbb{N}} \odot \pre{B} \rightsquigarrow \mathbb{C}$ be a state which is exchangeable in the first factor, and let $a \in \pre{A}$ be positive.
	Then there exist $\lambda \in \R_+$ and a state $\opm{\psi}{} \colon \pre{A}_{\mathbb{N}}\odot \pre{B} \rightsquigarrow \mathbb{C}$ exchangeable in the first factor such that the functional 
	\[
		\opm{\phi}{}_a(y)\coloneqq \opm{\phi}{}(a \odot y), \qquad \forall \, y \in \pre{A}_{\N} \odot \pre{B}
	\]
	is simply given by $\lambda \opm{\psi}{}$.
\end{lemma}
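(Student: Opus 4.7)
My plan is the following. The element $a \odot y$ is to be understood via the shift identification $\pre{A} \odot \pre{A}_{\N} \cong \pre{A}_{\N}$ coming from the Kolmogorov product structure of \cref{prop:kolmogorovproducts}; concretely, for $y$ supported on a finite $F \subseteq \N$, we place $a$ at any index $k \notin F$, and this choice is immaterial by exchangeability of $\opm{\phi}{}$. I will first verify that $\opm{\phi}{}_a$ is a positive functional. Since $a \geq 0$, there exists $c \in \closed{A}$ with $a = c^* c$. For any $y \in \pre{A}_{\N} \odot \pre{B}$, the equality $a \odot y^* y = (c \odot y)^*(c \odot y)$ holds in the completion $\closed{A_{\N} \odot B}$ by continuity of multiplication and involution. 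Hence $a \odot y^* y$ is positive in the sense of \cref{def:positive_el}, and \cref{prop:Phi_closPhi_positivity} applied to $\opm{\phi}{}$ (which is completely positive by \cref{prop:com_dom_codom}) gives $\opm{\phi}{}_a(y^* y) = \opm{\phi}{}(a \odot y^* y) \geq 0$.

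Next, set $\lambda \coloneqq \opm{\phi}{}_a(1) = \opm{\phi}{}(a \odot 1) \geq 0$. If $\lambda > 0$, let $\opm{\psi}{} \coloneqq \lambda^{-1} \opm{\phi}{}_a$; this is unital and positive by the previous step, hence a state, and completely positive by \cref{prop:com_dom_codom}. If $\lambda = 0$, then $\opm{\phi}{}_a$ vanishes on positive elements by positivity combined with the Cauchy--Schwarz-type bound $0 \leq \opm{\phi}{}_a(x) \leq \norm{x}\, \opm{\phi}{}_a(1) = 0$ for $x \geq 0$ (equivalently, using \cref{cor:bound_with_norm}, every element is a linear combination of positive elements, so $\opm{\phi}{}_a = 0$ altogether), and one takes for $\opm{\psi}{}$ any exchangeable state, e.g.\ an i.i.d.\ product state built from a fixed state on $\pre{A}$ tensored with a state on $\pre{B}$.

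For exchangeability of $\opm{\psi}{}$, let $\sigma$ be a finite permutation of $\N$ and $y \in \pre{A}^{\odot F} \odot \pre{B}$ with $F$ finite. Choose $k \in \N \setminus (F \cup \sigma(F))$ as the slot for $a$, and let $\sigma'$ be the finite permutation of $\N$ that fixes $k$ and acts as $\sigma$ elsewhere. Then by construction $a \odot (A_\sigma \tensor \id_B)(y) = (A_{\sigma'} \tensor \id_B)(a \odot y)$ in $\pre{A}_{\N} \odot \pre{B}$, so
\[
    \opm{\phi}{}_a\bigl((A_\sigma \tensor \id_B)(y)\bigr) = \opm{\phi}{}\bigl((A_{\sigma'} \tensor \id_B)(a \odot y)\bigr) = \opm{\phi}{}(a \odot y) = \opm{\phi}{}_a(y)
\]
by exchangeability of $\opm{\phi}{}$ under $\sigma'$. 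Dividing by $\lambda$ (or trivially in the degenerate case) yields the desired invariance of $\opm{\psi}{}$.

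The only real obstacle is bookkeeping: keeping the shift isomorphism and the induced permutation $\sigma'$ straight, and confirming that the identification of $a \odot y$ as an element of $\pre{A}_{\N} \odot \pre{B}$ does not depend on the choice of slot $k$. No deeper analytic input is needed beyond \cref{prop:Phi_closPhi_positivity,prop:com_dom_codom} and the explicit model of $\pre{A}_{\N}$ from \cref{prop:kolmogorovproducts}.
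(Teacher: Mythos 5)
Your proposal is correct and follows essentially the same route as the paper: set $\lambda = \opm{\phi}{}(a\odot 1)$, handle $\lambda=0$ via the bound $0\le\opm{\phi}{}(a\odot y)\le\norm{y}\,\opm{\phi}{}(a\odot 1)$ on positive elements together with \cref{cor:bound_with_norm}, and otherwise normalize $\opm{\phi}{}_a$. The only difference is that you spell out the positivity of $\opm{\phi}{}_a$ and the exchangeability of $\opm{\psi}{}$, which the paper's proof leaves implicit as ``by construction''.
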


\begin{proof}
	If $\opm{\phi}{}(a \odot 1)=0$, then for every positive $y \in \pre{A}_{\N} \odot \pre{B}$, we have
	\[
		0 \le \opm{\phi}{}(a \odot y) \le \norm{y} \opm{\phi}{}(a \odot 1)= 0
	\]
	by \cref{cor:bound_with_norm}.
	This in particular implies that the statement is trivially true with $\lambda = 0$,
	because $\opm{\phi}{}_a=0$ by the fact that positive $y$ span the algebra.

	If $\opm{\phi}{}(a \odot 1)\ne 0$, then this quantity is positive because $a$ is. Let us define 
	\[
	\opm{\psi}{} (y) \coloneqq \frac{1}{\opm{\phi}{}(a \odot 1)}\opm{\phi}{}_a(y).
	\]
	By construction, $\opm{\psi}{}$ is an exchangeable state satisfying the desired equation, therefore concluding the proof.
\end{proof}

\begin{lemma}\label{lem:ks_odot_inequality}
	In $\cpu_{\min/\max}$, let $\opm{\phi}{}\colon \pre{A}_{\mathbb{N}}\rightsquigarrow \mathbb{C}$ be an exchangeable state, and let $a \in \pre{A}_{\mathbb{N}}$ be self-adjoint. 
	Then 
	\[ 
	\opm{\phi}{}(a\odot a)\ge \opm{\phi}{}(a)^2.
	\]
\end{lemma}
\begin{proof}
	By the description offered in the proof of \cref{prop:kolmogorovproducts}, $a$ has a representative $a_k$ in $\pre{A}_1 \odot \dots \odot \pre{A}_k$ for some $k\in \N$. 
	Let $1_k = 1\odot \dots \odot 1$ the unit tensored with itself $k$-times.
	For $n \ge 2$, we take 
	\[ 
		S_n = a + 1_k\odot a + 1_k\odot 1_k \odot a + \dots + \underbrace{1_k\odot \dots \odot 1_k}_{(n-1)\text{ times}} {}\odot a,
	\] 
	which is self-adjoint because $a$ is.
	By the Kadison--Schwarz inequality from \cref{prop:ks_inequality}, we have $\opm{\phi}{} (S_n^2) \ge \opm{\phi}{} (S_n)^2$. 
	Moreover, the exchangeability of $\opm{\phi}{}$ allows one to write both arguments more concisely,
	\[ 
	\opm{\phi}{}(S_n^2) = n \opm{\phi}{}(a^2)+ n(n-1)\opm{\phi}{} (a\odot a)
	\qquad \text{and}\qquad \opm{\phi}{} (S_n)^2 = n^2 \opm{\phi}{}(a)^2
	\] 
	We now plug these expressions into the inequality $\opm{\phi}{} (S_n^2) \ge \opm{\phi}{} (S_n)^2$ to obtain 
	\[ 
	\opm{\phi}{}(a\odot a)\ge \frac{n}{n-1} \opm{\phi}{}(a)^2 - \frac{1}{n-1}\opm{\phi}{}(a^2)
	\]
	The claim then follows by taking the limit $n\to \infty$.
\end{proof}

\begin{theorem}[Quantum de Finetti, analytic version]
	\label{thm:qdf_states}
	In $\cpu_{\min/\max}$, consider states $\pre{A}_{\mathbb{N}} \odot \pre{B} \rightsquigarrow \mathbb{C}$ that are exchangeable in the first factor.
	Then:
	\begin{enumerate}
		\item\label{it:exc_compact_convex}
			These states form a compact convex set 
			in the weak-* topology.
		\item\label{it:exc_extremal}
			Such a state $\opm{\phi}{} \colon \pre{A}_{\mathbb{N}} \odot \pre{B} \rightsquigarrow \mathbb{C}$ is extremal in this set 
			if and only if it is of the form
			\begin{equation}
				\label{eq:qdf_states}
				\opm{\phi}{} = \opm{\psi}{(\N)} \odot \opm{\omega}{}
			\end{equation}
			for $\opm{\psi}{}\colon \pre{A} \rightsquigarrow \mathbb{C}$ an arbitrary state and $\opm{\omega}{}\colon \pre{B} \rightsquigarrow \mathbb{C}$ a pure state.
		\item\label{it:exc_decomp}
			For every such state $\opm{\phi}{} \colon \pre{A}_{\mathbb{N}} \odot \pre{B} \rightsquigarrow \mathbb{C}$, there is a Radon probability measure $\mu$ on the product of state spaces $\Sigma A \times \Sigma B$ such that
			\[
				\opm{\phi}{} = \int \left[ \opm{\psi}{(\N)} \odot \opm{\omega}{} \right] \,\,  \mu(\mathrm{d} \opm{\psi}{}, \mathrm{d} \opm{\omega}{}).
			\]
	\end{enumerate}
\end{theorem}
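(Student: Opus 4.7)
The approach is to follow the strategy of Hulanicki and Phelps~\cite{hulanicki68tensor}, with the scaling lemma (\cref{lem:qdf_constantmulti}) as the main technical tool. Part~\ref{it:exc_compact_convex} is routine: weak-* compactness of the state space of $\pre{A}_{\N} \odot \pre{B}$ combined with the fact that exchangeability in the first factor is defined by a family of closed linear constraints immediately yields compact convexity.

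For the harder direction of part~\ref{it:exc_extremal}, that an extremal exchangeable state $\opm{\phi}{}$ admits the product form, the plan is as follows. After the reindexing $\pre{A}_{\N} \cong \pre{A} \odot \pre{A}_{\N}$ isolating the first copy, for each positive $a \in \pre{A}$ define the positive functional
\[
	\opm{\phi}{}_a(y) \;\coloneqq\; \opm{\phi}{}(a \odot y), \qquad y \in \pre{A}_{\N} \odot \pre{B}.
\]
By \cref{lem:qdf_constantmulti}, $\opm{\phi}{}_a$ is a non-negative multiple of an exchangeable state. Comparing against the marginal $\opm{\bar\phi}{}(y) \coloneqq \opm{\phi}{}(1 \odot y)$, the inequality $a \le \|a\|\cdot 1$ yields the domination $\opm{\phi}{}_a \le \|a\| \, \opm{\bar\phi}{}$. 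Since $\opm{\bar\phi}{}$ is extremal exchangeable (it is identified with $\opm{\phi}{}$ up to reindexing, by exchangeability), the standard convex-analytic fact that positive functionals dominated by a multiple of an extremal state are themselves non-negative multiples of it forces $\opm{\phi}{}_a = \opm{\phi}{}(a \odot 1) \cdot \opm{\bar\phi}{}$, i.e.
\[
	\opm{\phi}{}(a \odot y) \;=\; \opm{\phi}{}(a \odot 1) \cdot \opm{\phi}{}(1 \odot y).
\]
Linearity extends this to all $a \in \pre{A}$, and iterating coordinate by coordinate with exchangeability produces the product decomposition $\opm{\phi}{} = \opm{\psi}{(\N)} \odot \opm{\omega}{}$; purity of $\opm{\omega}{}$ is then immediate, since a nontrivial convex decomposition of $\opm{\omega}{}$ would induce one of $\opm{\phi}{}$.

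Part~\ref{it:exc_decomp} is the Choquet--Bishop--de Leeuw theorem applied to the compact convex set of exchangeable states in the locally convex dual space: every exchangeable state is the barycenter of a regular Borel probability measure pseudo-supported on the closure of the extreme points. By the previous step, these extreme points lie in the image of $\Sigma \pre{A} \times \Sigma \pre{B}$ under the continuous map $(\opm{\psi}{}, \opm{\omega}{}) \mapsto \opm{\psi}{(\N)} \odot \opm{\omega}{}$, and pushing the Choquet measure back along this parametrization gives the Radon probability measure $\mu$.

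Finally, the remaining direction of part~\ref{it:exc_extremal}, that product states with pure $\opm{\omega}{}$ are extremal, reduces via the standard fact that pure states admit only product extensions across tensor products (provable via the Kadison--Schwarz inequality and irreducibility of the GNS representation) to the $\pre{B} = \mathbb{C}$ case, namely that $\opm{\psi}{(\N)}$ is extremal among exchangeable states on $\pre{A}_{\N}$. This is where I expect the main technical obstacle: the natural argument uses part~\ref{it:exc_decomp} to write $\opm{\psi}{(\N)} = \int \opm{\tilde\psi}{(\N)} \, d\mu(\opm{\tilde\psi}{})$ and then compares the single-copy and two-copy marginals to deduce $\int (\opm{\tilde\psi}{}(a) - \opm{\psi}{}(a))^2 \, d\mu = 0$ for every self-adjoint $a \in \pre{A}$; concluding $\mu = \delta_{\opm{\psi}{}}$ then proceeds by compactness of $\Sigma \pre{A}$ together with inner regularity of $\mu$, which avoids any separability assumption on $\pre{A}$ that a naive countable-union argument would otherwise require.
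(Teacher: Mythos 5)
Your overall architecture agrees with the paper's: part (i) by weak-* continuity of the permutation action, the forward half of part (ii) by applying \cref{lem:qdf_constantmulti} and iterating coordinate by coordinate, and part (iii) by Choquet--Bishop--de Leeuw together with the observation that the continuous image of $\Sigma A \times \Sigma B$ is compact, hence closed and (by the forward half of (ii)) contains all extreme points. One imprecision in your forward half: the ``standard convex-analytic fact'' you invoke --- that a positive functional dominated by a multiple of an extremal state is proportional to it --- is a statement about \emph{pure} states and fails for extreme points of a proper compact convex subset of the state space. What you actually need is to apply \cref{lem:qdf_constantmulti} to \emph{both} $a$ and $\norm{a}1 - a$, so that $\norm{a}\,\opm{\phi}{} = \opm{\phi}{}_a + \opm{\phi}{}_{\norm{a}1-a}$ exhibits $\opm{\phi}{}$ (after normalization) as a convex combination of two exchangeable states; extremality \emph{within the exchangeable set} then forces each summand to be proportional to $\opm{\phi}{}$. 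This is a one-line repair and is exactly how the paper uses the lemma.

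Where you genuinely diverge from the paper is the reverse half of part (ii). The paper proves extremality of $\opm{\psi}{(\N)}$ directly: given a convex decomposition $\opm{\psi}{(\N)} = \lambda\opm{\phi}{}_1 + (1-\lambda)\opm{\phi}{}_2$ into exchangeable states, it evaluates on a disjointly doubled self-adjoint $a \odot a$, applies the Kadison--Schwarz inequality (\cref{prop:ks_inequality}) to get $\opm{\phi}{}_i(a\odot a) \ge \opm{\phi}{}_i(a)^2$, and deduces $(\opm{\phi}{}_1(a)-\opm{\phi}{}_2(a))^2 \le 0$; this is elementary and independent of part (iii). You instead deduce it \emph{from} part (iii), via the second-moment identity $\int(\opm{\tilde\psi}{}(a)-\opm{\psi}{}(a))^2\,\mu(\mathrm{d}\opm{\tilde\psi}{}) = 0$ and a support argument for the Radon measure $\mu$. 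This is correct and not circular --- part (iii) only needs the forward half of (ii) --- and using the support of a Radon measure rather than a countable union of null sets does correctly sidestep separability. The trade-off is that the paper's route keeps part (ii) self-contained, whereas yours makes the ordering forward (ii) $\Rightarrow$ (iii) $\Rightarrow$ reverse (ii) essential; on the other hand, yours packages the conclusion as uniqueness of the representing measure, which sits closer to the de Finetti statement itself. Your reduction of the general-$B$ case to $B = \mathbb{C}$ via purity of $\opm{\omega}{}$ is the same step the paper asserts with ``it is enough to show''.
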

\begin{proof}
	Claim~\ref{it:exc_compact_convex} is a direct check based on the fact that permuting the tensor factors of $\pre{A}$ is a weak-* continuous operation on the state space $\Sigma({A_{\mathbb{N}}} \odot {B})$.

	Concerning part~\ref{it:exc_extremal}, suppose that $\opm{\phi}{}$ is extremal.
	Then for any positive $a \in \pre{A}$, we can write $\opm{\phi}{}_a = \lambda \opm{\phi}{}$ by \cref{lem:qdf_constantmulti} and extremality.
	Evaluating at $1$ shows that $\lambda = \opm{\phi}{}(a \odot 1)$. 
	In other words, we have
	\[
		\opm{\phi}{}(a \odot y) = \opm{\phi}{}(a\odot 1) \, \opm{\phi}{}(y)
	\]
	for all positive $a \in \pre{A}$ and $y \in \pre{A_{\N}}\odot \pre{B}$. 
	Applying this equation inductively gives
	\begin{equation*}
	\begin{split}
		\opm{\phi}{} (a_1 \odot \dots \odot a_n \odot b) &= \opm{\phi}{}(a_1 \odot 1) \, \opm{\phi}{} (a_2 \odot \dots \odot a_n \odot b) 	\\
		&\hspace{0.52em} \vdots \\
		& = \opm{\phi}{}(a_1 \odot 1) \cdots \opm{\phi}{}(a_n \odot 1) \, \opm{\phi}{}(1 \odot b)
	\end{split}
	\end{equation*}
	for all positive $a_1, \ldots, a_n \in \pre{A}$ and $b \in \pre{B}$.
	We now set $\opm{\psi}{} (a) \coloneqq \opm{\phi}{}(a \odot 1)$ for all $a \in \pre{A}$ and $\opm{\omega}{}(b) \coloneqq \opm{\phi}{} (1 \odot b)$ for all $b \in \pre{B}$.
	Then we already have~\eqref{eq:qdf_states} for simple tensors consisting of positive elements.
	These span the entire algebraic tensor product (using \cref{cor:bound_with_norm}), and therefore the equation holds for all elements by linearity.
	The fact that $\opm{\omega}{}$ must be pure is straightforward to see.

	Conversely, to show that every state of the form~\eqref{eq:qdf_states} is extremal among states exchangeable in the first factor, we first show that $\opm{\psi}{(\N)}$ is extremal among exchangeable states on $\pre{A}_{\N}$.
	Given a convex decomposition
	\[
		\opm{\psi}{(\N)} = \lambda \opm{\rho}{}_1 + (1-\lambda) \opm{\rho}{}_2
	\]
	with $\lambda \in (0,1)$, we consider arbitrary self-adjoint $a \in \pre{A}_{\N}$ and evaluate
	\begin{align*}
		\opm{\psi}{(\N)}(a)^2 & = \opm{\psi}{(\N)}(a \odot a) \\
			& = \lambda \opm{\rho}{}_1(a \odot a) + (1-\lambda) \opm{\rho}{}_2(a \odot a) \\
			& \ge \lambda \opm{\rho}{}_1(a)^2 + (1-\lambda) \opm{\rho}{}_2(a)^2,
	\end{align*}
	where the inequality step follows from \cref{lem:ks_odot_inequality}.
	By combining this with the expression obtained by expanding the left-hand side directly and using the binomial formula, we obtain the inequality
	\[
		2 \opm{\rho}{}_1(a)\opm{\rho}{}_2(a) \ge \opm{\rho}{}_1(a)^2 + \opm{\rho}{}_2(a)^2,
	\]
	where a factor of $\lambda(1-\lambda)$ has been cancelled by $\lambda \in (0,1)$.
	This inequality can be rearranged to $(\opm{\rho}{}_1(a)-\opm{\rho}{}_2(a))^2\le 0$.
	This implies $\opm{\rho}{}_1(a) = \opm{\rho}{}_2(a)$.
	Since $a$ was arbitrary self-adjoint, we conclude $\opm{\rho}{}_1 = \opm{\rho}{}_2$, so that $\opm{\psi}{(\N)}$ is indeed extremal.

	We now show that $\opm{\phi}{}$ is also extremal. 
	Consider
	\[ 
	\opm{\phi}{} = \lambda \opm{\phi_1}{} + (1-\lambda) \opm{\phi_2}{}
	\]
	for some states $\opm{\phi_i}{}$ exchangeable in the first factor and $\lambda\in [0,1]$. Extremality of $\opm{\psi}{(\N)}$ implies that $\opm{\phi_i}{}(a\odot 1) = \opm{\psi}{(\N)}(a)$ for all $a\in \pre{A}_{\N}$. 
	If $a$ is positive and such that $\opm{\psi}{(\N)}(a)= 0$, then 
	\[
	0\le \opm{\phi_i}{}(a\odot b) \le \opm{\phi_i}{}( a\odot \norm{b} 1) = \norm{b} \opm{\phi_i}{}( a\odot 1) = \norm{b} \opm{\psi}{(\N)}(a)=0
	\] 
	for all positive elements $b$ by \cref{cor:bound_with_norm}, hence by linearity (and \cref{cor:bound_with_norm} again) we get
	\[
		\opm{\phi_i}{}(a\odot b) =0=\opm{\psi}{(\N)}(a) \opm{\omega}{}(b)
	\]
	for all $b\in \pre{B}$.
	Instead, if $a$ is positive such that $\opm{\psi}{(\N)}(a)\neq 0$, 
	from $\opm{\phi}{}(a\odot b) = \opm{\psi}{(\N)}(a) \opm{\omega}{}(b)$ we obtain
	\[
	\opm{\omega}{}(b) =\lambda \frac{\opm{\phi_1}{}(a\odot b)}{\opm{\psi}{(\N)}(a)}+(1-\lambda) \frac{\opm{\phi_2}{}(a\odot b)}{\opm{\psi}{(\N)}(a)}
	\]
	Since the functionals $\frac{\opm{\phi}{}_i(a\odot b)}{\opm{\psi}{(\N)}(a)}$ are clearly positive, and unital by $\opm{\phi}{}_i(a\odot 1) = \opm{\psi}{(\N)}(a)$, extremality of $\opm{\omega}{}$ is sufficient to conclude that these functionals coincide with $\opm{\omega}{}$.
	But then we can conclude $\opm{\phi_i}{}(a\odot b) = \opm{\psi}{(\N)}(a)\opm{\omega}{}(b)$ for all positive $a$ and all $b\in \pre{B}$, and therefore $\opm{\phi}{}$ is extremal since the algebraic tensor product is spanned by these elements (using \cref{cor:bound_with_norm}).
	
	We now turn to part~\ref{it:exc_decomp}.
	Since the inclusion map
	\begin{align*}
		\Sigma A \times \Sigma B &\to \Sigma(A_{\mathbb{N}} \odot B) \\
		(\opm{\psi}{}, \opm{\omega}{}) &\mapsto \opm{\psi}{(\N)} \odot \opm{\omega}{}
	\end{align*}
	is weak-* continuous, its image is compact, and by \ref{it:exc_extremal} it contains all extremal states.
	By applying the Choquet--Bishop--de Leeuw theorem \cite[Section~4]{phelps01choquet}, for every state $\opm{\phi}{}$ exchangeable in the first factor we obtain a Baire probability measure $\nu$ on the set of all states exchangeable in the first factor such that
	\begin{equation}\label{eq:choquet}
	\opm{\phi}{} = \int \opm{\xi}{}\,\, \nu(\mathrm{d} \opm{\xi}{}),
	\end{equation}
	and moreover $\nu(E)=1$ for all Baire sets $E$ containing the extremal exchangeable states.
	It remains to explain how this integral restricts to the Borel set $\Sigma A \times \Sigma B$, which may or may not be Baire.\footnote{Every Baire set is Borel, but the converse need not hold \cite[Section 7.1]{dudley02real}).}
	Since the underlying space is compact Hausdorff, the Baire probability measure $\nu$ can be uniquely extended to a Radon probability measure $\mu$ on the Borel $\sigma$-algebra,
	which on compact sets $K$ satisfies~\cite[Theorem 7.3.1]{dudley02real}
	\[ 
		\mu(K)=\inf \, \Set {\nu(E)\given E \text{ is Baire and } E\supseteq K}.
	\]
	By taking $K$ to be the image of $\Sigma A\times \Sigma B$,
	we find that the infimum on the right-hand side is 1 because any Baire set of the family contains the extremal states. 
	Therefore $\mu(K)=1$, and the integral in \eqref{eq:choquet}, now interpreted with respect to $\mu$ restricted to $\Sigma A \times \Sigma B$, achieves the desired representation.
\end{proof}

\begin{remark}
	In particular, every exchangeable state on an infinite maximal tensor power of a C*-algebra $\closed{A}$ can be written as an integral over product states.
	With an additional tensor factor $B$, this also generalizes the separability criterion of~\cite[Theorem~1]{doherty04complete} to the infinite-dimensional case, even with the maximal tensor product.
\end{remark}

To use the previous result to prove the categorical version, let us start with a technical lemma.

\begin{lemma}
	\label{lem:pu_factor}
	Let $\opm{\phi}{} \colon \pre{A} \rightsquigarrow \pre{A}'$ be any surjective positive unital map.
	Then for any pre-C*-algebra $B$, composing with $\phi$ establishes a bijection between 
	\begin{enumerate}
		\item the set of positive unital maps $\pre{A}' \rightsquigarrow \pre{B}$, and
		\item the set of positive unital maps $\opm{\psi}{} \colon \pre{A} \rightsquigarrow \pre{B}$ such that $\norm{\opm{\psi}{}(x)} \le \norm{\opm{\phi}{}(x)}$ for every self-adjoint element $x$.
	\end{enumerate}
\end{lemma}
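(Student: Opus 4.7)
The plan is to show that the map from (1) to (2) given by precomposition with $\opm{\phi}{}$ is a well-defined bijection. First I would check that this map actually lands in (2): given a positive unital map $\opm{\chi}{}\colon \pre{A}'\rightsquigarrow \pre{B}$, the composite $\opm{\chi}{}\comp\opm{\phi}{}$ is clearly positive and unital, and for self-adjoint $x \in \pre{A}$ the element $\opm{\phi}{}(x)$ is self-adjoint, so $\norm{\opm{\chi}{}(\opm{\phi}{}(x))} \le \norm{\opm{\phi}{}(x)}$ by \cref{prop:pu_bounded} applied to $\opm{\chi}{}$. Injectivity is immediate from the surjectivity of $\opm{\phi}{}$.

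The interesting direction is surjectivity. Given $\opm{\psi}{}$ in (2), I would define $\opm{\chi}{}(y) \coloneqq \opm{\psi}{}(x)$ for any choice of preimage $x$ with $\opm{\phi}{}(x) = y$. To show this is well-defined, suppose $\opm{\phi}{}(x_1) = \opm{\phi}{}(x_2)$; writing $z \coloneqq x_1 - x_2$ and decomposing $z = z_r + \mathsf{i}\, z_i$ into self-adjoint parts, the self-adjointness of $\opm{\phi}{}$ (which follows from positivity, \cref{prop:pu_bounded}) gives $\opm{\phi}{}(z_r) = \opm{\phi}{}(z_i) = 0$. The hypothesis on $\opm{\psi}{}$ then forces $\opm{\psi}{}(z_r) = \opm{\psi}{}(z_i) = 0$, hence $\opm{\psi}{}(z) = 0$. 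The resulting $\opm{\chi}{}$ is linear (from linearity of $\opm{\psi}{}$ and $\opm{\phi}{}$ plus well-definedness), unital (since $\opm{\phi}{}(1) = 1$), and self-adjoint (since every $y$ has a self-adjoint preimage whenever $y$ is self-adjoint: use $x' = (x + x^*)/2$, noting $\opm{\phi}{}(x') = (y + y^*)/2 = y$).

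The last thing is positivity of $\opm{\chi}{}$, which is the only mildly subtle point because a preimage of a positive element need not itself be positive. The trick is to verify contractivity on self-adjoint elements and then invoke \cref{cor:self_contr_positive}. Concretely, for self-adjoint $y \in \pre{A}'$, pick a self-adjoint preimage $x$ as above; then
\[
	\norm{\opm{\chi}{}(y)} = \norm{\opm{\psi}{}(x)} \le \norm{\opm{\phi}{}(x)} = \norm{y}.
\]
Hence $\opm{\chi}{}$ is a self-adjoint unital map contractive on self-adjoints, so positive by \cref{cor:self_contr_positive}. By construction $\opm{\chi}{}\comp\opm{\phi}{} = \opm{\psi}{}$, completing surjectivity.

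The main obstacle is the positivity of $\opm{\chi}{}$: one cannot directly lift a decomposition $y = w^* w$ back to $\pre{A}$, so the argument routes through contractivity on self-adjoints together with \cref{cor:self_contr_positive}. The ability to select self-adjoint preimages of self-adjoint elements, ensured by the self-adjointness of $\opm{\phi}{}$, is what makes this route possible.
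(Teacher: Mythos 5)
Your proof is correct and follows essentially the same route as the paper's: both arguments deduce $\ker(\opm{\phi}{}) \subseteq \ker(\opm{\psi}{})$ from the norm hypothesis, use self-adjoint preimages of self-adjoint elements to get contractivity of the induced map on self-adjoints, and conclude positivity via \cref{cor:self_contr_positive}. Your explicit decomposition $z = z_r + \mathsf{i}\,z_i$ for the well-definedness step is a welcome clarification of a point the paper's proof leaves implicit (the hypothesis only constrains $\opm{\psi}{}$ on self-adjoint elements).
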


\begin{proof}
	Given any positive unital map $\opm{\psi'}{} \colon \pre{A'} \rightsquigarrow \pre{B}$, we clearly have
	\[ 
		\norm{\opm{\psi'}{}\comp \opm{\phi}{}(x)} \le \norm{\opm{\phi}{}(x)}
	\]
	for every $x \in \pre{A}$, so that the composition map under consideration is well-defined.
	The injectivity is obvious because $\opm{\phi}{}$ is assumed surjective.

	For surjectivity, suppose that $\opm{\psi}{} \colon \pre{A} \rightsquigarrow \pre{B}$ satisfies the given inequality.
	Then for any self-adjoint $x \in \pre{A'}$, we can choose any self-adjoint\footnote{Since $x$ has some preimage, it also has a self-adjoint preimage, namely the self-adjoint part of any preimage.} $y \in \pre{A}$ with $\opm{\phi}{}(y) = x$ and define $\opm{\psi'}{}(x) \coloneqq \opm{\psi}{}(y)$.
	This is well-defined since, for any self-adjoint $y\in A$, $\opm{\phi}{}(y)=0$ implies $\opm{\psi}{}(y)=0$ by the assumed inequality.
	This also implies the linearity of $\opm{\psi'}{}$ on the self-adjoint part of $\im(\opm{\phi}{})$.
	Furthermore, the assumed inequality ensures that $\norm{\opm{\psi'}{}(x)} = \norm{\opm{\psi}{}(y)} \le \norm{\opm{\phi}{}(y)} = \norm{x}$,
	where $y$ is any self-adjoint element with $\opm{\phi}{}(y)=x$.
	The complex linear extension $\opm{\psi'}{}$ is a bounded unital map on $\pre{A'}$ by surjectivity and unitality of $\opm{\phi}{}$.
	This map has norm $\le 1$ on self-adjoints, and is therefore positive by \cref{cor:self_contr_positive}.
\end{proof}

\begin{theorem}[Quantum de Finetti, categorical version]\label{thm:qdf}
	The pictures $\cpu_{\min/\max}$ are de Finetti representable. In particular, for any morphism $\phi\colon C \to A_{\N} \tensor B$ exchangeable in the first factor, there exists a unique morphism $\mu \colon C \to SA \tensor B$ such that 
	\begin{equation*}
		\tikzfig{qdf2}
	\end{equation*} 
\end{theorem}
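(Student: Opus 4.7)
The plan is to identify the de Finetti object with the state space object from \cref{prop:class_representable}, i.e.\ take $SA$ (playing the role of $\ell_A$ being $\ev_A$) and verify Conditions~\ref{it:df_autocomp}--\ref{it:ell_N_monic} of \cref{def:dF}. Condition~\ref{it:df_autocomp} is immediate since $\pre{SA}$ is commutative, so $\ev_A$ has commutative range (\cref{ex:commutingranges}). Condition~\ref{it:ell_N_monic} is likewise direct: $\pre{SA}$ is by construction generated by products of evaluation maps $\opm{\ev_a}{}$, so $\opm{\ev_A^{(\N)}}{} \odot \id_{\pre{B}}$ is surjective onto $\pre{SA} \odot \pre{B}$, and surjective unital linear maps yield monics in $\gen{\cpu_{\min/\max}}$. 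The uniqueness of $\mu$ in the factorization statement follows from this monicness, and the claim that the limit is preserved under $\_ \tensor \id_B$ will be automatic once we prove existence for arbitrary $B$.

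The heart of the proof is the construction of $\mu$ from an exchangeable $\phi \colon C \to A_{\N} \tensor B$. Since $\opm{\ev_A^{(\N)}}{} \odot \id_{\pre{B}}$ is surjective, it suffices to show $\ker(\opm{\ev_A^{(\N)}}{} \odot \id_{\pre{B}}) \subseteq \ker(\opm{\phi}{})$, as this yields a unique unital linear $\opm{\mu}{} \colon \pre{SA} \odot \pre{B} \rightsquigarrow \pre{C}$ factoring $\opm{\phi}{}$. For this kernel inclusion, take any $x \in \pre{A_{\N}} \odot \pre{B}$ in the kernel, written as $x = \sum_i a_{i,1} \odot \cdots \odot a_{i,n_i} \odot b_i$. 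To prove $\opm{\phi}{}(x) = 0$, by \cref{cor:zero_states} it suffices to show $\opm{\zeta}{}(\opm{\phi}{}(x)) = 0$ for every state $\opm{\zeta}{}$ on $\pre{C}$. Since $\opm{\zeta}{} \comp \opm{\phi}{}$ is an exchangeable state on $\pre{A_{\N}} \odot \pre{B}$, \cref{thm:qdf_states}\ref{it:exc_decomp} gives a Radon probability $\mu_\zeta$ on $\Sigma A \times \Sigma B$ with $\opm{\zeta}{} \comp \opm{\phi}{} = \int \opm{\psi}{(\N)} \odot \opm{\omega}{} \, d\mu_\zeta$. A direct computation yields $(\opm{\psi}{(\N)} \odot \opm{\omega}{})(x) = \opm{\omega}{}\bigl(\sum_i \opm{\psi}{}(a_{i,1}) \cdots \opm{\psi}{}(a_{i,n_i}) b_i\bigr)$, and the hypothesis on $x$ means exactly that the sum inside $\opm{\omega}{}$ vanishes for every $\opm{\psi}{} \in \Sigma A$ (after choosing wlog a linearly independent set of $b_i$'s, or more intrinsically by viewing $\pre{SA} \odot \pre{B}$ inside the $\pre{B}$-valued continuous functions on $\Sigma A$). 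Hence $\opm{\zeta}{}(\opm{\phi}{}(x)) = 0$, giving the required kernel inclusion. The resulting $\opm{\mu}{}$ is automatically self-adjoint because both $\opm{\phi}{}$ and $\opm{\ev_A^{(\N)}}{} \odot \id_{\pre{B}}$ are self-adjoint and the latter is epic.

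It remains to show that $\opm{\mu}{}$ is completely positive. Let $\pre{D}$ be any pre-C*-algebra and $y \in \pre{SA} \odot \pre{B} \odot \pre{D}$ be positive; we must show $(\opm{\mu}{} \odot \id_{\pre{D}})(y) \ge 0$. By \cref{cor:bound_with_norm,cor:norm_state} this can be tested on states $\opm{\zeta}{}$ of $\pre{C} \odot \pre{D}$. Lift $y$ to some $\tilde{y} \in \pre{A_{\N}} \odot \pre{B} \odot \pre{D}$ along the surjection $\opm{\ev_A^{(\N)}}{} \odot \id_{\pre{B} \odot \pre{D}}$, so that $(\opm{\mu}{} \odot \id_{\pre{D}})(y) = (\opm{\phi}{} \odot \id_{\pre{D}})(\tilde{y})$. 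Now $\opm{\phi}{} \odot \id_{\pre{D}}$ is itself an exchangeable completely positive unital map (using \cref{prop:cpu_tensor} and the fact that tensoring with $\id_{\pre{D}}$ preserves exchangeability), so \cref{thm:qdf_states} applied to $\opm{\zeta}{} \comp (\opm{\phi}{} \odot \id_{\pre{D}})$, now viewed as an exchangeable state on $\pre{A_{\N}} \odot (\pre{B} \odot \pre{D})$, gives
\[
    \opm{\zeta}{}\bigl((\opm{\mu}{} \odot \id_{\pre{D}})(y)\bigr) = \int \opm{\eta}{}(\tilde{y}(\opm{\psi}{})) \, d\mu_\zeta(\opm{\psi}{}, \opm{\eta}{}),
\]
where $\tilde{y}(\opm{\psi}{}) \in \pre{B} \odot \pre{D}$ is the value of $y$ viewed as a function $\Sigma A \to \pre{B} \odot \pre{D}$. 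Because $\pre{SA}$ is commutative and hence nuclear, $\pre{SA} \odot \pre{B} \odot \pre{D}$ embeds unambiguously into $C(\Sigma A, \closed{B \otimes D})$, and positivity of $y$ is equivalent to $\tilde{y}(\opm{\psi}{}) \ge 0$ pointwise. Hence the integrand is non-negative, giving the required positivity.

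The main conceptual obstacle is the well-definedness step: passing from an identity that is only asserted scalar-valued (via the analytic de Finetti theorem) to a factorization in the category of $\pre{C}$-valued maps. This is precisely where the combination of \cref{thm:qdf_states}, detection of positivity by states, and commutativity/nuclearity of $\pre{SA}$ conspire to produce the clean universal property.
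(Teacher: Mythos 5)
Your proof is correct, and it takes a genuinely different route from the paper's. The paper first builds an abstract candidate $QA$ as the quotient of $\pre{A_\N}$ by the nullspace of the seminorm $\sup_\phi \abs{\opm{\phi}{}(x)}$ over exchangeable states, verifies the de Finetti axioms for that quotient, and only afterwards identifies it with the state space object via \cref{thm:QA_SA}; the universal property is obtained from \cref{lem:pu_factor}, which reduces the factorization of an exchangeable positive unital map through the surjection to the norm inequality $\norm{\opm{\rho}{}(x)} \le \norm{\opm{(\ell_A^{(\N)} \tensor \id_B)}{}(x)}$ on self-adjoints, proved via \cref{cor:norm_state}, Bauer's maximum principle and \cref{thm:qdf_states}. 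You instead take the concrete $SA$ of \cref{prop:class_representable} as the candidate and split the factorization into a purely linear step (the kernel inclusion $\ker(\opm{\ev_A^{(\N)}}{} \odot \id_{\pre{B}}) \subseteq \ker(\opm{\phi}{})$, detected by states on $\pre{C}$ together with the integral decomposition of \cref{thm:qdf_states}) and a separate positivity step (pointwise positivity in $C(\Sigma A, \closed{B \otimes D})$, using nuclearity of the commutative algebra). Both arguments lean on the same analytic input, \cref{thm:qdf_states}, and in fact produce the same object, since the paper's seminorm nullspace is exactly the kernel of $\opm{\ev_A^{(\N)}}{}$ by \eqref{eq:QA_seminorm2}; what the paper's route buys is that \cref{thm:QA_SA} then \emph{identifies} the abstractly constructed equalizer with the state space as a theorem, whereas your route presupposes that identification but is more direct at the level of the factorization itself. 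Two small points to tidy: positivity of a self-adjoint element is detected by states via \cref{prop:spectrum_states} rather than \cref{cor:bound_with_norm,cor:norm_state}, and you should record explicitly that $\ev_A^{(\N)}$ is itself invariant under the permutations $A_\sigma$ (immediate from commutativity of $SA$), since this is part of exhibiting $SA$ as a cone over the equalizer diagram.
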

\begin{proof}
	We first construct a candidate de Finetti object $QA$ for these pictures and then prove that it indeed has the required properties.
	The fact that it is naturally isomorphic to the state space object $SA$, as indicated in the second part of the claim, is then clear by \cref{thm:QA_SA}.

	Let us equip the Kolmogorov power $A_\N$ with the seminorm
	\begin{equation}
		\label{eq:QA_seminorm}
		\norm{x}_{QA} \coloneqq \sup \, \Set{\abs{\opm{\phi}{}(x)} \given \phi \colon \mathbb{C} \to A_\N \textrm{ is an exchangeable state} }.
	\end{equation}
	This is finite since exchangeable states are positive unital, and hence contractive by \cref{prop:pu_bounded}.
	As a supremum of seminorms, it is therefore a seminorm.
	By exchangeability, it is clear that for every finite permutation $\sigma$, the elements of the form $x - \opm{A_{\sigma}}{}(x)$ have seminorm zero.
	We now define $QA$ as the normed vector space given by the quotient of $A_\N$ by the nullspace $\mathcal{N}_{QA}$ of this seminorm.
	In particular, this choice makes the projection map $A_\N \rightsquigarrow QA$ invariant under finite permutations.

	Next, we define a multiplication on $QA$ by ``disjoint tensoring'' as follows.
	For $x, y \in A_\N$, we have $x \tensor y \in A_{\N + \N}$, and we can consider this as an element of $A_\N$ via any bijection $\N + \N \cong \N$, for which we also write $x \tensor y$ by abuse of notation.
	We then define the multiplication on $QA$ as
	\[
		[x] \cdot [y] \coloneqq [x \tensor y],
	\]
	and the finite permutation invariance just noted implies that the result is independent of the choice of bijection, since $x$ and $y$ are supported on finitely many tensor factors.
	For well-definedness of multiplication, we first note that by Bauer's maximum principle~\cite[Theorem~1.4.1]{blanchard92variational} and the analytic quantum de Finetti theorem for states (\cref{thm:qdf_states}), we can equivalently write
	\begin{equation}
		\label{eq:QA_seminorm2}
		\norm{x}_{QA} = \sup \Set {\abs{\opm{\psi}{(\N)}(x)} \given  \psi \colon \mathbb{C} \to A \textrm{ is a state}}.
	\end{equation}
	Since every power state evaluates a disjoint tensor as
	\[
		\opm{\psi}{(\N)}(x\tensor y)=\opm{\psi}{(\N)}(x)\opm{\psi}{(\N)}(y),
	\]
	it follows that if one factor has $QA$-seminorm zero, then so does its disjoint tensor with any other factor.
	Thus multiplication is well-defined on equivalence classes.
	Using this identity together with~\eqref{eq:QA_seminorm2} gives submultiplicativity and the C*-identity.
	The star operation descends because exchangeable states are self-adjoint, and the remaining algebraic identities follow from the corresponding tensor identifications, again modulo finite permutation invariance.
	Therefore $QA$ is a commutative pre-C*-algebra.

	We now take $\opm{\ell_A}{} \colon \pre{A} \rightsquigarrow \pre{A_\N} \rightsquigarrow \pre{QA}$ to be the composition of the inclusion $\pre{A} \rightsquigarrow \pre{A_\N}$ in the first factor and the quotient map $\pre{A_\N} \rightsquigarrow \pre{QA}$ itself.
	Then $\opm{\ell_A}{}$ is unital and contractive, and therefore positive by \cref{prop:pu_bounded}.
	Moreover since $QA$ is commutative, complete positivity follows by \cref{prop:com_dom_codom}, and also $\ell_A$ is trivially autocompatible.
	Thus Property~\ref{it:df_autocomp} of \Cref{def:dF} is satisfied.
	
	The monicness of $\ell_A^{(\N)} \tensor \id_B$ for every $B$ required by Property~\ref{it:ell_N_monic} is clear, since $\opm{\ell_A}{(\N)}$ is precisely the quotient map $\pre{A_\N}\rightsquigarrow \pre{QA}$, and hence also $\opm{(\ell_A^{(\N)} \tensor \id_B)}{}$ maps $\pre{A_\N} \odot \pre{B}$ surjectively onto $\pre{QA} \odot \pre{B}$ (recall that epicness is preserved by algebraic tensoring, as commented in the proof of \cref{cor:evsharp_monic}, and in the category of vector spaces epimorphisms are precisely the surjective linear maps).

	Finally for Property~\ref{it:df_limit}, let us consider the case of positive unital maps first.
	Writing maps in the algebra direction, $\opm{(\ell_A^{(\N)}\tensor \id_B)}{}$ is surjective by the preceding paragraph.
	Thus \cref{lem:pu_factor} reduces the unique positive-unital factorization of a positive unital $\rho \colon C \to A_\N \tensor B$ exchangeable in the first factor to showing
	that for all self-adjoint $x \in A_\N \tensor B$,
	\[
		\norm{\opm{\rho}{}(x)} \le \norm{\opm{(\ell_A^{(\N)} \tensor \id_B)}{}(x)}.
	\]
	Since $\opm{\rho}{}(x)$ is also self-adjoint, we have
	\[
		\norm{\opm{\rho}{}(x)} = \sup \Set{\abs{\opm{\eta}{}(\opm{\rho}{}(x))} \given {\eta \colon \mathbb{C} \to C \textrm{ 
			 is a state}}} 
	\]
	by \cref{cor:norm_state}.
	But since $\rho \comp \eta \colon \mathbb{C} \to A_\N \tensor B$ is a state exchangeable in the first factor, it only remains to be shown that
	\begin{equation}\label{eq:claim}
		\sup_{\phi} \, \abs{\opm{\phi}{}(x)} \le \norm{\opm{(\ell_A^{(\N)} \tensor \id_B)}{}(x)}	
	\end{equation}
	where $\phi \colon \mathbb{C} \to A_\N \tensor B$ ranges over states exchangeable in the first factor. 
	For $B=\mathbb{C}$, the right-hand side is $\norm{[x]}_{QA}=\norm{x}_{QA}$ because $\opm{\ell_A}{(\N)}$ is the quotient map.
	Hence in this case, \eqref{eq:claim} holds with equality by the definition of $\norm{\_}_{QA}$.
	In general, by another application of Bauer's maximum principle and the quantum de Finetti theorem for states (\cref{thm:qdf_states}), 
	we get
	\[
		\sup_{\phi} \, \abs{\opm{\phi}{}(x)} = \sup \, \Set{  \abs{\opm{\pairing{\psi^{(\N)}}{\omega}}{} (x)} \given \psi \colon \mathbb{C} \to A \textrm{ and } \omega \colon \mathbb{C} \to B \textrm{ are states}}.
	\]
	The case $B=\mathbb{C}$ treated above implies that any power state $\psi^{(\N)}$ factors through $\ell^{(\N)}$,
	so that we get a state $\mu$ on $QA$ with $\psi^{(\N)} = \ell^{(\N)}\mu$. 
	In particular, $\pairing{\psi^{(\N)}}{\omega}= (\ell^{(\N)} \tensor \id_B)\comp \pairing{\mu}{\omega}$. An application of 
	\cref{cor:norm_state} is then enough to conclude that~\eqref{eq:claim}
	holds.

	This proves existence and uniqueness for positive unital maps.
	If $\rho$ is completely positive, then applying the positive-unital case to $\rho\tensor\id_D$ for an additional tensor factor $D$ shows, by uniqueness, that the factor obtained above tensors with $\id_D$ to a positive map.
	Taking $D=M_n$ for all $n$ shows complete positivity, and uniqueness in the completely positive setting follows from uniqueness in the positive-unital setting.
	Thus Property~\ref{it:df_limit} follows.
\end{proof}

The argument used in the proof of \cref{thm:qdf} shows that the factorization is in fact more general, i.e.~it holds for any $n$-positive\footnote{See e.g.~\cite[p.~26]{paulsen2002} for a definition.} unital map which is exchangeable in the first factor, for any choice of positive integer $n$. 
In particular, by \cref{prop:com_dom_codom} we obtain the following immediate consequence.

\begin{corollary}
	An exchangeable positive unital map $\opm{\phi}{}\colon \pre{A}_{\N} \rightsquigarrow \pre{B}$, where the tensor is with respect to either the maximal or the minimal tensor norm, is completely positive. 
\end{corollary}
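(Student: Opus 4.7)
My plan is to deduce this directly from the generalization of \cref{thm:qdf} flagged in the preceding remark: the factorization
\[
	\phi = \ell_A^{(\N)} \comp \mu
\]
holds not only for completely positive unital morphisms $\phi\colon B \to A_{\N}$ exchangeable in the first factor, but already for positive unital ones, with $\mu\colon B \to SA$ also positive unital. To see why this extension is valid, I would revisit the proof of \cref{thm:qdf}: the existence of the factorization is reduced via \cref{lem:pu_factor} to the norm inequality $\norm{\opm{\rho}{}(x)}\le \norm{\opm{(\ell_A \tensor \id_B)}{}(x)}$ on self-adjoint elements, and this inequality is established purely from \cref{cor:norm_state}, Bauer's maximum principle, and the quantum de Finetti theorem for \emph{states} (\cref{thm:qdf_states}). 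Since \cref{lem:pu_factor} is itself a statement about positive unital maps, and none of the intermediate steps uses complete positivity of $\rho$, the same argument goes through for any positive unital exchangeable $\rho$ and produces a positive unital $\mu$.

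Given this, I would argue as follows. The map $\opm{\phi}{}\colon \pre{A}_{\N} \rightsquigarrow \pre{B}$ corresponds to a morphism $\phi\colon B \to A_{\N}$ in $\calgone$ that is exchangeable in the first factor (this is the $B \to A_{\N} \tensor I$ case of \cref{def:exchangeable}). By the strengthened factorization just explained, there exists a positive unital morphism $\mu\colon B \to SA$ with
\[
	\phi = \ell_A^{(\N)} \comp \mu,
\]
equivalently $\opm{\phi}{} = \opm{\mu}{} \comp \opm{\ell_A^{(\N)}}{}$.

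Now both factors are in fact completely positive: the state space object $SA$ is commutative (it is a classical object, see the proof of \cref{thm:QA_SA} and the construction in \cref{prop:class_representable}), so $\opm{\ell_A^{(\N)}}{}\colon \pre{A}_{\N}\rightsquigarrow \pre{SA}$ is a positive unital map with commutative codomain, hence completely positive by \cref{prop:com_dom_codom}, and $\opm{\mu}{}\colon \pre{SA}\rightsquigarrow \pre{B}$ is a positive unital map with commutative domain, so again completely positive by the same proposition. Their composition $\opm{\phi}{}$ is therefore completely positive, which is the desired conclusion. The only non-routine point in this whole plan is checking that the proof of \cref{thm:qdf} indeed goes through verbatim once one replaces ``completely positive'' by ``positive'' in the hypothesis on $\rho$; this is essentially a bookkeeping exercise, since \cref{lem:pu_factor} and \cref{thm:qdf_states} are already stated at the level of positive unital maps and of states respectively.
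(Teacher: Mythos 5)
Your proposal is correct and follows essentially the same route as the paper: the paper likewise observes that the proof of \cref{thm:qdf} already establishes the factorization for positive (indeed $n$-positive) unital exchangeable maps, and then invokes \cref{prop:com_dom_codom} to upgrade to complete positivity via the commutative intermediate object $SA$. Your additional spelling-out of why the factorization extends (via \cref{lem:pu_factor} and \cref{thm:qdf_states}) and of both factors being completely positive is exactly the intended reading of the paper's one-line justification.
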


Combining \cref{lem:autocomp,thm:qdf}, we also get the following. 

\begin{corollary}
	Given any two pre-C*-algebras $A$ and $B$, we have natural bijections
	\[
		\cpu_{\min/\max, \det}(B, SA) \cong \Set{\textrm{Autocompatible morphisms }B\to A}.
	\]
\end{corollary}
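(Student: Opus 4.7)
The plan is to observe that this corollary is an immediate consequence of the two results it cites: \cref{thm:qdf} shows that $\cpu_{\min/\max}$ are de Finetti representable, and \cref{lem:autocomp} establishes precisely the claimed bijection for any de Finetti representable picture, with the correspondence implemented by post-composition with the evaluation morphism $\ev_A \colon SA \to A$. So the proof reduces to instantiating \cref{lem:autocomp} with $\cD = \cpu_{\min/\max}$, and the main conceptual work has already been carried out in the previous two results.

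If one wished to unpack the two directions concretely in the pre-C*-algebra setting: given a deterministic $\mu \colon B \to SA$, i.e.~(the formal opposite of) a $*$-homomorphism $\pre{SA} \rightsquigarrow \pre{B}$, the composite $\ev_A \comp \mu$ takes values in the commutative subalgebra corresponding to $\pre{SA}$ and is therefore autocompatible by \cref{ex:commutingranges}. Conversely, an autocompatible morphism $\phi \colon B \to A$ lifts via \cref{rem:display_autocomp_N} to an exchangeable generalized morphism $\phi^{(\N)} \colon B \to A_{\N}$ which belongs to the picture, and the universal property in \cref{def:dF}\ref{it:df_limit}, applied to $A_{\N}$ as provided by \cref{thm:qdf}, factors this uniquely as $\ev_A^{(\N)} \comp \mu$ for some deterministic $\mu \colon B \to SA$; marginalization then recovers $\phi = \ev_A \comp \mu$, and uniqueness of $\mu$ follows from the monicness of $\ev_A^{(\N)}$ (Property~\ref{it:ell_N_monic} of \cref{def:dF}).

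There is no real obstacle here beyond the fact that both hypotheses of \cref{lem:autocomp} are already verified. The only point that might warrant an explicit sentence is naturality in $A$ and $B$, but this is inherited from \cref{lem:autocomp}, whose bijection is already asserted to be natural, and the instantiation $\cD = \cpu_{\min/\max}$ is canonical.
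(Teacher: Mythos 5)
Your proposal is correct and matches the paper exactly: the paper gives no separate proof, stating only that the corollary follows by combining \cref{lem:autocomp} and \cref{thm:qdf}, which is precisely your instantiation of \cref{lem:autocomp} at $\cD = \cpu_{\min/\max}$. Your additional unpacking of the two directions is consistent with the proofs of those earlier results and adds nothing that conflicts with the paper.
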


\newpage
\appendix
\section{Infinite tensor powers and permutation-invariant elements}
\label{sec:infinite_tensor_trivial}

In this appendix, we give a proof of the final sentence of \cref{rem:01laws}. 
To avoid overloading the notation, here we simply write $A$ instead of $\closed{A}$ to indicate a C*-algebra, in contrast to \cref{nota:completion}.

Let us consider $A_{J}$, the infinite tensor power of a C*-algebra $A$~\cite[Section~6.1]{murphy1990} by some infinite set $J$.
In our notation, this is precisely the completion of the Kolmogorov power, which in the main text we denote by $A_J$ (the use of the same notation should not cause confusion since here we are only considering C*-algebras).

In the following, it will not matter whether we take this infinite tensor product with respect to the minimal or the maximal C*-norm.
Following \cref{nota:sigma}, every finite permutation $\sigma$ induces a bounded $*$-homomorphism $\opm{A_\sigma}{} \colon A_{J} \rightsquigarrow A_J$ given by permuting the tensor factors.

\begin{proposition}\label{prop:no_01HSlaw}
	Let us consider any C*-algebra $A$, an infinite set $J$ and $x \in A_J$. If $\opm{A_\sigma}{}(x)=x$ for every finite permutation $\sigma \colon J \to J$, then $x= \lambda 1$ for some $\lambda \in \mathbb{C}$.
\end{proposition}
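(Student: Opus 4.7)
The plan is to reduce to the case where $x$ is self-adjoint, then to argue that such an $x$ must have arbitrarily small ``spectral diameter'' by approximating it in a finite tensor power and exploiting the permutation invariance.

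First, I would note that since $\opm{A_\sigma}{}$ is a $*$-automorphism of $A_J$, it commutes with the star operation, so the real and imaginary parts $(x + x^*)/2$ and $(x - x^*)/(2\mathsf{i})$ remain permutation-invariant. It therefore suffices to prove the claim when $x$ is self-adjoint.

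Next, given $\varepsilon > 0$, I would use density of $\bigcup_F A_F \subseteq A_J$ (indexed over finite subsets $F \subset J$) to pick a self-adjoint $y \in A_F$ with $\|x - y\| < \varepsilon$; self-adjointness is arranged by replacing any approximant with $(y + y^*)/2$. Since $J$ is infinite, there exists a finite permutation $\sigma$ with $\sigma(F) \cap F = \emptyset$. The invariance of $x$ together with the fact that $\opm{A_\sigma}{}$ is an isometric $*$-automorphism of $A_J$ gives $\|x - \opm{A_\sigma}{}(y)\| = \|x - y\| < \varepsilon$, hence $\|y - \opm{A_\sigma}{}(y)\| < 2\varepsilon$. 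Both elements sit inside the isometrically embedded subalgebra $A_{F \cup \sigma(F)} \cong A_F \otimes A_{\sigma(F)} \subseteq A_J$, and identifying $A_{\sigma(F)}$ with $A_F$ via $\sigma$ this inequality becomes $\|y \otimes 1 - 1 \otimes y\| < 2\varepsilon$ in $A_F \otimes A_F$.

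The key step is then to interpret this inequality as a constraint on the spectrum of $y$. The commutative $*$-subalgebra $C^*(1, y) \subseteq A_F$ is $*$-isomorphic to $C(\spec{y})$ by continuous functional calculus, and nuclearity of commutative C*-algebras implies that the inclusion $C^*(1, y) \otimes C^*(1, y) \hookrightarrow A_F \otimes A_F$ is isometric for both the minimal and maximal C*-tensor norms. Under the identification $C^*(1, y) \otimes C^*(1, y) \cong C(\spec{y} \times \spec{y})$, the element $y \otimes 1 - 1 \otimes y$ corresponds to the function $(s, t) \mapsto s - t$, whose supremum norm equals $\max \spec{y} - \min \spec{y}$, the diameter of $\spec{y}$. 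So this spectral diameter is less than $2\varepsilon$, and setting $\lambda_\varepsilon \coloneqq \tfrac{1}{2}(\max \spec{y} + \min \spec{y})$ yields $\|y - \lambda_\varepsilon 1\| < \varepsilon$ and hence $\|x - \lambda_\varepsilon 1\| < 2\varepsilon$.

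Finally, the family $(\lambda_\varepsilon)$ is automatically Cauchy as $\varepsilon \to 0$ by the triangle inequality, so it converges to some $\lambda \in \mathbb{R}$ satisfying $x = \lambda 1$. The main subtlety I anticipate is justifying the isometric identification of norms via the commutative subalgebra uniformly for both choices of C*-tensor norm; this is exactly where nuclearity of commutative C*-algebras enters, ensuring in particular that the argument does not need to distinguish between the minimal and maximal completions.
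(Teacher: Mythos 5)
Your proof is correct, but it takes a genuinely different route from the paper's at the decisive step. Both arguments share the same skeleton: approximate $x$ by some $y$ in a finite tensor factor $A_F$, move $F$ off itself by a finite permutation $\sigma$, and conclude that $\norm{y - \opm{A_\sigma}{}(y)}$ is small. The paper then collapses the displaced copy by applying the slice map $\opm{\phi}{}_{\mid A_{\sigma(F)}} \otimes \id$ for an arbitrary fixed state $\opm{\phi}{}$, which gives $\norm{y - \opm{\phi}{}(\opm{A_\sigma}{}(y))\,1} \le \norm{y - \opm{A_\sigma}{}(y)}$ directly; this handles arbitrary (not necessarily self-adjoint) $x$ in one stroke, exhibits the scalar explicitly as $\opm{\phi}{}(x)$, and only needs the complete positivity of $\opm{\phi}{} \odot \id$ for both tensor norms (\cref{prop:cpu_tensor}). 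You instead reduce to self-adjoint $x$ and read the inequality $\norm{y \otimes 1 - 1 \otimes y} < 2\varepsilon$ as a bound on the spectral diameter of $y$ via functional calculus. The subtlety you correctly isolate does go through: $C^*(1,y) \otimes C^*(1,y) \hookrightarrow A_F \otimes A_F$ is isometric for the maximal norm as well, because nuclearity gives $C^*(1,y)\otimes_{\max} C^*(1,y) = C^*(1,y)\otimes_{\min} C^*(1,y)$, the composite into $A_F \otimes_{\min} A_F$ is isometric by injectivity of the minimal tensor product, and a contraction whose composite with a further contraction is isometric must itself be isometric. What your approach buys is a concrete spectral picture of why invariant elements are forced to be scalars; what it costs is the self-adjoint reduction and the nuclearity detour, neither of which the paper's slice-map argument needs. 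One cosmetic omission: dispose of the degenerate case $A = 0$ (where $A_J \cong 0$, spectra are empty, and $1 = 0$) at the outset, as the paper does, before invoking nonemptiness of spectra or dividing out by $\norm{1}$.
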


We thank MathOverflow users Diego Martinez and Caleb Eckhardt, whose comments were crucial for arriving at the following argument.\footnote{See \href{https://mathoverflow.net/questions/452405/is-there-an-element-in-an-infinite-tensor-power-of-a-c-algebra-that-is-invarian}{mathoverflow.net/questions/452405}.}

\begin{proof}
	The statement is trivial if one factor is zero, since then also $A_J \cong 0$, so assume otherwise.
	By definition of $A_J$, there is a sequence $x_n \in A_{F_n}\subset A_J$, where $F_n$ is a finite set, such that $x = \lim_n x_n$. 
	For clarity, let us consider $\epsilon_n>0$ such that $\norm{ x-x_n } \le \epsilon_n$ and $\epsilon_n \to 0$. 
	We pick a finite permutation $\sigma_n$ such that $\sigma_n(F_n)\cap F_n=\emptyset$. Then, since $\norm{\opm{A_{\sigma_n}}{}} \le 1$ by \cref{prop:pu_bounded},
	\[
		\norm{ x_n - {\opm{A_{\sigma_n}}{}}(x_n)} = \norm{x_n - x + {\opm{A_{\sigma_n}}{}} (x) - {\opm{A_{\sigma_n}}{}}(x_n)} \le \norm{x_n - x} + \norm{\opm{A_{\sigma_n}}{}} \norm{x-x_n} \le 2 \epsilon_n.
	\]
	Let us fix any state $\opm{\phi}{}\colon A_J \rightsquigarrow \mathbb{C}$, which is possible by $A_J \not\cong 0$.
	Then our assumptions also imply
	\[
		\norm{ \opm{\phi}{}({\opm{A_{\sigma_n}}{}}(x_n)) - \opm{\phi}{}(x)} \le \epsilon_n.
	\]
	Using the completely positive map
	\[
		\opm{\psi}{} \coloneqq \opm{\phi}{}_{\mid A_{\sigma_n(F_n)}} \otimes \id_{A_{J\setminus \sigma_n(F_n)}} \colon A_J \rightsquigarrow A_{J\setminus \sigma_n(F_n)},
	\]
	we obtain
	\[
		\norm{  x_n - \opm{\phi}{}({\opm{A_{\sigma_n}}{}}(x_n))1 }=\norm{ \opm{\psi}{} (x_n -{\opm{A_{\sigma_n}}{}}(x_n)) }\le\norm{ x_n -{\opm{A_{\sigma_n}}{}}(x_n) }\le 2\epsilon_n,
	\]
	where the first equation holds because both terms can be written with tensor factors of $1$ in all tensor factors that belong to $J \setminus F_n$.
	Putting everything together, we get
	\[
		\norm{ x- \opm{\phi}{}(x)1 } \le \norm{ x-x_n} + \norm{ x_n - \opm{\phi}{}({\opm{A_{\sigma_n}}{}}(x_n))1 } + \norm{ \opm{\phi}{}({\opm{A_{\sigma_n}}{}}(x_n))1 - \opm{\phi}{}(x)1 } \le 4 \epsilon_n \to 0,
	\]
	from which we conclude that $x=\opm{\phi}{}(x)1$.
\end{proof}

\section*{Declarations}
\paragraph{Funding} This research was funded in whole by the Austrian Science Fund FWF P 35992-N.
\paragraph{Competing Interests} none
\paragraph{Data Availability} NA
\paragraph{Author Contribution} Both the authors wrote and reviewed the manuscript.
\newpage
{\small \bibliographystyle{plain}
\bibliography{markov}}

@article {fritz2019synthetic,
	author = {Fritz, Tobias},
	title = {A synthetic approach to {M}arkov kernels, conditional independence and theorems on sufficient statistics},
	journal = {Adv. Math.},
	volume = {370},
	pages = {107239},
	year = {2020},
	note = {\href{https://arxiv.org/abs/1908.07021}{arXiv:1908.07021}},
}

@article {chojacobs2019strings,
	author = {Cho, Kenta and Jacobs, Bart},
	title = {Disintegration and {B}ayesian Inversion via String Diagrams},
	journal = {Math. Structures Comput. Sci.},
	volume = {29},
	issue = {7},
	pages = {938--971},
	note = {\href{https://arxiv.org/abs/1709.00322}{arXiv:1709.00322}},
	year = {2019},
}

@article{fritzrischel2019zeroone,
	author = {Fritz, Tobias and Rischel, Eigil Fjeldgren},
	title = {Infinite products and zero-one laws in categorical probability},
	journal = {Compositionality},
	volume = {2},
	pages = {3},
	year = {2020},
	doi = {10.32408/compositionality-2-3},
	note = {\href{https://arxiv.org/abs/1912.02769}{arXiv:1912.02769}},
}

@article{fritz2023representable,
  title={Representable {M}arkov Categories and Comparison of Statistical Experiments in Categorical Probability},
  author={Fritz, Tobias and Gonda, Tom{\'a}{\v{s}} and Perrone, Paolo and Rischel, Eigil Fjeldgren},
  journal = {Theoret. Comput. Sci.},
  volume = {961},
  number = {113896},
  year = {2023},
  publisher={Elsevier},
  note = {\href{https://arxiv.org/abs/2010.07416}{arXiv:2010.07416}},
}

@article {fritz2021definetti,
    AUTHOR = {Fritz, Tobias and Gonda, Tom\'{a}\v{s} and Perrone, Paolo},
     TITLE = {De {F}inetti's theorem in categorical probability},
   JOURNAL = {J. Stoch. Anal.},
    VOLUME = {2},
      YEAR = {2021},
    NUMBER = {4},
    note = {\href{https://arxiv.org/abs/2105.02639}{arXiv:2105.02639}},
        DOI={10.31390.josa.2.4.06},
}

@phdthesis {houghtonlarsen2021dilations,
	author = {Houghton-Larsen, Nicholas Gauguin},
	title = {A Mathematical Framework for Causally Structured Dilations and its Relation to Quantum Self-Testing},
	school = {University of Copenhagen},
	year = {2021},
	note = {\href{https://arxiv.org/abs/2103.02302}{arXiv:2103.02302}},
}

@misc{quantum-markov,
    author = {Parzygnat, Arthur J.},
    title = {Inverses, disintegrations, and {B}ayesian inversion in quantum {M}arkov categories},
    year = {2020},
    note = {\href{https://arxiv.org/abs/2001.08375}{arXiv:2001.08375}},
}

@article{moss2022ergodic,
    author = {Moss, Sean and Perrone, Paolo},
    title = {A category-theoretic proof of the ergodic decomposition theorem},
    DOI={10.1017/etds.2023.6},
    publisher={Cambridge University Press},
    pages={1--27},
    journal = {Ergodic Theory Dynam. Systems},
    year = {2023},
    note = {\href{https://arxiv.org/abs/2207.07353}{arXiv:2207.07353}},
}

@inproceedings{moss2022probability,
  title={Probability monads with submonads of deterministic states},
  author={Moss, Sean and Perrone, Paolo},
  booktitle={Proceedings of the 37th Annual ACM/IEEE Symposium on Logic in Computer Science},
  pages={1--13},
  year={2022},
  doi = {10.1145/3531130.3533355},
  note = {\href{https://arxiv.org/abs/2204.07003}{arXiv:2204.07003}},
}

@article {fritz2022dseparation,
	author = {Fritz, Tobias and Klingler, Andreas},
	title = {The $d$-separation criterion in Categorical Probability},
	note = {\href{https://arxiv.org/abs/2207.05740}{arXiv:2207.05740}},
	journal = {J. Mach. Learn. Res.},
	volume = {24},
	number = {46},
	pages = {1--49},
	year = {2023},
}

@article{fritz2022dilations,
  title={Dilations and Information Flow Axioms in Categorical Probability},
  author={Fritz, Tobias and Gonda, Tom{\'a}{\v{s}} and Houghton-Larsen, Nicholas Gauguin and Lorenzin, Antonio and Perrone, Paolo and Stein, Dario},
  year = {2023},
  journal = {Math. Struct. Comp. Sci.},
  DOI={10.1017/S0960129523000324},
  pages={1–45},
  note = {\href{https://arxiv.org/abs/2211.02507}{arXiv:2211.02507}},
}

@Article{jacobsinvolutive,
 Author = {Jacobs, Bart},
 Title = {Involutive categories and monoids, with a {GNS}-correspondence},
 FJournal = {Foundations of Physics},
 Journal = {Found. Phys.},
 ISSN = {0015-9018},
 Volume = {42},
 Number = {7},
 Pages = {874--895},
 Year = {2012},
 Language = {English},
 DOI = {10.1007/s10701-011-9595-7},
}

@Book{weggeolsen,
 Author = {Wegge-Olsen, Niels Erik},
 Title = {{{\(K\)}}-theory and {{\(C^*\)}}-algebras: a friendly approach},
 ISBN = {0-19-859694-4},
 Year = {1993},
 Publisher = {Oxford: Oxford University Press},
 Language = {English},
 Keywords = {46L80,19K14,46-02,46H25},
 zbMATH = {412105},
 Zbl = {0780.46038}
}

@Book{brownozawa,
 Author = {Brown, Nathanial P. and Ozawa, Narutaka},
 Title = {{{\(C^*\)}}-algebras and finite-dimensional approximations},
 FSeries = {Graduate Studies in Mathematics},
 Series = {Grad. Stud. Math.},
 ISSN = {1065-7338},
 Volume = {88},
 ISBN = {978-0-8218-4381-9},
 Year = {2008},
 Publisher = {Providence, RI: American Mathematical Society (AMS)},
 Language = {English},
 Keywords = {46-02,46L35,46L05,46L06,46L07,46L10,46L55,05C25,19K33,22D25,43A07},
 zbMATH = {5256855},
 Zbl = {1160.46001}
}

@Article{takesaki,
 Author = {Takesaki, Masamichi},
 Title = {On the cross-norm of the direct product of {{\(C^ *\)}}-algebras},
 FJournal = {T{\^o}hoku Mathematical Journal. Second Series},
 Journal = {T{\^o}hoku Math. J. (2)},
 ISSN = {0040-8735},
 Volume = {16},
 Pages = {111--122},
 Year = {1964},
 Language = {English},
 DOI = {10.2748/tmj/1178243737},
 Keywords = {46Lxx},
 zbMATH = {3206547},
 Zbl = {0127.07302}
}

@Article{kirchberg,
 Author = {Kirchberg, Eberhard and Wassermann, Simon},
 Title = {{{\(C^*\)}}-algebras generated by operator systems},
 FJournal = {Journal of Functional Analysis},
 Journal = {J. Funct. Anal.},
 ISSN = {0022-1236},
 Volume = {155},
 Number = {2},
 Pages = {324--351},
 Year = {1998},
 Language = {English},
 DOI = {10.1006/jfan.1997.3226},
 Keywords = {46L05},
 zbMATH = {1199813},
 Zbl = {0940.46038}
}

@article{parzygnat2023disintegration,
  doi = {10.4171/jncg/493},
  url = {https://doi.org/10.4171/jncg/493},
  year = {2023},
  publisher = {European Mathematical Society - {EMS} - Publishing House {GmbH}},
  author = {Parzygnat, Arthur J. and Russo, Benjamin},
  title = {Non-commutative disintegrations: Existence and uniqueness in finite dimensions},
  fjournal = {Journal of Noncommutative Geometry},
  journal = {J. Noncommut. Geom.},
  note = {\href{https://arxiv.org/abs/1907.09689}{arXiv:1907.09689}}
}

@Book{lin2001amenable,
 Author = {Lin, Huaxin},
 Title = {An introduction to the classification of amenable {{\(C^*\)}}-algebras},
 ISBN = {981-02-4680-3},
 Year = {2001},
 Publisher = {Singapore: World Scientific},
 Language = {English},
 Keywords = {46L80,46-02,46L35,19K35},
}

@Book{murphy1990,
 Author = {Murphy, Gerard J.},
 Title = {C{{\({}^*\)}}-algebras and operator theory},
 ISBN = {0-12-511360-9},
 Year = {1990},
 Publisher = {Boston, MA etc.: Academic Press, Inc.},
 Language = {English},
 Keywords = {46Lxx,47C15,46-02,46H05,46M20},
}

@misc{bunke2021calgccat,
  author       = {Bunke, Ulrich},
  title        = {Lecture course on {C}*-algebras and {C}*-categories},
  year         = {2021},
  note ={\href{https://bunke.app.uni-regensburg.de/VorlesungCastcats-1.pdf}{https://bunke.app.uni-regensburg.de/VorlesungCastcats-1.pdf}},
}

@Book{paulsen2002,
 Author = {Paulsen, Vern},
 Title = {Completely bounded maps and operator algebras},
 FSeries = {Cambridge Studies in Advanced Mathematics},
 Series = {Camb. Stud. Adv. Math.},
 Volume = {78},
 ISBN = {0-521-81669-6},
 Year = {2002},
 Publisher = {Cambridge: Cambridge University Press},
 Language = {English},
 Keywords = {47-02,47B65,46L07,47L25,47L30},
}

@Book{conway2000,
 Author = {Conway, John B.},
 Title = {A course in operator theory},
 FSeries = {Graduate Studies in Mathematics},
 Series = {Grad. Stud. Math.},
 ISSN = {1065-7338},
 Volume = {21},
 ISBN = {0-8218-2065-6},
 Year = {2000},
 Publisher = {Providence, RI: American Mathematical Society},
 Language = {English},
 doi={10.1090/gsm/021},
}

@article {barnum2007nobroadcasting,
	author = {Barnum, Howard and Barrett, Jonathan and Leifer, Matthew and Wilce, Alexander},
	title = {Generalized no-broadcasting theorem},
	journal = {Phys. Rev. Lett.},
	volume = {99},
	pages = {240501},
	year = {2007},
	note = {\href{https://arxiv.org/abs/0707.0620}{arXiv:0707.0620}},
}

@misc{fritz2023supports,
	author = {Fritz, Tobias and Gonda, Tom{\'a}{\v{s}} and Lorenzin, Antonio and Perrone, Paolo and Stein, Dario},
	title = {Absolute continuity, supports and idempotent splitting in categorical probability},
	note = {\href{https://arxiv.org/abs/2308.00651}{arXiv:2308.00651}},
}

@article {furber_jacobs_gelfand,
    author = {Furber, Robert W. J. and Jacobs, Bart P. F.},
    title  = {From {K}leisli Categories to Commutative {C}*-algebras: Probabilistic {G}elfand Duality},
    journal = {Logical Methods in Computer Science},
    volume = {11},
    issue = {2},
    year = {2015},
    doi = {10.2168/LMCS-11(2:5)2015},
    note = {\href{https://arxiv.org/abs/1303.1115}{arXiv:1303.1115}},
}

@article{guichardet66produitstensoriels,
    AUTHOR = {Guichardet, Alain},
     TITLE = {Produits tensoriels infinis et repr\'{e}sentations des
              relations d'anticommutation},
   JOURNAL = {Ann. Sci. \'{E}cole Norm. Sup. (3)},
  FJOURNAL = {Annales Scientifiques de l'\'{E}cole Normale Sup\'{e}rieure.
              Troisi\`eme S\'{e}rie},
    VOLUME = {83},
      YEAR = {1966},
     PAGES = {1--52},
      ISSN = {0012-9593},
   MRCLASS = {46.65},
  MRNUMBER = {205097},
MRREVIEWER = {J.\ Feldman},
       URL = {http://www.numdam.org/item?id=ASENS_1966_3_83_1_1_0},
}

@article{stormer69exchangeable,
title = {Symmetric states of infinite tensor products of {C*}-algebras},
fjournal = {Journal of Functional Analysis},
journal={J. Funct. Anal.},
volume = {3},
number = {1},
pages = {48--68},
year = {1969},
issn = {0022-1236},
doi = {https://doi.org/10.1016/0022-1236(69)90050-0},
author = {Størmer, Erling}
}

@article{hulanicki68tensor,
  doi = {10.1016/0022-1236(68)90016-5},
  year = {1968},
  publisher = {Elsevier {BV}},
  volume = {2},
  number = {2},
  pages = {177--201},
  author = {Hulanicki, A. and Phelps, R.R.},
  title = {Some applications of tensor products of partially-ordered linear spaces},
  fjournal = {Journal of Functional Analysis},
  journal={J. Funct. Anal.}
}

@Book{khalkhali13noncommutative,
 Author = {Khalkhali, Masoud},
 Title = {Basic noncommutative geometry},
 Edition = {2nd updated ed.},
 FSeries = {EMS Series of Lectures in Mathematics},
 Series = {EMS Ser. Lect. Math.},
 ISSN = {2523-5176},
 ISBN = {978-3-03719-128-6},
 Year = {2013},
 Publisher = {Z{\"u}rich: European Mathematical Society (EMS)},
 Language = {English},
 DOI = {10.4171/128},
}

@article{fritz2023lax,
  title={From Gs-monoidal to Oplax Cartesian Categories: Constructions and Functorial Completeness},
  author={Fritz, Tobias and Gadducci, Fabio and Trotta, Davide and Corradini, Andrea},
  journal = {Appl. Categ. Struct.},
  volume = {31},
  number = {42},
  publisher = {Springer},
  note = {\href{https://arxiv.org/abs/2205.06892}{arXiv:2205.06892}},
  year={2023},
}

@misc{fong2020supply,
author={Fong, Brendan and Spivak, David I.},
title={Supplying bells and whistles in symmetric monoidal categories},
year={2020},
note={\href{https://arxiv.org/abs/1908.02633}{arXiv:1908.02633}},
}

@InCollection{selinger11graphical,
 Author = {Selinger, Peter},
 Title = {A survey of graphical languages for monoidal categories},
 BookTitle = {New structures for physics},
 ISBN = {978-3-642-12820-2; 978-3-642-12821-9},
 Pages = {289--355},
 Year = {2011},
 Publisher = {Berlin: Springer},
 Language = {English},
 DOI = {10.1007/978-3-642-12821-9_4},
 Keywords = {18-01,18D10},
note={\href{https://arxiv.org/abs/0908.3347}{arXiv:0908.3347}},
}

@book {kadison1997operatoralgebrasI,
    AUTHOR = {Kadison, Richard V. and Ringrose, John R.},
     TITLE = {Fundamentals of the theory of operator algebras. {V}ol. {I}},
    SERIES = {Graduate Studies in Mathematics},
    VOLUME = {15},
      NOTE = {Elementary theory, Reprint of the 1983 original},
 PUBLISHER = {American Mathematical Society, Providence, RI},
      YEAR = {1997},
     PAGES = {xvi+398},
}

@book {kadison1997operatoralgebrasII,
    AUTHOR = {Kadison, Richard V. and Ringrose, John R.},
     TITLE = {Fundamentals of the theory of operator algebras. {V}ol. {II}},
    SERIES = {Graduate Studies in Mathematics},
    VOLUME = {16},
      NOTE = {Advanced theory, Corrected reprint of the 1986 original},
 PUBLISHER = {American Mathematical Society, Providence, RI},
      YEAR = {1997},
     PAGES = {i--xxii and 399--1074},
}

@article{kavruk13quotients,
title = {Quotients, exactness, and nuclearity in the operator system category},
journal = {Advances in Mathematics},
volume = {235},
pages = {321-360},
year = {2013},
issn = {0001-8708},
doi = {https://doi.org/10.1016/j.aim.2012.05.025},
author = {Ali S. Kavruk and Vern I. Paulsen and Ivan G. Todorov and Mark Tomforde},
keywords = {Operator systems, Tensor products, Exactness, Nuclearity},
note    = {\href{https://arxiv.org/abs/1008.2811}{arXiv:1008.2811}},
}

@article{doherty04complete,
  title = {Complete family of separability criteria},
  author = {Doherty, Andrew C. and Parrilo, Pablo A. and Spedalieri, Federico M.},
  journal = {Phys. Rev. A},
  volume = {69},
  issue = {2},
  pages = {022308},
  numpages = {20},
  year = {2004},
  publisher = {American Physical Society},
  doi = {10.1103/PhysRevA.69.022308},
  url = {https://link.aps.org/doi/10.1103/PhysRevA.69.022308},
  note    = {\href{https://arxiv.org/abs/quant-ph/0308032}{arXiv:quant-ph/0308032}},
}

@Book{blanchard92variational,
 Author = {Blanchard, Philippe and Br{\"u}ning, Erwin},
 Title = {Variational methods in mathematical physics. {A} unified approach. {Transl}. from the {German} by {Gillian} {M}. {Hayes}},
 FSeries = {Texts and Monographs in Physics},
 Series = {Texts Monogr. Phys.},
 ISSN = {0172-5998},
 ISBN = {3-540-16190-2},
 Year = {1992},
 Publisher = {Springer-Verlag},
 Language = {English},
 doi={10.1007/978-3-642-82698-6},
}

@Inbook{fuchs2004unknownquantum,
author={Fuchs, Christopher A. and Schack, R{\"u}diger},
title={5 Unknown Quantum States and Operations,a Bayesian View},
bookTitle={Quantum State Estimation},
year={2004},
publisher={Springer Berlin Heidelberg},
address={Berlin, Heidelberg},
pages={147--187},
isbn={978-3-540-44481-7},
doi={10.1007/978-3-540-44481-7_5},
note    = {\href{https://arxiv.org/abs/quant-ph/0404156}{arXiv:quant-ph/0404156}},
}

@article{staton2023quantum,
      title={Quantum de {F}inetti Theorems as Categorical Limits, and Limits of State Spaces of {C}*-algebras}, 
      author={Sam Staton and Ned Summers},
      note = {\href{https://arxiv.org/abs/2207.05832}{arXiv:2207.05832}},    
      doi = {10.4204/EPTCS.394.19},
    journal = {EPTCS},
    volume = {394},
    pages = {400--414},
    year = {2023},
}

@article {swirszcz1974convexity,
    AUTHOR = {\'{S}wirszcz, T.},
     TITLE = {Monadic functors and convexity},
   JOURNAL = {Bull. Acad. Polon. Sci. S\'{e}r. Sci. Math. Astronom. Phys.},
    VOLUME = {22},
      YEAR = {1974},
     PAGES = {39--42},
}

@book {alfsen1971convex,
    AUTHOR = {Alfsen, Erik M.},
     TITLE = {Compact convex sets and boundary integrals},
    SERIES = {Ergebnisse der Mathematik und ihrer Grenzgebiete [Results in Mathematics and Related Areas]},
    VOLUME = {Band 57},
 PUBLISHER = {Springer-Verlag, New York-Heidelberg},
      YEAR = {1971},
}

@book {jaeger2007quantum,
    AUTHOR = {Jaeger, Gregg},
     TITLE = {Quantum information},
      NOTE = {An overview,
              With a foreword by Tommaso Toffoli},
 PUBLISHER = {Springer, New York},
      YEAR = {2007},
}

@book {vonneumann2018foundations,
    AUTHOR = {von Neumann, John},
     TITLE = {Mathematical foundations of quantum mechanics},
   EDITION = {New},
      NOTE = {Translated from the German and with a preface by Robert T.
              Beyer,
              Edited and with a preface by Nicholas A. Wheeler},
 PUBLISHER = {Princeton University Press, Princeton, NJ},
      YEAR = {2018},
}

@book {nielsen2000quantum,
    AUTHOR = {Nielsen, Michael A. and Chuang, Isaac L.},
     TITLE = {Quantum computation and quantum information},
 PUBLISHER = {Cambridge University Press, Cambridge},
      YEAR = {2000},
     PAGES = {xxvi+676},
}

@article{blackadar1985shape,
 ISSN = {00255521, 19031807},
 URL = {http://www.jstor.org/stable/24491568},
 author = {Blackadar, Bruce},
 journal = {Mathematica Scandinavica},
 number = {2},
 pages = {249--275},
 publisher = {Mathematica Scandinavica},
 title = {Shape theory for {C}*-algebras},
 volume = {56},
 year = {1985}
}

@article{vaidman2001paradoxes,
	author = {Vaidman, Lev},
	title = {The Paradoxes of the Interaction-free Measurements},
	journal = {Zeitschrift für Naturforschung A},
	volume = {56},
	number = {1--2},
	pages = {100--107},
	year = {2001},
	note = {\href{https://arxiv.org/abs/quant-ph/0102049}{arXiv:quant-ph/0102049}},
}

@article {coecke2012picturing,
    AUTHOR = {Coecke, Bob and Spekkens, Robert W.},
     TITLE = {Picturing classical and quantum {B}ayesian inference},
   JOURNAL = {Synthese},
    VOLUME = {186},
      YEAR = {2012},
    NUMBER = {3},
     PAGES = {651--696},
     note = {\href{https://arxiv.org/abs/1102.2368}{arXiv:1102.2368}},
}

@book{weibel1994homological,
place={Cambridge}, 
series={Cambridge Studies in Advanced Mathematics}, 
title={An Introduction to Homological Algebra}, 
publisher={Cambridge University Press}, 
author={Weibel, Charles A.}, 
year={1994}, 
collection={Cambridge Studies in Advanced Mathematics}}

@Book{phelps01choquet,
 Author = {Phelps, Robert R.},
 Title = {Lectures on {Choquet}'s theorem},
 edition = {{S}econd},
 FSeries = {Lecture Notes in Mathematics},
 Series = {Lect. Notes Math.},
 ISSN = {0075-8434},
 Volume = {1757},
 ISBN = {3-540-41834-2},
 Year = {2001},
 Publisher = {Berlin: Springer},
 Language = {English},
 Keywords = {46A55,46J20,46-02},
 doi = {https://doi.org/10.1007/b76887},
}

@book{dudley02real, 
place={Cambridge}, 
edition={2}, 
series={Cambridge Studies in Advanced Mathematics}, 
title={Real Analysis and Probability}, 
publisher={Cambridge University Press}, 
author={Dudley, R. M.}, 
year={2002}, 
collection={Cambridge Studies in Advanced Mathematics},
doi={10.1017/CBO9780511755347},
isbn={9780511755347},
}

\end{document}